
\documentclass[a4paper,reqno]{amsart}


\usepackage{amsmath}
\usepackage{amsthm}
\usepackage{amssymb}
\usepackage{amscd} 

\usepackage{bbm} 
\usepackage{mathtools,mathrsfs} 



\usepackage{verbatim} 

\usepackage{enumitem}

\usepackage{xcolor}

\usepackage{xy}
\xyoption{matrix} \xyoption{arrow} \xyoption{arc} \xyoption{color}


\newtheoremstyle{newremark}
  {5pt}
  {5pt}
  {\rmfamily}
  {}
  {\rmfamily\bf}
  {.}
  {.5em}
  {}

\newtheorem{theorem}{Theorem}
\newtheorem{lemma}[theorem]{Lemma}
\newtheorem{corollary}[theorem]{Corollary}
\newtheorem{proposition}[theorem]{Proposition}

\theoremstyle{newremark}
\newtheorem{remark}[theorem]{Remark}
\newtheorem{definition}[theorem]{Definition}

\newtheorem*{definition*}{Definition} 
\newtheorem*{notations*}{Notations}

\numberwithin{theorem}{section}
\numberwithin{equation}{section}


\newcommand{\N}{\mathbb{N}} 
\newcommand{\R}{\mathbb{R}} 

\newcommand{\Z}{\mathbb{Z}} 
\newcommand{\C}{\mathbb{C}} 



\newcommand{\bbD}{\mathbb{D}}

\newcommand{\bbH}{\mathbb{H}}

\newcommand{\bbR}{\mathbb{R}}
\newcommand{\bbS}{\mathbb{S}}


\newcommand{\calL}{\mathscr{L}}
\newcommand{\calM}{\mathscr{M}}



\newcommand{\frS}{\frak S}













\DeclareMathOperator{\rmdist}{\mathrm{dist}} 


\DeclareMathOperator{\rmLip}{\mathrm{Lip}} 

\DeclareMathOperator{\rmsing}{\mathrm{Sing}} 
\DeclareMathOperator{\SO}{SO} 
\DeclareMathOperator{\U}{U} 
\DeclareMathOperator{\On}{O} 
\DeclareMathOperator{\rmKer}{Ker} 
\DeclareMathOperator{\rmRan}{Ran} 










\def\XXint#1#2#3{{%
\setbox0=\hbox{$#1{#2#3}{\int}$}
\vcenter{\hbox{$#2#3$}}\kern-.5\wd0}}




\newcommand{\veps}{\varepsilon}


\renewcommand{\leq}{\leqslant}
\renewcommand{\geq}{\geqslant}
\renewcommand{\subset}{\subseteq}


\newcommand{\LL}{\mathop{\hbox{\vrule height 6pt width .5pt depth 0pt
\vrule height .5pt width 3pt depth 0pt}}\nolimits}

\newcommand{\e}{{\rm e}}

\newcommand{\res}{\mathop{\hbox{\vrule height 7pt width .5pt depth 0pt\vrule height .5pt width 6pt depth 0pt}}\nolimits}

\newcommand{\mb}[1]{{\mathbf #1}}

\newcommand{\bb}[1]{{\mathbb #1}}

\newcommand{\eps}{\varepsilon}

\newcommand{\abs}[1]{\left| #1 \right|} 
\newcommand{\pd}[2]{\frac{\partial #1}{\partial #2}} 
\newcommand{\vol}[1]{\mbox{vol}_{#1}} 

\newcommand{\trans}{\mathsf{t}}
\newcommand{\trac}{{\rm tr}}

\newcommand{\eo}{{\bf e}_0}  
\newcommand{\euu}{{\bf e}^{(1)}_1} 
\newcommand{\eud}{{\bf e}^{(1)}_2} 
\newcommand{\edu}{{\bf e}^{(2)}_1} 
\newcommand{\edd}{{\bf e}^{(2)}_2} 


\setlength{\textheight}{24cm} \setlength{\textwidth}{15cm}
\setlength{\topmargin}{-0.25cm}
\setlength{\oddsidemargin}{0.25cm} \setlength{\evensidemargin}{0.25cm}


\setcounter{tocdepth}{2}


\begin{document}


\title[Torus-like solutions for the LDG model]{Torus-like  solutions for the Landau- de Gennes model. \\ Part II: Topology of $\bbS^1$-equivariant minimizers}

\author{Federico Dipasquale}
\address{Dipartimento di Informatica, Universit\`{a} di Verona, Strada Le Grazie 15, 37134 Verona, Italy}
\email{federicoluigi.dipasquale@univr.it}

\author{Vincent Millot}
\address{LAMA, Univ Paris Est Creteil, Univ Gustave Eiffel, UPEM, CNRS, F-94010, Cr\'{e}teil, France}
\email{vincent.millot@u-pec.fr}

\author{Adriano Pisante}
\address{Dipartimento di Matematica, Sapienza Universit\`{a} di Roma, P.le Aldo Moro 5, 00185 Roma, Italy}
\email{pisante@mat.uniroma1.it}

\date{\today}

\begin{abstract}
We study energy minimization of a continuum Landau-de Gennes energy functional for nematic liquid crystals, in a three-dimensional axisymmetric domain  and in a restricted class of $\mathbb{S}^1$-equivariant (i.e., axially symmetric) configurations. We assume smooth and nonvanishing $\mathbb{S}^1$-equivariant (e.g. homeotropic) Dirichlet boundary condition and a physically relevant norm constraint (the Lyuksyutov constraint) in the interior. Relying on our previous results  in the nonsymmetric setting \cite{DMP1}, we prove  partial regularity of minimizers away from a possible finite set of interior singularities located on the symmetry axis. For a suitable class of domains and boundary data, we show that for smooth minimizers (torus solutions) the level sets of the signed biaxiality are generically finite unions of tori of revolution. 
Concerning nonsmooth minimizers (split solutions), we characterize their asymptotic behavior around any singular point in terms of explicit $\mathbb{S}^1$-equivariant harmonic maps into $\mathbb{S}^4$, whence the generic level sets of the signed biaxiality contains invariant topological spheres. 
Finally, in the model case of a nematic droplet, we provide existence of torus solutions, at least when the boundary data are suitable uniaxial deformations of the radial anchoring, and existence of split solutions for boundary data which are suitable linearly full harmonic spheres.
\end{abstract}

\keywords{Liquid crystals; axisymmetric torus solutions; Harmonic maps}


\maketitle


\tableofcontents


\section{Introduction}
In this second part of our series \cite{DMP1}-\cite{DMP2}, we focus on the regularity and topological properties of minimizers of the Landau-de Gennes (LdG) energy under the norm constraint  considered in~\cite{DMP1} and a further symmetry constraint. We consider minimizers  
of the energy functional over a restricted class of axisymmetric (more precisely, $\bbS^1$-equivariant) configurations which are subject to a smooth axisymmetric Dirichlet boundary condition. Before entering into a detailed discussion, let us briefly review for convenience the mathematical aspects of the LdG model under investigation, notations,  and basic terminology from our first part \cite{DMP1}.  
 \vskip2pt
 
 The Landau-de Gennes theory is a continuum theory aimed to describe macroscopic configurations of nematic liquid crystals. The order parameter used to represent a given configuration is a family of second order tensors usually called $Q$-tensors. Denoting by $\mathscr{M}_{3\times3}(\R)$ the vector space made of $3\times3$-matrices with real entries, $Q$-tensors are the elements of the 5-dimensional subspace 
\[
	\mathcal{S}_0:=\Big\{Q=(Q_{ij})\in\mathscr{M}_{3\times 3}(\R) : Q=Q^\trans \, , \; \trac(Q)=0 \Big\}\,,
\]
where $Q^\trans$ denotes the transpose of $Q$, and $\trac(Q)$ the trace of $Q$. The space $\mathcal{S}_0$ is endowed with the Hilbertian structure given by the usual (Frobenius) inner product. By symmetry of the admissible matrices, the inner product and the induced norm reduce to  
$P:Q:=\sum_{i,j=1}^3P_{ij}Q_{ij}=\trac(PQ)$ and $\abs{Q}^2=\trac(Q^2)$.
Choosing an orthonormal basis, $\mathcal{S}_0$ is identified with the Euclidean $\R^5$, and then $\big\{Q\in\mathcal{S}_0: \abs{Q}=1\big\}=\bb{S}^4$ is the $4$-dimensional sphere.

Within the configuration space $\mathcal{S}_0$, one can distinguish three mutually distinct phases: {\it (i)} the isotropic phase, $Q=0$; {\it (ii)} the uniaxial phase, Q has exactly one double eigenvalue; {\it (iii)} the biaxial phase, $Q$ has three distinct eigenvalues. Following \cite{DMP1}, we shall measure biaxiality  through the {\it signed biaxiality parameter} $\widetilde{\beta}$ defined for $Q\in \mathcal{S}_0\setminus\{0\}$ by 
 \begin{equation}
\label{signedbiaxiality}
\widetilde{\beta}(Q):=\sqrt{6} \,\frac{{\rm tr}(Q^3)}{|Q|^{3}}  \in [-1,1] \, .
\end{equation}
 If $Q\in \mathcal{S}_0$ has spectrum $\sigma( Q)=\{ \lambda_1 ,\lambda_2, \lambda_3 \} \subset \mathbb{R}$ with eigenvalues in increasing order,  then $\widetilde{\beta}(Q)=\pm 1$ iff the minimal/maximal eigenvalue is double (purely positive/negative uniaxial phase), $\widetilde{\beta}(Q)=0$ iff $\lambda_2=0$ and $\lambda_1=-\lambda_3\neq 0$ (maximally biaxial phase), and $Q=0$ iff $\lambda_1=\lambda_2=\lambda_3$ (isotropic phase). Given a continuous configuration map ${\bf Q}:\overline\Omega\to \mathcal{S}_0$ defined on a closed domain $\overline\Omega\subset\R^3$, we have shown in \cite{DMP1} how the topology and the geometry of ${\bf Q}$ can be studied by means of $\widetilde{\beta}$, or more precisely through the {\it biaxiality regions}, i.e., the closed subsets of $\Omega$ of the form
 \begin{equation}\label{biaxialityregions}
	\{\beta \leq t\} := \big\{ x \in \overline{\Omega} : \widetilde{\beta} \circ {\bf Q}(x) \leq t \big\} \text{ and } \{\beta \geq t\} := \big\{ x \in \overline{\Omega} : \widetilde{\beta} \circ {\bf Q}(x) \geq t\big\}\,,\quad t \in [-1,1]\,, 
\end{equation}
and  the corresponding {\it biaxial surfaces} $\{ \beta = t \} := \big\{ x \in \overline{\Omega} : \widetilde{\beta} \circ {\bf Q}(x) = t \big\}$.

\vskip5pt

For a bounded open set  $\Omega \subset \mathbb{R}^3$ with smooth boundary, the Landau-de Gennes energy of a  configuration ${\bf Q} \in W^{1,2}(\Omega; \mathcal{S}_0)$ we consider is of the form
\begin{equation}
\label{LDGenergy}
\mathcal{F}_{\rm LG}({\bf Q})=\int_\Omega \frac{L}{2} \abs{\nabla {\bf Q}}^2+F_{\rm B}({\bf Q}) \, dx \, , 
\end{equation}
i.e., with the one-constant approximation for the elastic energy density and parameter  $L>0$. The bulk potential $F_{\rm B}$ is the quartic polynomial
\begin{equation}
\label{abc-potential}
F_{\rm B}({\bf Q}):=-\frac{a^2}{2} {\rm tr}({\bf Q}^2)-\frac{b^2}{3} {\rm tr} ({\bf Q}^3)+\frac{c^2}{4} \big({\rm tr} ({\bf Q}^2) \big)^2 \, ,
\end{equation}
where $a,b$ and $c$ are non zero material-dependent constants. As usual, we can subtract-off an additive constant and introduce the modified bulk potential 
\begin{equation}
\label{FBtilde}
\widetilde{F}_{\rm B}({\bf Q}):=F_{\rm B}({\bf Q})-\min_{\mathcal{S}_0} F_{\rm B} \, ,
\end{equation}
which is nonnegative and has $0$ as minimum value. 
The minimum is achieved when the signed biaxiality is maximal and
 $\widetilde{F}_{\rm B}({\bf Q})=0$ iff ${\bf Q} \in \mathcal{Q}_{\rm min}$, i.e., ${\bf Q}$ belongs to the vacuum-manifold of positive uniaxial matrices 
\begin{equation}
\label{vacuum}
\mathcal{Q}_{\rm min}= \left\{ {\bf Q} \in \mathcal{S}_0:  {\bf Q}=s_+ \left( n \otimes n -\frac13 I \right) \, , \quad n \in \mathbb{S}^2  \right\} \, ,
\end{equation}
where
\[ s_+:=\frac{b^2+\sqrt{b^4+24a^2 c^2}}{4c^2} \, \]
is the positive root of the equation $2c^2 t^2 -b^2 t-3 a^2=0$. 
The new energy functional corresponding to the modified  bulk potential \eqref{FBtilde} is
\begin{equation}
\label{LDGenergytilde}
\widetilde{\mathcal{F}}_{\rm LG}({\bf Q}):=\int_\Omega \frac{L}{2} |\nabla {\bf Q}|^2+\widetilde{F}_{\rm B}({\bf Q}) \, dx \, , 
\end{equation}
so that it is the sum of  two nonnegative terms penalizing both spatial variations and deviations from the vacuum manifold $\mathcal{Q}_{\rm min}$. Observe that $\mathcal{Q}_{\rm min}\simeq\mathbb{R}P^2\subset \bbS^4$, and as a consequence, $\mathcal{Q}_{\rm min}$ has nontrivial topology. Both  the homotopy groups $\pi_2(\mathcal{Q}_{\rm min})=\mathbb{Z}$ and $\pi_1(\mathcal{Q}_{\rm min})=\mathbb{Z}_2$  play a role in the presence of defects, especially when minimizing the energy in the restricted class of axisymmetric configurations  introduced below.
\vskip5pt

In continuation to \cite{DMP1}, we  still assume in this article  that the norm of  admissible configurations is prescribed to be the constant value proper of the vacuum manifold \cite{Ly} (see also \cite{DMP1} and the references therein), i.e.,
\begin{equation}
\label{Lyuk}
|{\bf Q}(x)| \equiv \sqrt{\frac23}\, s_+  \qquad (\hbox{Lyuksyutov constraint}) \, .
\end{equation}
Following \cite{DMP1}, we rescale any configuration as ${\bf Q} = s_+ \sqrt\frac{2}{3} Q$, and under the Lyuksyutov constraint  the energy functional takes the form 
$$\widetilde{\mathcal{F}}_{\rm LG}({\bf Q})=\frac23 s_+^2L\mathcal{E}_\lambda(Q)$$
where now $Q \in W^{1,2}(\Omega;\bbS^4)$, 
\begin{equation}
\label{LDGenergytilde}
\mathcal{E}_\lambda(Q):=\int_\Omega \frac{1}{2} |\nabla Q|^2+\lambda W(Q)\, dx\,,
\end{equation}
and
$\lambda:=\sqrt{\frac{2}{3}}\frac{b^2}{L}\,s_+>0$. In turn, the reduced potential $W:\mathbb{S}^4\to\R$ is given by 
\begin{equation}
\label{redpotential}
W(Q)=\frac{1}{3\sqrt{6}}\Big(1-\widetilde{\beta}(Q)\Big) \qquad \forall Q\in\mathbb{S}^4\,.
\end{equation} 
By \eqref{signedbiaxiality}, $W$ is nonnegative and it vanishes precisely on $\mathbb{R}P^2$, i.e.,  $\{W=0\}=\mathbb{R}P^2$.

A critical point $Q_\lambda\in W^{1,2}(\Omega;\mathbb{S}^4)$ of $\mathcal{E}_\lambda$ among $\mathbb{S}^4$-valued maps is a weak solution of the Euler-Lagrange equation
\begin{equation}\label{MasterEq}
\Delta Q_\lambda+|\nabla Q_\lambda|^2Q_\lambda =- \lambda \nabla_{\rm tan} W (Q_\lambda) \quad\text{in $\Omega$}\,.
\end{equation}
The tangential gradient of $W$ along $\mathbb{S}^4 \subset \mathcal{S}_0$ is given by
\[  
	\nabla_{\rm tan} W (Q_\lambda)=- \Big(Q_\lambda^2-\frac{1}{3}I - {\rm tr}(Q_\lambda^3)Q_\lambda\Big)  \,,
\]
and the left hand side in \eqref{MasterEq} is the tension field of the $\mathbb{S}^4$-valued map $Q_{\lambda}$ as in the theory of harmonic maps. 
In fact, when $\lambda = 0$, the energy $\mathcal{E}_{0}$ is  the Dirichlet energy
\[
	\mathcal{E}_0(Q) = \int_{\Omega} \frac{1}{2} \abs{\nabla Q}^2\,dx\,, 
\]
and \eqref{MasterEq} is exactly the harmonic map equation for maps from $\Omega$ into $\bbS^4$.
\vskip5pt 

In this article, we are interested in $\mathbb{S}^4$-valued maps minimizing the energy functional $\mathcal{E}_\lambda$ over a class of $\mathbb{S}^1$-equivariant configurations. Our interest arises from the fact that the symmetry of configurations yields symmetry properties of their biaxiality sets, and this allows us to describe such sets in an even more details than in \cite{DMP1}. Symmetry  ans\"atze  have been considered in several recent papers, e.g. \cite{INSZ3,INSZ4,ABL,INSZ5,Yu,ABGL}, in two and three dimensional  Landau-de Gennes models. Here  $\mathbb{S}^1$-equivariance is meant in a sense of axial symmetry. To be more precise, let us define the action of $\bbS^1$ on the space  $\mathcal{S}_0$ we are considering. 
First, we identify the group $\mathbb{S}^1$ with the subgroup of $\SO(3)$ made of all rotations around the vertical axis of $\mathbb{R}^3$. A matrix $R\in\mathscr{M}_{3\times 3}(\mathbb{R})$ represents a rotation of angle $\alpha$ around the vertical axis if it writes
\begin{equation}\label{rotmatr}
R=\begin{pmatrix} \widetilde R & 0 \\ 
  0 & 1 \end{pmatrix}
\quad\text{with}\quad \widetilde R:= \begin{pmatrix} \cos\alpha & -\sin\alpha  \\ \sin\alpha & \cos\alpha  \end{pmatrix}\,. 
\end{equation}
A rotation of angle $\alpha$ around the vertical axis is uniquely determined by $\e^{i\alpha}\in\mathbb{S}^1$ and we will often employ the notation $R_\alpha\in\bb{S}^1$ to specify its rotation angle $\alpha$. The $\mathbb{S}^1$-action by rotations on $\mathbb{R}^3$ yields an induced isometric action on $\mathcal{S}_0$ given by 
\begin{equation}\label{inducedaction}
\mathcal{S}_0 \ni A \mapsto R\cdot A:=R \, A \, R^\trans \in \mathcal{S}_0\,.
\end{equation}
Assuming that the domain $\Omega\subset\R^3$ is axially symmetric ($\mathbb{S}^1$-invariant), i.e., $R \cdot \Omega=\Omega$ for every $R \in \mathbb{S}^1$, we shall say that a map $Q \colon \Omega \to \mathcal{S}_0$ is $\mathbb{S}^1$-equivariant if
\begin{equation}
\label{S1equivariance}
Q(R x)=R\cdot Q(x) =R Q(x) R^\trans\quad \mbox{a.e. } x \in \Omega \, , \quad \forall R \in \mathbb{S}^1 \, ,
\end{equation}
with the obvious analogue definition for maps defined on the boundary. 
This notion of $\mathbb{S}^1$-equivariance is well-defined in the continuous, Lipschitz, or $W^{1,2}$-regularity class. Here we are interested in  
the space of $\mathbb{S}^1$-equivariant Sobolev maps
\begin{equation}
\label{equivsobolev}
W^{1,2}_{\rm sym}(\Omega;\mathbb{S}^4):=\Big\{ Q \in W^{1,2}(\Omega; \mathbb{S}^4) :  \text{ $Q$ is $\mathbb{S}^1$-equivariant} \Big\}\,.	
\end{equation}
Note that the action \eqref{inducedaction} being isometric, $\mathbb{S}^4 \subset \mathcal{S}_0$ is invariant and this space is well-defined.

Given a smooth boundary data  $Q_{\rm b}:\partial\Omega\to\mathbb{S}^4$ which is $\mathbb{S}^1$-equivariant, we set 
\begin{equation}
\label{S1admissibleconf}
\mathcal{A}^{{\rm sym}}_{Q_{\rm b}}(\Omega):=\Big\{ Q \in W^{1,2}_{\rm sym}(\Omega; \mathbb{S}^4) : \, Q_{\vert_{\partial \Omega}}=Q_{\rm b} \, \Big\}\,,
\end{equation}
and we aim to minimize the energy $\mathcal{E}_\lambda$ over $\mathcal{A}^{{\rm sym}}_{Q_{\rm b}}(\Omega)$. As shown in Proposition~\ref{prop:nonempty-class}, 
the set $\mathcal{A}^{{\rm sym}}_{Q_{\rm b}}(\Omega)$ is always not empty whenever $Q_{\rm b}$ is a Lipschitz map, so that the minimization problem is well-defined.
The symmetry constraint \eqref{equivsobolev} and the boundary condition in \eqref{S1admissibleconf} being  $W^{1,2}$-weakly closed, 
the direct method in the Calculus of Variations easily yields existence of minimizers. Concerning regularity, and in contrast with \cite{DMP1}, we do not expect full regularity, but only partial regularity. Indeed,  $Q_{\rm b}$ could have no $\mathbb{S}^4$-valued continuous extension to $\overline{\Omega}$ since $\mathbb{S}^1$-equivariance  
implies that $Q(0,0,x_3)=\pm \eo$ along $\Omega \cap \{x_3 \hbox{-axis}\}$, with 
\begin{equation}
\label{fixedpoint}
	\eo := \frac{1}{\sqrt 6}\begin{pmatrix} -1 & 0 & 0 \\ 0 & -1 & 0 \\ 0 & 0 & 2 \end{pmatrix} \,, 
\end{equation}
$\pm \eo$ being the unique unit norm elements of $\mathcal{S}_0$ which are fixed by the $\mathbb{S}^1$-action.  
In particular, if $\Omega$ is the unit ball and $Q_{\rm b}(0,0,\pm1)=\pm \eo$, then the class $\mathcal{A}^{{\rm sym}}_{Q_{\rm b}}(\Omega)$ contains no map continuous in $\overline{\Omega}$, and thus minimizers must have singularities. 
\vskip5pt 
 
Our first main result provides the partial regularity of minimizers of $\mathcal{E}_\lambda$ in the $\mathbb{S}^1$-equivariant class with a general Dirichlet boundary condition.

\begin{theorem}\label{thm:partial-regularity}
Let $\Omega\subset\R^3$ be a bounded and axisymmetric open set with boundary of class $C^3$, and let 
$Q_{\rm b} \in C^{1,1}(\partial \Omega;\mathbb{S}^4)$ be an $\mathbb{S}^1$-equivariant map. 
If $Q_\lambda$ is a minimizer of $\mathcal{E}_\lambda$ in the class $\mathcal{A}^{{\rm sym}}_{Q_{\rm b}}(\Omega)$,  then $Q_\lambda \in C^{\omega}(\Omega\setminus \Sigma)\cap C^{1,\delta}(\overline \Omega\setminus \Sigma)$ for every $\delta\in(0,1)$, where $\Sigma$ is a finite subset of $\Omega\cap\{x_3\text{-axis}\}$ (possibly empty). Moreover, 
\begin{enumerate}
\item[{\sl (1)}] if $Q_{\rm b} \in C^{2,\delta}(\partial \Omega)$ for some $\delta>0$, then  $Q_\lambda \in C^{2,\delta}(\overline\Omega\setminus\Sigma)$;
\vskip5pt
\item[{\sl (2)}] if $\partial\Omega$ is analytic and $Q_{\rm b}\in C^\omega(\partial\Omega)$, then $Q_\lambda\in C^\omega(\overline\Omega\setminus\Sigma)$.
\end{enumerate} 
\noindent In addition, for each singular point $\bar x\in \Sigma$, there exist a map $Q_* \in \{ \pm Q^{(\alpha)}\}_{\alpha \in \R}$ and $\nu>0$ such that 

\begin{equation}\label{asymptnearsingthm}
\|Q_\lambda^{\bar x,r}-Q_*\|_{C^2(\overline{B_2}\setminus B_1)}=O(r^\nu)\quad\text{as $r\to 0$}\,,
\end{equation}
where $Q_\lambda^{\bar x,r}(x):=Q_\lambda(\bar x+r x)$ and for a given rotation $R_\alpha\in\mathbb{S}^1$ acting on $\mathcal{S}_0$ as in \eqref{inducedaction}, \begin{equation}
\label{stableblowups}
\hbox{ } \qquad \quad Q^{(\alpha)}(x):=  R_\alpha \cdot \frac{1}{\sqrt 6} \frac{1}{|x|}\begin{pmatrix} -x_3 & 0 & \sqrt{3} x_1 \\ 0 & -x_3 & \sqrt{3} x_2 \\ \sqrt{3} x_1 & \sqrt{3} x_2 & 2x_3 \end{pmatrix} \, , \quad x=(x_1,x_2,x_3) \in \mathbb{R}^3 \setminus \{0\} \, .
\end{equation}
\end{theorem}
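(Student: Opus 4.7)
The plan exploits the $\mathbb{S}^1$-equivariance together with the partial-regularity machinery developed in Part~I \cite{DMP1}. By Palais's principle of symmetric criticality (the $\mathbb{S}^1$-action on $\mathcal{S}_0$ is isometric and preserves $\mathcal{E}_\lambda$), $Q_\lambda$ is a weak solution of \eqref{MasterEq} on $\Omega$. In cylindrical coordinates the equivariant minimization on $\Omega$ reduces to a weighted $2$D minimization on the meridional half-plane $\Omega^+:=\Omega\cap\{x_2=0,\,x_1\geq 0\}$, with weight $r$ and the constraint $\tilde Q\in\{\pm\mathbf{e}_0\}$ on the axis $\{x_1=0\}\cap\overline{\Omega^+}$. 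Away from the axis ($r>0$) the reduced system is regular, and a direct orbit-averaging argument shows that $Q_\lambda$ is a local minimizer of $\mathcal{E}_\lambda$ in the unconstrained class on any small ball $B\Subset\Omega\setminus\{x_3\text{-axis}\}$. The full regularity theory of \cite{DMP1} then applies on $\Omega\setminus\{x_3\text{-axis}\}$ and yields the interior analyticity together with the boundary regularity in items~(1)--(2) at all points of $\partial\Omega\setminus\{x_3\text{-axis}\}$. A short argument using the smoothness of $Q_{\rm b}$ (which is forced to take the values $\pm\mathbf{e}_0$ at its intersection with the axis) and the boundary $\varepsilon$-regularity from Part~I rules out singularities on $\partial\Omega\cap\{x_3\text{-axis}\}$, so $\Sigma\subset\Omega\cap\{x_3\text{-axis}\}$.

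For an interior axis point $\bar x$ we perform a $3$D blow-up analysis. The rescalings $Q_\lambda^{\bar x,r}$ are $\mathbb{S}^1$-equivariant, solve \eqref{MasterEq} with the potential coefficient rescaled to $\lambda r^2$, and satisfy the monotonicity formula from Part~I, so that the density $\Theta(r):=r^{-1}\int_{B_r(\bar x)}\tfrac12|\nabla Q_\lambda|^2+\lambda W(Q_\lambda)\,\mathrm{d}x$ is nondecreasing. Combining monotonicity, $\varepsilon$-regularity and weak compactness, any blow-up sequence admits a subsequence converging strongly in $W^{1,2}_{\rm loc}(\R^3\setminus\{0\};\mathbb{S}^4)$ and in $C^2_{\rm loc}(\R^3\setminus\{0\})$ to a $0$-homogeneous $\mathbb{S}^1$-equivariant energy-minimizing harmonic map $Q_*\colon\R^3\setminus\{0\}\to\mathbb{S}^4$. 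Such a map is determined by an $\mathbb{S}^1$-equivariant harmonic sphere $\mathbb{S}^2\to\mathbb{S}^4$ whose values at the poles lie in $\{\pm\mathbf{e}_0\}$, hence by an ODE on a half-meridian with prescribed endpoints in the $\mathbb{S}^1$-fixed-point set. A classification argument, combining this ODE reduction with the minimality of $Q_*$ on $B_1$ (through comparison with suitable uniaxial equivariant competitors along the axis), identifies the admissible tangent maps as the explicit family $\{\pm Q^{(\alpha)}\}_{\alpha\in\R}$ displayed in \eqref{stableblowups}. Each $Q^{(\alpha)}$ carries a uniform positive Dirichlet density, so the $\varepsilon$-regularity alternative produces a density gap between regular and singular axis points, and the total energy bound yields the finiteness of $\Sigma$.

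The last ingredient is the quantitative convergence \eqref{asymptnearsingthm}. Our plan is to prove a \L{}ojasiewicz--Simon inequality for the Dirichlet energy around the orbit $\{Q^{(\alpha)}\}_{\alpha\in\R}$, regarded as a smooth compact $1$-parameter family of critical points of the Dirichlet energy on $\mathbb{S}^1$-equivariant $\mathbb{S}^4$-valued maps on $\mathbb{S}^2$. The orbit is \emph{integrable}, since $\partial_\alpha Q^{(\alpha)}$ is a Jacobi field generated by the $\mathbb{S}^1$-action on the target; Simon's scheme, combined with the monotonicity of $\Theta$ and the fact that after rescaling the potential contributes only $O(r^2)$ on unit scale, then yields uniqueness of $Q_*$ together with the polynomial rate $O(r^\nu)$ in $C^2(\overline{B_2}\setminus B_1)$. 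The higher-regularity items~(1)--(2) up to $\overline\Omega\setminus\Sigma$ follow by a standard Schauder, respectively analytic, bootstrap applied to \eqref{MasterEq}. The principal obstacles we anticipate are the tangent-map classification (a delicate ODE analysis is needed to rule out any equivariant harmonic sphere other than $\pm Q^{(\alpha)}$) and the \L{}ojasiewicz--Simon step (requiring a precise description of the spectrum of the Jacobi operator of $Q^{(\alpha)}$ on $\mathbb{S}^2$ and a verification of the integrability of the $1$-parameter orbit).
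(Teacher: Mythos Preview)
Your overall architecture (symmetric criticality $\Rightarrow$ monotonicity $\Rightarrow$ blow-up $\Rightarrow$ tangent-map classification $\Rightarrow$ \L{}ojasiewicz--Simon) matches the paper's, but three of the steps contain genuine gaps.

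\textbf{Off-axis regularity and compactness.} The orbit-averaging claim is incorrect: a non-equivariant competitor supported in a ball $B$ off the axis cannot be averaged into an equivariant competitor supported in $B$, since the $\mathbb{S}^1$-orbit leaves $B$. Equivariant minimizers are \emph{not} known to be local minimizers in the full class. The paper avoids this entirely: off the axis it shows directly by an equivariant Fubini/capacity argument (Proposition~\ref{vanishdensbdry}) that the scaled energy vanishes, and then uses only \emph{criticality} (Corollary~\ref{corELeq}) plus the $\varepsilon$-regularity of \cite{DMP1}. At axis points the strong $W^{1,2}$-compactness of blow-ups is itself nontrivial in the symmetric class: a Luckhaus-type interpolation respecting the equivariance has to be built by hand (Theorem~\ref{compintthm}), and this is what feeds into Proposition~\ref{compblowup}. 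Your sketch glosses over both points.

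\textbf{Tangent-map classification.} Your ``ODE reduction plus comparison with uniaxial competitors'' is too optimistic. The space of $\mathbb{S}^1$-equivariant harmonic spheres $\mathbb{S}^2\to\mathbb{S}^4$ contains, besides the obvious degree-one and degree-two degenerate families, a two-parameter family of \emph{linearly full} maps of energy $12\pi$; the paper classifies them via Calabi's theorem and the twistor fibration $\mathbb{C}P^3\to\mathbb{S}^4$ (Theorem~\ref{classification}). These full maps are not ruled out by energy comparison but by a \emph{second-variation} computation (Proposition~\ref{thm:instability-full}): any tangent map with nontrivial $L_2$-component is unstable under equivariant perturbations, hence cannot arise from a minimizer. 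Only after this does one reduce to the family $\{\pm Q^{(\alpha)}\}$, whose minimality is checked via a reduction to $\mathbb{S}^2$-valued maps and \cite{BCL}.

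\textbf{\L{}ojasiewicz--Simon.} Exhibiting the single Jacobi field $\partial_\alpha Q^{(\alpha)}$ does not give integrability: one needs that \emph{all} nearby critical points (not just those in the one-parameter orbit) form a manifold. The paper sidesteps any spectral computation by observing (Lemma~\ref{analyticmanifold}) that the entire stratum $\mathcal{H}\mathrm{arm}_1(\mathbb{S}^2;\mathbb{S}^4)$ of harmonic spheres of energy $4\pi$ is a finite-dimensional real-analytic manifold, which automatically yields the optimal-exponent inequality \eqref{eq:Loj-Sim}. The power decay then follows from a hole-filling iteration on dyadic annuli (Proposition~\ref{simoninductivestep}), not from Simon's original ODE argument.
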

\vskip5pt

In establishing Theorem \ref{thm:partial-regularity}, the starting point is to realize that 
minimizers of $\mathcal{E}_\lambda$ over the symmetric class $\mathcal{A}^{{\rm sym}}_{Q_{\rm b}}(\Omega)$ essentially satisfy Palais' Symmetric Criticality Principle (although neither the functional is $C^1$-differentiable, nor the space $W^{1,2}(\Omega;\mathbb{S}^4)$ has a Banach manifold structure), see Proposition \ref{prop:symmetric-criticality}.  They are therefore  true critical points of $\mathcal{E}_\lambda$, and hence weak solutions of \eqref{MasterEq}. As a consequence,   the regularity results from our first part \cite{DMP1} apply, and we prove that the smallness of the scaled energy $\frac1r \mathcal{E}_\lambda(Q_\lambda,\Omega \cap B_r(x))$ implies the regularity of $Q_\lambda$ in a neighborhood of $x\in\overline\Omega$.
Hence, to obtain partial regularity, we employ the strategy introduced 
in the pioneering papers \cite{SU1,SU2,SU3} and already adopted in \cite{DMP1}. It is based  on three main ingredients: 1) monotonicity formulas; 2) strong compactness of blow-ups; 3) constancy of blow-up limits (Liouville property). Compared to the classical case,  energy minimality only holds in the restricted class $\mathcal{A}^{{\rm sym}}_{Q_{\rm b}}(\Omega)$ of equivariant configurations, and these three fundamental ingredients have to be reworked out carefully, as we comment in more detail at the beginning of Section~\ref{sec:6}. 
 The crucial difference  with \cite{DMP1} is that singularities can not be excluded here (as already noticed), which means that the Liouville property does not  hold. However, constancy of blow-ups holds at the boundary as in \cite{DMP1}, it still holds away from the symmetry axis, and it can only fail at finitely many interior points on the axis (the singular points) by a classical argument.
 In Section~\ref{sec:stability}, we identify all possible nonconstant blow-up limits and prove that they form  
a one-parameter family of nonconstant $0$-homogeneous $\mathbb{S}^1$-equivariant harmonic (tangent) maps $\{ \pm Q^{(\alpha)} \}_{\alpha \in \mathbb{R}}$. Those tangent maps are smooth away from the origin and minimize the Dirichlet energy among all compactly supported $\mathbb{S}^1$-equivariant  perturbations. In Theorem~\ref{thm:partial-regularity}, \eqref{asymptnearsingthm} completely characterizes the asymptotic behavior of a minimizer near a singular point, and it shows in particular uniqueness of the blow-up limit.
To prove \eqref{asymptnearsingthm}, we make use of the Simon-{\L}ojasiewicz inequality (see \cite{Sim83}) for the Dirichlet energy on $C^3(\mathbb{S}^2; \mathbb{S}^4)$, adapting the simplified argument in 
\cite{Simon} to our perturbed Dirichlet energy \eqref{LDGenergytilde}. As detailed in Section~\ref{subsec:Loj}, the set of all possible tangent maps is  contained in a smooth manifold, so that the Simon-{\L}ojasiewicz inequality holds with optimal exponent, which implies the power-type decay    in~\eqref{asymptnearsingthm}. 
\vskip5pt

The family $\{ \pm Q^{(\alpha)} \}_{\alpha \in \mathbb{R}}$ is determined through a stability/instability analysis of $0$-homogeneous equivariant harmonic maps. To this purpose, we first classify all $\mathbb{S}^1$-equivariant harmonic spheres $\boldsymbol{\omega}\in C^\infty(\mathbb{S}^2;\mathbb{S}^4)$, see Theorem~\ref{classification}.
Identifying $\mathcal{S}_0$ with $\R \oplus \C \oplus \C$,  the $\bbS^1$-action on $\mathcal{S}_0$ rewrites in terms of complex numbers as $R_\alpha \cdot (t, \zeta_1, \zeta_2)=(t,e^{i\alpha} \zeta_1, e^{i2\alpha} \zeta_2)$. In this way, and by a classical result due to E. Calabi \cite{Ca}, $\mathbb{S}^4$-valued equivariant harmonic spheres are either $\mathbb{S}^2$-valued maps ({\em linearly degenerate}) of the form
\[ 
	\boldsymbol{\omega}^{(1)} (x)=( \boldsymbol{\omega}_0 (x), \boldsymbol{\omega}_1(x), 0 ) \, , \qquad \hbox{or} \qquad \boldsymbol{\omega}^{(2)} (x)=( \boldsymbol{\omega}_0 (x), 0 , \boldsymbol{\omega}_2(x) ) \, ,
\]
 or {\em linearly full} (the image spans the whole space $\mathcal{S}_0$). 
 
 In the linearly degenerate case, we show that, up to the application of the antipodal map $\bbS^4\ni a \mapsto -a\in\bbS^4$,
 \begin{equation}
\label{degenerate-spheres}
	\boldsymbol{\omega}^{(k)} (x)=\boldsymbol{\sigma}_2^{-1} \left( \mu_k \left( \boldsymbol{\sigma}_2 (x) \right)^k  \right) \, , \quad \mu_k \in \mathbb{C}^*  \, ,\quad k=1,2\,,
\end{equation}
where $\boldsymbol{\sigma}_2 \colon \mathbb{S}^2 \to \mathbb{C} \cup \{ \infty\}$ is the stereographic projection from the south pole. 
  
To describe linearly full harmonic spheres into $\mathbb{S}^4$, we follow \cite{Ca} (see also \cite{Br,La,Verdier,BairdWood}, and Section \ref{sec:axisymm}).
Identifying $\mathbb{S}^2$ with $\mathbb{C}P^1$, we study  their {\em canonical lift} to $\mathbb{C}P^3$, the twistor space\footnote{As detailed, for instance, in \cite[Chapter~7]{BairdWood}, the twistor space of $\mathbb{S}^4$ is $\SO(5)/\U(2)$; however, it is elementary but not obvious to identify it with $\mathbb{C}P^3$ (see e.g. \cite{Fawley} for details on this identification).} of~$\mathbb{S}^4$. 
Up to the application of the antipodal map and up to the postcomposition with the twistor fibration $\boldsymbol\tau \colon \mathbb{C}P^3 \to \mathbb{S}^4$, one has a one-to-one correspondence between harmonic spheres into  $\mathbb{S}^4$ and {\em horizontal} algebraic curves $\widetilde{\boldsymbol\omega} \colon \mathbb{C}P^1 \to \mathbb{C}P^3$ corresponding to their twistor lift\footnote{For a precise definition of the twistor lift, we refer to Proposition \ref{liftings}, Remark \ref{twistorlift}, and Theorem \ref{classification}.}. The commutative diagram
  \begin{equation} \label{sobolevcomm}
 \xymatrix{
 & \mathbb{C}P^3 \ar[d]^{\boldsymbol\tau} \\
\mathbb{S}^2=\mathbb{C}P^1 \ar[r]^{{\boldsymbol\omega}} \ar[ur]^{{\widetilde{\boldsymbol\omega}}} & \mathbb{S}^4 } \end{equation}
reflects the fact that $\boldsymbol\omega= \boldsymbol\tau \circ \widetilde{\boldsymbol\omega}$. Lifting the $\mathbb{S}^1$-action to $\mathbb{C}P^3$ and specializing to the equivariant maps allows to classify all the possible canonical lifts that  can be written in homogeneous coordinates $[z_0, z_1] \in \mathbb{C}P^1$ as
\begin{equation}
\label{twistorlifts}
\widetilde{\boldsymbol\omega} ([z_0, z_1] )=\left[ z_0^3, \mu_1 z_0^2 z_1 , \mu_2 z_0 z_1^2 , -\frac{\mu_1 \mu_2}{3} z_1^3 \, \right] \in \mathbb{C}P^3 \, , \qquad (\mu_1,\mu_2)  \in \mathbb{C}^* \times \mathbb{C}^* \, . 
\end{equation}

Any $0$-homogeneous harmonic map into $\mathbb{S}^4$ is of the form $Q(x)=\boldsymbol\omega\left(x/|x|\right)$ for some harmonic sphere $\boldsymbol\omega$.  A tricky but elementary argument shows that such a map is unstable within the equivariant class as soon as the last component  $\pmb{\omega}_2$ does not vanish identically. Therefore, both $\boldsymbol{\omega}^{(2)}\left(x/|x|\right)$ and all $0$-homogeneous extensions of linearly full harmonic spheres $\boldsymbol\omega= \boldsymbol\tau \circ \widetilde{\boldsymbol\omega}$ corresponding to \eqref{twistorlifts} are unstable. On the other hand, stability holds for $\boldsymbol{\omega}^{(1)}\left(x/|x|\right)$ if and only if $|\mu_1|=1$ in~\eqref{degenerate-spheres}. Setting $\mu_1=e^{i \,\alpha}$ with  $\alpha \in \mathbb{R}$, we obtain  the family $\{Q^{(\alpha)}\}_{\alpha\in\R}$ defined in \eqref{stableblowups}, and we prove that each $Q^{(\alpha)}$ is in fact  locally minimizing the Dirichlet energy among all equivariant configurations. 
\vskip5pt

Given a minimizer $Q_\lambda$ of the energy $\mathcal{E}_\lambda$ in the equivariant class, existence or nonexistence of singularities  
turns out to be a subtle issue, depending on the nature of the boundary data $Q_{\rm b}$, as well as on the topology and the geometry of the domain $\Omega$.  
Our third part \cite{DMP2} is dedicated to a detailed analysis on this problem. In this article, we want to emphasize that both cases can occur, providing 
some natural and topologically nontrivial data $Q_{\rm b}$ leading to either smoothness or singularities in the case where $\Omega$ is a nematic droplet (i.e., $\Omega=B_1$ the unit ball). Beyond the question of existence of singularities, we are also interested in the topological properties  of the biaxial surfaces $\{ \beta=t\}$, $t\in (-1,1)$, as they encode the topology and the geometry of a configuration.  Natural boundary data to consider 
are smooth maps with values in $\mathbb{R}P^2$  exploiting  the nontrivial topology of $\mathbb{R}P^2$.
 Boundary data with  low regularity, namely maps in $W^{1/2,2}(\partial \Omega;\mathbb{R}P^2)$ with point singularities representing the nontrivial element in $\pi_1(\mathbb{R}P^2)=\mathbb{Z}_2$, are also of interest as they lead, at least in nonsymmetric settings, to topological defects touching the boundary, see \cite{Can,CanOrl}. They are not considered here, and $Q_{\rm b}$ is always assumed to be smooth.

As recalled in \cite{DMP1}, if the domain $\Omega$ is simply connected, then the same holds for $\partial \Omega$, and any map $Q_{\rm b}\in C^1(\partial \Omega; \mathbb{R}P^2)$ can be written in the form
\begin{equation}
  \label{Qblift}
  Q_{\rm b}(x)=\sqrt{\frac32} \left( v(x) \otimes v(x) -\frac13 I\right) \qquad \hbox{for all } x\in \partial \Omega \, , \quad v\in C^1(\partial \Omega;\mathbb{S}^2) \, .
\end{equation} 
If $\partial \Omega$ is of class $C^2$ and $v(x)$ in \eqref{Qblift} is the outer unit normal $\overset{\to}{n}(x)$, 
we obtain the so-called {\sl homeotropic} boundary condition (or {\sl radial anchoring}). When $\Omega$ is axially symmetric, 
such $Q_{\rm b}$ is $\mathbb{S}^1$-equivariant if and only if $v$ is $\mathbb{S}^1$-equivariant (with respect to the obvious action of $\mathbb{S}^1$ on $\mathbb{S}^2\subset \mathbb{R}^3$ by rotations around the vertical axis). In particular, if $\Omega$ is axially symmetric and $v(x)=\overset{\to}{n}(x)$, then  $Q_{\rm b}$ as in \eqref{Qblift} is $\mathbb{S}^1$-equivariant.

To motivate and illustrate our discussion, we now consider the important case where $\Omega$ is a nematic droplet, that is $\Omega=B_1$  the unit ball, and  
 $v(x)=\overset{\to}{n}(x)=\frac{x}{|x|}$ in \eqref{Qblift}. Then $Q_{\rm b}(x)=\overline{H}(x)$ for $x\in\partial B_1$, where $\overline{H}$ is the {\sl constant-norm hedgehog}

\begin{equation}
  \label{Hbar}
  \overline{H}(x)=\sqrt{\frac32} \left( \frac{x}{|x|}\otimes \frac{x}{|x|} -\frac13 I\right) \,.
\end{equation}
Notice that $\overline{H}$ is  equivariant with respect to the full orthogonal group $\On(3)$, and it turns out that $\overline{H}$ is the unique $\On(3)$-equivariant critical point of $\mathcal{E}_\lambda$ with homeotropic boundary condition.
We have shown in \cite{DMP1} that  $\overline{H}$ is unstable with respect to $\mathbb{S}^1$-equivariant perturbations, so that it is {\it not} a minimizer of  $\mathcal{E}_\lambda$, neither globally nor among the $\mathbb{S}^1$-equivariant class.
Therefore, $\On(3)$-symmetry breaking occurs. Concerning minimizers  $Q_\lambda$ of $\mathcal{E}_\lambda$ in the class  $ \mathcal{A}^{\rm sym}_{\overline{H}}(B_1)$, we expect them 
to be smooth, although this remains a major open problem. In addition to smoothness, and as already discussed in \cite{DMP1} in the nonsymmetric context, the corresponding biaxiality regions $\{ \beta < t\}$  at  regular values $t\in (-1,1)$  (see \eqref{biaxialityregions}) should form an increasing family of axially symmetric solid tori. In turn, their complements $\{ \beta \geq t \}$ should be kind of distance neighborhoods from the boundary $\partial B_1$ with cylindrical neighborhoods of the vertical axis added. When $t=- 1$, the set $\{\beta=-1\}$ should be a {\sl disclination line}, i.e., a horizontal circle $\Gamma$ where exchange of the two smallest eigenvalues of $Q_\lambda$ occurs. Finally, the set $\{ \beta=1\}$ should be the union of $\partial B_1$ with the vertical diameter $I$. In this picture, sub- and superlevel sets of the biaxiality function are {\em mutually linked} in the sense of \cite{DMP1} (i.e., each set is not contractible in the complement of the other)
because the subsets $\Gamma$ and $\partial \Omega \cup I$ are. There is a wide numerical evidence for these symmetry properties to hold. Indeed, this conjectural description has been already investigated, first in \cite{ScSl,PeTr,Kleman}, and then in \cite{SKH,KVZ,GaMk,KV}, where authors refer to such an equilibrium configuration as the {\bf ``torus solution''} of the Landau-de Gennes model (see also \cite{DR,LPZZ,HQZ} for further numerical results in this direction). 
\vskip5pt

In the attempt to validate partially this  aforementioned picture, we provide in the next theorem the first existence result of torus solutions to \eqref{MasterEq} in the case of a nematic droplet and suitable deformations of the radial anchoring $\overline{H}$ as boundary data.

\begin{theorem}\label{thm:examples-tori}
Assume that $\Omega = B_1$, and let 
$\overset{\to}{n}$ be the outer unit normal field on $\partial B_1$. There exists a sequence of $\mathbb{S}^1$-equivariant maps $\{v_j \} \subset C^\infty(\partial B_1;\mathbb{S}^2)$ which are equivariantly homotopic to $\overset{\to}{n}$ and satisfying $\overset{\to}{n} \cdot v_j>0$ on $\partial B_1$ for all $j$,  such that for the following holds. For each $j$, let $Q_{\rm b}^j\in C^\infty_{\rm sym}(\partial B_1;\R P^2)$ be as in \eqref{Qblift} with $v\equiv v_j$, and $Q^{j}$  any minimizer of  $\mathcal{E}_\lambda$ over  $\mathcal{A}^{\rm sym}_{Q_{\rm b}^{j}}(B_1)$. Then, 
\begin{enumerate}
\item[(1)] the sequence $\{Q_{\rm b}^j\}$ is bounded in $W^{1/2,2}(\partial B_1;\mathbb{S}^4)$;
\vskip5pt
\item[(2)] $Q^j \rightharpoonup \mathbf{e}_0$ weakly in $W^{1,2}(\Omega;\mathbb{S}^4)$ as $j\to \infty$;
\vskip5pt
\item[(3)] up to a subsequence, $|\nabla Q^j|^2 dx \mathop{\rightharpoonup}\limits^{*} c\, \mathcal{H}^1 \res \mathcal{C}$ weakly-$*$ as measures on $\overline B_1$ as $j\to \infty$, where $\mathcal{C}=\partial \Omega \cap \{ x_3=0\}$ and $c>0$ is a constant.
\end{enumerate}
As a consequence,   for $j$ large enough and setting $\beta_j:=\beta \circ Q^j$, 
\begin{enumerate}
\item[(4)] $Q^{j}$ is smooth in $\overline{\Omega}$; 
\vskip5pt
\item[(5)] the negative uniaxial set $\{ \beta_j=-1\}$ is not empty and contains an $\mathbb{S}^1$-invariant circle $\Gamma_j\subset B_1 \setminus I$ with $I:=B_1\cap\{x_3 \hbox{-axis}\}$, while the positive uniaxial set  $\{\beta_j=1\}$contains  $\partial \Omega \cup I$, and in particular $\{\beta_j=1\}$ and $\{ \beta_j=-1\}$ are mutually linked; 
\vskip5pt
\item[(6)] for every $0<\rho<1$ and $t\in[-1,1)$, we have $ \{\beta_j \leq t\} \subset \big\{ x \in \Omega : \rmdist(\mathcal{C},x) < \rho \big\}$ for $j$ large enough (depending on $\rho$ and $t$); 
\vskip5pt
\item[(7)] any regular biaxial surface $\{\beta_j=t\}$, $t\in (-1,1)$, is a finite union of axially symmetric tori.
\end{enumerate}
\end{theorem}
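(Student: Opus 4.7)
\textbf{Construction of $\{v_j\}$.} I would parametrize $\mathbb{S}^1$-equivariant maps $v\colon\partial B_1\to\mathbb{S}^2$ by a smooth function $f\colon[0,\pi]\to\R$ via $v(\theta,\phi)=(\sin f(\theta)\cos\phi,\sin f(\theta)\sin\phi,\cos f(\theta))$, so that $\overset{\to}{n}\cdot v=\cos(f(\theta)-\theta)$ and the pointwise positivity constraint reduces to $|f-\theta|<\pi/2$. The choice is then $f_j$ monotone increasing with $f_j\equiv\veps_j$ on $[0,\pi/2-\delta_j]$, $f_j\equiv\pi-\veps_j$ on $[\pi/2+\delta_j,\pi]$, and a smooth transition across the equator, for suitable $\veps_j,\delta_j\to 0^+$. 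This guarantees $\overset{\to}{n}\cdot v_j>0$, equivariant homotopy of $v_j$ to $\overset{\to}{n}$, and $v_j\to\pm e_3$ pointwise outside a shrinking equatorial band; since $v\mapsto\sqrt{3/2}(v\otimes v-\tfrac13 I)$ identifies $v$ with $-v$, we obtain $Q^j_{\rm b}\to\eo$ pointwise a.e.\ on $\partial B_1$ (and uniformly on compact subsets of $\partial B_1\setminus\mathcal{C}$), with $Q^j_{\rm b}$ equal to $\eo$ at both poles.

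\textbf{Items (1)--(3).} For~(1), I would exhibit an $\mathbb{S}^1$-equivariant $\mathbb{S}^4$-valued competitor $\widetilde Q^j$: set $\widetilde Q^j\equiv\eo$ outside the toroidal neighborhood $\mathcal{T}_j:=\{x\in B_1:\rmdist(x,\mathcal{C})<\delta_j\}$ and, inside $\mathcal{T}_j$, interpolate along the geodesic of $\mathbb{S}^4$ from $\eo$ to $Q^j_{\rm b}(x/|x|)$ with a cut-off profile in the cross-section. Because $|\mathcal{T}_j|\sim\delta_j^2$ and gradients of $\widetilde Q^j$ scale like $1/\delta_j$, $\mathcal{E}_\lambda(\widetilde Q^j)\leq C$ uniformly in $j$, which gives~(1) and the uniform energy bound $\mathcal{E}_\lambda(Q^j)\leq C$. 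Passing to a weakly $W^{1,2}$-convergent subsequence $Q^j\rightharpoonup Q^\infty$, the trace of $Q^\infty$ is $\eo$ (from $L^2$-convergence of $Q^j_{\rm b}$); comparison with the constant $\eo$ (admissible, of zero energy) forces $Q^\infty\equiv\eo$, giving~(2). The defect measure $\mu\geq 0$ defined by $|\nabla Q^j|^2\,dx\rightharpoonup^*\mu$ is $\mathbb{S}^1$-invariant; combining the $\varepsilon$-regularity encoded in Theorem~\ref{thm:partial-regularity} with the concentration of $|\nabla Q^j_{\rm b}|^2\,d\sigma$ on $\mathcal{C}$ (from the construction) restricts $\rmsupp\mu\subseteq\mathcal{C}$, so $\mu=c\,\mathcal{H}^1\res\mathcal{C}$ by $\mathbb{S}^1$-invariance along $\mathcal{C}$. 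Strict positivity $c>0$ comes from a topological lower bound: reducing to the meridional half-disc, the degree-one boundary data $v_j$ obstructs admissible extensions of vanishing energy, yielding $\mathcal{E}_\lambda(Q^j)\geq c_*>0$ uniformly.

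\textbf{Items (4)--(7).} For~(4), any possible singularity of $Q^j$ lies on $I$ by Theorem~\ref{thm:partial-regularity}, and at any such point the tangent-map characterization yields $r^{-1}\mathcal{E}_\lambda(Q^j,B_r(\bar x))\geq E_*>0$ for small $r$; but for $\bar x\in I$ one has $B_r(\bar x)\cap\mathcal{C}=\emptyset$ for small $r$, so~(3) forces $\mathcal{E}_\lambda(Q^j,B_r(\bar x))\to 0$ as $j\to\infty$, a contradiction. Hence $\Sigma^j=\emptyset$ and $Q^j\in C^\infty(\overline{B_1})$ for $j$ large, via Theorem~\ref{thm:partial-regularity}(1)--(2). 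For~(5), $Q^j|_{\partial B_1}\subset\mathcal{Q}_{\rm min}$ and since $Q^j_{\rm b}$ equals $\eo$ at the poles while $Q^j|_I$ takes values in $\{\pm\eo\}$ by equivariance and is continuous, $Q^j\equiv\eo$ on $I$; thus $\partial\Omega\cup I\subseteq\{\beta_j=1\}$. Existence of $\Gamma_j\subset\{\beta_j=-1\}\cap(B_1\setminus I)$ is a topological obstruction: were $Q^j$ to avoid the negative uniaxial manifold $\{\widetilde\beta=-1\}$, it would deformation-retract into $\mathcal{Q}_{\rm min}$, giving an $\mathbb{R}P^2$-valued extension of $Q^j_{\rm b}$, which by simple connectedness of $B_1$ would lift to an $\mathbb{S}^2$-valued extension of $v_j$, contradicting $\deg v_j=1$. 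Mutual linking follows from $\Gamma_j$ being a horizontal circle encircling $I$. For~(6), $\varepsilon$-regularity and~(3) give $Q^j\to\eo$ uniformly outside any neighborhood of $\mathcal{C}$, whence $\widetilde\beta\circ Q^j\to 1$ uniformly. For~(7), by~(6), each regular level set $\{\beta_j=t\}$, $t\in(-1,1)$, is a smooth compact 2-surface in $B_1\setminus I$ with free $\mathbb{S}^1$-action, hence a disjoint union of tori of revolution by the classification of principal $\mathbb{S}^1$-bundles over compact 1-manifolds.

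\textbf{Main obstacle.} The chief difficulty lies in~(3): the strict positivity of $c$ and the exact identification $\rmsupp\mu=\mathcal{C}$ require a delicate $\mathbb{S}^1$-equivariant bubbling analysis, ensuring that no energy escapes to $\partial\Omega\setminus\mathcal{C}$ nor concentrates on the axis $I$, and that the limiting bubble carries strictly positive mass reflecting the equivariant degree of $v_j$.
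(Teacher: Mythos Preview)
Your overall strategy matches the paper's: concentrate an equivariant uniaxial boundary datum on the equator, build a finite-energy equivariant extension, apply $\varepsilon$-regularity plus compactness of minimizers, and read off the torus topology from Proposition~\ref{prop:semidisk}. However there are two concrete problems, one minor and one substantive.

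\textbf{The construction.} With $f_j\equiv\veps_j>0$ near $\theta=0$, the formula $v_j(\theta,\phi)=(\sin f_j(\theta)\cos\phi,\sin f_j(\theta)\sin\phi,\cos f_j(\theta))$ is not single-valued at the north pole (and likewise at the south), so $v_j\notin C^0(\partial B_1)$. You need $f_j(0)=0$, $f_j(\pi)=\pi$; the strict positivity $\overset{\to}{n}\cdot v_j>0$ then still holds since $|f_j(\theta)-\theta|\le \pi/2-\delta_j<\pi/2$. Once $\veps_j=0$, your $Q_{\rm b}^j$ is \emph{exactly} $\eo$ outside the equatorial band, and your competitor $\widetilde Q^j$ matches the boundary data, fixing the gluing issue you would otherwise have. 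The paper does essentially the same construction, but instead of concentrating by hand it precomposes a fixed profile $\bar h$ with M\"obius self-maps of the planar half-disc $\mathcal{D}^+$ and uses 2d conformal invariance to get the uniform $W^{1,2}$ bound \eqref{Thetabound} for free.

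\textbf{The real gap is in item (2).} From $Q^j\rightharpoonup Q^\infty$ in $W^{1,2}$ and $\mathrm{tr}\,Q^\infty=\eo$ you cannot ``compare with the constant $\eo$'' and conclude $Q^\infty\equiv\eo$: the constant $\eo$ is \emph{not} admissible for $Q^j$ (it has a different trace), and weak lower semicontinuity only gives $\mathcal{E}_\lambda(Q^\infty)\le\liminf\mathcal{E}_\lambda(Q^j)$, a finite number which you later argue is strictly positive. What you need is that $Q^\infty$ is itself a minimizer over $\mathcal{A}^{\rm sym}_{\eo}(B_1)$; \emph{then} comparison with the zero-energy competitor $\eo$ forces $Q^\infty\equiv\eo$. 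This minimality of the limit is exactly the content of the equivariant Luckhaus-type compactness Theorem~\ref{compintthm} (and its boundary analogue Theorem~\ref{compintthmbdry}), which produces strong $W^{1,2}_{\rm loc}$ convergence and passes the minimizing property to the limit via an explicit equivariant interpolation in a thin shell. You invoke ``$\varepsilon$-regularity encoded in Theorem~\ref{thm:partial-regularity}'', but that theorem is about a \emph{single} minimizer, not about compactness of sequences with varying boundary data; the missing ingredient is precisely this compactness statement. The paper likewise uses it, together with the quantitative lemma (Lemma~\ref{toplbenergy}) saying that a disc map with vanishing $2$d energy and boundary trace near $\R P^2$ must have homotopically trivial normalized loop, to get $c>0$; your ``topological lower bound'' is the right idea but needs this precise formulation to turn equivariant degree into a uniform positive energy bound on the meridional half-disc.
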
 
The proof of the theorem will be presented in Section~\ref{sec:topology}, and here we outline its main ideas in order to illustrate how suitable topologically nontrivial boundary data yields the emergence of disclination lines and in turn of biaxial tori. In view of \eqref{Qblift}, the boundary data $Q_{\rm b}^j$ is uniaxial with simple eigenvalue $\lambda_3\equiv \frac2{\sqrt{6}}$. In addition, the map $v_j$ orients the corresponding eigenspace map $V^j_{\rm max}$, which can  actually be identified with $Q_{\rm b}^j$ because of \eqref{Qblift}. Setting $\mathcal{O}_\rho:=\big\{ x \in \Omega : \rmdist(\mathcal{C},x) < \rho \big\}$, we construct  $v_j$  as a deformation of $\overset{\to}{n}$ in such a way that  $Q_{\rm b}^j \equiv \mathbf{e}_0$ in $\partial \Omega \setminus \overline{\mathcal{O}_\rho}$ for $j$ large enough. 
Extending  $V^j_{\rm max}$ to the segment $I$ as $\mathbf{e}_0$, for each vertical open half-disc $\mathcal{D}^+$ with $\partial \mathcal{D}^+ \subset \partial \Omega \cup I$, we have a well-defined continuous map 
\begin{equation}
\label{nontrivialloop-j}
\gamma^j \colon \partial \mathcal{D}^+ \to \mathbb{R}P^2 \, , \qquad \gamma^j(x):=V^j_{\rm max}(x) \, .
\end{equation} 
 Then $[\gamma^j] \neq 0$ in $\pi_1(\mathbb{R}P^2)$ since $v_j$ and $\overset{\to}{n}$ are (equivariantly) homotopic. Hence, the boundary data $Q_{\rm b}^j$ is topologically nontrivial. By construction, the sequence $\{Q^j\}$ converges weakly to the constant map $\mathbf{e}_0$  (i.e, claim {\it (1)} holds),  exhibiting on each meridian a $W^{1/2,2}$-bubbling of the nontrivial element of  $\pi_1(\mathbb{R}P^2)$ (see Remark \ref{H12bubbling}). By the compactness property of minimizers, we show that the weak convergence of $\{Q^j\}$ improves to strong convergence 
away from the set $\mathcal{C}$ toward a limiting minimizing map. This limiting map is constant because of its constant trace, which proves claim {\it (2)}. Applying the $\varepsilon$-regularity theorem from \cite{DMP1} near the vertical axis for $j$ large enough, we infer that each map $Q^j$ must be smooth up to the boundary, and  $W^{1,2}$-boundedness easily yields claim~{\it (3)}.  Since $[\gamma^j] \neq 0$ in $\pi_1(\mathbb{R}P^2)$ and $Q^j$ is smooth, the set $\{ \beta_j=-1\}\cap \mathcal{D}^+$ contains at least one point  
(otherwise the loop $\gamma^j$ would be contractible). Thus $\{ \beta_j=-1\}$ contains an invariant circle $\Gamma_j\subset \Omega \setminus I$, and $\partial \Omega \cup I \subset \{\beta_j=1\}$ by regularity. As a consequence,  any pair of biaxial sets $\{ \beta_j \leq t_1\}$ and $\{\beta_j \geq t_2\}$ with $-1\leq t_1< t_2\leq 1$ are mutually linked (for $j$ large enough), and any regular surface $\{\beta_j=t\} \subset \Omega$, $t\in(-1,1)$, is a finite union of axially symmetric tori, in agreement with the discussion above. At this stage, we do not know whether or not  the ``disclination line'' $\Gamma_j$ is unique, or if the biaxial surfaces $\{ \beta_j=t\}$ are connected and provide a regular foliation of $\Omega \setminus (I\cup\Gamma_j)$ by tori as $t$ runs from $-1$ to $1$. These questions seem to be quite difficult and remain open problems. 
\vskip5pt

Still in the case of a nematic droplet, we provide in our next result  examples of boundary data  leading to singular minimizers with an even number of singularities ({\em split minimizers}, according to Definition \ref{def:split-minimizer}). 


\begin{theorem}\label{thm:examples-split}
	Assume that $\Omega = B_1$ and set $Q^*_{\rm b}:= Q^{(0)}$ where $Q^{(0)}$ is given by \eqref{stableblowups} (with $\alpha=0$). There exists a sequence of $\bbS^1$-equivariant boundary conditions $\{Q_{\rm b}^j\} \subset C^{1,1}(\partial B_1; \bbS^4)$ such that,  for any minimizer $Q^j$    of $\mathcal{E}_\lambda$ over $\mathcal{A}^{\rm sym}_{Q_{\rm b}^j}(B_1)$,  the following properties hold:  
	\begin{enumerate}
		\item for each $j$, the maximal eigenvalue $\lambda^j_{\rm max}(x)$ of $Q_{\rm b}^j$ is simple for every $x \in \partial B_1$, and the corresponding eigenspace map $V^j_{\rm max}: \partial B_1 \to \R P^2$ is equivariantly homotopic to the radial anchoring  $\overline{H}$ in \eqref{Hbar}; 
		\vskip5pt
		\item 
		$Q^j_{\rm b} \rightharpoonup Q_{\rm b}^*$ weakly in $W^{1,2}(\partial B_1;\mathbb{S}^4)$ as $j\to\infty$; 
		\vskip5pt
		\item up to a subsequence, $Q^j \to Q^*$ strongly  in $W^{1,2}(B_1;\bbS^4)$ as $j \to \infty$, where $Q^*$ is a singular energy minimizer of $\mathcal{E}_\lambda$ over $\mathcal{A}^{\rm sym}_{Q_{\rm b}^*}(B_1)$.
	\end{enumerate}	 
	As a consequence, for  $j$ large enough, $Q^j$ is a (singular) split minimizer of $\mathcal{E}_\lambda$ in the sense of Definition \ref{def:split-minimizer}. \end{theorem}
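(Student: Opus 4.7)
The plan is to construct $Q_{\rm b}^j$ as a sequence of $C^{1,1}$, $\bbS^1$-equivariant modifications of $Q_{\rm b}^*:=Q^{(0)}|_{\partial B_1}$ on a shrinking $\bbS^1$-invariant polar cap around the south pole $p_-=(0,0,-1)$, and then pass to the limit in the minimization problem via a compactness plus recovery-sequence argument. First I would check that $Q_{\rm b}^*$ is smooth on $\partial B_1$ and $\bbS^1$-equivariant, with $Q_{\rm b}^*(p_+)=\eo$ (positive uniaxial, simple max eigenvalue) and $Q_{\rm b}^*(p_-)=-\eo$ (negative uniaxial, where the two largest eigenvalues coalesce). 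A direct spectral computation on any meridian then shows that the max eigenvector $V_{\rm max}(x)$ of $Q_{\rm b}^*(x)$ rotates by $90^\circ$ from $[e_3]$ at $p_+$ toward the tangential direction as $x\to p_-$, creating a half-hedgehog tangential defect at the south pole. On the cap $\mathcal{C}_j:=\{x\in\partial B_1:1+x_3<1/j\}$, I would define $Q_{\rm b}^j$ as a $\bbS^1$-equivariant $C^{1,1}$-interpolation from $Q_{\rm b}^*$ at $\partial\mathcal{C}_j$ to $\eo$ at $p_-$ which completes the missing $90^\circ$ rotation of $V_{\rm max}$ ``the long way around'' in $\R P^2$, designed so that the max eigenvalue of $Q_{\rm b}^j$ remains simple pointwise. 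This yields $V^j_{\rm max}$ globally defined on $\partial B_1$ and equivariantly homotopic to the normal $\overset{\to}{n}$ (i.e., to $V_{\rm max}$ of $\overline H$), proving (1). Since the perturbation is bounded in $L^\infty$, supported on a set of area $O(j^{-2})$, with gradient at most $O(j)$, the $W^{1,2}(\partial B_1)$-norm of $Q_{\rm b}^j$ is uniformly bounded while $\|Q_{\rm b}^j-Q_{\rm b}^*\|_{L^2(\partial B_1)}=O(j^{-1})$, yielding the weak $W^{1,2}$-convergence in (2).

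For (3), let $Q^j$ minimize $\mathcal{E}_\lambda$ over $\mathcal{A}^{\rm sym}_{Q_{\rm b}^j}(B_1)$. I would build an equivariant recovery sequence $\widetilde Q^j\in\mathcal{A}^{\rm sym}_{Q_{\rm b}^j}(B_1)$ by gluing the $0$-homogeneous extension $Q^{(0)}|_{B_1}$ to a bounded-energy cap extension near $p_-$, so that $\mathcal{E}_\lambda(\widetilde Q^j)\leq\mathcal{E}_\lambda(Q^{(0)}|_{B_1})+o(1)$. Minimality gives $\sup_j\|Q^j\|_{W^{1,2}(B_1)}<\infty$, and a weakly convergent subsequence $Q^j\rightharpoonup Q^*$ in $W^{1,2}(B_1;\bbS^4)$ can be extracted, with $Q^*\in\mathcal{A}^{\rm sym}_{Q_{\rm b}^*}(B_1)$. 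Applying the analogous recovery construction to an arbitrary competitor $V\in\mathcal{A}^{\rm sym}_{Q_{\rm b}^*}(B_1)$ produces $V^j\in\mathcal{A}^{\rm sym}_{Q_{\rm b}^j}(B_1)$ with $\mathcal{E}_\lambda(V^j)\to\mathcal{E}_\lambda(V)$; combining $\mathcal{E}_\lambda(Q^j)\leq\mathcal{E}_\lambda(V^j)$ with lower semicontinuity shows $Q^*$ is a minimizer and $\mathcal{E}_\lambda(Q^j)\to\mathcal{E}_\lambda(Q^*)$. Rellich compactness gives strong $L^2$-convergence $Q^j\to Q^*$, hence $\int W(Q^j)\,dx\to\int W(Q^*)\,dx$; thus the Dirichlet parts of the energies converge, and the Radon--Riesz property of $L^2$-norms upgrades weak to strong $W^{1,2}$-convergence, settling (3).

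To show $Q^*$ singular, I would invoke the stability and local minimality analysis of Section~\ref{sec:stability}: $Q^{(0)}|_{B_1}$ is an equivariantly locally minimizing $0$-homogeneous tangent map with trace $Q_{\rm b}^*$, and a quantitative energy comparison rules out smooth equivariant $\bbS^4$-valued minimizing extensions of $Q_{\rm b}^*$, forcing any minimizer $Q^*$ to have a singularity at the origin of the form classified in Theorem~\ref{thm:partial-regularity}. The split-minimizer conclusion then follows by contradiction: if $Q^{j_k}$ were smooth along a subsequence, the $\varepsilon$-regularity criterion from the proof of Theorem~\ref{thm:partial-regularity} would provide a uniform estimate $r^{-1}\mathcal{E}_\lambda(Q^{j_k},B_r(0))<\varepsilon_0$ for some fixed $r>0$; strong $W^{1,2}$-convergence would propagate this bound to $Q^*$, contradicting its singularity at the origin. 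Hence $Q^j$ must be singular for all $j$ large enough, with singular set contained in the vertical axis by Theorem~\ref{thm:partial-regularity}, i.e., $Q^j$ is a split minimizer in the sense of Definition~\ref{def:split-minimizer}. The main obstacle is the singularity of $Q^*$ itself: since $\pi_2(\bbS^4)=0$ admits smooth $\bbS^4$-valued extensions of $Q_{\rm b}^*$, the singularity cannot be extracted from a pure topological obstruction and must rely on the quantitative minimality properties of the tangent-map family $\{\pm Q^{(\alpha)}\}$ (Section~\ref{sec:stability}) combined with a careful energy comparison.
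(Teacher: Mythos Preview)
Your construction of the boundary data differs from the paper's (the paper takes $Q_{\rm b}^j$ to be the linearly full harmonic spheres $Q_{1,\mu_{2,j}}$ from formula~\eqref{eq:classification} with $\mu_{2,j}\downarrow 0$), but your cap-modification approach is a perfectly reasonable alternative and the convergence claims (1)--(3) can be made to work along your lines.

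The genuine gap is in your argument that $Q^*$ is singular. You correctly note that $\pi_2(\bbS^4)=0$ provides no global topological obstruction, and you propose instead to ``rule out smooth equivariant $\bbS^4$-valued minimizing extensions of $Q_{\rm b}^*$'' via a ``quantitative energy comparison'' based on the local minimality of $Q^{(0)}$. But local minimality of the tangent map (Proposition~\ref{thm:min-x/x}) does not by itself preclude a smooth global minimizer with the same boundary data and strictly lower energy; you would need a global lower bound on $\mathcal{E}_\lambda$ over smooth equivariant extensions that exceeds $\mathcal{E}_\lambda(Q^{(0)})$, and nothing in Section~\ref{sec:stability} supplies this. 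The paper's argument is entirely different and much simpler: by Theorem~\ref{thm:partial-regularity} any equivariant minimizer is smooth away from finitely many axis points, and at each such singularity the tangent map is $\pm Q^{(\alpha)}$, so along the $x_3$-axis the minimizer takes only the values $\pm\eo$ and flips sign exactly once at each singularity. Since $Q_{\rm b}^*(0,0,1)=\eo$ while $Q_{\rm b}^*(0,0,-1)=-\eo$, the number of sign flips on the vertical diameter is odd, hence at least one --- this is Remark~\ref{oddsingularities}. So the obstruction \emph{is} structural (a parity count along the axis, enabled by the classification of equivariant tangent maps), and you have overlooked it.

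Your persistence-of-singularities step also has a flaw: smoothness of each $Q^{j_k}$ gives $\boldsymbol{\Theta}(Q^{j_k},0)=0$, but this does not yield a bound $r^{-1}\mathcal{E}_\lambda(Q^{j_k},B_r(0))<\boldsymbol{\eps}_{\rm in}$ at a radius $r$ \emph{uniform in $k$}. The paper instead uses that strong $W^{1,2}$-convergence plus $\varepsilon$-regularity upgrades to local $C^1$-convergence away from $\Sigma_*=\rmsing(Q^*)$; then, near any $x_*\in\Sigma_*$, the values of $Q^j$ on the axis just above and just below $x_*$ must be $\eo$ and $-\eo$ respectively for $j$ large, forcing $Q^j$ to have a singularity in between. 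This again rests on the axis sign-flip structure you are missing.
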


The proof is based on an argument similar to the one used for torus-like minimizers in Theorem~\ref{thm:examples-tori}. It still relies on the compactness and regularity properties of minimizers combined with a suitable choice of linearly full harmonic spheres as boundary data, i.e., $Q_{\rm b}= \boldsymbol\tau \circ \widetilde{\boldsymbol\omega}$ for well-chosen values of the parameters $(\mu_1, \mu_2)\in \C^*\times \C^*$ in \eqref{twistorlifts}. An important point is that, in connection with Remark \ref{oddsingularities}, such boundary data could in principle allow for smooth minimizers but they actually {\it do not}. In  particular, this proves that singularities may show up for energetic reasons, in analogy with the gap phenomenon for $\mathbb{S}^2$-valued harmonic maps (see \cite{AlLi,HL}, and  also \cite{HLP,HKL} for similar results in a related axially symmetric context). Finally, as an interesting consequence of the existence of singular minimizers  
in the class $\mathcal{A}^{\rm sym}_{Q_{\rm b}}(\Omega)$ and the regularity of global minimizers 
in \cite{DMP1}, we conclude symmetry breaking results for global minimizers under equivariant boundary conditions with or without norm constraint (see Corollaries \ref{cor:symmetry-breaking-I} and \ref{cor:symmetry-breaking-II} for precise statements). As already announced in \cite{DMP1}, such phenomena are already known from \cite{AlLi} for minimizers of the Frank-Oseen energy. Our results are the natural counterpart for  minimizers of the Landau-de Gennes energy, in agreement with the numerical simulations in \cite{DR} in the case of cylindrical domains and radial anchoring.

\vskip5pt
The final part of the paper is dedicated to topological properties of equivariant minimizers under more general assumptions on the domains and  the boundary data, in analogy with the results obtained in \cite[Theorem 1.6]{DMP1} where no symmetry constraint is considered. As it will be apparent in Sec.~\ref{smoothtopology}, the conclusions actually do not depend on energy minimality but just on the few properties below.  
Besides axial symmetry, we follow \cite{DMP1} and we assume that $\Omega$ and  a configuration $Q$ satisfy 
\vskip5pt
\begin{itemize}
    \item[($HP_0$)] $Q \in C^{1}(\overline{\Omega};\mathbb{S}^4) \cap C^\omega(\Omega;\mathbb{S}^4)$;
    \vskip5pt
	\item[($HP_1$)] $\bar{\beta}:=\min_{x \in \partial \Omega} \widetilde{\beta}\circ Q(x)>-1$; 
	\vskip5pt
	\item[($HP_2$)] $\Omega$ is connected and simply connected; 
	\vskip5pt
	\item[($HP_3$)] $\deg(v, \partial \Omega)= \sum_{i=1}^M \deg(v,S_i)$  is odd;
\end{itemize}
where the  $S_i$'s denote  the connected components of $\partial\Omega$.

 In view of ($HP_0$) and ($HP_1$), the maximal eigenvalue $\lambda_{\rm max}(x)$ of $Q(x)$ is simple and smooth on the boundary $\partial \Omega$,  so there is a well-defined and smooth eigenspace map $V_{\rm max}: \partial \Omega \to \mathbb{R}P^2$. Since the boundary $\partial \Omega$ is a finite union of topological spheres due to $(HP_2)$, the map $V_{\rm max}$ has a (nonunique) smooth lifting $v: \partial \Omega \to \mathbb{S}^2$ which is required to satisfy ($HP_3$).  Because of assumption $(HP_2)$, any axisymmetric (smooth) domain is topologically an axially symmetric ball with finitely many disjoint closed balls removed from its interior and having centers on the symmetry axis. If the trace of $Q$ at the boundary is the radial anchoring, then ($HP_3$) implies that an even number of balls are removed (possibly none). 
\vskip5pt

For smooth minimizers $Q_\lambda \in {A}^{{\rm sym}}_{Q_{\rm b}}(\Omega) $ like those constructed in Theorem \ref{thm:examples-tori}, we have the following topological result. We point out that, as in \cite{DMP1}, the unit norm constraint in assumption $(HP_0)$ could be relaxed to $Q(x)\neq 0$ in $\overline{\Omega}$ without affecting the conclusions below.

\begin{theorem}
\label{topology-sym-torus}
Let $\Omega\subset\R^3$ be a bounded and axisymmetric open set with boundary of class $C^3$, and let 
$Q_{\rm b} \in C^{1,1}(\partial \Omega;\mathbb{S}^4)$ be an $\mathbb{S}^1$-equivariant map.  
Assume that $Q:=Q_\lambda$ is a smooth minimizer of $\mathcal{E}_\lambda$ in the class $ \mathcal{A}^{{\rm sym}}_{Q_{\rm b}}(\Omega)$ and that ($HP_1$)-($HP_3$) hold. Then the biaxiality regions associated with $Q$  are nonempty $\mathbb{S}^1$-invariant closed subset of $\overline{\Omega}$, and setting $\beta:=\widetilde\beta\circ Q$, the following holds.
\begin{itemize}
\item[(1)] The set of singular values of $\beta$ in $[-1,\bar{\beta}]$ is at most countable, possibly accumulating only at~$\bar{\beta}$. Moreover, for any regular value $\displaystyle{t\in(-1,\bar{\beta})}$, the set $\{\beta = t\}$ is the disjoint union of finitely many (at least one) revolution tori  contained in $\Omega$. For any regular value $t\in[\bar{\beta},1)$, the set $\{\beta = t\}$ is the disjoint union of finitely many connected sets which are either 
revolution tori, $\mathbb{S}^1$-invariant strips touching the boundary, or circles lying on the boundary.
\vskip5pt

\item[(2)] The set $\{\beta =-1 \}$ contains an invariant  circle $\Gamma\subseteq \Omega \setminus I$ and the set $\{ \beta \geq \bar{\beta} \}$ contains $\partial \Omega \cup I$, where $I=\Omega \cap \{ x_3 \hbox{-axis }\}$ is a finite union of open segments. As a consequence,  $\Gamma$ and $\partial \Omega \cup I$ 
are nonempty, compact, non simply connected and mutually linked. Given  $-1\leq t_1 < t_2\leq \bar\beta$, the same hold for the sets $\{\beta \leq t_1 \}$ and $\{\beta \geq t_2 \}$. 
\end{itemize}
\end{theorem}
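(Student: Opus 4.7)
The plan is to transpose the strategy used for the non-symmetric Theorem~1.6 of Part~I \cite{DMP1} to the present $\bbS^1$-equivariant setting, now exploiting the axial symmetry to deduce the toroidal shape of regular level sets. Under $(HP_0)$, the function $\beta=\widetilde\beta\circ Q$ is real analytic in $\Omega$ and of class $C^1$ up to the boundary, and the $\bbS^1$-equivariance of $Q$ makes $\beta$ an $\bbS^1$-invariant function; thus $\beta$ descends to a real analytic function of the meridional coordinates $(r,z)$ on the quotient half-plane $\Omega/\bbS^1$. A standard analytic Sard-type statement for real analytic functions of two variables then gives that the critical values of $\beta$ form a discrete subset of $(-1,\bar\beta)$, possibly accumulating only at the endpoint $\bar\beta$, which yields the at most countable set of singular values claimed in~(1).

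For $t\in(-1,\bar\beta)$ regular, the level set $\{\beta=t\}$ is a compact smooth $2$-submanifold of $\Omega$: it avoids $\partial\Omega$ because $t<\bar\beta$ and it avoids the symmetry axis $I$ because $\bbS^1$-equivariance forces $Q=\pm\eo$ on $I$, hence $\beta=\pm 1\notin(-1,\bar\beta)$ there. Each connected component is $\bbS^1$-invariant and disjoint from the axis, hence a torus of revolution generated by rotating a simple closed analytic curve in the meridional half-plane. For $t\in[\bar\beta,1)$, the same meridional analysis combined with the $C^1$ regularity up to $\partial\Omega$ shows that each connected component is either such a revolution torus, an invariant strip meeting $\partial\Omega$ transversally along circles, or a single circle lying entirely on $\partial\Omega$, which concludes~(1).

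To produce the negative uniaxial circle in~(2), I set up a topological obstruction argument based on $(HP_3)$. Fix an open meridional half-disc $\mathcal{D}^+$ whose boundary lies in $\partial\Omega\cup I$. On $\partial\mathcal{D}^+\cap\partial\Omega$, assumption $(HP_1)$ ensures that the maximal eigenvalue of $Q$ is simple, so the eigenspace map $V_{\rm max}$ is continuous with values in $\mathbb{R}P^2$; on the segments $\partial\mathcal{D}^+\cap I$, $V_{\rm max}$ extends continuously by the line spanned by $\eo$. The resulting loop $\gamma\colon\partial\mathcal{D}^+\to\mathbb{R}P^2$ represents a nonzero class in $\pi_1(\mathbb{R}P^2)=\mathbb{Z}_2$ precisely because the parity of $\deg(v,\partial\Omega)$ in $(HP_3)$ detects this class, in the same spirit as in \eqref{nontrivialloop-j}. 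If $\{\beta=-1\}\cap\mathcal{D}^+$ were empty, then the two smallest eigenvalues of $Q$ would never coalesce on $\overline{\mathcal{D}^+}$ and $V_{\rm max}$ would extend continuously to the closed half-disc, making $\gamma$ contractible in $\mathbb{R}P^2$, a contradiction. Rotating any such point around the axis yields an invariant circle $\Gamma\subset\Omega\setminus I$ contained in $\{\beta=-1\}$, while the inclusion $\partial\Omega\cup I\subset\{\beta\geq\bar\beta\}$ is immediate from the definition of $\bar\beta$ and the values $\beta\equiv\pm 1$ on $I$.

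The most delicate point, and the main obstacle I foresee, is the mutual linking of $\Gamma$ with $\partial\Omega\cup I$ and the analogous statement for the pair $(\{\beta\leq t_1\},\{\beta\geq t_2\})$ with $-1\leq t_1<t_2\leq\bar\beta$. I plan to adapt the linking argument of Theorem~1.6 in Part~I: non-contractibility of $\Gamma$ in $\overline\Omega\setminus(\partial\Omega\cup I)$ will be detected by the fact that any $\bbS^1$-invariant disc spanning $\Gamma$ must cross $I$ at least once (by equivariance and intersection theory in the meridional plane), and conversely, any loop in $\partial\Omega\cup I$ homologous to a meridional arc joining a component of $\partial\Omega$ to $I$ winds nontrivially around $\Gamma$. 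The generalization to $-1\leq t_1<t_2\leq\bar\beta$ will then follow by monotonicity of the sub- and superlevel sets, since $\Gamma\subset\{\beta\leq t_1\}$ and $\partial\Omega\cup I\subset\{\beta\geq t_2\}$, so the nontrivial classes constructed above persist.
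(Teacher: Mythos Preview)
Your approach is essentially the same as the paper's (which derives Theorem~\ref{topology-sym-torus} from a more general Theorem~\ref{topology-sym} via Proposition~\ref{prop:semidisk}), but there are two concrete slips and one place where your argument is weaker than it needs to be.

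\emph{Eigenvalue slip.} You write that if $\{\beta=-1\}\cap\mathcal{D}^+$ were empty, ``the two smallest eigenvalues of $Q$ would never coalesce''. This is backwards: $\widetilde\beta=-1$ corresponds to the \emph{maximal} eigenvalue being double, so $\beta\neq -1$ means $\lambda_2<\lambda_3$ everywhere, i.e.\ the two \emph{largest} eigenvalues never coalesce. Your conclusion (that $V_{\rm max}$ extends) is correct, but the stated reason is not.

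\emph{Value of $\beta$ on $I$.} You assert $\beta\equiv\pm1$ on $I$ and then claim $I\subset\{\beta\geq\bar\beta\}$ ``is immediate''. It is not: if $\beta=-1$ occurred on $I$, the inclusion would fail since $\bar\beta>-1$. You need the additional argument (which the paper gives): $(HP_1)$ forces $Q=\eo$ at the axis boundary points $\mathcal{B}=\partial\Omega\cap\{x_3\text{-axis}\}$, and since $Q$ is smooth (no singularities) and $Q\in\{\pm\eo\}$ on each segment $\ell_k\subset I$, continuity gives $Q\equiv\eo$ on all of $I$, hence $\beta\equiv+1$ there.

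\emph{Linking.} Your proposed argument via ``$\bbS^1$-invariant discs spanning $\Gamma$'' and intersection theory in the meridional plane is vague and not obviously correct (a hypothetical contracting disc need not be $\bbS^1$-invariant). The paper's argument is cleaner and purely set-theoretic: once you know $\Gamma$ and $\partial\mathcal{D}^+_\Omega$ are mutually linked (established directly in Proposition~\ref{prop:semidisk}), the linking of $K_1=\{\beta\leq t_1\}$ and $K_2=\{\beta\geq t_2\}$ follows by a simple restriction argument. Since $\Gamma\subset K_1$ and $\partial\mathcal{D}^+_\Omega\subset K_2$, any homotopy contracting $K_i$ inside $\overline\Omega\setminus K_j$ would, upon precomposition with the parametrization of $\Gamma_i$, contract $\Gamma_i$ inside $\overline\Omega\setminus\Gamma_j$, contradicting the known linking of $\Gamma$ and $\partial\mathcal{D}^+_\Omega$. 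This avoids any intersection-theoretic subtleties.
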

\vskip3pt

The proof of this result is somehow similar to the proof of \cite[Theorem 1.6]{DMP1}. However, thanks to the symmetry constraint, we can use here a more direct argument leading to more precise conclusions. Concerning claim (1), all the smooth biaxial surfaces contained in $\Omega$ must have genus one and must be tori of revolution by axial symmetry. In this simplified setting, we can even discuss biaxial surfaces for regular values $t \in [\bar{\beta},+1)$. Since $\overline{I}\subset \{\beta=1\}$, such values of the biaxiality are all attained on the boundary by continuity. The corresponding biaxial surfaces, which are of course $\mathbb{S}^1$-invariant, thus have connected components with boundary on $\partial \Omega$. 

Another very  interesting feature appears in connection with claim 2), as a far-reaching extension of what we already observed in Theorem \ref{thm:examples-tori}.
As discussed in Section~\ref{sec:remarks-domains}, we can consider the half slice $\mathcal{D}^+_\Omega:=\Omega\cap\{x_2=0\, , \, x_1>0\}$, and reconstruct $\Omega$ from it by axial symmetry. More precisely, inside the plane $\{x_2=0\}$ the set $\mathcal{D}^+_\Omega $ is open, connected, simply connected and with piecewise smooth boundary. Regarding the boundary and the closure of $\mathcal{D}^+_\Omega $ relative to $\{ x_2=0\}$, we have 
\begin{equation}
\label{reconstruction}	
 \Omega \setminus I=\mathbb{S}^1 \cdot \mathcal{D}^+_\Omega \,  , \qquad \partial \Omega \cup I=  \mathbb{S}^1 \cdot \partial \mathcal{D}^+_\Omega \, , \qquad \overline{\Omega} =\mathbb{S}^1 \cdot \overline{\mathcal{D}^+_\Omega} \, . 
\end{equation}
In view of ($HP_1$), the eigenspace map $V_{\rm max} \colon \partial \Omega \to \mathbb{R}P^2$ is well-defined and smooth. Extending $V_{\rm max}$  by continuity and invariance to be $\mathbf{e}_0 \in  \mathbb{R}P^2 \subset \mathbb{S}^4$ on $I$, and then restricting it to $\partial \mathcal{D}^+_\Omega$, we obtain a well-defined continuous map $\gamma \colon \partial \mathcal{D}^+_\Omega \to \mathbb{R}P^2$ as defined  in \eqref{nontrivialloop-j}.  
A simple argument based essentially on $(HP_3)$ shows that $[\gamma] \neq 0$ in $\pi_1(\mathbb{R}P^2)$, which leads to the existence of an invariant circle $\Gamma\subset \{\beta=-1\}\subset\Omega \setminus I$ (see Proposition \ref{prop:semidisk} for more details). Then the linking properties claimed in (2) are straightforward consequences of the linking properties between the ``disclination line'' $\Gamma$ and $\partial \mathcal{D}^+_\Omega$ inside $\overline{\Omega}$. Even in the more general context of Theorem \ref{topology-sym-torus}, we refer to the solutions to \eqref{MasterEq} coming from smooth minimizers $Q_\lambda$ of $\mathcal{E}_\lambda$ over $\mathcal{A}^{{\rm sym}}_{Q_{\rm b}}(\Omega)$ as {\em torus solutions} of the Landau-de Gennes model (with Lyuksyutov constraint).
\vskip5pt

Finally we discuss the topology of biaxial regions corresponding to singular configurations which are assumed to satisfy conditions $(HP_1)$-$(HP_3)$ and $(HP_0)$, the latter except on a finite set $\rmsing \, Q \subset \Omega \cap \{ x_3 \mbox{-axis}\}$. For simplicity we consider only energy minimizers and the model examples are those constructed in Theorem~\ref{thm:examples-split}, for which $(HP_1)$-$(HP_3)$ hold because of Theorem~\ref{thm:partial-regularity} and properties iv) and v) in the proof of Theorem~\ref{thm:examples-split}. Due to $(HP_1)$ and axial symmetry, $Q_{\rm b}(x)=\mathbf{e}_0$ for any $x \in \partial \Omega \cap \{ x_3 \mbox{-axis}\}$ and $Q_{\lambda}(x)=\pm \mathbf{e}_0$ for $x \in \Omega \cap \{x_3\mbox{-axis}\} \setminus \rmsing \, Q_\lambda$, therefore singularities come in finitely many pairs which are the endpoints of the vertical segments in $\Omega \cap \{x_3 \mbox{-axis}\}$ where $Q_\lambda(x)=-\mathbf{e}_0$. In addition each singularity carries a sign in the obvious way. We will refer to the solutions to \eqref{MasterEq} coming from these axially symmetric singular minimizers as the {\em split solutions} of the Landau-de Gennes model (with Lyuksyutov constraint).
 
 \begin{theorem}
 \label{topology-sym-split}
 
 Let $\Omega\subset\R^3$ be a bounded and axisymmetric open set with boundary of class $C^3$, and let 
$Q_{\rm b} \in C^{1,1}(\partial \Omega;\mathbb{S}^4)$ be an $\mathbb{S}^1$-equivariant map. Assume 
 that $Q:=Q_\lambda$ is a singular minimizer of $\mathcal{E}_\lambda$ in the class $ \mathcal{A}^{{\rm sym}}_{Q_{\rm b}}(\Omega)$ and that ($HP_1$)-($HP_3$) hold. Then the biaxiality regions associated with~$Q$, as subsets of $\overline{\Omega}\setminus \rmsing Q$, are nonempty and $\mathbb{S}^1$-invariant. Setting $|\rmsing Q|=:2N>0$ and $\beta:=\widetilde\beta\circ Q$, the following holds. 
\begin{itemize}
	\item[(1)] The set of singular values of ${\beta}$ in $[-1, \bar{\beta}]$ is at most countable, and it can accumulate only at $\bar{\beta}$ or $-1$. For any regular value $t\in (-1, \bar{\beta})$, the closure of $\{\beta = t\}\subset \Omega$ in $\overline{\Omega}$ contains $N$ mutually disjoint topological  spheres $S^t_j$ (smooth if $t = 0$, and smooth with corners on the $x_3$-axis otherwise), each of them  
	obtained by adding to a component of $\{\beta = t\}$ the corresponding pair of singular points. The set $\{\beta=t\}\setminus \cup_{j=1}^NS^t_j$ is either  a finite union of disjoint revolution tori or empty.
	\vskip5pt
	\item[(2)] For any regular value $t\in(\bar\beta,1)$, besides possible disjoint tori and at most $N$ axisymmetric spheres as above,  the set $\{\beta = t \}\subset \overline{\Omega}\setminus \rmsing Q$ may contain finitely many strips touching the boundary, finitely many topological discs touching the boundary with puncture at the singularities, and finitely many circles lying on the boundary. 
	\vskip5pt
	\item[(3)] For any pair of regular values $t_1$, $t_2$ in $(-1, \bar{\beta})$ with $t_1 < t_2 $, the closure of $\{\beta \geq t_2\}\subset \Omega$ in $\overline{\Omega}$ is not contractible in the complement of $\{\beta \leq t_1\}$ in $\overline{\Omega}$.
	\end{itemize}
\end{theorem}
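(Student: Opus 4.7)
The plan is to combine the partial regularity and asymptotic description of Theorem \ref{thm:partial-regularity} with the half-slice reduction $\mathcal{D}^+_\Omega := \Omega \cap \{x_2=0,\,x_1>0\}$ already exploited for Theorem \ref{topology-sym-torus}, adding a careful local analysis of $\{\beta=t\}$ at each singular point. First, using $(HP_1)$ together with the dichotomy $Q=\pm\mathbf{e}_0$ on $I\setminus\rmsing Q$, I pair the $2N$ singular points so that each pair $(\bar x_j^-,\bar x_j^+)$ bounds a segment $J_j\subset I$ on which $Q\equiv-\mathbf{e}_0$ (hence $\beta\equiv-1$), while $Q\equiv+\mathbf{e}_0$ on $I\setminus\bigcup_jJ_j$. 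By Theorem \ref{thm:partial-regularity}, $\beta$ is real-analytic on $\Omega\setminus\rmsing Q$ and of class $C^{1,\delta}$ up to $\partial\Omega$ away from $\rmsing Q$; in particular, on every compact subset of $\overline\Omega\setminus\rmsing Q$ the critical values of $\beta$ form a finite set.

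Applying this finiteness on the compact exhaustion $\overline\Omega\setminus\bigcup_{\bar x\in\rmsing Q} B_{1/k}(\bar x)$ and letting $k\to\infty$ yields an at most countable set of critical values of $\beta$ in $[-1,\bar\beta]$, with accumulation possible only at $\bar\beta$ (values attained on $\partial\Omega$) or at $-1$ (coming from the singularities, where $\beta$ sweeps $[-1,1]$ along different rays). For any regular value $t$, the level set $\{\beta=t\}$ is a smooth $\mathbb{S}^1$-invariant $2$-manifold in $\overline\Omega\setminus\rmsing Q$. Passing to the half-slice it reduces to finitely many smooth arcs and loops in $\overline{\mathcal{D}^+_\Omega}$ disjoint from the $x_3$-axis; exactly as in Theorem \ref{topology-sym-torus}, interior closed loops lift to revolution tori, arcs with endpoints on $\partial\mathcal{D}^+_\Omega\cap\partial\Omega$ lift to $\mathbb{S}^1$-invariant strips, and isolated boundary points lift to invariant boundary circles. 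For $t<\bar\beta$ only closed interior loops are possible outside neighborhoods of $\rmsing Q$; for $t>\bar\beta$ the three kinds of boundary configurations of (2) arise, including arcs joining $\partial\mathcal{D}^+_\Omega\cap\partial\Omega$ to a singular point, which lift to invariant discs with puncture at that singularity.

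The crucial new point is the local structure of $\{\beta=t\}$ near a singularity $\bar x$. By \eqref{asymptnearsingthm}, the rescalings $Q_\lambda^{\bar x,r}$ converge in $C^2$ on annuli to $Q_*=\pm Q^{(\alpha)}$. A direct diagonalization of \eqref{stableblowups} yields, for every $\alpha\in\R$ and every unit vector $\omega=(\sin\theta\cos\phi,\sin\theta\sin\phi,\cos\theta)$, the explicit profile $\widetilde\beta(Q^{(\alpha)}(\omega))=\frac12\cos\theta(3-\cos^2\theta)$, a smooth function strictly monotone from $+1$ at $\theta=0$ to $-1$ at $\theta=\pi$. Hence every $t\in(-1,1)$ is a regular value of $\widetilde\beta\circ Q_*$ on $\mathbb{S}^2$, with preimage a single $\mathbb{S}^1$-orbit (the equator precisely when $t=0$). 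Combining this with \eqref{asymptnearsingthm} and the implicit function theorem on $\partial B_r(\bar x)$ for $r$ small, there exists $r_0>0$ such that $\{\beta=t\}\cap(B_{r_0}(\bar x)\setminus\{\bar x\})$ is a smooth $\mathbb{S}^1$-invariant annulus, with one boundary circle on $\partial B_{r_0}(\bar x)$ and the other collapsing onto $\bar x$. Adding $\bar x$ compactifies it into a topological disc, smooth if and only if $t=0$ and with a conical corner on the $x_3$-axis otherwise. The two discs coming from $\bar x_j^-$ and $\bar x_j^+$ share the unique invariant circle surrounding $J_j$ on each small transverse sphere and glue along it to form the topological sphere $S^t_j$ of claim (1). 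All other components of $\{\beta=t\}$, being disjoint from $\rmsing Q$ and from $\partial\Omega$ for $t<\bar\beta$, are revolution tori by the previous paragraph, completing (1) and, together with the boundary analysis above, yielding (2).

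For claim (3), I adapt the linking argument of Theorem \ref{topology-sym-torus}. The eigenspace map $V_{\rm max}$ is well-defined and continuous on $\overline\Omega\setminus(\{\beta=-1\}\cup\rmsing Q)$. Extending it by $\mathbf{e}_0$ on $I\setminus\bigcup_jJ_j$ and restricting to $\partial\mathcal{D}^+_\Omega$ yields a continuous loop $\gamma:\partial\mathcal{D}^+_\Omega\to\mathbb{R}P^2$ which, by $(HP_3)$, represents the nontrivial element of $\pi_1(\mathbb{R}P^2)$. Any contraction of $\overline{\{\beta\geq t_2\}}$ inside $\overline\Omega\setminus\{\beta\leq t_1\}$ would, upon restriction to $\mathcal{D}^+_\Omega$, produce a continuous $\mathbb{R}P^2$-valued extension of $\gamma$ over a filled planar region, contradicting $[\gamma]\neq 0$. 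The main technical obstacle throughout is the local analysis of the third paragraph: extracting uniform $C^2$ control on shrinking spherical shells from \eqref{asymptnearsingthm} and, together with the strict monotonicity of the latitudinal biaxiality profile of $Q^{(\alpha)}$, concluding that exactly one annular component of $\{\beta=t\}$ emanates from each singular point. This is what ultimately guarantees that the gluing across each segment $J_j$ produces exactly one topological sphere, so that the total count matches $N$.
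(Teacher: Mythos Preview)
Your treatment of parts (1) and (2) is close to the paper's, and the local analysis near singularities (via \eqref{asymptnearsingthm} and the explicit latitudinal profile of $\widetilde\beta\circ Q^{(\alpha)}$) is exactly the right ingredient, corresponding to the paper's Proposition~\ref{prop:conic-structure}. One imprecision: you assert that the disc from $\bar x_j^-$ glues to the disc from $\bar x_j^+$, i.e., that the spheres respect the dipole pairing. This is not justified and not what the theorem claims. The paper runs a maximal-arc argument in $\mathcal{D}^+_\Omega$: the arc emanating from a singularity $a$ cannot end in the interior (implicit function theorem), nor on $\partial\Omega$ (since $t<\bar\beta$), nor on $I\setminus\rmsing Q$ (where $\beta=\pm1$), so it ends at \emph{some} singularity of opposite sign. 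Together with your local uniqueness (one arc per singularity) this yields a pairing of the $2N$ points into $N$ pairs, hence $N$ spheres, but the pairing need not be the dipole pairing.

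Your argument for (3) has a genuine gap. You propose to reuse the loop $\gamma:\partial\mathcal{D}^+_\Omega\to\mathbb{R}P^2$ from the smooth case, but $\partial\mathcal{D}^+_\Omega$ contains the segments $J_j\subset I$ on which $Q\equiv-\mathbf e_0$, where the maximal eigenvalue is \emph{double} and $V_{\rm max}$ is undefined; it also contains the singular points themselves. So $\gamma$ cannot be defined continuously on all of $\partial\mathcal{D}^+_\Omega$, and the rest of the linking argument collapses. (The second step, ``a contraction of $\overline{\{\beta\geq t_2\}}$ would produce an $\mathbb{R}P^2$-valued extension of $\gamma$'', is also not explained and does not follow in any obvious way.) The paper takes a completely different route: it uses one of the spheres $\mathfrak S$ produced in part~(1), locates a point $\tilde a$ on the axis inside $\mathfrak S$ with $Q(\tilde a)=-\mathbf e_0$ (hence $\tilde a\in\{\beta\leq t_1\}$), and shows by an elementary degree computation that $\mathfrak S$ is not contractible in $\mathbb{R}^3\setminus\{\tilde a\}$. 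Since $\mathfrak S\subset\overline{\{\beta\geq t_2\}}\subset\overline\Omega\setminus\{\beta\leq t_1\}\subset\mathbb{R}^3\setminus\{\tilde a\}$, this forces non-contractibility of $\overline{\{\beta\geq t_2\}}$ in $\overline\Omega\setminus\{\beta\leq t_1\}$.
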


From this theorem, the topological structure of the biaxial regions in the smooth and the singular case appears to be quite different.
For singular minimizers, the emergence of topological spheres inside $\{ \beta=t\}$ (at least for any regular value $t$ below $\bar{\beta}$) can be understood by looking at their intersection with the vertical slice $\mathcal{D}^+_\Omega$, first very close to the symmetry axis and then far away from it. Far away from the singularities of $Q_\lambda$, such surfaces cannot touch the axis, where indeed $Q_{\lambda}(x)\in\{\pm \mathbf{e}_0\}$, i.e., it is uniaxial. 
In view of the asymptotic expansion in Theorem \ref{thm:partial-regularity}, the biaxial surfaces of $Q_\lambda$  touch the $x_3$-axis precisely at each singular point with a cone-like behavior. Indeed, these surfaces are exactly cones  for the tangent maps $Q^{(\alpha)}$ with opening angle depending on $t$ (see Proposition \ref{prop:conic-structure}). Moreover, they can be extended away from $\rmsing Q_\lambda$, but they are trapped inside the domain, at least for $-1<t<\bar{\beta}$. On the other hand, two leaves of  $\{ \beta=t\}$ corresponding to different singularities cannot intersect transversally if $t$ is a regular value. Hence each leaf has to end into another singularity, giving a topological sphere. Of course such spheres are compatible with the presence of extra tori (but also with  other subsets, as in claim (2)). The first appearance of split solutions in numerical studies seems to be in \cite{GaMk}. They were lately found in other numerical papers, such as \cite{HQZ} (in particular, Fig.~8 in \cite{HQZ} contains a schematic picture of split solutions that can be helpful to visualize the first conclusion of Theorem~\ref{topology-sym-split}). 
\vskip5pt

In the last article of our series \cite{DMP2}, we further analyse existence and even coexistence of torus and split $\mathbb{S}^1$-equivariant  minimizers under radial anchoring at the boundary. The results obtained in \cite{DMP2} are perturbative in nature and depend in a subtle way on the geometry of the domain $\Omega$. Unfortunately, they do not cover the case of a nematic droplet with radial anchoring. In the recent paper \cite{Yu}, results somehow related to ours, both here and in \cite{DMP2}, are presented in  the case of a nematic droplet with homeotropic boundary data. In \cite{Yu}, the coexistence property is shown by a clever minimization argument for the energy functional in a class of $\On(2)\times \mathbb{Z}_2$-equivariant constant-norm configurations (the extra $\mathbb{Z}_2$-action being induced by reflection across  the plane $\{x_3=0\}$). Since the class considered in \cite{Yu}  is strictly smaller than the class $\mathcal{A}^{{\rm sym}}_{Q_{\rm b}}{(B_1)}$ (see the discussion in  Section~\ref{sec:comparison}), 
  it is not known (at present) whether the minimizers of $\mathcal{E}_\lambda$ in the class $\mathcal{A}^{{\rm sym}}_{Q_{\rm b}}(B_1)$ with homeotropic boundary values are smooth, singular, or if smooth and singular minimizers may coexist. Numerical simulations from \cite{GaMk,HQZ} in some range of parameters for the LdG theory  without norm constraint suggest that the torus solution should be energetically more convenient. However, at present no rigorous result in this direction  is available.
\vskip10pt

\noindent \textbf{Acknowledgements.} The research in this paper grew out of material in the Ph.D. thesis of F.D.\,. F.D. would like to express his deepest gratitude to his supervisor A.P. and to V.M. for continuous support during these years and for having involved him into this project.

\section{Auxiliary results for axially symmetric configurations}\label{sec:s1-equivariance}

\subsection{Decomposition of $\mathcal{S}_0$ into invariant subspaces}
In this subsection, we provide a decomposition of $\mathcal{S}_0$ into a direct sum of  (linear) subspaces which are invariant under the action of $\bbS^1$. This decomposition will allow identifications with the complex plane  (see  Lemma \ref{prop:Li-action}) and in turn the use of methods from complex geometry in the classification of the harmonic spheres contained in the next section.


\begin{lemma}[Decomposition of $\mathcal{S}_0$ into invariant subspaces]\label{thm:decomposition-S0}
There is a distinguished orthonormal basis $\big\{\eo, \euu, \eud, \edu, \edd\big\}$ of $\mathcal S_0$ given by 
\begin{multline}\label{eq:basis-S0}
\eo := \frac{1}{\sqrt 6}\begin{pmatrix} -1 & 0 & 0 \\ 0 & -1 & 0 \\ 0 & 0 & 2 \end{pmatrix}\,,\;\euu := \frac{1}{\sqrt 2}\begin{pmatrix} 0 & 0 & 1 \\ 0 & 0 & 0 \\ 1 & 0 & 0 \end{pmatrix}\,, \;\eud := \frac{1}{\sqrt 2}\begin{pmatrix} 0 & 0 & 0 \\ 0 & 0 & 1 \\ 0 & 1 & 0 \end{pmatrix}\,,\\[5pt]
 \edu:= \frac{1}{\sqrt 2}\begin{pmatrix} 1 & 0 & 0 \\ 0 & -1 & 0 \\ 0 & 0 & 0 \end{pmatrix}\,, \; \edd: = \frac{1}{\sqrt 2}\begin{pmatrix} 0 & 1 & 0 \\ 1 & 0 & 0 \\ 0 & 0 & 0 \end{pmatrix}\,,\qquad\qquad\qquad\qquad
\end{multline}
such that the subspaces
$$L_0 := \bb{R}\eo\,,\quad L_1:= \bb{R} \euu \oplus \bb{R} \eud\,,\quad L_2 := \bb{R} \edu \oplus \bb{R} \edd\,,$$
are invariant under the action of $\bb{S}^1$ defined in \eqref{inducedaction}, and 
\begin{equation}
		\label{eq:decomposition-S0}
		\mathcal{S}_0 =L_0 \oplus L_1 \oplus L_2\,.
\end{equation}
\end{lemma}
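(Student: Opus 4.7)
The lemma combines three separate claims: orthonormality of the proposed basis, the direct-sum decomposition, and $\mathbb{S}^1$-invariance of each summand. The plan is to handle these in sequence, reducing as much as possible to inspection of $2\times 2$ blocks.

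First I would verify directly that the five matrices in \eqref{eq:basis-S0} lie in $\mathcal{S}_0$ (they are manifestly symmetric and traceless) and that they form an orthonormal system with respect to the Frobenius inner product $P:Q = \mathrm{tr}(PQ)$. Each matrix has exactly two nonzero entries of modulus $1/\sqrt{2}$, except $\mathbf{e}_0$, for which a direct computation gives $|\mathbf{e}_0|^2 = \tfrac{1}{6}(1+1+4)=1$. Orthogonality is immediate since the supports of the nonzero entries are disjoint for any two distinct basis elements (up to the diagonal pair $\mathbf{e}_0, \mathbf{e}_1^{(2)}$, for which one checks $\mathrm{tr}(\mathbf{e}_0 \mathbf{e}_1^{(2)}) = \tfrac{1}{\sqrt{12}}(-1+1+0)=0$). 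Since $\dim \mathcal{S}_0 = 5$, this yields both the basis property and \eqref{eq:decomposition-S0}.

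Next I would verify $\mathbb{S}^1$-invariance of $L_0$, $L_1$, $L_2$ by computing $R_\alpha A R_\alpha^{\trans}$ for each basis vector, with $R_\alpha$ of the form \eqref{rotmatr}. For $\mathbf{e}_0$: using the block structure of $R_\alpha$ and the fact that the upper-left $2\times 2$ block of $\mathbf{e}_0$ is $-\tfrac{1}{\sqrt{6}} I_2$, the conjugation acts trivially, so $R_\alpha\cdot \mathbf{e}_0 = \mathbf{e}_0$ and $L_0$ is pointwise fixed. For $L_1$: both $\mathbf{e}_1^{(1)}$ and $\mathbf{e}_2^{(1)}$ are of the form $\tfrac{1}{\sqrt 2}(v\otimes e_3 + e_3\otimes v)$ with $v$ a unit vector in the horizontal plane; since $R_\alpha e_3 = e_3$ and $R_\alpha v$ remains horizontal, one reads off
\begin{equation*}
R_\alpha \cdot \mathbf{e}_1^{(1)} = \cos\alpha\, \mathbf{e}_1^{(1)} + \sin\alpha\, \mathbf{e}_2^{(1)}\,, \qquad R_\alpha \cdot \mathbf{e}_2^{(1)} = -\sin\alpha\, \mathbf{e}_1^{(1)} + \cos\alpha\, \mathbf{e}_2^{(1)}\,,
\end{equation*}
which shows that $L_1$ is invariant and that $\mathbb{S}^1$ acts on it by rotation of angle $\alpha$.

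For $L_2$: the two matrices $\mathbf{e}_1^{(2)}$ and $\mathbf{e}_2^{(2)}$ are supported in the upper-left $2\times 2$ block and form an orthonormal basis of the $2$-dimensional space of traceless symmetric $2\times 2$ matrices, on which $\widetilde R$ acts by conjugation. A direct computation using the double-angle formulas
$\cos^2\alpha - \sin^2\alpha = \cos 2\alpha$ and $2\sin\alpha\cos\alpha = \sin 2\alpha$ gives
\begin{equation*}
R_\alpha \cdot \mathbf{e}_1^{(2)} = \cos(2\alpha)\, \mathbf{e}_1^{(2)} + \sin(2\alpha)\, \mathbf{e}_2^{(2)}\,, \qquad R_\alpha \cdot \mathbf{e}_2^{(2)} = -\sin(2\alpha)\, \mathbf{e}_1^{(2)} + \cos(2\alpha)\, \mathbf{e}_2^{(2)}\,,
\end{equation*}
proving that $L_2$ is invariant, with $\mathbb{S}^1$ acting by rotation of angle $2\alpha$. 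The proof is by this point purely computational; the only subtle point worth flagging is the doubling of the angle on $L_2$, which is precisely the feature that will be exploited in Lemma~\ref{prop:Li-action} to identify $L_k$ with $\mathbb{C}$ carrying the weight-$k$ representation of $\mathbb{S}^1$.
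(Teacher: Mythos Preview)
Your proof is correct and essentially follows the same route as the paper's, though you carry out the explicit rotation computations on each $L_k$ (including the doubling of the angle on $L_2$) that the paper defers to the next lemma; the paper instead observes the block structure of $R A R^\trans$ once and for all and reads off invariance of each block without computing the action explicitly.
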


\begin{proof}
For elements in $\mathcal S_0$, let us use the  notation 
\[
  A =: \begin{pmatrix} \widetilde A - \frac{a_0}{2}I & \mb{a} \\ 
  \mb{a}^\mathsf{t} & a_0 \end{pmatrix},
\] 
where $a_0\in\R$, $\mb{a}\in \mathscr{M}_{2\times 1}(\R)\simeq\bb{R}^2$, and $\widetilde A\in \mathscr{M}_{2\times 2}(\R)$ has zero trace. In this way, for a rotation around the $x_3$-axis $R \in \bbS^1$, and $\widetilde{R}\in \SO(2)$  the corresponding rotation in the $(x_1,x_2)$-plane, we have 
\begin{equation}
\label{eq:S1-action}
  R A  R^\trans = \begin{pmatrix} \widetilde R \widetilde A \widetilde R^\trans  - \frac{a_0}{2}I& \widetilde R \mb{a} \\ (\widetilde R \mb{a})^\mathsf{t} & a_0\end{pmatrix}.
\end{equation} 
The key observation is that each block is invariant under the $\bb{S}^1$-action. Therefore, to determine the desired basis, it is enough to determine an orthonormal basis of symmetric traceless matrices for each block. Clearly, $\{\eo\}$, $\{\euu,\eud\}$, and $\{\edu,\edd\}$ provide such basis. 
\end{proof}

In the next result we explain in which sense the $\bb{S}^1$-action is diagonalized by our decomposition of the space $\mathcal{S}_0$. 
\begin{lemma}\label{prop:Li-action}
We have the following isometric isomorphisms:
\begin{enumerate}
		\item[\rm (0)] $L_0 \simeq \bb{R}$ \emph{via} $\boldsymbol{\iota}_0: A\in L_0 \mapsto (A:\eo)\in \bb{R}$;
		\vskip3pt
		\item[\rm (1)] $L_1 \simeq \bb{C}$ \emph{via} $\boldsymbol{\iota}_1:A\in L_1\mapsto (A:\euu)+i (A:\eud)\in\C$;
		\vskip3pt
		\item[\rm (2)] $L_2 \simeq \bb{C}$ \emph{via} $\boldsymbol{\iota}_2: A\in L_2\mapsto (A:\edu)+i(A:\edd)\in \C$.
\end{enumerate}
	Moreover, the $\bb{S}^1$-action on $\mathcal{S}_0$ corresponds on each $L_k$ to an $\bb{S}^1$-action by rotations of degree $k$. In other words, we have for $k=0,1,2$, 
	\begin{equation}\label{degreeaction}
	\boldsymbol{\iota}_k(R_\alpha A R_\alpha^\trans)=e^{ik\alpha}\boldsymbol{\iota}_k(A) \quad\forall A\in L_k\,,\;\forall R_\alpha\in\mathbb{S}^1\,.
	\end{equation}
\end{lemma}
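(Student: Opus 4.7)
My plan is to reduce each assertion to a direct matrix computation, exploiting the block structure of the $\mathbb{S}^1$-action recorded in \eqref{eq:S1-action}. The fact that each $\boldsymbol{\iota}_k$ is an isometric isomorphism is essentially by construction: since $\{\eo\}$, $\{\euu,\eud\}$, and $\{\edu,\edd\}$ are orthonormal bases of $L_0$, $L_1$, and $L_2$ respectively (by Lemma~\ref{thm:decomposition-S0}), the map $\boldsymbol{\iota}_0$ is just the coordinate map with respect to $\{\eo\}$, while $\boldsymbol{\iota}_1$ and $\boldsymbol{\iota}_2$ are the coordinate maps with respect to the corresponding two-element bases, packaging the two real coordinates as the real and imaginary parts of a complex number. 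Viewing $\bbC$ as Euclidean $\bbR^2$, these are thus linear isometries.

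For the equivariance formula \eqref{degreeaction}, the cases $k=0$ and $k=1$ follow directly from \eqref{eq:S1-action}. In the first case, the $2\times 2$ block of $\eo$ is a scalar multiple of the identity, hence $\widetilde R$-invariant, and the scalar bottom-right entry is untouched; thus $R_\alpha\eo R_\alpha^\trans=\eo$, which is multiplication by $e^{i\cdot 0\cdot\alpha}$ on $L_0$. In the second case, the elements of $L_1$ correspond under \eqref{eq:S1-action} to the off-diagonal vector $\mathbf{a}\in\bbR^2$ alone, and the action reduces to $\mathbf{a}\mapsto \widetilde R\mathbf{a}$, i.e., to the standard rotation of angle $\alpha$ on $\bbR^2$. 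Since $\boldsymbol{\iota}_1$ is precisely the identification $(a_1,a_2)\leftrightarrow a_1+ia_2$ under which this rotation becomes multiplication by $e^{i\alpha}$, formula \eqref{degreeaction} with $k=1$ follows.

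The key step, and really the only one requiring a computation, is $k=2$. Here I would work out directly how conjugation by $\widetilde R$ acts on the symmetric traceless $2\times 2$ block of an element of $L_2$. Using double-angle identities one obtains
\[
\widetilde R\begin{pmatrix}1&0\\0&-1\end{pmatrix}\widetilde R^\trans=\cos(2\alpha)\begin{pmatrix}1&0\\0&-1\end{pmatrix}+\sin(2\alpha)\begin{pmatrix}0&1\\1&0\end{pmatrix},
\]
and analogously $\widetilde R\begin{pmatrix}0&1\\1&0\end{pmatrix}\widetilde R^\trans=-\sin(2\alpha)\begin{pmatrix}1&0\\0&-1\end{pmatrix}+\cos(2\alpha)\begin{pmatrix}0&1\\1&0\end{pmatrix}$. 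Translated back to the basis of $L_2$, this reads $R_\alpha\edu R_\alpha^\trans=\cos(2\alpha)\,\edu+\sin(2\alpha)\,\edd$ and $R_\alpha\edd R_\alpha^\trans=-\sin(2\alpha)\,\edu+\cos(2\alpha)\,\edd$. Hence the pair of coordinates of any $A\in L_2$ is rotated by angle $2\alpha$, which under $\boldsymbol{\iota}_2$ becomes multiplication by $e^{i2\alpha}$, yielding \eqref{degreeaction} with $k=2$.

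I do not anticipate a real obstacle: the only substantive geometric input is the classical spin-$2$ behavior of traceless symmetric $2\times 2$ matrices under $\SO(2)$-conjugation, which is exactly what produces the doubled rotation angle on $L_2$ and explains the asymmetric roles of $L_1$ and $L_2$ in the decomposition.
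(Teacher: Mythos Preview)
Your proposal is correct and follows essentially the same approach as the paper: establish the isometric isomorphisms from the orthonormal basis structure, then verify \eqref{degreeaction} case by case, with the substantive step being the $k=2$ computation. The only minor difference is that for $k=2$ the paper obtains the same transformation formulas $R_\alpha\edu R_\alpha^\trans=\cos 2\alpha\,\edu+\sin 2\alpha\,\edd$ and $R_\alpha\edd R_\alpha^\trans=-\sin 2\alpha\,\edu+\cos 2\alpha\,\edd$ via a short algebraic trick with the reflection matrix $J=\mathrm{diag}(1,-1)$ (using that $J\widetilde A$ commutes with rotations and $J\widetilde R_{-\alpha}J=\widetilde R_\alpha$, so $\widetilde R_\alpha\widetilde A\widetilde R_\alpha^\trans=\widetilde R_{2\alpha}\widetilde A$), whereas you compute directly with double-angle identities; both routes are equally valid and lead to the same conclusion.
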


\begin{proof}
The statements (0), (1), and (2) are elementary. Identity \eqref{degreeaction} for $k=0$ is obvious, while it follows from \eqref{eq:S1-action} for $k=1$ in view of the simple identities
\[ R_\alpha \euu R_\alpha^\trans =\cos \alpha \,\, \euu + \sin \alpha \,\, \eud \, , \qquad R_\alpha \eud R_\alpha^\trans =-\sin \alpha \,\, \euu + \cos \alpha \,\, \eud \, .  \] 

It then remains to check \eqref{degreeaction} for $k=2$. For this purpose, let us consider the $2\times2$ diagonal matrix $J:={\rm diag}(1,-1)$.
Using the notation in \eqref{eq:S1-action}, we observe that for $A\in L_2$ and $R_\alpha\in\mathbb{S}^1$, 
$$
\widetilde R_\alpha \widetilde A\widetilde{R}_\alpha^\trans=\widetilde R_\alpha  J^2 \widetilde A \widetilde R_{-\alpha}= \widetilde R_\alpha J \widetilde R_{-\alpha} J \widetilde A = \widetilde R_\alpha  \widetilde R_\alpha  \widetilde A=  \widetilde R_{2\alpha} \widetilde A\,, 
$$
so that 
\[ R_\alpha \edu R_\alpha^\trans =\cos 2\alpha \,\, \edu + \sin 2\alpha \,\, \edd \, , \qquad R_\alpha \edd R_\alpha^\trans =-\sin 2\alpha \,\, \edu + \cos 2\alpha \,\, \edd \, .  \] 
 Thus, writing $A= c\,\edu+ d \,\edd$ we have $z:=c+id=\boldsymbol{\iota}_2(A)$ and $\boldsymbol{\iota}_2(R_\alpha A R_\alpha^\trans)=e^{2i\alpha}z$, hence the conclusion follows.
\end{proof}

\begin{remark}\label{remisom&S1action}
By Lemma \ref{prop:Li-action}, $\mathcal{S}_0=L_0 \oplus L_1 \oplus L_2$ is isometrically isomorphic to $\R \oplus \C \oplus \C$ through the mapping $\boldsymbol{\iota}_*:\mathcal{S}_0\to \R\oplus\C\oplus\C$ defined by 
\begin{equation}\label{isomS_0RCC}
\boldsymbol{\iota}_*(A):=\big(\boldsymbol{\iota}_0(A),\boldsymbol{\iota}_1(A),\boldsymbol{\iota}_2(A)\big)\,. 
\end{equation}
In addition, when considering on $\R \oplus \C \oplus \C$ the $\mathbb{S}^1$-action
\begin{equation}\label{S1actionRCC}
R_\alpha\cdot(t,\zeta_1,\zeta_2):=(t,e^{i\alpha}\zeta_1,e^{2i\alpha}\zeta_2)\qquad\forall R_\alpha\in\mathbb{S}^1 \,,
\end{equation} 
the map $\boldsymbol{\iota}_*$ is $\mathbb{S}^1$-equivariant.
 In the next two sections we will rely on this identification of $\mathcal{S}_0$ with $\R \oplus \C \oplus \C$ and we will always consider on the latter the diagonal action given by \eqref{S1actionRCC}.
 Clearly, since this action is isometric, $\mathbb{S}^1$ also acts on the unit sphere of $\R \oplus \C \oplus \C$. 
\end{remark}

\begin{remark}\label{rmk:invariant-subspaces}
	In view of the previous remark it is obvious that the only 1d vector subspace fixed by the action is $L_0$.
	In addition, the only 3-dimensional invariant linear subspaces $V<\mathcal{S}_0$ are $V=L_0 \oplus L_1$ and $V=L_0 \oplus L_2$. To see this, first note that any invariant odd-dimensional subspace must contain a vector $v$ s.t. $R v = v$ for all $R \in \bbS^1$. This implies $L_0$ is a linear subspace of any such invariant subspace of $\mathcal S_0$. Thus, if $V < \mathcal{S}_0$ is invariant, then $V = L_0 \oplus (L_0^\perp \cap V)$. Let $W = L_0^\perp \cap V$. Therefore, $W < L_1 \oplus L_2$ and it is invariant. Moreover, if $\dim V = 3$, then $\dim W = 2$. Notice that vectors in $W$ cannot have components both along $L_1$ and along $L_2$, otherwise $W$ would be equivariantly isomorphic both to $L_1$ and to $L_2$ under the projection maps, which is impossible because $\bbS^1$ acts with different degrees on $L_1$ and $L_2$. Hence, $W$ must be either $L_1$ or $L_2$.
\end{remark}


\subsection{Structure of axisymmetric domains}\label{sec:remarks-domains}
The purpose of this subsection is to collect some geometric properties of axisymmetric domains of $\R^3$ that we shall use in the next sections. We start recalling the following auxiliary result characterizing the  simple connectivity of a bounded domain $\Omega \subset \mathbb{R}^3$ with smooth boundary. We will always suppose $\Omega$ to be $C^1$-smooth in this section, although to be able to prove boundary regularity as in \cite{DMP1}, we will require $\partial\Omega$ of class $C^3$ when necessary in the paper.

 \begin{lemma}\cite[Theorem 3.2 and Corollary 3.5]{BeFr}
 Let $\Omega \subset \mathbb{R}^3$ be a bounded and connected open set with $C^1$-boundary. Then $\Omega$ is simply connected if and only if $\partial \Omega=\cup_{i=0}^M S_i$ and each surface $S_i$ is diffeomorphic to the standard sphere $\mathbb{S}^2 \subset \mathbb{R}^3$.
 \label{simpledomains}
 \end{lemma}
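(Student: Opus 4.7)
The plan is to prove the two implications separately, in both cases exploiting the classification of closed orientable surfaces by genus.

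For the implication ``sphere components $\Rightarrow$ simply connected,'' the first step is to invoke the smooth Schoenflies theorem in $\mathbb{R}^3$: every smoothly embedded $2$-sphere in $\mathbb{R}^3$ bounds a smoothly embedded closed $3$-ball. Since $\Omega$ is bounded and connected, exactly one of the resulting balls, call it $B_0$, contains $\Omega$, while the remaining $B_1,\dots,B_M$ are pairwise disjoint closed balls lying inside $B_0$. Hence $\Omega$ is diffeomorphic to $B_0\setminus \cup_{i=1}^M\overline{B_i}$, which one checks to be simply connected by induction on $M$ via the Seifert--van Kampen theorem, using that removing a closed $3$-ball from a simply connected open $3$-manifold does not affect the fundamental group.

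For the reverse implication ``simply connected $\Rightarrow$ sphere components,'' the plan is to apply the classical ``half lives, half dies'' theorem of $3$-manifold topology to the compact orientable $3$-manifold with boundary $M:=\overline\Omega$. This is the statement, obtained from Poincar\'e--Lefschetz duality and the long exact sequence of the pair $(M,\partial M)$, that
\[
    \dim_{\mathbb{Q}}\,\ker\!\big(H_1(\partial M;\mathbb{Q})\to H_1(M;\mathbb{Q})\big)\;=\;\tfrac{1}{2}\,\dim_{\mathbb{Q}} H_1(\partial M;\mathbb{Q})\,.
\]
Since $\Omega$ is simply connected, Hurewicz's theorem yields $H_1(M;\mathbb{Q})=0$, so the kernel above is all of $H_1(\partial M;\mathbb{Q})$ and this group must itself vanish. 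As each boundary component $S_i$ is a closed orientable smooth surface, vanishing of its first Betti number forces $S_i\cong\mathbb{S}^2$ by the classification of surfaces.

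The main obstacle will be the reverse implication, where invoking half-lives-half-dies requires Poincar\'e--Lefschetz duality and a careful treatment of orientations on a manifold with boundary. An alternative route would be via Alexander duality inside $S^3 = \mathbb{R}^3\cup\{\infty\}$: each component $S_i$ separates $S^3$ into two open sets whose reduced homology is computable from $H^*(S_i)$, and combining this information with a Mayer--Vietoris argument for the decomposition of $S^3$ into $\overline{\Omega}$ and the closure of its complement leads to the same conclusion on $H_1(\partial\Omega;\mathbb{Q})$. Either approach leans on standard duality machinery in algebraic topology, which is the most substantive input to the proof.
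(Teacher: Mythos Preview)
The paper does not actually prove this lemma; it is stated as a citation of \cite[Theorem 3.2 and Corollary 3.5]{BeFr} and used as a black box. So there is no ``paper's own proof'' to compare against.

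Your outline is a correct and standard direct argument. For the forward implication, the smooth Schoenflies theorem in $\mathbb{R}^3$ (Alexander's theorem) together with connectedness of $\Omega$ indeed forces the ball-with-holes structure, and the van Kampen induction is routine. For the reverse implication, the half-lives-half-dies argument is exactly the right tool: with $H_1(\overline\Omega;\mathbb{Q})=0$ the kernel of $H_1(\partial\Omega;\mathbb{Q})\to H_1(\overline\Omega;\mathbb{Q})$ is simultaneously all of $H_1(\partial\Omega;\mathbb{Q})$ and half of it, forcing $b_1(S_i)=0$ for every component. One small point worth making explicit in the forward direction is why the balls $B_1,\dots,B_M$ bounded by the inner spheres are pairwise disjoint and contained in $B_0$; this follows from the fact that $\Omega$ is connected and the $S_i$ are the components of $\partial\Omega$, but a sentence justifying it would tighten the argument.
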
 
 
 Let us now recall that we identify $\mathbb{S}^1$ with the subgroup of $\SO(3)$ made of all rotations around the vertical $x_3$-axis (see \eqref{rotmatr}), and that we define axisymmetry accordingly.
 
 \begin{definition}
 \label{domain-section}
 A set $\Omega\subset \R^3$ is said to be \emph{axisymmetric} (or \emph{$\bbS^1$-invariant}, or \emph{rotationally symmetric}) if it is invariant under the action of $\mathbb{S}^1$, i.e., $R\cdot\Omega=\Omega$ for every $R\in \mathbb{S}^1$. Equivalently, $\Omega$ is axisymmetric if 
 $$\Omega=\bigcup_{R\in\mathbb{S}^1}R\cdot\mathcal{D}_\Omega \quad\text{where}\quad\mathcal{D}_\Omega:=\Omega\cap\{x_2=0\}\,.$$
 \end{definition}
 
In case of an axisymmetric domain, the geometric description in Lemma \ref{simpledomains} can be made more precise. Starting from this lemma and the structure of bounded multiply connected smooth domain in the plane we have the following Corollary \ref{domains-structure}, the proof of which is elementary and left to the reader. 
 
\begin{corollary}
\label{domains-structure}
Let $\Omega \subset \mathbb{R}^3$ be a bounded and connected open set with boundary of class $C^1$. If $\Omega$ is axisymmetric and simply connected, then the following holds. 
\begin{itemize}
\item[(i)] $\mathcal{D}_\Omega$ is a bounded and connected (relatively) open subset with $C^1$-boundary of the vertical plane $\{x_2=0\}$, and $R_\pi\mathcal{D}_\Omega=\mathcal{D}_\Omega$ (i.e., $\mathcal{D}_\Omega$ is symmetric with respect to the $x_3$-axis). 
\vskip5pt
\item[(ii)] There is a connected and simply connected (relatively) open subset $\mathcal{T}$ of the vertical plane $\{x_2=0\}$ satisfying $R_\pi\mathcal{T}=\mathcal{T}$ such that either $\mathcal{D}_\Omega=\mathcal{T}$, or $\mathcal{D}_\Omega=\mathcal{T}\setminus \bigcup_{i=1}^M H_i$  where the $H_i$'s are connected and simply connected (relatively) closed subset of $\{x_2=0\}$. In addition, the ``holes'' $H_i\subset \mathcal{T}$ are mutually disjoint,  $R_\pi H_i=H_i$ and $\partial \mathcal{D}_\Omega=\partial \mathcal{T}\cup \left(\cup_{i=1}^M \partial H_i \right)$. 

\vskip5pt
\item[(iii)] The (relatively) open subsets $\mathcal{D}^+_\Omega:=\mathcal{D}_\Omega\cap\{x_1>0\}$ and $\mathcal{D}^-_\Omega:=\mathcal{D}_\Omega\cap\{x_1<0\}$ of the vertical plane $\{x_2=0\}$ are connected and simply connected, and $R_\pi\mathcal{D}^{\pm}_\Omega=\mathcal{D}^{\mp}_\Omega$. Moreover, if $I=\Omega \cap \{ x_3\hbox{-axis}\}$ then identities \eqref{reconstruction} hold.
\end{itemize}
\end{corollary}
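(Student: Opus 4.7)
My plan is to combine Lemma~\ref{simpledomains}, the Jordan curve theorem in the meridian plane $\{x_2 = 0\}$, and a linking argument using simple connectedness of $\Omega$. First, I would analyze each boundary component. By Lemma~\ref{simpledomains}, $\partial \Omega = \bigsqcup_{i=0}^M S_i$ with each $S_i$ diffeomorphic to $\mathbb{S}^2$, and by connectedness of $\mathbb{S}^1$ each $S_i$ is itself $\mathbb{S}^1$-invariant. The key geometric claim is that each such $S_i$ is the surface of revolution of a single simple $C^1$ arc $\gamma_i$ in the closed meridian half-plane $\{x_1 \geq 0,\, x_2 = 0\}$ with both endpoints on the $x_3$-axis and interior disjoint from it. Indeed, a closed meridian component disjoint from the axis would revolve to a torus, while an interior tangency of the meridian with the axis would create a cone-like singularity on the rotated surface, both excluded by the $C^1$-smoothness and $\mathbb{S}^2$-topology of $S_i$. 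Consequently $C_i := S_i \cap \{x_2 = 0\}$ is a $C^1$ Jordan curve in the plane $\{x_2 = 0\}$, $R_\pi$-invariant and crossing the $x_3$-axis in exactly two points. Applying the Jordan curve theorem, I would define $\mathcal{T}$ as the bounded component of $\{x_2 = 0\} \setminus C_0$ (where $S_0$ is the outer boundary component) and, for $i \geq 1$, $H_i$ as the closure of the bounded component of $\{x_2 = 0\} \setminus C_i$. All these regions are simply connected and $R_\pi$-invariant; mutual disjointness of the $H_i$'s and the inclusions $H_i \subset \mathcal{T}$ follow from mutual disjointness of the $S_i$'s and the fact that each $S_i$ with $i \geq 1$ lies in the bounded region enclosed by $S_0$. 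Identifying $\Omega$ with the interior enclosed by $S_0$ minus the closed regions enclosed by the inner $S_i$'s, one obtains $\mathcal{D}_\Omega = \mathcal{T} \setminus \bigcup_i H_i$, together with the boundary identity in (ii) and the $C^1$-regularity statement in (i).

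Second, I would prove connectedness of $\mathcal{D}_\Omega$, the only point in (i) not yet established. The starting observation is that $\Omega$ must meet the $x_3$-axis: otherwise $\Omega \subset \R^3 \setminus \{x_3\text{-axis}\}$, but the $\mathbb{S}^1$-orbit of any $p \in \Omega$ is a simple closed curve in $\Omega$ with linking number $\pm 1$ with the axis, contradicting simple connectedness of $\Omega$ (which would force this loop to be contractible in $\R^3 \setminus \{x_3\text{-axis}\}$, hence to have vanishing linking number). Thus $I := \Omega \cap \{x_3\text{-axis}\}$ is a nonempty finite union of bounded open segments, its endpoints lying in the finite set $\bigcup_i C_i \cap \{x_3\text{-axis}\}$. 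Writing $\mathcal{D}_\Omega = \mathcal{D}^+_\Omega \cup I \cup \mathcal{D}^-_\Omega$ with $R_\pi \mathcal{D}^\pm_\Omega = \mathcal{D}^\mp_\Omega$, connectedness of $\mathcal{D}_\Omega$ reduces to that of the $\mathbb{S}^1$-quotient $\Omega / \mathbb{S}^1 \simeq \mathcal{D}^+_\Omega \cup I$, which is automatic from the connectedness of $\Omega$.

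Finally, for (iii), each $C_i$ is split by its two axis-crossings into a ``right'' arc in $\{x_1 > 0\}$ and a mirror ``left'' arc, so that $\mathcal{T} \cap \{x_1 \geq 0\}$ and each $H_i \cap \{x_1 \geq 0\}$ are closed topological discs in the closed half-plane $\{x_1 \geq 0,\, x_2 = 0\}$, the latter attached to the boundary axis along a segment. Hence $\mathcal{D}^+_\Omega = (\mathcal{T} \cap \{x_1 > 0\}) \setminus \bigcup_i (H_i \cap \{x_1 > 0\})$ is obtained by removing finitely many mutually disjoint boundary-attached closed topological discs from an open topological disc. Connectedness and simple connectedness of $\mathcal{D}^+_\Omega$ then follow from the standard planar characterization: the complement of $\mathcal{D}^+_\Omega$ in the plane $\{x_2 = 0\}$ is connected, since each of its pieces connects to the closed left half-plane $\{x_1 \leq 0\}$ either directly or through axis segments. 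The identity $R_\pi \mathcal{D}^+_\Omega = \mathcal{D}^-_\Omega$ is obvious, while the reconstruction identities \eqref{reconstruction} follow from the fact that every $\mathbb{S}^1$-orbit in $\Omega \setminus I$ meets $\mathcal{D}^+_\Omega$ in exactly one point, together with compactness of $\mathbb{S}^1$ for the closure identity. The most delicate step I expect to be the first paragraph, where the $\mathbb{S}^1$-symmetry and the $\mathbb{S}^2$-topology must be combined to pin down the exact structure of each axisymmetric boundary component; once this is done, the remaining steps are routine planar topology.
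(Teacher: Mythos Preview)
The paper does not actually prove this corollary: it states that the result follows from Lemma~\ref{simpledomains} together with ``the structure of bounded multiply connected smooth domain in the plane,'' declares the proof ``elementary and left to the reader,'' and then gives only a heuristic constructive description of such domains. Your proposal is correct and supplies precisely the argument the paper omits, using exactly the ingredients the paper names (Lemma~\ref{simpledomains} for the boundary spheres, the Jordan curve theorem and planar topology for the meridian section); the linking argument forcing $\Omega$ to meet the axis and the quotient argument for connectedness of $\mathcal{D}^+_\Omega \cup I$ are clean ways to make rigorous what the paper leaves implicit.
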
 
 
 When discussing topological properties of minimizers we will suppose $\Omega$ satisfies the hypotheses of Lemma~\ref{simpledomains}. Thus, unless otherwise stated, from now on $\Omega$ will be a bounded open connected simply connected axisymmetric set in $\R^3$ with $C^1$-smooth boundary that can be reconstructed from its vertical slices $\mathcal{D}_\Omega$ and $\mathcal{D}^+_\Omega$ according to Corollary \ref{domains-structure}.
   
  Observe that any such $\Omega$ is obtained as follows: fix arbitrarily a vertical plane $\Pi$ through the $x_3$-axis and let $\mathcal{T}$ be a bounded region in $\Pi$, symmetric with respect to the $x_3$-axis, whose boundary $\partial\mathcal{T}$ is a simple closed curve, so that $\mathcal{T}$ is simply connected. Create in $\mathcal{T}$ a finite number $M \geq 0$ of symmetric disjoint hollows $H_i\subset \mathcal{T}$ (we set $H_0 = \emptyset$ for convenience), each one diffeomorphic to the closed unit disc, whose boundaries $\partial H_i$ are simple closed curves. Let
\begin{equation}
	\mathcal{D} = \mathcal{T} \setminus \cup_{i=0}^M H_i.
\end{equation} 
Note that, unless $M = 0$, $\mathcal{D}$ is \emph{not} simply connected. Rotating $\mathcal{D}$ of an angle $\pi$ around the $x_3$-axis, we obtain a domain $\Omega$ with the desired properties. Conversely, given $\Omega$ as in the above, $\Omega \cap \Pi$ is a planar domain $\mathcal{D}$ as in the above. Notice that $I:=\Omega \cap \{x_3 \hbox{-axis}\}=\mathcal{D} \cap  \{x_3 \hbox{-axis}\} $, hence we can write
\begin{equation}\label{eq:decomp-D}
	\mathcal{D} = \mathcal{D}^+ \cup I \cup \mathcal{D}^-,
\end{equation}
where $\mathcal{D}^+ = \mathcal{D} \cap \{ x_1 > 0\}$ and $\mathcal{D}^- = \mathcal{D}\cap \{ x_1 < 0 \}$ (of course, $\mathcal{D}^+$ and $\mathcal{D}^-$ are congruent by symmetry). In contrast to $\mathcal{D}$, $\mathcal{D}^\pm$ are simply connected. Clearly, one can also re-obtain $\Omega$ by rotating, say, $\mathcal{D}^+ \cup{I}$ around the $x_3$-axis of an angle $2\pi$. Notice that $\partial\mathcal{D}^+$ is given by a unique simple piecewise smooth closed curve that can be thought of as a parametrized curve embedding $\bbS^1$ into $\overline{\Omega}$. For instance, if $\Omega = B_1$, then $\mathcal{D}$ is a disc, $\mathcal{D}^+$ a semidisc and $\partial\mathcal{D}^+$ the boundary of such semidisc in the vertical plane $\Pi$ with flat part on the $x_3$-axis.  

From the above it follows that the $x_3$-axis intersects $\partial \Omega$ exactly $(2M + 2)$-times and that $I=\Omega \cap \{x_3\mbox{-axis}\}$ is the union of $M +1$ segments $\ell_k$:
\begin{equation}\label{eq:def-ellk}
	I=\Omega \cap \{x_3\mbox{-axis}\} = \cup_{k=1}^{M+1} \ell_k.
\end{equation}
Of course, $\cup_{k=1}^{M+1} \ell^k$ is also the flat part of $\partial\mathcal{D}^+$ on the $x_3$-axis in the plane $\Pi$. We also denote
\begin{equation}\label{eq:def-B}
	\mathcal{B} := \partial I=\partial \Omega \cap \{ x_3\mbox{-axis} \} = \{ b_1, b_2, \dots, b_{2M+2}\},
\end{equation}
where the boundary is taken in the $x_3$-axis. We label such points increasingly with their $x_3$-coordinate. Thus, $b_1$ is the lowest point of $\overline{\Omega}$ along the $x_3$-axis and $b_{2M+2}$ the highest. Finally, notice that the validity of \eqref{eq:def-ellk} and \eqref{eq:def-B} actually relies only on the smoothness $\partial\Omega$, hence they hold true for every $\bbS^1$-invariant bounded open set $\Omega$ with Lipschitz boundary, independently of its connectedness properties.

\begin{remark}
We observe that if $\Omega$ is $\mathbb{S}^1$-invariant then the same holds for the function $\tilde{d}$ giving the signed distance from its boundary, hence its gradient is $\mathbb{S}^1$-equivariant and in particular the outer normal field $\overset{\to}n(x)$ along $\partial \Omega$ is equivariant. As a consequence we see that the corresponding radial anchoring $Q_{\rm b}(x)$ given by \eqref{Qblift} is equivariant. In addition, as $\overset{\to}n(b_j)=(0,0, (-1)^j)$ for $j=1, \ldots , 2M+2$, for such data we obtain $Q_{\rm b}(b_j)=\mathbf{e}_0$ for each $x \in \mathcal{B}$.  
\end{remark}
\begin{remark}
\label{rmk:invariantconf}
More generally, for any boundary map $Q_{\rm b} \in \rmLip_{\rm sym}(\partial \Omega; \mathbb{S}^4)$ 	the equivariance property \eqref{S1equivariance} together with Remark \ref{rmk:invariant-subspaces} yield $Q_{\rm b}(b_j)=\pm \mathbf{e}_0$ for any $x \in \mathcal{B}=\partial I$. Analogous property holds for an admissible configuration  $Q \in \mathcal{A}^{\rm sym}_{Q_{\rm b}}(\Omega)$ at each point $x \in I$ whenever the restriction to the $x_3$-axis makes sense. 
\end{remark}

\subsection{Stereographic projections, projective spaces, and the twistor fibration}\label{subsec:stereo}
In the following sections, we shall consider $\bbS^1$-equivariant maps from the standard $2$-sphere $\mathbb{S}^2\subset\R^3$ into~$\bbS^4$, the unit sphere of $\mathcal{S}_0$. To describe those maps, it will be useful to make use of stereographic projections. Although these notions are elementary, to fix the notations and for the reader's convenience we collect them in this subsection.
 
\subsubsection*{The stereographic projection of $\mathbb{S}^2$}  
Let $S^{(2)} := (0,0,-1)$ be the south pole of $\bbS^2 \subset \R^3$. We write $x = (x_1,x_2,x_3)$ for a point in $\bbS^2$, and $y = (y_1, y_2)$ a point in $\R^2$. The stereographic projection of $\mathbb{S}^2$ from the south pole is the map $\boldsymbol{\sigma}_2:\mathbb{S}^2\setminus\{S^{(2)}\}\to \R^2$ given by 
$$\boldsymbol{\sigma}_2(x) := \left(\frac{x_1}{1+x_3},\frac{x_2}{1+x_3}\right)\, ,$$
which is a diffeomorphism whose inverse map is given by 
$$\boldsymbol{\sigma}_2^{-1}(y) = \left( \frac{2y_1}{1+\abs{y}^2}, \frac{2y_2}{1+\abs{y}^2}, \frac{1-\abs{y}^2}{1+\abs{y}^2} \right)\,.$$
Identifying $\R^2$ with $\C$, $\boldsymbol{\sigma}_2$ can be seen as a map from $\mathbb{S}^2\setminus\{S^{(2)}\}$ into $\C$, and 
$$ \boldsymbol{\sigma}_2(x) =\frac{x_1+ i x_2}{1+x_3}\,.$$
It then follows that 
\begin{equation}\label{equivsigma2intoC}
\boldsymbol{\sigma}_2(R_\alpha x)=e^{i\alpha}  \boldsymbol{\sigma}_2(x)\qquad\forall R_\alpha\in\mathbb{S}^1\,.
\end{equation}
In terms of {\sl spherical coordinates} on $\mathbb{S}^2$, i.e., 
\begin{equation}\label{sphercoord}
x=\big(\cos\phi \sin\theta, \sin\phi \sin\theta,\cos\theta\big)
\end{equation}
with $\theta\in[0,\pi]$ (the {\sl latitude}) and $\phi\in[0,2\pi)$ (the {\sl colatitude}), we have 
\begin{equation}\label{sphercoordsterproj}
 \boldsymbol{\sigma}_2(x) =\tan\big(\theta/2\big) e^{i\phi}\quad\text{and}\quad | \boldsymbol{\sigma}_2(x)|=\tan(\theta/2)\,.
 \end{equation}
Writing $z=y_1+i y_2$, the complex version of the formula for the inverse map $\boldsymbol{\sigma}^{-1}_2$ reads
\begin{equation}\label{forminvsterproj}
\boldsymbol{\sigma}_2^{-1}(z) =\left( \frac{2z}{1+\abs{z}^2}, \frac{1-\abs{z}^2}{1+\abs{z}^2}\right)\,. 
\end{equation}

Let us now recall that the projective complex line $\C P^1$ is the smooth manifold made of all complex lines through the origin in $\C^2$, i.e., $\C P^1=(\C\times\C \setminus\{(0,0)\} )/\C^*$. We denote by $[z_0,z_1]$, $(z_0,z_1 )\neq(0,0)$, the {\sl homogeneous coordinates} for a point in $\C P^1$. In other words, if $z_0\in \C^*$, then $[z_0,z_1]$ is the  complex line $\big\{(\zeta_0,\zeta_1)\in\C^2:\zeta_1-(z_0^{-1}z_1)\zeta_0=0\big\}$, while $[0,1]$ is the line (at infinity) $\{(0,\zeta_1)\in \C^2, \, \zeta_1\in \C \}$. We refer to as an {\sl inhomogeneous coordinate} on $\C P^1$ the complex number $z=z_0^{-1}z_1$, and the mapping $[z_0,z_1]\mapsto z$ allows to identify $\C P^1$ with $\C\cup\{\infty\}$, agreeing that $[0,1]\in \C P^1$ is mapped to $\infty$ (the point at infinity). With the convention that $\boldsymbol{\sigma}_2$ maps the south pole $S^{(2)}$ to $\infty$, the stereographic projection $\boldsymbol{\sigma}_2$ can be then seen as a bijective map from $\mathbb{S}^2$ into $\C\cup \{\infty\} $ and in turn to $\C P^1$.  This map turns out to be a diffeomorphism. In terms of the inverse map $\boldsymbol{\sigma}_2^{-1}:\C P^1\to\mathbb{S}^2$, we have 
$$\boldsymbol{\sigma}_2^{-1}\big([z_0,z_1]\big) =\boldsymbol{\sigma}_2^{-1}(z_0^{-1}z_1)\,.$$ 
In view of \eqref{equivsigma2intoC},  considering the following $\mathbb{S}^1$-action on $\C P^1$: 
\begin{equation}
\label{S1actionCP1}
	e^{i\alpha}\cdot [z_0,z_1]:=[z_0,e^{i\alpha}z_1] \,,
\end{equation} 
the map $\boldsymbol{\sigma}_2:\mathbb{S}^2\to\C P^1$ is equivariant with respect to the $\mathbb{S}^1$-action, i.e., 
\begin{equation}\label{equivsigma2intoCP1}
 \boldsymbol{\sigma}_2(R_\alpha x)=e^{i\alpha}\cdot  \boldsymbol{\sigma}_2(x)\qquad\forall R_\alpha\in\mathbb{S}^1\,.
 \end{equation}

\subsubsection*{The stereographic projection of $\mathbb{S}^4$}

By means of the isometric isomorphism $\boldsymbol{\iota}_*$ between $\mathcal{S}_0$ and $\R \oplus \C \oplus \C$ (see \eqref{isomS_0RCC}), we identify $\mathbb{S}^4$ with unit sphere of $\R \oplus \C \oplus \C$. Setting $S^{(4)}:=(-1,0,0)\in\mathbb{S}^4$, the stereographic projection $\boldsymbol{\sigma}_4 : \bbS^4 \setminus \{S^{(4)}\}\to \C^2$ from the south pole $S^{(4)}$ and its inverse are given by 
$$\boldsymbol{\sigma}_4(p) := \left(\frac{\zeta_1}{1+t},\frac{\zeta_2}{1+t} \right)\,, \qquad \boldsymbol{\sigma}_4^{-1}(\eta_1,\eta_2) = \frac{1}{1+|\eta_1|^2+|\eta_2|^2}\left(1-|\eta_1|^2-|\eta_2|^2,2\eta_1 , 2 \eta_2 \right) $$
where $p=(t,\zeta_1,\zeta_2)\in\bbS^4 \setminus \{S^{(4)}\}\subset \R\oplus \C \oplus \C$ and $(\eta_1, \eta_2) \in \C^2$. 
Adding a point at infinity to $\C^2$ and sending $\{S^{(4)}\}$ to it, $\boldsymbol{\sigma}_4$ induces a diffeomorphism between $\bbS^4$ and  $\C^2 \cup \{\infty\}$. Note that, according to \eqref{S1actionRCC}, we have for the  extended map the equivariance property (fixing $S^{(4)}$ and $\infty$), namely  
\begin{equation}\label{S1actionC2}
\boldsymbol{\sigma}_4(R_\alpha\cdot p)=\left(\frac{e^{i\alpha}\zeta_1}{1+t},\frac{e^{2i\alpha}\zeta_2}{1+t} \right)\qquad\forall R_\alpha\in\mathbb{S}^1\,.
\end{equation}
\vskip2pt

Let us now denote by $\mathbb{S}^2_{(1)}\subset L_0 \oplus L_1$ the unit sphere of $\R \oplus \C \oplus \{0\}$ and by $\mathbb{S}^2_{(2)} \subset L_0 \oplus L_2$ the unit sphere of $\R \oplus \{0\} \oplus \C$ (which are equatorial $2$-spheres of $\mathbb{S}^4$).  We notice that $\boldsymbol{\sigma}_4$ maps $\mathbb{S}^2_{(1)}\setminus  \{S^{(4)}\}$ and $\mathbb{S}^2_{(2)} \setminus \{S^{(4)}\}$ into $\C\times\{0\}$ and $\{0\}\times\C$ respectively. Moreover, its restrictions give the mappings $\boldsymbol{\sigma}^{(1)}_2: \mathbb{S}^2_{(1)}\setminus  \{S^{(4)}\}\to\C$ and $\boldsymbol{\sigma}^{(2)}_2: \mathbb{S}^2_{(2)}\setminus  \{S^{(4)}\}\to\C$ defined by
 
 \begin{equation}
 \label{partialstereo}	
\boldsymbol{\sigma}^{(1)}_2(t,\zeta_1,0):=\frac{\zeta_1}{1+t}\quad\text{and}\quad \boldsymbol{\sigma}^{(2)}_2(t,0,\zeta_2):=\frac{\zeta_2}{1+t} \, ,
\end{equation}
which are stereographic projections, and in view of \eqref{S1actionC2} they satisfy
\begin{equation}
\label{partialstereoequiv} \boldsymbol{\sigma}^{(1)}_2(R_\alpha\cdot\big(t,\zeta_1,0)\big)=e^{i\alpha}\boldsymbol{\sigma}^{(1)}_2(t,\zeta_1,0)\quad\text{and}\quad \boldsymbol{\sigma}^{(2)}_2(R_\alpha\cdot\big(t,0,\zeta_2)\big)=e^{2i\alpha}\boldsymbol{\sigma}^{(2)}_2(t,0,\zeta_2)
\end{equation}
for every $R_\alpha\in\mathbb{S}^1$.

\subsubsection*{The twistor fibration $\C P^3\to\mathbb{S}^4$}

The complex projective $3$-space $\C P^3$ is the smooth (complex) manifold made of all complex lines through the origin in $\C^4$, i.e., $\C P^3=(\C^4 \setminus \{0 \})/\C^*$. If we denote by $[w_0,w_1,w_2,w_3]$, with $(w_0,w_1,w_2,w_3)\in\C^4\setminus\{0\}$, the homogeneous coordinates of a point in $\C P^3$, then $[w_0,w_1,w_2,w_3]$ represents the complex line $\big\{(\zeta_0,\zeta_1,\zeta_2,\zeta_3)\in \C^4: w_\ell \zeta_m-w_m \zeta_\ell =0 \, , \, \, \, 1\leq i<j \leq 4 \big\}$ (i.e., the $w_\ell$'s and the $\zeta_\ell$'s differ by a common nonzero factor).

Considering $\mathbb{S}^4$ as the unit sphere of $\R\oplus\C\oplus\C$, the twistor fibration $\boldsymbol{\tau}: \bb{C}P^3 \to \bb{S}^4$ is the map given by 
\begin{equation}\label{eq:tau}
\boldsymbol{\tau}\big([w_0, w_1, w_2, w_3]\big):= \frac{\left(\abs{w_0}^2  +\abs{w_3}^2- \abs{w_1}^2 - \abs{w_2}^2 , 2(\overline{w_0} w_1 + \overline{w_2} w_3),2(\overline{w_0} w_2 - \overline{w_1} w_3)\right)}{\abs{w_0}^2 + \abs{w_1}^2 + \abs{w_2}^2 + \abs{w_3}^2}\,,
\end{equation}
see \cite{BoWo1,BoWo2,BoWo3,LemaireWood}. Considering the $\mathbb{S}^1$-action on $\C P^3$ defined by 
\begin{equation}\label{S1actionCP3}
R_\alpha\cdot[w_0,w_1,w_2,w_3]:= [w_0,e^{i\alpha}w_1,e^{2i\alpha}w_2,e^{3i\alpha}w_3]\qquad\forall R_\alpha\in\mathbb{S}^1\,,
\end{equation}
and the (induced) $\mathbb{S}^1$-action on $\mathbb{S}^4\subset\R\oplus\C\oplus\C$ given by \eqref{S1actionRCC} (see Remark \ref{remisom&S1action}), the twistor map $\boldsymbol{\tau}$ turns out to be equivariant. We state this property in the following lemma, whose proof is a straightforward consequence of formulas \eqref{S1actionRCC}, \eqref{eq:tau}, and \eqref{S1actionCP3}, hence it is left to the reader.

\begin{lemma}\label{lemma:tau-s1-eq}
The twistor fibration $\boldsymbol{\tau} : \C P^3 \to \bbS^4$ is equivariant with respect to the $\bb{S}^1$-actions on $\C P^3$ and $\mathbb{S}^4$ given in \eqref{S1actionCP3} and \eqref{S1actionRCC}. In other words, 
\begin{equation}
\label{tauequivariance} 
\boldsymbol{\tau}\big( R_\alpha \cdot [w_0, w_1, w_2, w_3] \big) = R_\alpha \cdot \boldsymbol{\tau}\big([w_0, w_1, w_2, w_3]\big)
\end{equation}
for every $R_\alpha\in\mathbb{S}^1$ and every $[w_0,w_1,w_2,w_3]\in \C P^3$. 
\end{lemma}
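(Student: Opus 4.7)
The lemma is a pure computational verification, so my plan is to spell out the weight-matching between the two $\bbS^1$-actions and let the formula for $\boldsymbol{\tau}$ do the rest. The structural observation I would emphasize is that the action \eqref{S1actionCP3} assigns weight $k$ to the homogeneous coordinate $w_k$, while the action \eqref{S1actionRCC} on $\R\oplus\C\oplus\C$ assigns weights $0,1,2$ respectively to the three slots. Every bilinear expression $\overline{w_i}w_j$ transforms by the phase $e^{i(j-i)\alpha}$, so the whole argument reduces to checking that the pairings selected by \eqref{eq:tau} respect these weights slot by slot.

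Concretely, I would proceed in three short steps. First, observe that the denominator $|w_0|^2+|w_1|^2+|w_2|^2+|w_3|^2$ is invariant under $w_k\mapsto e^{ik\alpha}w_k$ since $|e^{ik\alpha}|=1$, so it suffices to check the numerator. Second, for the first (real) scalar component only diagonal pairings $|w_k|^2$ appear, which are again invariant. Third, for the two complex components I would match weights as follows: in the weight-$1$ slot one has $\overline{w_0}w_1$ (weight $1-0=1$) and $\overline{w_2}w_3$ (weight $3-2=1$), so both terms pick up exactly the factor $e^{i\alpha}$; in the weight-$2$ slot one has $\overline{w_0}w_2$ (weight $2-0=2$) and $\overline{w_1}w_3$ (weight $3-1=2$), so both terms pick up exactly $e^{2i\alpha}$. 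This precisely reproduces the action $R_\alpha\cdot(t,\zeta_1,\zeta_2)=(t,e^{i\alpha}\zeta_1,e^{2i\alpha}\zeta_2)$ on the image.

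There is no real obstacle here: the proof is essentially a sanity check confirming that the particular combinations chosen in the definition \eqref{eq:tau} of the twistor fibration are tailored to be compatible with the natural weighted circle actions used throughout the paper. I would conclude with a one-line remark explaining that this compatibility is exactly what will allow, in the subsequent classification of $\bbS^1$-equivariant harmonic spheres $\boldsymbol{\omega}\colon\bbS^2\to\bbS^4$, to lift the equivariance from $\bbS^4$ to the twistor space $\C P^3$ and thus exploit the holomorphic/horizontal structure of canonical lifts in homogeneous coordinates as in \eqref{twistorlifts}.
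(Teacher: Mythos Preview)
Your proposal is correct and is precisely the straightforward computation the paper has in mind; indeed the paper does not write out a proof at all, stating only that the lemma ``is a straightforward consequence of formulas \eqref{S1actionRCC}, \eqref{eq:tau}, and \eqref{S1actionCP3}, hence it is left to the reader.'' Your weight-matching argument is exactly the intended verification, spelled out in full.
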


\begin{remark}
A simple way to get some insight in the formula \eqref{eq:tau} for the twistor fibration is to interpret it in terms of quaternions. We recall that quaternions may be thought of as a set $\bbH$ of ordered pairs of complex numbers endowed with a noncommutative multiplication.
We identify $\C^2$ with $\bbH$ \emph{via} $(\zeta_1,\zeta_2) \mapsto \zeta_1 + \zeta_2j$ (here $j$ is the second imaginary unit of quaternions anticommuting with $i$, whence noncommutativity of the multiplication), and we  also identify $\C^4$ with $\bbH^2$ by the map\footnote{Under this rather unconventional identification the twistor fibration $\boldsymbol\tau$ takes the form \eqref{eq:tau} and it is equivariant in the sense of Lemma \ref{lemma:tau-s1-eq}.}
 $(\zeta_0,\zeta_1,\zeta_2,\zeta_3) \mapsto (\zeta_0 + \zeta_3j, \zeta_1+\zeta_2 j)$. The quaternionic projective space $\bb{H} P^1$ is the quotient of $\bb{H}^2$ by the left action by $\bb{H} \setminus \{0\}$. As recalled above for $\C P^1$, we can identify $\bbH P^1$ with $\bbH \cup \{ \infty\}$ \emph{via} $[q_1,q_2] \mapsto q_1^{-1}q_2$ using the inhomogeneous quaternionic coordinate $q_1^{-1}q_2$ for $q_1 \neq 0$ (extended sending $[0,1] \mapsto \infty$), and in turn with $\C^2 \cup \{\infty\}$ writing $q_1^{-1}q_2 =\eta_1 + \eta_2 j$, $\eta_1 , \, \eta_2 \in \C$. This way (i.e., considering the composite map) we can see the stereographic projection $\boldsymbol{\sigma}_4$ as a map identifying $\bbH P^1$ with $\mathbb{S}^4$ through $\C^2 \cup \{\infty\}$, namely 
 \[
\bb{H}P^1 \ni [ q_1,q_2] \quad \overset{\boldsymbol\sigma_4^{-1}}\longrightarrow \quad \left( \frac{|q_1|^2-|q_2|^2} {|q_1|^2+|q_2|^2} , 2 q_1^{-1} q_2 \right) \in \mathbb{S}^4 \subset \R \oplus \bb{H} \, . 
\]
As a consequence one can check that the map $\boldsymbol\tau$ in \eqref{eq:tau} is exactly the composition of the Hopf map $\rho : \bb{C} P^3 \to \bb{H} P^1$, taking complex lines in $\C^4\simeq \bb{H}^2$ to their quaternionic envelope in $\bb{H}^2$ (i.e., $\rho([w_0,w_1,w_2,w_3]) = [w_0 +w_3j, w_1 + w_2j]$), with the inverse of $\boldsymbol{\sigma}_4$. 
\end{remark}

\section{Equivariant harmonic spheres into $\bbS^4$}\label{sec:axisymm}

The auxiliary results contained in this section will be used later in the present paper, when discussing the asymptotic behavior of LdG minimizers at isolated singularities, and they will be also of use in our companion paper \cite{DMP2}. Since such profiles turn out to be homogeneous extensions of $\bbS^1$-equivariant harmonic maps  from the two-sphere, their classification  we present here is of independent interest and of possible use in the analysis of minimizing harmonic maps under symmetry constraint (see, e.g., \cite{Gastel,HorMos} and references therein).

Recall that any weakly harmonic map in $\omega \in W^{1,2}(\bbS^2;\bbS^4)$ is by definition a critical point of the energy functional
\begin{equation}
\label{tanenergy}
 E(\omega)=\int_{\bb{S}^2} \frac12 |\nabla_T \omega|^2 \,d\vol{\bb{S}^2} \, , 
\end{equation} 
hence it is a weak solution to
 \begin{equation}
 \label{eqharmspheres}	
 \Delta_T \omega+ \abs{\nabla_T \omega}^2\omega=0 \, ,
 \end{equation}
and therefore $C^\infty$-smooth due to H\'{e}lein's theorem (see, e.g., \cite[Section 10.4.1]{GiaqMart}) and in turn real analytic by the analyticity results in \cite{Morrey}. Smooth harmonic maps between Euclidean spheres are usually called \emph{harmonic spheres} and we will often use such terminology here. 

%

We start with the following important result due to E.~Calabi \cite{Ca}.
\begin{lemma}
\label{lemma1calabi}
{\rm(\cite{Ca})}	Every nonconstant harmonic sphere $\omega$ is a weakly conformal branched minimal immersion for which the energy (or, equivalently by conformality, the area) and the dimension of the image satisfy
\begin{equation}
\label{Calabi} 
E(\omega)=\int_{\bb{S}^2} \frac12 |\nabla_T \omega|^2 \,d\vol{\bb{S}^2} =4 \pi \, |d| \,, \quad |d| \in \N \, , \quad {\rm dim} \, \, {\rm span}_\R\,\omega(\mathbb{S}^2) =k\in \{ 1, 3, 5\} \, .
\end{equation}
\end{lemma}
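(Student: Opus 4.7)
The plan is to establish the three conclusions of the lemma in sequence, following Calabi's classical approach \cite{Ca}.

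For weak conformality, I would work in a local conformal coordinate $z = x + iy$ on $\bbS^2$, viewing $\omega$ as an $\R^5$-valued map with $|\omega| \equiv 1$. Setting $\omega_z := \tfrac{1}{2}(\omega_x - i\omega_y)$, I would introduce the Hopf differential $\phi := (\omega_z \cdot \omega_z)\, dz^2$, where the dot denotes the $\R^5$ inner product extended complex-bilinearly. The harmonic map equation \eqref{eqharmspheres} translates into $\omega_{z\bar z} = -\tfrac{1}{4}|\nabla_T \omega|^2 \omega$, while the constraint $|\omega|^2 = 1$ yields $\omega \cdot \omega_z = 0$. Combining these gives
\[
\partial_{\bar z}(\omega_z \cdot \omega_z) \,=\, 2\, \omega_{z\bar z} \cdot \omega_z \,=\, -\tfrac{1}{2}|\nabla_T\omega|^2 (\omega \cdot \omega_z) \,=\, 0,
\]
so $\phi$ is a holomorphic quadratic differential on $\bbS^2 \simeq \C P^1$. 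Since $H^0(\C P^1, K^{\otimes 2}) = 0$ (as $\deg K = -2$), $\phi$ vanishes identically, which unpacks into $|\omega_x|^2 = |\omega_y|^2$ and $\omega_x \cdot \omega_y = 0$, i.e., weak conformality. Combined with the smoothness and real analyticity already recalled, $\omega$ is then a weakly conformal branched minimal immersion, with an isolated (possibly empty) branch set given by the zeros of $|\nabla_T \omega|$.

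For the quantization of the energy, weak conformality yields $\tfrac{1}{2}|\nabla_T \omega|^2 = 2|\omega_z|^2$, which agrees with the unsigned Jacobian $J_\omega$ associated to the area forms on $\bbS^2$ and on $\omega(\bbS^2)$; hence $E(\omega)$ equals the area of $\omega(\bbS^2)$ counted with multiplicity. To identify this area as an integer multiple of $4\pi$, I would invoke the twistor fibration $\boldsymbol{\tau}: \C P^3 \to \bbS^4$ recalled in Subsection \ref{subsec:stereo}: after possibly replacing $\omega$ by $-\omega$, Calabi's twistor construction produces a horizontal holomorphic lift $\widetilde\omega : \C P^1 \to \C P^3$ with $\omega = \boldsymbol{\tau} \circ \widetilde\omega$. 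A direct computation pulling back the round volume form of $\bbS^4$ through $\boldsymbol{\tau}$, together with the horizontality of $\widetilde\omega$, identifies $\operatorname{area}(\omega(\bbS^2))$ with $4\pi \deg(\widetilde\omega)$, where $\deg(\widetilde\omega)$ is the algebraic degree of the rational curve $\widetilde\omega$; this yields the desired integer $|d|$.

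For the dimension statement, the case $k = 1$ would force $\omega$ to take values in the two-point set $\{\pm v\}$ for some unit vector $v$, which by continuity of $\omega$ and connectedness of $\bbS^2$ reduces $\omega$ to a constant (this case is listed only for formal completeness). For $k \geq 2$, we may assume $\omega$ is linearly full in its spanned subsphere $\bbS^{k-1} \subset \R^k$. Calabi's theorem \cite{Ca} then asserts that $k - 1$ must be even, ruling out $k \in \{2, 4\}$: the argument proceeds by constructing the iterated complex derivatives $\partial_z^j \omega$ and the associated osculating flag of complex subspaces in $\C^k$, and by showing that the compatibility of this flag with the real structure of $\R^k$ forces $k - 1 \in 2\Z$. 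Thus $k \in \{3, 5\}$ in our setting, completing the enumeration. The main obstacle is the quantization step, since conformality and the dimensional dichotomy are largely algebraic or holomorphic in nature: quantization relies on the \emph{global} existence of the horizontal twistor lift $\widetilde\omega: \C P^1 \to \C P^3$ (as opposed to merely local ones), a classical but nontrivial point in Calabi's theorem, which for our purposes we may simply cite from \cite{Ca} together with the explicit form of $\boldsymbol{\tau}$ in \eqref{eq:tau}, since the next section builds on this classification rather than rederiving it from scratch.
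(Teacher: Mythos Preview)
The paper does not prove this lemma at all: it is stated with the attribution ``(\cite{Ca})'' and immediately followed by the next paragraph, with no proof environment. The authors simply invoke it as a known result due to Calabi, to be used as a black box in the subsequent classification.

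Your proof sketch is therefore strictly more than what the paper provides, and the outline you give is essentially the classical route. The Hopf-differential argument for weak conformality is standard and correct. The dimension dichotomy via Calabi's osculating flag (forcing the target sphere to be even-dimensional) is also the right mechanism. For the energy quantization, your appeal to the horizontal twistor lift $\widetilde\omega:\C P^1\to\C P^3$ is valid in spirit, though note a mild circularity relative to this particular paper: the existence and properties of the twistor lift are precisely what the authors develop in detail in the \emph{following} subsections (Proposition~\ref{liftings}, Theorem~\ref{classification}), so from the paper's internal logic the quantization is being imported from \cite{Ca} rather than derived from the later material. You correctly flag the global existence of the lift as the nontrivial point and defer to \cite{Ca}, which is exactly what the paper itself does for the entire lemma.
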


Besides constant maps (for which $d=0$ and the range has dimension $k=1$), we will distinguish between {\em linearly degenerate} and {\em linearly full} harmonic spheres (for which $k=3$ and $k=5$ respectively) and we will classify them in the next two subsections under the $\mathbb{S}^1$-equivariance assumption.
 

In the next lemma we recall two other well known facts concerning harmonic spheres which will be used in the next subsections (we refer to \cite[Chapter 6]{HeleinBook} for a proof).

\begin{lemma}
\label{lemma2calabi}
Any harmonic map $\omega : \bbS^2 \to \bbS^4$ is {\em real isotropic}. In particular, any harmonic map $\omega : \bbS^2 \to \bbS^4$ is (weakly) conformal, hence in terms of the spherical coordinates $(\theta,\phi)$ on $\mathbb{S}^2$ 
\begin{equation}
\label{conformalityI}
\abs{\partial_\theta \omega}^2\equiv \frac1{\sin^2 \theta}\abs{\partial_\phi \omega}^2 \, \qquad \hbox{and} \qquad \partial_\theta \omega \,\cdot \, \frac1{\sin \theta} \partial_\phi\omega \equiv 0 \, .	\end{equation}
 \end{lemma}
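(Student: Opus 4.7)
My plan is to first establish weak conformality through the classical Hopf differential argument and then to upgrade this to real isotropy by an induction on the order of higher normal derivatives, relying in both steps on the vanishing of holomorphic differentials of positive degree on $\bbS^2$.

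First I would work in a local isothermal (complex) coordinate $z=x+iy$ on $\bbS^2$, viewing $\omega$ as an $\R^5$-valued map subject to $\omega\cdot\omega\equiv 1$, and set $\omega_z:=\partial_z\omega\in\C^5$. In these coordinates the harmonic map equation \eqref{eqharmspheres} reads $\omega_{z\bar z}+|\omega_z|^2\omega=0$ (up to a positive conformal factor), so that $\omega_{z\bar z}$ is parallel to $\omega$ everywhere. Differentiating the unit-norm constraint gives $\omega\cdot\omega_z\equiv 0$, whence
\begin{equation*}
\partial_{\bar z}(\omega_z\cdot\omega_z)=2\,\omega_{z\bar z}\cdot\omega_z=-2|\omega_z|^2\,(\omega\cdot\omega_z)=0\,.
\end{equation*}
Thus $\Phi:=(\omega_z\cdot\omega_z)\,dz^2$ defines a globally well-defined holomorphic quadratic differential on $\bbS^2$, and since $\bbS^2$ has genus zero, the space of such differentials is trivial, so $\Phi\equiv 0$. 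This is exactly the weak conformality condition $\omega_z\cdot\omega_z\equiv 0$. Passing to spherical coordinates $(\theta,\phi)$ through the conformal identification $z=\tan(\theta/2)\,e^{i\phi}$ of \eqref{sphercoordsterproj}, conformality translates into the fact that $\omega^{*}g_{\bbS^4}$ is pointwise a nonnegative scalar multiple of the round metric $d\theta^2+\sin^2\theta\,d\phi^2$, which is precisely~\eqref{conformalityI}.

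To promote weak conformality to real isotropy, I would proceed by induction on $k\geq 1$, defining along $\omega$ the successive normal derivatives $\omega^{(1)}_z:=\omega_z$ and $\omega^{(k+1)}_z:=P^\perp_\omega\bigl(\partial_z\omega^{(k)}_z\bigr)$, where $P^\perp_\omega$ denotes the orthogonal projection of $\C^5$ onto the complement of $\omega$. The aim is to show that each symmetric product $\omega^{(k)}_z\cdot\omega^{(l)}_z$ transforms under change of isothermal coordinate as a globally defined holomorphic section of $K^{\otimes(k+l)}$ on $\bbS^2$; the tensorial character follows from the chain rule combined with the sphere constraint. The crucial step, which I expect to be the main obstacle, is the recursive identity
\begin{equation*}
\partial_{\bar z}\omega^{(k)}_z\in\mathrm{span}_\C\bigl\{\omega,\omega^{(1)}_z,\dots,\omega^{(k-1)}_z\bigr\}\,,
\end{equation*}
which implies holomorphicity by pairing with $\omega^{(l)}_z$ using the inductive isotropy hypothesis, and whose clean form reflects the fact that $\bbS^4\hookrightarrow\R^5$ is totally umbilic. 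Since $K^{\otimes(k+l)}$ has strictly negative degree on $\bbS^2$ for $k+l\geq 1$, every such holomorphic section must vanish identically, yielding $\omega^{(k)}_z\cdot\omega^{(l)}_z\equiv 0$ for all $k,l\geq 1$, i.e.\ the real isotropy of $\omega$.
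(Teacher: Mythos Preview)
The paper does not give its own proof of this lemma but simply refers to \cite[Chapter~6]{HeleinBook}; your plan is precisely the classical argument presented there (Hopf differential for conformality, then inductive vanishing of higher holomorphic differentials on the genus-zero surface for real isotropy). Your outline is correct and matches the cited reference, with the only caveat that the recursive span identity $\partial_{\bar z}\omega^{(k)}_z\in\mathrm{span}_\C\{\omega,\omega^{(1)}_z,\dots,\omega^{(k-1)}_z\}$ --- which you rightly flag as the crucial step --- requires a short but careful computation that you should spell out rather than merely attribute to total umbilicity.
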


In view of Remark \ref{remisom&S1action}, we can identify $\mathbb{S}^4$ with the unit sphere of $\R\oplus\C\oplus\C$, which allows to write an $\mathbb{S}^1$-equivariant harmonic map $\omega:\mathbb{S}^2\to\mathbb{S}^4$ as a map $\boldsymbol{\omega}$ given by
$$ \boldsymbol{\omega}=(\boldsymbol{\omega}_0,\boldsymbol{\omega}_1,\boldsymbol{\omega}_2)\,,$$
where $\boldsymbol{\omega}_0:\mathbb{S}^2\to[-1,1]$, $\boldsymbol{\omega}_1:\mathbb{S}^2\to \C$, and $\boldsymbol{\omega}_2:\mathbb{S}^2\to \C$. By \eqref{S1actionRCC}, the $\mathbb{S}^1$-equivariance of $\boldsymbol{\omega}$ translates into 
\begin{equation}
\label{omegaequiv}
\boldsymbol{\omega}_0(R_\alpha x)=\boldsymbol{\omega}_0(x)\,,\quad\boldsymbol{\omega}_1(R_\alpha x)=e^{i\alpha}\boldsymbol{\omega}_1(x)\,,\quad\boldsymbol{\omega}_2(R_\alpha x)=e^{2i\alpha}\boldsymbol{\omega}_2(x)\qquad\forall R_\alpha\in\mathbb{S}^1\,.
\end{equation}
In terms of the spherical coordinates $(\theta,\phi)$ of $x\in\mathbb{S}^2$ (see \eqref{sphercoord}), the identities above imply that 
\begin{equation}\label{eq:omega}
\boldsymbol{\omega}_0(x)=\omega_0(\theta)\,,\quad  \boldsymbol{\omega}_1(x)=\omega_1(\theta)e^{i\phi}\,,\quad  \boldsymbol{\omega}_2(x)=\omega_2(\theta)e^{2 i\phi}\,,
\end{equation}
where $\omega_0:[0,\pi]\to[-1,1]$ and $\omega_k:[0,\pi]\to\C$ for $k=1,2$. By smoothness of $ \boldsymbol{\omega}$, we have 
\begin{equation}\label{eq:constraint-components}
\omega_0(0)\in\{\pm1\}\,,\quad\omega_0(\pi)\in\{\pm1\}\,,\quad\omega_1(0)=\omega_1(\pi)=0\,,\quad\omega_2(0)=\omega_2(\pi)=0 \,.
\end{equation}
In particular, $\boldsymbol{\omega}$ sends the south pole $S^{(2)}\in\mathbb{S}^2$ either to the south pole $S^{(4)}\in\mathbb{S}^4$, or to the north pole $N^{(4)}:=-S^{(4)}$.   
Finally, in view of \eqref{sphercoordsterproj}, precomposing $\boldsymbol{\omega}$ with the inverse  stereographic projection $\boldsymbol{\sigma}_2^{-1}:\C\to\mathbb{S}^2$ (given by \eqref{forminvsterproj}) leads to 
\begin{equation}
\label{omegacoefficients}
\boldsymbol{\omega}\circ\boldsymbol{\sigma}_2^{-1}(z) =\left(\widetilde \omega_0(|z|),\widetilde\omega_1(|z|)\frac{z}{|z|},\widetilde\omega_2(|z|)\frac{z^2}{|z|^2} \right)\,,
\end{equation}
where $\widetilde\omega_0:[0,+\infty)\to[-1,1]$ and $\widetilde\omega_k:[0,+\infty)\to\C$  for $k=1,2$ are given by $\widetilde\omega_k(r):=\omega_k\big(2\arctan(r)\big) $.

\begin{remark}
When restricting to equivariant harmonic spheres, the first equality in \eqref{conformalityI}, together with \eqref{eq:omega}, yields the useful identities
\begin{equation}	
\label{conformalityII}
	\abs{\partial_\theta \boldsymbol{\omega}}^2= \frac{\abs{\boldsymbol{\omega}_1}^2+4 \abs{\boldsymbol{\omega}_2}^2 }{\sin^2 \theta}= \frac12 \abs{\nabla_T \boldsymbol\omega}^2 \, . 
\end{equation}

\end{remark}

An interesting consequence of the previous identity is the following fact.

\begin{remark}\label{rmk:lemaire} ({\em Branch points}). 
If a smooth nonconstant map $\boldsymbol\omega : \bbS^2 \to \bbS^4$ is harmonic and $\bbS^1$-equivariant, then $| \boldsymbol{\omega_0}(p)|<1$ and $(\boldsymbol{\omega}_1(p),\boldsymbol{\omega}_2(p))\neq (0,0)$ whenever $p\neq \pm S^{(2)}$. Otherwise we would have $\boldsymbol{\omega}_0 \equiv \pm 1$ on a circle, hence  $\boldsymbol\omega$ would be a constant map in a disc in view of Lemaire's theorem \cite{Le} and then everywhere by unique continuation. As a consequence, by \eqref{conformalityII} the only possible \emph{branch points} (i.e., the points where $\boldsymbol\omega$ has zero differential) are the poles $S=S^{(2)}$ and $N=-S^{(2)}$.
\end{remark}

\subsection{Classification of linearly degenerate harmonic spheres} 

The key preliminary fact is given in the following simple lemma.

\begin{lemma}\label{dichotnonfulllemma}
Let $\boldsymbol{\omega}:\mathbb{S}^2\to\mathbb{S}^4$ be a nonconstant $\bbS^1$-equivariant  harmonic map. If $\boldsymbol{\omega}$ is not linearly full, then either ${\rm span}_\R\,\boldsymbol{\omega}(\mathbb{S}^2)=L_0\oplus L_1$ or  ${\rm span}_\R\,\boldsymbol{\omega}(\mathbb{S}^2)=L_0\oplus L_2$.
\end{lemma}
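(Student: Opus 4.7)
The plan is to combine the dimensional information supplied by Lemma \ref{lemma1calabi} with the representation-theoretic classification of invariant subspaces in Remark \ref{rmk:invariant-subspaces}, so that the conclusion becomes essentially automatic.

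First, I would identify the dimension of $V:={\rm span}_\R\,\boldsymbol{\omega}(\mathbb{S}^2)$. By Calabi's Lemma \ref{lemma1calabi}, since $\boldsymbol{\omega}$ is a harmonic sphere into $\bbS^4$, this dimension belongs to $\{1,3,5\}$. The case $\dim V=1$ is excluded because $\boldsymbol\omega$ is nonconstant (if the image were contained in a line through the origin, the unit-norm constraint would force $\boldsymbol\omega$ to take at most two values, contradicting continuity and nonconstancy), and the case $\dim V=5$ is excluded by the assumption that $\boldsymbol\omega$ is not linearly full. Hence $\dim V=3$.

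Next I would observe that $V$ is $\mathbb{S}^1$-invariant. Indeed, since $\mathbb{S}^2\subset\mathbb{R}^3$ is axially symmetric and $\boldsymbol\omega$ satisfies the equivariance relations \eqref{omegaequiv}, the image $\boldsymbol\omega(\mathbb{S}^2)$ is stable under the induced action \eqref{S1actionRCC} on $\R\oplus\C\oplus\C\simeq\mathcal{S}_0$. Since the $\mathbb{S}^1$-action is linear, the real linear span $V$ of an invariant subset is itself an invariant linear subspace of $\mathcal{S}_0$.

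At this point the statement follows directly from Remark \ref{rmk:invariant-subspaces}: the only three-dimensional $\mathbb{S}^1$-invariant real linear subspaces of $\mathcal{S}_0$ are $L_0\oplus L_1$ and $L_0\oplus L_2$, so $V$ must coincide with one of them. The only step requiring any real thought is the invariance argument, which I expect to be the main (and mild) obstacle: one has to be slightly careful that we are invoking invariance of the \emph{real} span (not the complex one) and that Remark \ref{rmk:invariant-subspaces} genuinely rules out any other three-dimensional invariant subspace, including hybrid ones with nontrivial components in both $L_1$ and $L_2$; this is precisely the content of the argument given there, based on the fact that $\mathbb{S}^1$ acts on $L_1$ and $L_2$ with distinct weights.
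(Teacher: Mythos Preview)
Your proof is correct and follows essentially the same approach as the paper: both arguments set $V={\rm span}_\R\,\boldsymbol{\omega}(\mathbb{S}^2)$, use Lemma~\ref{lemma1calabi} together with the nonconstant and nonfull hypotheses to get $\dim V=3$, observe that equivariance makes $V$ an $\mathbb{S}^1$-invariant subspace, and then invoke Remark~\ref{rmk:invariant-subspaces} to conclude.
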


\begin{proof}
Set $V:={\rm span}_\R\,\boldsymbol{\omega}(\mathbb{S}^2)$. Since $\boldsymbol{\omega}$ is assumed to be neither constant nor linearly full, then $V$ is linear subspace of $\mathcal{S}_0$ of dimension $k=3$ because of Lemma~\ref{lemma1calabi}. On the other hand, the equivariance of $\boldsymbol{\omega}$ implies that $V$ is invariant under the action of $\mathbb{S}^1$. By Remark~\ref{rmk:invariant-subspaces}, it follows that either $V=L_0\oplus L_1$ or $V=L_0\oplus L_2$, as claimed. 
\end{proof}

The classification of the linearly degenerate harmonic spheres is now a simple consequence of the classification results for $\mathbb{S}^2$-valued harmonic maps from \cite{BCL} specialized to the equivariant setting.

\begin{proposition}
\label{degeneratespheres}
If $\boldsymbol{\omega}=(\boldsymbol{\omega}_0,\boldsymbol{\omega}_1,\boldsymbol{\omega}_2):\mathbb{S}^2\to \mathbb{S}^4$ is a nonconstant $\mathbb{S}^1$-equivariant and nonfull harmonic map and $d \in \Z$ is as in \eqref{Calabi}, then the following holds: 
\begin{itemize}
\item[(i)] either  $\boldsymbol{\omega}_2\equiv 0$, $deg \, \boldsymbol{\omega}=d=\pm 1$ and for some $\mu_1 \in \C\setminus \{0\}$ we have
\begin{equation}
\label{degeneratesp1}
\boldsymbol{\omega}\big(\boldsymbol{\sigma}_2^{-1}(z)\big)= \pm \boldsymbol{\omega}^{(1)}_{\rm eq}\big(\boldsymbol{\sigma}_2^{-1}(\mu_1 z)\big) \, ,
\end{equation}
with the $+$ sign if $\boldsymbol{\omega}(S^{(2)})=S^{(4)}$, and the $-$ sign if $\boldsymbol{\omega}(S^{(2)})=-S^{(4)}$; 
\item[(ii)]  or $\boldsymbol{\omega}_1\equiv 0$, $deg \, \boldsymbol{\omega}=d=\pm 2$ and for some $\mu_2 \in \C\setminus \{0\}$ we have
\begin{equation}
\label{degeneratesp2}
\boldsymbol{\omega}\big(\boldsymbol{\sigma}_2^{-1}(z)\big)= \pm \boldsymbol{\omega}^{(2)}_{\rm eq}\big(\boldsymbol{\sigma}_2^{-1}(\mu_2 z^2)\big) \, ,
\end{equation}
with the $+$ sign if $\boldsymbol{\omega}(S^{(2)})=S^{(4)}$, and the $-$ sign if $\boldsymbol{\omega}(S^{(2)})=-S^{(4)}$; 
\end{itemize}
here $\boldsymbol{\omega}^{(1)}_{\rm eq}:\mathbb{S}^2\to \mathbb{S}^2_{(1)}\subset\mathbb{S}^4$ and $\boldsymbol{\omega}^{(2)}_{\rm eq}:\mathbb{S}^2\to \mathbb{S}^2_{(2)}\subset\mathbb{S}^4$ are the equatorial embeddings 
\begin{equation}
\label{equatorial}	
\boldsymbol{\omega}^{(1)}_{\rm eq}(x):=(x_3,x_1+ix_2,0)\quad\text{and}\quad \boldsymbol{\omega}^{(2)}_{\rm eq}(x):=(x_3,0,x_1+ix_2)\,.
\end{equation}
\end{proposition}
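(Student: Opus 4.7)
My plan is to first apply Lemma \ref{dichotnonfulllemma} in order to reduce to two cases: either ${\rm span}_\R \, \boldsymbol\omega(\mathbb{S}^2) = L_0 \oplus L_1$ (so that $\boldsymbol\omega_2 \equiv 0$ and the image lies in $\mathbb{S}^2_{(1)}$), or ${\rm span}_\R \, \boldsymbol\omega(\mathbb{S}^2) = L_0 \oplus L_2$ (so that $\boldsymbol\omega_1 \equiv 0$ and the image lies in $\mathbb{S}^2_{(2)}$). In both cases $\boldsymbol\omega$ becomes a nonconstant harmonic map between two round $2$-spheres, for which I will invoke the classical (anti)holomorphic classification going back to Chern and Eells-Wood, already referenced in \cite{BCL}: after stereographic projections, any such map is either a holomorphic or an antiholomorphic rational self-map of $\mathbb{C}P^1$. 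Conceptually this is a consequence of the weak conformality of Lemma \ref{lemma2calabi} combined with Riemann-type removability of isolated singularities, so we may take it as a black box.

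To handle Case (i), I will set $f := \boldsymbol\sigma^{(1)}_2 \circ \boldsymbol\omega \circ \boldsymbol\sigma_2^{-1} : \mathbb{C}P^1 \to \mathbb{C}P^1$ and observe, via \eqref{equivsigma2intoC} and the first identity of \eqref{partialstereoequiv}, that the $\mathbb{S}^1$-equivariance of $\boldsymbol\omega$ translates into $f(e^{i\alpha}z) = e^{i\alpha} f(z)$ for every $\alpha \in \mathbb{R}$. In the holomorphic case, differentiating this identity at $\alpha = 0$ yields the ODE $z f'(z) = f(z)$, whose only rational solutions are $f(z) = \mu_1 z$ with $\mu_1 \in \mathbb{C}^*$. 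In the antiholomorphic case, writing $f(z) = g(\bar z)$ produces the analogous equation $w g'(w) = -g(w)$, so that $f(z) = \nu/\bar z$ for some $\nu \in \mathbb{C}^*$; a short direct computation will show that this family is obtained exactly by postcomposing the holomorphic model with the antipodal involution on $\mathbb{S}^2_{(1)}$ (which in $\mathbb{C}$-coordinates is $z \mapsto -1/\bar z$), hence corresponds to the minus sign in \eqref{degeneratesp1} after relabelling the free parameter. The sign is then pinned down by evaluating at $z = \infty$, using $\boldsymbol\sigma_2^{-1}(\infty) = S^{(2)}$ and $\boldsymbol\omega^{(1)}_{\rm eq}(S^{(2)}) = S^{(4)}$, while the degree equality $d = \pm 1$ follows from \eqref{Calabi} together with the obvious fact that $z \mapsto \mu_1 z$ and $z \mapsto \nu/\bar z$ have topological degree $\pm 1$.

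Case (ii) will follow the same scheme with the partial stereographic projection $\boldsymbol\sigma^{(2)}_2$, the crucial modification being that the action on $L_2$ has degree two (second identity of \eqref{partialstereoequiv}), so the equivariance of the induced $f$ now reads $f(e^{i\alpha}z) = e^{2i\alpha} f(z)$. The corresponding ODEs become $z f'(z) = 2 f(z)$ and $w g'(w) = -2 g(w)$, whose rational solutions $f(z) = \mu_2 z^2$ and $f(z) = \nu/\bar z^2$ together deliver \eqref{degeneratesp2} and $d = \pm 2$ via \eqref{Calabi}. The only genuinely nontrivial ingredient in the whole argument is the classical (anti)holomorphic classification of harmonic $2$-spheres into $\mathbb{S}^2$, and this is where I expect the main conceptual obstacle to sit; once it is granted, the remainder reduces to elementary equivariance bookkeeping and a sign check at the south pole.
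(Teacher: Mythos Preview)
Your proposal is correct and follows essentially the same strategy as the paper: apply Lemma~\ref{dichotnonfulllemma} to reduce to an $\mathbb{S}^2$-valued harmonic map, invoke the classical (anti)holomorphic classification (the paper cites \cite{Le,BCL}), and then use the equivariance of the stereographic conjugate $f$ to pin down the rational function. The only cosmetic differences are that the paper constrains $f$ by locating its zeros and poles via $\boldsymbol{\omega}^{-1}(\{\pm S^{(4)}\}) \subset \{\pm S^{(2)}\}$ rather than through your ODE, and it handles the sign by passing from $\boldsymbol{\omega}$ to $-\boldsymbol{\omega}$ at the outset instead of treating the antiholomorphic branch separately.
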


\begin{proof}
By Lemma~\ref{dichotnonfulllemma}, if $\boldsymbol{\omega}=(\boldsymbol{\omega}_0,\boldsymbol{\omega}_1,\boldsymbol{\omega}_2):\mathbb{S}^2\to\mathbb{S}^4$ is a nonconstant $\mathbb{S}^1$-equivariant and  nonfull harmonic map, then either $\boldsymbol{\omega}_2\equiv 0$, or $\boldsymbol{\omega}_1\equiv 0$. We shall consider the two cases separately. 
\vskip3pt

\noindent{\it Case 1: $\boldsymbol{\omega}_2\equiv 0$.}  Recalling that $\mathbb{S}^2_{(1)}$ denotes the unit sphere of $\R\oplus\C\oplus\{0\}\simeq L_0\oplus L_1$, the mapping $\boldsymbol{\omega}:\mathbb{S}^2\to \mathbb{S}^2_{(1)}$ is harmonic. Since it is not constant, it has a nonzero topological degree $d\in \Z\setminus \{0\}$ such that \eqref{Calabi} holds and, considering $-\boldsymbol{\omega}$ instead of  $\boldsymbol{\omega}$ if necessary, we may assume that $d>0$, so that $\boldsymbol{\omega}$ is holomorphic. Note that $\boldsymbol{\omega}^{-1}(\{\pm S^{(4)}\})\subset \mathbb{S}^2$ are finite sets and $\boldsymbol{\omega}^{-1}(\{\pm S^{(4)}\}) \subset \{ \pm S^{(2)}\}$ because of equivariance.  In view of \cite{Le} (see also \cite[Section 7]{BCL}) and \eqref{partialstereo}, the  map $f:=\boldsymbol{\sigma}_2^{(1)}\circ\boldsymbol{\omega}\circ \boldsymbol{\sigma}^{-1}_2:\C\to\C$ is a well-defined rational function of the form $f(z)=P(z)/Q(z)$ for some coprime polynomials $P$ and $Q$ such that $d=\max \{ \deg P , \deg Q\}$. Since $\{Q=0\} \subset \boldsymbol{\omega}^{-1}(S^{(4)})$ and $\{P=0\} \subset \boldsymbol{\omega}^{-1}(-S^{(4)})$, we must have $f(z)=\mu_1 z^l$ for some $l\in \Z\setminus \{0\}$. Since $\boldsymbol{\sigma}_2$ and $\boldsymbol{\sigma}^{(1)}_2$
are equivariant in view of \eqref{equivsigma2intoC} and \eqref{partialstereoequiv}, then the map $f$ is also equivariant, namely $f(e^{i\alpha}z)=e^{i\alpha}f(z)$ for every $e^{i\alpha}\in\mathbb{S}^1$ and for every $z\neq 0$. Comparing with \eqref{omegacoefficients} yields $l=1$ and the conclusion follows from the definition of $f$ and the identity $\left( \boldsymbol{\sigma}^{(1)}_2 \right)^{-1} = \boldsymbol{\omega}^{(1)}_{\rm eq} \circ \boldsymbol{\sigma}_2^{-1}$.

\noindent{\it Case 2: $\boldsymbol{\omega}_1\equiv 0$.} This second case being entirely similar to the first one, we briefly sketch it just for the reader's convenience. Recalling that $\mathbb{S}^2_{(2)}$ denotes the unit sphere of $\R\oplus\{0\}\oplus\C \simeq L_0 \oplus L_2$,  $\boldsymbol{\omega}:\mathbb{S}^2\to \mathbb{S}^2_{(2)}$ is a nonconstant harmonic map, hence it has a nonzero topological degree $d\in \Z\setminus \{0\}$ such that \eqref{Calabi} holds. As above, up to a sign we may assume $d>0$, hence $\boldsymbol{\omega}$ is holomorphic. Defining the  map $f:=\boldsymbol{\sigma}_2^{(2)}\circ\boldsymbol{\omega}\circ \boldsymbol{\sigma}^{-1}_2:\C\to\C$ we still have a rational function of the form $f(z)=P(z)/Q(z)$ for some coprime polynomials $P$ and $Q$ such that $d=\max \{ \deg P , \deg Q\}$ and the same argument above gives $f(z)=\mu_2 z^l$ for some $l\in \Z\setminus \{0\}$ because of equivariance. The equivariance properties \eqref{equivsigma2intoC} and \eqref{partialstereoequiv} of $\boldsymbol{\sigma}_2$ and $\boldsymbol{\sigma}^{(2)}_2$ now imply $f(e^{i\alpha}z)=e^{i2\alpha}f(z)$ for every $e^{i\alpha}\in\mathbb{S}^1$ and for every $z\neq 0$. Comparing with \eqref{omegacoefficients} in this case yields $l=2$ and the conclusion follows from the definition of $f$ and the identity $\left( \boldsymbol{\sigma}^{(2)}_2 \right)^{-1} = \boldsymbol{\omega}^{(2)}_{\rm eq} \circ \boldsymbol{\sigma}_2^{-1}$.
\end{proof}

\subsection{Linearly full harmonic spheres and horizontal algebraic curves.}
In this subsection, we construct $\bbS^1$-equivariant linearly full harmonic spheres $\boldsymbol{\omega} : \mathbb{S}^2 \to \mathbb{S}^4$ by composing $\mathbb{S}^1$-equivariant {\em horizontal} algebraic curves $\Phi: \C P^1 \to \C P^3$ with the twistor fibration  $\boldsymbol\tau :\bb{C} P^3 \to \bb{S}^4$ defined in \eqref{eq:tau}. We will limit as much as possible the geometric terminology referring the unfamiliar reader to \cite{BairdWood} and \cite{Ura} for all the relevant definitions. 

Recall that $\C P^3$ is a compact manifold with a natural Riemannian metric called the Fubini-Study metric. A possible way to define it is to consider the embedding $\C P^3 \hookrightarrow \calM_{4\times 4} (\C)$ induced by the map $\C^4 \setminus \{ 0\} \ni w \longrightarrow I_4-2 \frac{w \otimes \bar{w}}{|w|^2} \in \calM_{4\times 4} (\C)$ sending each complex line in $\C^4$ into the reflection across the complex 3-plane $w^\perp$, and to consider the pull-back metric on $\C P^3$ of the Riemannian metric $(A, B)= \Re \, \rm{tr} (A^*B)$ \, on  $\calM_{4\times 4} (\C)$. Actually the Fubini-Study metric extends to an Hermitian metric on the complexified tangent bundle which is the pull-back of the standard Hermitian metric $\langle A, B \rangle=  \rm{tr} (A^*B)$ on $\calM_{4\times 4} (\C) $ and endowed with this metric $\C P^3$ is a complex K\"ahler manifold. 

Once $\bb{C} P^3$ is endowed with the Fubini-Study metric the map $\boldsymbol\tau$ becomes a Riemannian submersion \cite{BoWo1}. The \emph{horizontal distribution} $\mathcal{H} = \rmKer {\rm d}\boldsymbol\tau^\perp \subset T \bb{C} P^3$ is the family of subspaces $ \mathcal{H}_{[w]} \subset T_{[w] }\C P^3$  parametrized by $[w] \in \C P^3$ consisting of those tangent vectors to $\bb{C} P^3$ at $[w]$ that are orthogonal to the fibers of $\boldsymbol\tau$ passing through $[w]$, i.e., of those vectors belonging to the $4$-dimensional plane $(\rmKer {\rm d}\boldsymbol\tau_{[w]})^\perp $. Being $\boldsymbol\tau$ by definition a Riemannian submersion, ${\rm d}{\boldsymbol \tau}_{[w]}: \mathcal{H}_{[w]} \to T_{\boldsymbol\tau([w])} \mathbb{S}^4$ is an isometry for any $[w] \in \C P^3$.

We say that a holomorphic map $\tilde \Phi : \C P^1 \to \C P^3$ is \emph{horizontal} if at each point of its image it intersects the fibers of $\boldsymbol\tau$ orthogonally, i.e., $\rmRan {\rm d} \Phi_{[z]} \subset \mathcal{H}_{\Phi(z)}$ for any $z=[z_0,z_1] \in \C P^1$.

In order to check both holomorphicity and horizontality, but also to take advantage of $\mathbb{S}^1$-equivariance, it is convenient to compose the maps with the stereographic projection $\boldsymbol{\sigma}_2^{-1}$ and to consider the induced maps $\Phi:\C \to\C P^3$, $\Phi(z)=\Phi([1,z])$. 
The map $\Phi$ is usually defined (at least locally) as $\Phi(z)=[\Psi(z)]$, where $\Psi :\C \to \C^4 \setminus \{0\}$ is a smooth map, $\Psi(z)=(\Psi_0(z), \ldots, \Psi_3(z))$.

For a smooth function $f:\C \to \C$ of the complex variable $z=x_1+i x_2 \in \C$  we will consider complex derivatives $f_{z}:=\partial_z f$ and $f_{\bar{z}}:=\partial_{\bar{z}} f$ with respect to the usual Wirtinger's operators
\[
	\partial_z = \frac{1}{2}\left( \pd{}{x_1} - i\pd{}{x_2} \right), \quad \partial_{\bar{z}} = \frac{1}{2}\left( \pd{}{x_1} + i\pd{}{x_2} \right) \, .
\] 
Clearly $f$ is holomorphic (resp., antiholomorphic) in $\C$ if and only if $f_{\bar{z}} \equiv 0$ (resp., $f_z \equiv 0$). In addition, the following intertwining relation between Wirtinger's operators and complex conjugation, namely $\bar{f}_{\bar{z}} = \overline{f_z}$,  will be tacitly used during the computations. 

Note that if $f$ is {\em $\mathbb{S}^1$-equivariant of degree k} in the sense that $f(e^{i \alpha} z)= e^{i k \alpha} f(z)$ for all $z\in\C$ and some $k\in \mathbb{Z}$, then we have the identities on $\C$ for any $\alpha \in \R$,
\begin{equation}
\label{derfequivariance}	
 f_{\bar{z}} (e^{i \alpha} z)= e^{i (k+1) \alpha} f_{\bar{z}} (z) \, , \qquad  f_{z} (e^{i \alpha} z)= e^{i (k-1) \alpha} f_{z} (z) \, ,
\end{equation}
showing how the ``degree of equivariance'' changes under complex differentiation. Clearly the conjugate function $\bar{f}$ inherits equivariance of degree $-k$ and similar relations for the degree of equivariance hold for its complex derivatives $\bar{f}_z$ and $\bar{f}_{\bar{z}}$.

The following simple result gives a full description of equivariant holomorphic maps $\Psi : \C P^1 \to \C P^3$ together with their horizontality property. 

\begin{lemma} 
\label{horizontallifts}
For each map $\Psi : \C \setminus \{0\}\simeq \C P^1 \setminus \{\pm S^{(2)}\} \to \C P^3 $,  $\Psi([z_0,z_1])=\Psi(z_1/z_0)$, the following are equivalent:

\begin{enumerate}
\item $\Psi$ is a nonconstant holomorphic map on  $\C P^1\setminus\{\pm S^{(2)}\} $ and $\mathbb{S}^1$-equivariant with respect to the actions \eqref{S1actionCP1} and \eqref{S1actionCP3};

\item there exist $\mu=(\mu_0,\mu_1, \mu_2, \mu_3) \in \C^4 $	with at least two nonzero entries such that on  $\C P^1\setminus\{\pm S^{(2)}\} $ we have
\begin{equation}
\label{algebraiccurves}
 \Psi ([z_0, z_1]) =	\left[ \mu_0 z_0^3, \mu_1 z_0^2 z_1, \mu_2 z_0 z_1^2, \mu_3 z_1^3 \right] \, .
	\end{equation}	 
\end{enumerate}
As a consequence, $\Psi$ extends to a holomorphic map $\Psi : \C P^1 \to \C P^3$ still given by \eqref{algebraiccurves}.
In addition, the map $\Psi$ in \eqref{algebraiccurves} is horizontal if and only if the parameters $\mu_0,\mu_1, \mu_2$ and $\mu_3$ satisfy 
\begin{equation}
\label{mu-horizontal}
\mu_0 \mu_3=-\frac{\mu_1 \mu_2}3 \, .
\end{equation}

\end{lemma}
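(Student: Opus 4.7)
The implication (2) $\Rightarrow$ (1) is a direct verification: the formula \eqref{algebraiccurves} defines a holomorphic polynomial map $\C^2 \setminus \{0\} \to \C^4$ that descends to a well-defined holomorphic map $\Psi : \C P^1 \to \C P^3$ (provided at least two $\mu_k$'s are nonzero, otherwise the image reduces to a single point), and equivariance with respect to \eqref{S1actionCP1} and \eqref{S1actionCP3} is transparent because the $k$-th entry picks up the factor $e^{ik\alpha}$ when $z_1$ is replaced by $e^{i\alpha}z_1$.

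For (1) $\Rightarrow$ (2), the plan is to pass to the affine chart $z = z_1/z_0$, so that $\Psi$ becomes a nonconstant holomorphic equivariant map $\C^* \to \C P^3$. Since $\Psi$ is nonconstant, its image is not contained in every coordinate hyperplane $\{w_j = 0\}$, so I can fix an index $j \in \{0,1,2,3\}$ for which the $j$-th entry of any local holomorphic lift of $\Psi$ is not identically zero. The ratios
\[
f_k(z) := \Psi_k(z)/\Psi_j(z), \qquad k \in \{0,1,2,3\},
\]
are then intrinsically well-defined as meromorphic functions on $\C^*$ (independently of the local lift, since different lifts differ by a scalar holomorphic factor that cancels in the quotient). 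Transcribing the equivariance $\Psi(e^{i\alpha}z) = R_\alpha \cdot \Psi(z)$ in inhomogeneous coordinates yields
\[
f_k(e^{i\alpha}z) = e^{i(k-j)\alpha} f_k(z) \qquad \forall \alpha \in \R, \; \forall z \in \C^* .
\]
On any annulus in $\C^*$ on which $f_k$ is holomorphic, expanding $f_k$ in Laurent series and matching Fourier modes in $\alpha$ forces all coefficients except the one attached to $z^{k-j}$ to vanish, whence $f_k(z) \equiv c_k z^{k-j}$ on $\C^*$ by meromorphic continuation (with $c_j = 1$). Clearing the denominator and passing back to homogeneous coordinates gives $\Psi([z_0, z_1]) = [c_0 z_0^3, c_1 z_0^2 z_1, c_2 z_0 z_1^2, c_3 z_1^3]$, which is \eqref{algebraiccurves} with $\mu_k = c_k$. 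The ``at least two nonzero entries'' clause follows from nonconstancy, and the extension to all of $\C P^1$ is immediate from the explicit formula.

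For the horizontality part, the strategy is first to recall (from \cite{BoWo1,BoWo2,LemaireWood} or from a direct computation based on the definition \eqref{eq:tau}) the algebraic $\boldsymbol\tau$-horizontality equation for a holomorphic lift $\widetilde\Psi = (\widetilde\Psi_0, \widetilde\Psi_1, \widetilde\Psi_2, \widetilde\Psi_3)$ of $\Psi : U \subset \C \to \C P^3$, namely
\[
\widetilde\Psi_0 \, \widetilde\Psi_3' - \widetilde\Psi_3 \, \widetilde\Psi_0' + \widetilde\Psi_1 \, \widetilde\Psi_2' - \widetilde\Psi_2 \, \widetilde\Psi_1' \equiv 0,
\]
which encodes the vanishing of the $\C$-bilinear symplectic form on $\C^4 \simeq \bbH^2$ associated to the quaternionic structure underlying $\boldsymbol\tau$ (the sign pattern being the one compatible with the identification $(\zeta_0,\zeta_1,\zeta_2,\zeta_3) \leftrightarrow (\zeta_0 + \zeta_3 j, \zeta_1 + \zeta_2 j)$ used in the excerpt). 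Plugging the affine-chart lift $\widetilde\Psi(z) = (\mu_0, \mu_1 z, \mu_2 z^2, \mu_3 z^3)$ of \eqref{algebraiccurves} into this equation, a short computation produces $(3\mu_0 \mu_3 + \mu_1 \mu_2) z^2 \equiv 0$, which is exactly \eqref{mu-horizontal}. The main obstacle here is not the equivariance analysis (algebraic and elementary) but the careful derivation of the horizontality equation from \eqref{eq:tau}, a classical but non-trivial computation that hinges on tracking the interaction of the complex-linear and quaternionic structures on $\C^4$.
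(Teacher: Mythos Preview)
Your proof is correct and follows essentially the same strategy as the paper: for (1)$\Rightarrow$(2) both arguments use equivariance to force the inhomogeneous ratios to be monomials $c_k z^{k-j}$ (you via a global meromorphic/Laurent-series argument, the paper via local equivariant lifts on annuli, comparison on a circle, and the identity principle), and for horizontality both plug \eqref{algebraiccurves} into the pull-back of the contact form $\Theta = w_0\,dw_3 - w_3\,dw_0 + w_1\,dw_2 - w_2\,dw_1$ to obtain $(3\mu_0\mu_3 + \mu_1\mu_2)z^2 \equiv 0$. Your use of the intrinsically defined meromorphic ratios $f_k$ is marginally cleaner in that it sidesteps the paper's implicit claim that one can choose a local lift whose components are individually equivariant, but the two routes are otherwise interchangeable.
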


\begin{proof}
We only discuss the implication $(1) \Rightarrow (2)$, as the converse implication is trivial.

Clearly it is enough to show that for each $z_* \neq 0$ there exists an $\mathbb{S}^1$-invariant open annulus $A=A_{z_*}$ on which the representation \eqref{algebraiccurves} holds. Indeed, taking a collection $\mathcal{A}$ of such annuli, when two annuli $A, A' \in \mathcal{A}$ overlap the corresponding representations with parameters $\mu$ and $\mu'$ must be pointwise proportional, hence the parameters $\mu$ and $\mu'$ must be proportional, i.e. each representation is valid on both the annuli. As a consequence, passing to a subfamily $\mathcal{A}' \subset \mathcal{A}$ which is a locally finite cover of $\C \setminus \{0\}$ allows to pick a representation \eqref{algebraiccurves} with fixed $\mu$ which is valid in the whole $\C \setminus \{0\}$.

Since $\Psi$ is holomorphic and $\mathbb{S}^1$-equivariant, for each $z_*\neq 0$ there exists an invariant annulus $A$ containing $z_*$ and a holomorphic map $\Phi :A \to \C^4 \setminus \{0\}$ such that $\Psi=[\Phi]$ on $A$ and $\Phi(z)=(\Phi_0(z), \ldots , \Phi_3(z))$ has each entry $\Phi_\ell$ which is equivariant of degree $\ell$ on $A$ and at least two of them do not vanish identically because $\Psi$ is nonconstant. Note that because of equivariance each entry $\Phi_\ell$ is either identically zero or nowhere vanishing on $A$ (for if $\Phi_\ell$ vanishes on a circle then it is zero everywhere by the identity principle for holomorphic functions). Choosing $\mu_\ell=0$ whenever $\Phi_\ell \equiv 0$, we can focus on the nonzero components.

We first fix an invariant circle $\mathcal{C}\subset A$ passing through $z_*$, we take $0 \leq m <\ell\leq 3$ corresponding to nonzero components of $\Phi$, with $m$ being the minimum of such indices, and we set $\mu_m=1$. Then the ratio $\Phi_\ell / \Phi_m$ is well-defined and holomorphic on $A$ and it is equivariant of degree $\ell-m$. As a consequence $\Phi_\ell(z) / \Phi_m(z)$ is a nonzero constant multiple $\mu_\ell\neq 0$ of $z^{\ell -m}$ on the circle $\mathcal{C}$, hence $\Phi_\ell \equiv \mu_\ell z^{\ell-m}\Phi_m$ on $A$ again because of the identity principle. Varying $\ell$ with the restriction above we have shown in the annulus $A$ the identity $\Phi(z)=\Phi_m(z) \left( \mu_0 z^{-m}, \ldots , \mu_3 z^{3-m}\right)$, hence \eqref{algebraiccurves} holds on $A$ because $\Phi_m\neq 0$ on $A$. Thus $(1) \Rightarrow (2)$ is completely proved.

Once the equivalence is proved, it is obvious that $\Psi$ admits the obvious holomorphic extension to $\C P^1$  still given by \eqref{algebraiccurves}. Concerning horizontality, in view of the homogeneity in \eqref{algebraiccurves} we can regard $\Phi$ as a globally defined map $\Phi: \C^2 \to \C^4$ and considering on $\C^4$ the 1-form 
\[
  \Theta= w_0 \,\mbox{d}w_3 -  w_3 \,\mbox{d}w_0 +  w_1 \,\mbox{d}w_2 -  w_2 \,\mbox{d}w_1,
\]
by the explicit form of $\boldsymbol\tau$ the horizontality condition can be rewritten (see \cite{LemaireWood}, see also \cite{BoWo1}, \cite{BoWo2})
 as
\begin{equation}
\label{eq:horizontality1}
  \Phi^* \Theta = 0.
\end{equation} 
Then, a simple calculation combining \eqref{algebraiccurves} and \eqref{eq:horizontality1} yields \eqref{mu-horizontal}.
\end{proof}

The next result provides an explicit family of linearly full equivariant harmonic spheres.

\begin{proposition}\label{prop:explfullspheres}
Let $\Psi : \bbS^2=\C P^1 \to \C P^3$ be an $\mathbb{S}^1$-equivariant horizontal holomorphic map as in \eqref{algebraiccurves}-\eqref{mu-horizontal} and assume $\mu_0=1$. Then the composition $\boldsymbol{\omega}=\boldsymbol\tau \circ \Psi$ is given by the formulas

\begin{equation}\label{eq:classification}
	\begin{split}
	\boldsymbol{\omega}([z_0, z_1]) = &\frac{1}{D(z_0, z_1)} \left( \abs{z_0 }^6 - \abs{\mu_1}^2 \abs{z_0}^4\abs{z_1}^2-  \abs{\mu_2}^2 \abs{z_0}^2 \abs{z_1}^4  +\frac{\abs{\mu_1}^2\abs{\mu_2}^2}{9}\abs{z_1}^6 , \right. \\
	& \left. 2 \mu_1 \overline{z_0} z_1 \left( \abs{z_0}^4 - \frac{\abs{\mu_2}^2}{3}\abs{z_1}^4 \right), 2 \mu_2 \overline{z_0}^2 z_1^2 \left( \abs{z_0}^2 + \frac{\abs{\mu_1}^2}{3} \abs{z_1}^2 \right)  \right),
	\end{split}
	\end{equation}		
where

	\begin{equation}
		D(z_0,z_1) =  \abs{z_0 }^6 + \abs{\mu_1}^2 \abs{z_0}^4\abs{z_1}^2+  \abs{\mu_2}^2 \abs{z_0}^2 \abs{z_1}^4  +\frac{\abs{\mu_1}^2\abs{\mu_2}^2}{9}\abs{z_1}^6 \, .
	\end{equation}
In addition, for each $(\mu_1, \mu_2) \in \C^2$ the map $\boldsymbol{\omega}:\mathbb{S}^2 \to \mathbb{S}^4$ is an  $\mathbb{S}^1$-equivariant harmonic map with $\boldsymbol{\omega}([1, 0]) = (1,0,0)$. Moreover, $\boldsymbol{\omega}$ is linearly full with energy $E(\pmb{\omega})=4\pi d=12 \pi$   if and only if $\mu_1\neq 0$ and $\mu_2 \neq 0$.
\end{proposition}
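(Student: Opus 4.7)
The plan is to handle the four assertions separately: the explicit formula \eqref{eq:classification}, the value at $[1,0]$, the $\bbS^1$-equivariance and harmonicity of $\boldsymbol\omega=\boldsymbol\tau\circ\Psi$, and finally the characterization of linear fullness together with the energy.

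First, to establish \eqref{eq:classification} I would simply substitute $(w_0,w_1,w_2,w_3)=(z_0^3,\mu_1 z_0^2 z_1,\mu_2 z_0 z_1^2,-\tfrac{\mu_1\mu_2}{3}z_1^3)$ (with $\mu_3$ fixed by the horizontality condition \eqref{mu-horizontal}) directly into the definition \eqref{eq:tau} of the twistor fibration. This is essentially bookkeeping: the four squared moduli assemble into $D(z_0,z_1)$; the combination $|w_0|^2+|w_3|^2-|w_1|^2-|w_2|^2$ produces the first component; and the cross-term sums $\overline{w_0}w_1+\overline{w_2}w_3$ and $\overline{w_0}w_2-\overline{w_1}w_3$ factor (via identities such as $\bar{z_0}^{\,3}z_0^2=\bar z_0|z_0|^4$ and $\bar{z_0}^{\,2}\bar{z_1}z_1^3=\bar z_0^{\,2}z_1^2|z_1|^2$) into the two explicit expressions displayed. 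Plugging $[z_0,z_1]=[1,0]$ into the resulting formula gives $\boldsymbol\omega([1,0])=(1,0,0)$ on the nose.

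Equivariance is then immediate from composing the equivariance of $\Psi$ in Lemma \ref{horizontallifts} with that of $\boldsymbol\tau$ in Lemma \ref{lemma:tau-s1-eq}. Harmonicity is the content of the classical twistorial construction (Calabi \cite{Ca}, see also \cite{BairdWood}): whenever $\Psi:\C P^1\to\C P^3$ is holomorphic and horizontal, $\boldsymbol\tau\circ\Psi$ is a weakly conformal harmonic map into $\bbS^4$. I would invoke this theorem rather than rederive it. For linear fullness, the ``only if'' direction is read off \eqref{eq:classification}: if $\mu_1=0$ the second complex component $\boldsymbol\omega_1$ vanishes identically, so $\boldsymbol\omega(\bbS^2)\subset L_0\oplus L_2$, and symmetrically if $\mu_2=0$. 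Conversely, if $\mu_1\mu_2\neq 0$, the explicit formula shows that both $\boldsymbol\omega_1$ and $\boldsymbol\omega_2$ are not identically zero and that $\boldsymbol\omega$ is nonconstant; the dichotomy of Lemma \ref{dichotnonfulllemma} then forces $\boldsymbol\omega$ to be linearly full.

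It remains to verify $E(\boldsymbol\omega)=12\pi$ when $\mu_1\mu_2\neq0$. By Lemma \ref{lemma1calabi} one has $E(\boldsymbol\omega)=4\pi|d|$ for some positive integer $|d|$, so the claim reduces to showing $|d|=3$. The cleanest route uses horizontality: since $\boldsymbol\tau$ is a Riemannian submersion and $\Psi$ is horizontal, $|{\rm d}\boldsymbol\omega|^2=|{\rm d}\Psi|^2$ pointwise, hence $E(\boldsymbol\omega)$ equals the Fubini--Study area of the image rational curve $\Psi(\C P^1)\subset\C P^3$, which has algebraic degree $3$ (a rational normal cubic) precisely when the defining cubic polynomials in \eqref{algebraiccurves} are coprime, i.e.\ when $\mu_1\mu_2\neq0$; the degree--area formula then gives $12\pi$. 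The main obstacle I anticipate is pinning down the correct normalization constants in this argument, so as a self-contained fallback I would parametrize $z_1/z_0=\tan(\theta/2)e^{i\phi}$, read off the scalar radial profiles $\omega_0(\theta)$, $\omega_1(\theta)$, $\omega_2(\theta)$ from \eqref{eq:classification}, and integrate via the conformality identity \eqref{conformalityII} to obtain $E(\boldsymbol\omega)=2\pi\int_0^\pi\bigl(|\omega_1(\theta)|^2+4|\omega_2(\theta)|^2\bigr)\,\frac{d\theta}{\sin\theta}$; after simplification (and using that the integrand is a rational function of $\tan(\theta/2)$ whose primitive is elementary), this yields $12\pi$ independently of $\mu_1,\mu_2\in\C^*$.
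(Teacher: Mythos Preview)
Your proposal is correct and largely parallels the paper's proof: the formula and normalization are obtained by direct substitution, equivariance by composing Lemmas~\ref{horizontallifts} and~\ref{lemma:tau-s1-eq}, harmonicity via the twistorial construction (the paper cites \cite{Ura} for the two facts that holomorphic maps into K\"ahler manifolds are harmonic and that horizontal harmonic maps project to harmonic maps under a Riemannian submersion), and linear fullness by reading off when $\boldsymbol\omega_1$ and $\boldsymbol\omega_2$ vanish.

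The one genuine difference is the energy computation. The paper avoids both of your routes by a continuity trick: since $E(\boldsymbol\omega)\in 4\pi\N$ by Lemma~\ref{lemma1calabi} and the map \eqref{eq:classification} depends continuously (indeed $C^1$) on $(\mu_1,\mu_2)$, the energy is locally constant on the connected set $\C^*\times\C^*$, hence it suffices to evaluate at a single convenient point. Choosing $\mu_1=\mu_2=\sqrt{3}$ gives the Veronese embedding (Remark~\ref{rmk:hedgehog}), for which $|\nabla_T\boldsymbol\omega|^2\equiv 6$ and so $E=12\pi$. This sidesteps exactly the normalization worries you flagged in your degree--area argument and the tedium of your fallback integral; conversely, your degree--area approach has the merit of explaining \emph{why} the answer is $3$ (it is the algebraic degree of the rational normal cubic $\Psi$) rather than merely verifying it at one point.
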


\begin{proof}
The validity of the formula \eqref{eq:classification} for the composition $\boldsymbol\tau \circ \Psi$ together with the normalization $\boldsymbol{\omega}([1, 0]) = (1,0,0)$ are just straightforward computations. It is easy to check that the corresponding maps are $\mathbb{S}^1$-equivariant with respect to the actions given in \eqref{S1actionCP1}-\eqref{S1actionRCC}, hence \eqref{omegaequiv} holds. 
Harmonicity of $\boldsymbol{\omega}$ as in \eqref{eq:classification} can be verified by a direct checking of \eqref{eqharmspheres}. Alternatively, since the projective spaces $\C P^N$ are K\"ahler manifolds, by \cite[Chapter 4, Prop. 3.14]{Ura} each holomorphic map in \eqref{algebraiccurves} is harmonic. In addition, since $\boldsymbol\tau$ is a Riemannian submersion and under \eqref{mu-horizontal} each $\Psi$ is a horizontal harmonic map, then  the composite maps $\boldsymbol{\omega}=\boldsymbol\tau \circ \Psi $ in \eqref{eq:classification} are harmonic in view of \cite[Chapter 6, Prop. 2.36]{Ura}. In view of the explicit form of its components, we see that $\boldsymbol{\omega}_1 \equiv 0$ iff $\mu_1=0$ and  $\boldsymbol{\omega}_2 \equiv 0$ iff $\mu_2=0$, therefore the map $\boldsymbol{\omega}$ is full if and only if $\mu_1\neq 0$ and $\mu_2\neq 0$.
 Finally, according to \eqref{Calabi}, the value of the energy being discrete, it has to be locally constant under continuous changes of the parameters, hence it must be independent of their effective values, whenever the condition $\mu_1 \mu_2 \neq 0$ holds (because the dependence of the map on the parameters is easily seen to be continuous in $C^1(\mathbb{S}^2;\mathbb{S}^4)$). Choosing $\mu_1=\mu_2=\sqrt{3}$ a direct calculation gives $|\nabla_T \boldsymbol{\omega}|^2 \equiv 6$, hence $E(\boldsymbol{\omega})=12 \pi$, $d=3$ and the proof is complete. 
\end{proof} 		

\begin{remark}\label{rmk:hedgehog}
As already observed in the previous proof and as we will comment more in the next section, the case $\mu_1 = \mu_2 = \sqrt{3}$ is special. The corresponding harmonic sphere is by a direct computation 
\[
	\boldsymbol{\omega}^{(H)}([z_0,z_1]) = \frac{1}{\left(\abs{z_0}^2+\abs{z_1}^2\right)^2} \left( \abs{z_0}^4 - 4\abs{z_0}^2\abs{z_1}^2 + \abs{z_1}^4, 2 \sqrt{3} \, \overline{z_0} z_1 \left( \abs{z_0}^2 - \abs{z_1}^2\right), 2 \sqrt{3} \, \overline{z_0}^2 z_1^2  \right),
\]
it is invariant under the antipodal map $\C P^1 \ni [z_0,z_1] \to [\overline{z_1}, -\overline{z_0}] \in \C P^1$  and it corresponds to the {\em Veronese embedding} $\mathbb{S}^2 / \{\pm 1\}= \R P^2 \hookrightarrow \mathbb{S}^4$.
\end{remark}

\begin{remark}
  Notice that the function $\omega_0$ is always real-valued, while the functions $\omega_1$, $\omega_2$ in \eqref{eq:omega} are complex-valued whenever the parameters $\mu_1$, $\mu_2$ are such and real-valued otherwise.
\end{remark}

\begin{remark}
\label{degeneratecases}
It is worth noticing that letting $\mu_2=0$ or $\mu_1=0$ in \eqref{eq:classification} we obtain precisely, up to the double sign, the linearly degenerate harmonic spheres of energy $4\pi$ and $8\pi$ respectively described in Proposition~\ref{degeneratespheres}.
\end{remark}

\subsection{Classification of linearly full harmonic spheres.}
In this final subsection we are going to show that  every equivariant linearly full harmonic sphere $\boldsymbol\omega$ is actually one of those constructed in Proposition~\ref{prop:explfullspheres}, possibly up to composition with the antipodal involution on $\mathbb{S}^4$. As already announced in the Introduction, the key step to achieve such classification  is to obtain each $\bbS^1$-equivariant linearly full harmonic sphere ${\boldsymbol\omega}: \bb{S}^2=\C P^1 \to \bb{S}^4$  as a composition of the twistor fibration  $\boldsymbol\tau :\bb{C} P^3 \to \bb{S}^4$ defined in \eqref{eq:tau} with a horizontal $\bbS^1$-equivariant algebraic curve $\widetilde{\boldsymbol \omega} : \bb{C} P^1 \to \bb{C} P^3$, called the \emph{twistor lift}\footnote{The \emph{twistor space} of $\bbS^4$ is $\SO(5)/\U(2)$, see \cite[Chapter 7]{BairdWood}. However, for the purpose of presenting the lift in terms of explicit formulas, we find more convenient to work with its equivalent presentation as $\C P^3$. For details on this identification and the deduction of these formulas from those in \cite{Ca} we refer the interested reader to \cite{Fawley}.} of $\boldsymbol\omega$. The geometric meaning of such construction in terms of orthogonal almost complex structures on the tangent spaces $\{T_{\boldsymbol\omega(p)} \mathbb{S}^4\}_{p \in \mathbb{S}^2}$ together with its far-reaching higher dimensional generalizations in the framework of twistor theory are described in details in the references given in the Introduction but they will not be discussed at all here. Indeed, in order to keep to the minimum the needed background from complex geometry, we will follow quite closely the concrete description of the lift from \cite{BoWo1,BoWo2,BoWo3} and \cite{Fawley}, but avoiding any use of quaternions, so that the argument here will be more elementary and it will be presented in an almost self-contained form. Thus, we will take the general formulas for the lift $\widetilde{\boldsymbol\omega}$ from those papers as an ansatz, and prove that they actually satisfy all the desired properties in order to reconstruct ${\boldsymbol\omega}$. In particular, we will check that the construction of $\widetilde{\boldsymbol\omega}$ is compatible with the $\mathbb{S}^1$-equivariance constraint,
which, apparently, has not yet been considered in the literature. As a final consequence, it will be quite easy to obtain explicit formulas for all the linearly full equivariant harmonic spheres ${\boldsymbol\omega} : \mathbb{S}^2 \to \mathbb{S}^4$ in terms of those in \eqref{eq:classification}.

We now explain how to construct an algebraic $\bbS^1$-equivariant horizontal lift starting from ${\boldsymbol\omega}$. Since ${\boldsymbol\omega}(-S^{(2)})\in \{\pm S^{(4)}\}$ then, without loss of generality, up to composing with the antipodal map $a :\mathbb{S}^4 \to \mathbb{S}^4$ given by $a(x)=-x$ we may assume $\boldsymbol{\omega}(-S^{(2)})= \boldsymbol{\omega}(-S^{(4)})$. We follow \cite{BoWo1} and for each linearly full harmonic sphere we consider the complex valued smooth functions $(\xi, \eta)= \boldsymbol{\sigma}_4 \circ \boldsymbol{\omega}$, so that
\begin{equation}
\label{eq:xi-eta}
	\xi := \frac{\pmb{\omega}_1}{1 + \pmb{\omega}_0}, \quad \eta := \frac{\pmb{\omega}_2}{1+\pmb{\omega}_0}.
\end{equation}
By Remark \ref{rmk:lemaire}, both $\xi$ and $\eta$ are well-defined everywhere on $\bbS^2$ except, possibly, at the south pole because of our normalization above. Further, they are $\bbS^1$-equivariant in the sense of \eqref{S1actionCP1}-\eqref{S1actionC2} and real-analytic, since both $\boldsymbol{\sigma}_4$ and $\boldsymbol{\omega}$ enjoy these properties. Composing with $\boldsymbol{\sigma}^{-1}_2$ and identifying $\mathbb{S}^2 \setminus \{S^{(2)}\}$ with $\C$, we will regard \eqref{eq:xi-eta} as smooth equivariant complex-valued functions defined in the whole complex plane $\C$. In addition, by \eqref{eqharmspheres} and \eqref{eq:xi-eta}  simple computations yield  
\begin{equation}
\label{harmxi}
\xi_{z\bar{z}} + \frac{\xi (\abs{\eta_{\bar{z}}}^2+\abs{\bar{\eta}_{\bar{z}}}^2 )- 2\bar{\xi} \xi_{\bar{z}} \xi_z - \xi_{\bar{z}} \partial_z \abs{\eta}^2 - \xi_{z} \partial_{\bar{z}}\abs{\eta}^2}{1 + \abs{\xi}^2 + \abs{\eta}^2} = 0
\end{equation}
and
\begin{equation}
\label{harmeta}	
\eta_{z\bar{z}} + \frac{\eta (\abs{\xi_{\bar{z}}}^2+\abs{\bar{\xi}_{\bar{z}}}^2 )  - 2\bar{\eta} \eta_{\bar{z}} \eta_z - \eta_{\bar{z}} \partial_z \abs{\xi}^2 - \eta_{z} \partial_{\bar{z}}\abs{\xi}^2}{1 + \abs{\xi}^2 + \abs{\eta}^2} = 0,
\end{equation}
where the previous equations hold in the whole complex plane $\C$.

Applying \eqref{derfequivariance} to $\xi$ and $\eta$ with $k=1$ and $k=2$ respectively, we have the identities on $\C$ for any $\alpha \in \R$, namely,
\begin{equation}
\label{equivxiderivatives}
	\xi_{\bar{z}} (e^{i \alpha} z)= e^{i 2 \alpha} \xi_{\bar{z}} (z) \, , \qquad  \xi_{z} (e^{i \alpha} z)=  \xi_{z} (z) \, ,
\end{equation}
and
\begin{equation}
\label{equivetaderivatives}
	\eta_{\bar{z}} (e^{i \alpha} z)= e^{i 3 \alpha} \eta_{\bar{z}} (z) \, , \qquad  \eta_{z} (e^{i \alpha} z)=  e^{i \alpha} \eta_{z} (z) \, ,
\end{equation}
with similar properties for the complex derivatives of $\bar{\xi}$ and $\bar{\eta}$.

The next result is a consequence of conformality and real isotropy of harmonic spheres stated in Lemma \ref{lemma2calabi}, once rewritten in complex coordinates and in terms of $\xi$, $\eta$.
\begin{lemma}\label{lemma:conf+iso}
	Let $\boldsymbol\omega : \bbS^2 \to \bbS^4$ be an $\bbS^1$-equivariant harmonic map and $\xi$, $\eta$ as in \eqref{eq:xi-eta}. Then:
	\begin{equation}
	\label{eq:conf}
\xi_{\bar{z}} \bar{\xi}_{\bar{z}} + \eta_{\bar{z}} \bar{\eta}_{\bar{z}} = 0
	\end{equation}
	and
	\begin{equation}
	\label{eq:iso}
		\xi_{\bar{z}\bar{z}} \bar{\xi}_{\bar{z}\bar{z}} + \eta_{\bar{z}\bar{z}} \bar{\eta}_{\bar{z}{\bar{z}}} = 0.
	\end{equation}
\end{lemma}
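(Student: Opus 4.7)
Both identities are $\bar z$-derivative versions (obtained by complex conjugation) of, respectively, the conformality $\boldsymbol{\omega}_z\cdot\boldsymbol{\omega}_z = 0$ and the second-order real isotropy $\boldsymbol{\omega}_{zz}\cdot\boldsymbol{\omega}_{zz} = 0$ of the harmonic sphere $\boldsymbol{\omega}$, taken in the $\C$-bilinear extension of the Euclidean inner product on $\R^5$, both of which are granted by Lemma~\ref{lemma2calabi}. The plan is to pull these two ambient identities back through the explicit formula $\boldsymbol{\omega} = \boldsymbol{\sigma}_4^{-1}(\xi,\eta) = (\mu-1,\,\mu\xi,\,\mu\eta)$ with $\mu := 2/\rho$ and $\rho := 1+\abs{\xi}^2+\abs{\eta}^2$, and then to complex-conjugate. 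Throughout I will use the elementary identity $(\Re f)_z^2 + (\Im f)_z^2 = f_z\,\overline{f_{\bar z}}$, valid for any smooth $\C$-valued $f$, which recombines real-component squares in the ambient bilinear product into complex-derivative expressions, together with the pivotal algebraic relation $\mu\rho = 2$, which upon differentiation yields the Leibniz identities $\mu_z\rho+\mu\rho_z=0$ and $\mu_{zz}\rho+2\mu_z\rho_z+\mu\rho_{zz}=0$.

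For \eqref{eq:conf}, direct expansion of $\boldsymbol{\omega}_z\cdot\boldsymbol{\omega}_z$ and grouping give
\[
  \boldsymbol{\omega}_z\cdot\boldsymbol{\omega}_z = \rho\,\mu_z^2 + \mu\mu_z\,\rho_z + \mu^2\bigl(\xi_z\overline{\xi_{\bar z}}+\eta_z\overline{\eta_{\bar z}}\bigr),
\]
where the $\rho\mu_z^2$ coefficient comes from $(\mu_z)^2$ together with the $(\abs{\xi}^2+\abs{\eta}^2)$-weighted pieces of the other two components, and the cross-term $\mu\mu_z\rho_z$ arises because $\xi\,\overline{\xi_{\bar z}}+\bar\xi\,\xi_z = \partial_z\abs{\xi}^2$ and similarly for $\eta$. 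By the first-order consequence $\mu\rho_z = -\mu_z\rho$ of $\mu\rho=2$, the first two terms cancel and leave $\boldsymbol{\omega}_z\cdot\boldsymbol{\omega}_z = \mu^2\bigl(\xi_z\overline{\xi_{\bar z}}+\eta_z\overline{\eta_{\bar z}}\bigr)$. Since $\mu>0$, conformality forces $\xi_z\overline{\xi_{\bar z}}+\eta_z\overline{\eta_{\bar z}} = 0$, and complex conjugation (using $\bar\xi_{\bar z} = (\bar\xi)_{\bar z} = \overline{\xi_z}$) produces \eqref{eq:conf}.

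For \eqref{eq:iso}, one iterates the procedure. Expanding $(\mu\xi)_{zz} = \mu_{zz}\xi + 2\mu_z\xi_z + \mu\xi_{zz}$ and its analogue for $\eta$, then applying the recombination identity and grouping the nine resulting cross-products while using $\partial_z^2\abs{\xi}^2 = \bar\xi\,\xi_{zz}+2\xi_z\overline{\xi_{\bar z}}+\xi\,\overline{\xi_{\bar z\bar z}}$ (and similarly for $\eta$), one arrives at
\[
\begin{aligned}
  \boldsymbol{\omega}_{zz}\cdot\boldsymbol{\omega}_{zz} &= \rho\,\mu_{zz}^2 + 2\mu_{zz}\mu_z\rho_z + \mu_{zz}\mu\,\rho_{zz} + \bigl(4\mu_z^2 - 2\mu_{zz}\mu\bigr)\bigl(\xi_z\overline{\xi_{\bar z}}+\eta_z\overline{\eta_{\bar z}}\bigr) \\
  &\quad + 2\mu_z\mu\,\partial_z\bigl(\xi_z\overline{\xi_{\bar z}}+\eta_z\overline{\eta_{\bar z}}\bigr) + \mu^2\bigl(\xi_{zz}\overline{\xi_{\bar z\bar z}}+\eta_{zz}\overline{\eta_{\bar z\bar z}}\bigr).
\end{aligned}
\]
The conformality established in the previous paragraph annihilates both the $\bigl(\xi_z\overline{\xi_{\bar z}}+\eta_z\overline{\eta_{\bar z}}\bigr)$-term and its $\partial_z$-derivative (as the expression vanishes identically on $\C$), while the three purely $\mu$-dependent leading terms collapse to zero via the second-order Leibniz identity $\mu_{zz}\rho+2\mu_z\rho_z+\mu\rho_{zz}=0$. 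What survives is $\boldsymbol{\omega}_{zz}\cdot\boldsymbol{\omega}_{zz} = \mu^2\bigl(\xi_{zz}\overline{\xi_{\bar z\bar z}}+\eta_{zz}\overline{\eta_{\bar z\bar z}}\bigr)$, so real isotropy followed by complex conjugation yields \eqref{eq:iso}.

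The main technical obstacle is the cancellation in the second step: out of the six groups of terms in the expansion, all but the pure $\mu^2\bigl(\xi_{zz}\overline{\xi_{\bar z\bar z}}+\eta_{zz}\overline{\eta_{\bar z\bar z}}\bigr)$ contribution must vanish, and this requires simultaneously (i) the algebraic tightness $\mu\rho\equiv 2$ (differentiated twice) for the pure $\mu$-block, and (ii) the conformality $\xi_z\overline{\xi_{\bar z}}+\eta_z\overline{\eta_{\bar z}}\equiv 0$ not merely pointwise but identically as a function, so that its $\partial_z$-derivative also vanishes and kills the penultimate cross-term. All remaining operations — differentiating $\mu$ and $\rho$, expanding products, and taking final conjugates — are routine bookkeeping.
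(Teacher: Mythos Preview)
Your proof is correct and follows essentially the same route as the paper: both derive \eqref{eq:conf} and \eqref{eq:iso} from the conformality $\boldsymbol{\omega}_z\cdot\boldsymbol{\omega}_z=0$ and the second-order real isotropy $\boldsymbol{\omega}_{zz}\cdot\boldsymbol{\omega}_{zz}=0$ granted by Lemma~\ref{lemma2calabi}, rewritten in the stereographic coordinates $(\xi,\eta)$. The paper simply delegates this computation to \cite[Proposition~6.1]{HeleinBook} (the cases $\alpha=\beta=1$ and $\alpha=\beta=2$), whereas you carry it out explicitly via the parametrization $\boldsymbol{\omega}=(\mu-1,\mu\xi,\mu\eta)$ and the Leibniz identities for $\mu\rho\equiv 2$; your bookkeeping checks out and the cancellations occur exactly as you describe.
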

\begin{proof}
Using the definition of $\xi$, $\eta$ one may check directly that Eq.~\eqref{eq:conf} and Eq.~\eqref{eq:iso} follow taking, respectively, $\alpha =1$, $\beta=1$ and $\alpha =2$, $\beta = 2$ in \cite[Proposition 6.1]{HeleinBook}. 
\end{proof}

Another key consequence is the following lemma. 

\begin{lemma}\label{lemma:nonzero}
	If $\boldsymbol\omega : \bbS^2 \to \bbS^4$ is a linearly full $\bbS^1$-equivariant harmonic map and $\xi$, $\eta$ are as in \eqref{eq:xi-eta}, then at any point $z \in \C \setminus \{ 0\}$ the complex derivatives $\bar{\xi}_{\bar z}$, $\eta_{\bar z}$, $\xi_{\bar z}$ and $\bar{\eta}_{\bar{z}}$ cannot vanish simultaneously.  Moreover, neither $\bar{\xi}_{\bar{z}}$ nor $\bar{\eta}_{\bar z}$ can vanish identically. Finally, $\xi_{\bar z}$ and $\eta_{\bar z}$ cannot vanish identically at the same time. 
\end{lemma}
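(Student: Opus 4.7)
The plan is to address the three assertions separately, exploiting the conjugation identities $\bar{\xi}_{\bar z}=\overline{\xi_{z}}$ and $\bar{\eta}_{\bar z}=\overline{\eta_{z}}$, the $\bbS^{1}$-equivariance of $\xi$ and $\eta$ of respective degrees $1$ and $2$, and the normalization $\boldsymbol{\omega}(-S^{(2)})=-S^{(4)}$ adopted before the lemma, under which $\xi$ and $\eta$ are real-analytic on all of $\C$ with $\xi(0)=\eta(0)=0$.

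For the first assertion, I will observe that at any point $z_{0}$ the vanishing of all four derivatives $\bar{\xi}_{\bar z},\xi_{\bar z},\bar{\eta}_{\bar z},\eta_{\bar z}$ is equivalent, via the conjugation identities, to $d(\xi,\eta)(z_{0})=0$, and hence, since $\boldsymbol{\sigma}_{4}$ is a diffeomorphism on its domain, to the vanishing of $d\boldsymbol{\omega}$ at the point $p_{0}\in\bbS^{2}$ corresponding to $z_{0}$ under $\boldsymbol{\sigma}_{2}$. For $z_{0}\in\C\setminus\{0\}$, $p_{0}$ is distinct from both poles of $\bbS^{2}$, so Remark~\ref{rmk:lemaire} rules out $d\boldsymbol{\omega}(p_{0})=0$.

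For the second assertion, suppose $\bar{\xi}_{\bar z}\equiv 0$; then $\xi_{z}\equiv 0$, so $\xi$ is antiholomorphic on $\C$, and I write $\xi(z)=g(\bar z)$ with $g$ entire and $g(0)=0$. Plugging into the degree-one equivariance $\xi(e^{i\alpha}z)=e^{i\alpha}\xi(z)$ yields $g(e^{-i\alpha}\bar z)=e^{i\alpha}g(\bar z)$, which forces each Taylor coefficient $a_{n}$ of $g$ at $0$ to satisfy $(e^{-in\alpha}-e^{i\alpha})a_{n}=0$ for every $\alpha\in\R$; as no $n\geq 0$ satisfies $-n=1$, this gives $g\equiv 0$, hence $\xi\equiv 0$. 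Thus $\boldsymbol{\omega}_{1}\equiv 0$ and $\boldsymbol{\omega}(\bbS^{2})\subset L_{0}\oplus L_{2}$, contradicting linear fullness. The argument for $\bar{\eta}_{\bar z}\equiv 0$ is identical, with degree two in place of degree one, producing $\boldsymbol{\omega}(\bbS^{2})\subset L_{0}\oplus L_{1}$.

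The main step is the third assertion. Assuming $\xi_{\bar z}\equiv\eta_{\bar z}\equiv 0$, both $\xi$ and $\eta$ are entire, and equivariance together with analyticity at $0$ reduces them to $\xi(z)=c_{1}z$ and $\eta(z)=c_{2}z^{2}$ for some $c_{1},c_{2}\in\C$. Substituting into equation \eqref{harmxi}, the terms $\xi_{z\bar z}$, $\xi\,|\eta_{\bar z}|^{2}$, $\bar{\xi}\,\xi_{\bar z}\xi_{z}$, and $\xi_{\bar z}\partial_{z}|\eta|^{2}$ all vanish, while $|\bar{\eta}_{\bar z}|^{2}=|\eta_{z}|^{2}$ and $\partial_{\bar z}|\eta|^{2}=\eta\,\overline{\eta_{z}}$, so that \eqref{harmxi} collapses to $\xi|\eta_{z}|^{2}=\xi_{z}\,\eta\,\overline{\eta_{z}}$. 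Inserting $\xi_{z}=c_{1}$ and $\eta_{z}=2c_{2}z$ yields $c_{1}|c_{2}|^{2}=0$, hence $c_{1}=0$ or $c_{2}=0$, in either case contradicting linear fullness. The principal obstacle lies in this last step: the conformality and real-isotropy identities \eqref{eq:conf}--\eqref{eq:iso} are trivially verified when $\xi_{\bar z}\equiv\eta_{\bar z}\equiv 0$, so the nonlinear harmonic map equation itself must be genuinely exploited to extract the algebraic constraint on $c_{1}$ and $c_{2}$.
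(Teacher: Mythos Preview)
Your proof is correct and follows essentially the same approach as the paper's: all three assertions are handled by the same mechanisms (branch points only at the poles for the first, antiholomorphicity plus equivariance forcing $\xi\equiv 0$ or $\eta\equiv 0$ for the second, and reduction to $\xi=c_1z$, $\eta=c_2z^2$ followed by substitution into the harmonic map equation for the third). Your execution differs only in minor details---you use the Taylor expansion at $0$ for the second assertion where the paper uses a ratio argument with $1/\bar z$ and boundedness, and you carry out explicitly the ``simple calculation'' the paper leaves to the reader in the third, obtaining $c_1|c_2|^2=0$.
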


\begin{proof}
 The first statement follows from conformality and Remark \ref{rmk:lemaire}. Indeed, if we take $z\neq 0$ then  $p= \boldsymbol{\sigma}_2^{-1}(z)  \in \mathbb{S}^2 \setminus \{ \pm S^{(2)}\}$ and $\boldsymbol\omega (p) \neq \pm S^{(4)}$; hence we have $(\xi(z), \eta(z)) =\boldsymbol{\sigma}_4(\omega(p)) \neq (0,0)$ and, by the conformality of $\boldsymbol{\sigma}_4$, we can rewrite the energy density of $\boldsymbol\omega$ at $p$ as
\[ 
	\abs{\nabla_T \boldsymbol\omega (p)}^2=4\frac{\abs{\bar{\xi}_{\bar z}}^2+ \abs{\eta_{\bar z}}^2+\abs{\xi_{\bar z}}^2+\abs{\bar{\eta}_{\bar{z}}}^2}{\left(1+\abs{\xi(z)}^2+\abs{\eta(z)}^2\right)^2} \neq 0 \, ,
\]
since the only branch points of $\boldsymbol\omega$ can be at the poles.
 
Now we claim that $\bar{\xi}_{\bar{z}}$ and $\bar{\eta}_{\bar{z}}$ cannot vanish identically. Suppose by contradiction that $\bar{\xi}_{\bar{z}} \equiv 0$ in $\C$. Then $\bar{\xi}$ is an entire holomorphic function, hence $\xi$ is antiholomorphic in the whole $\C$. Note that $\xi$ is equivariant of degree one on $\C \setminus \{0\}$, as the function $g(z)=1/\bar{z}$, hence $\xi(z)/g(z) \equiv \xi(1)  $ on the circle $|z|=1$ again by equivariance. By the identity principle for antiholomorphic functions we have $\xi(z) =\xi(1)/\bar{z}$ on $\C \setminus \{0\}$, hence $\xi(1)=0$ and in turn $\xi \equiv 0$ because the function $\xi$ is bounded near the origin. As a consequence, \eqref{eq:xi-eta} yields $\boldsymbol{\omega}_1\equiv 0$, which is impossible, because the map $\boldsymbol{\omega}$ is linearly full. A similar argument applies if we assume $\bar{\eta}_{\bar{z}}\equiv 0$. Indeed, then $\eta(z)/h(z)\equiv \eta(1)$ in $\C \setminus \{0\}$ with $h(z)=1/\bar{z}^2$, hence $\eta \equiv 0$ because it is bounded near the origin. Thus $\boldsymbol{\omega}_2\equiv 0$ because of \eqref{eq:xi-eta}, which is again impossible because the map $\boldsymbol{\omega}$ is linearly full. 

Finally, if $\xi_{\bar z}$ and $\eta_{\bar z}$ both vanish identically then $\xi$ and $\eta$ are both holomorphic and equivariant of degree one and two respectively. Arguing as above we have $\xi \equiv c_1 z$ and $\eta=c_2 z^2$, where $c_1 c_2 \neq 0$ because the map $\boldsymbol\omega$ is linearly full.
Then simple calculations give a contradiction because neither $\eqref{harmxi}$ nor $\eqref{harmeta}$ are satisfied.
\end{proof}

The last preliminary fact we need before defining the twistor lift is the following.
\begin{lemma}
\label{holomorphicfactor} 
Let ${\boldsymbol\omega}: \bbS^2 \to \bbS^4$ be an $\bbS^1$-equivariant harmonic map and $\xi$, $\eta$ as in \eqref{eq:xi-eta}. Then in the whole complex plane we have
\begin{equation}
\label{zeroproduct}
\left( \bar{\xi}_{\bar{z}\bar{z}}\bar{\eta}_{\bar{z}} - \bar{\eta}_{\bar{z}\bar{z}} \bar{\xi}_{\bar{z}} \right) \left( \xi_{\bar{z}\bar{z}}\bar{\eta}_{\bar{z}} - \bar{\eta}_{\bar{z}\bar{z}} \xi_{\bar{z}} \right) = 0 \, .
\end{equation}
As a consequence, at least one of the two factor in \eqref{zeroproduct} vanishes identically on $\C$.

If, in addition, the map ${\boldsymbol\omega}$ is linearly full, then only one factor in \eqref{zeroproduct} vanishes identically.
	\end{lemma}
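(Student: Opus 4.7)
The plan is to establish the identity $F_1 F_2 \equiv 0$ on $\C$, where $F_1 := \bar{\xi}_{\bar z \bar z}\bar{\eta}_{\bar z} - \bar{\eta}_{\bar z \bar z}\bar{\xi}_{\bar z}$ and $F_2 := \xi_{\bar z \bar z}\bar{\eta}_{\bar z} - \bar{\eta}_{\bar z \bar z}\xi_{\bar z}$ are the two factors appearing in \eqref{zeroproduct}, by a direct algebraic manipulation combining the conformality \eqref{eq:conf}, its $\bar z$-derivative, and the real isotropy \eqref{eq:iso} from Lemma \ref{lemma:conf+iso}. Setting for brevity $a := \bar{\xi}_{\bar z}$, $b := \bar{\eta}_{\bar z}$, $c := \xi_{\bar z}$, $d := \eta_{\bar z}$ and denoting by $a', b', c', d'$ their further $\bar z$-derivatives, so that $F_1 = a'b - b'a$ and $F_2 = c'b - b'c$, expanding the product and then inserting the isotropy $a'c' = -b'd'$ together with the conformality $ac = -bd$ yields
\[
F_1 F_2 \,=\, a'c' b^2 - a'b' bc - b'c' ab + (b')^2 ac \,=\, -b'b\,\bigl(d'b + a'c + c'a + b'd\bigr).
\]
The key observation is that the bracket on the right-hand side is exactly $\partial_{\bar z}(ac + bd)$, which vanishes identically by \eqref{eq:conf}; this gives $F_1 F_2 \equiv 0$ throughout $\C$.

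Next, I will pass from the vanishing of the product to the vanishing of one of the factors. Since $\boldsymbol{\omega}$ is smooth and hence real analytic by the regularity results recalled at the beginning of the section, $\xi$, $\eta$ and all their complex derivatives are real analytic on the connected open set $\C$, and so are $F_1$ and $F_2$. Therefore, if $F_1 \not\equiv 0$, then $\{F_1 \neq 0\}$ is a nonempty open subset on which $F_2$ must vanish, and the identity principle for real analytic functions forces $F_2 \equiv 0$ on the whole of $\C$. This proves the first two assertions of the lemma.

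For the third (linearly full) assertion, I will argue by contradiction, assuming both $F_1$ and $F_2$ to be identically zero. By Lemma \ref{lemma:nonzero}, the open set $\{\bar{\eta}_{\bar z} \neq 0\}$ is dense in $\C$, and on it the identities $F_1 = 0$ and $F_2 = 0$ rewrite as
\[
\partial_{\bar z}\bigl(\bar{\xi}_{\bar z}/\bar{\eta}_{\bar z}\bigr) = 0 \,=\, \partial_{\bar z}\bigl(\xi_{\bar z}/\bar{\eta}_{\bar z}\bigr),
\]
so that both quotients are holomorphic. Tracking the equivariance degrees through \eqref{equivxiderivatives}--\eqref{equivetaderivatives} ($\xi_{\bar z}$ is of degree $2$, $\bar{\xi}_{\bar z}$ of degree $0$, and $\bar{\eta}_{\bar z}$ of degree $-1$), the identity principle then forces $\bar{\xi}_{\bar z}/\bar{\eta}_{\bar z} = q z$ and $\xi_{\bar z}/\bar{\eta}_{\bar z} = p z^3$ for some constants $p,q \in \C$, and the conformality \eqref{eq:conf} in turn yields $\eta_{\bar z}/\bar{\eta}_{\bar z} = -pq z^4$.

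The main obstacle will be to extract from these three proportionality relations a genuine contradiction with linear fullness. My plan is to integrate them in $\bar z$, writing $\xi = p z^3 \bar{\eta} + C(z)$, $\bar{\xi} = q z \bar{\eta} + E(z)$, and $\eta = -pq z^4 \bar{\eta} + G(z)$ for holomorphic functions $C, E, G$, which by equivariance are further constrained to be monomials of prescribed degree. Conjugating the first identity and comparing with the second produces a linear relation between $\eta$, $\bar{\eta}$ and explicit polynomials in $z, \bar z$; substituting the third relation into it, one can solve for $\eta$ as a rational function of $(z, \bar z)$, and plugging this back into the harmonic map equations \eqref{harmxi}--\eqref{harmeta} together with the equivariance structure \eqref{eq:omega}--\eqref{omegacoefficients} will force either $\boldsymbol{\omega}_1 \equiv 0$ or $\boldsymbol{\omega}_2 \equiv 0$, i.e., the linear degeneracy of $\boldsymbol{\omega}$, contradicting the hypothesis and closing the proof.
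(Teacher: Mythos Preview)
Your derivation of \eqref{zeroproduct} and the unique-continuation step are correct and essentially coincide with the paper's proof; your bookkeeping with $a,b,c,d$ is in fact a bit cleaner than the paper's sequence of substitutions.

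For the linearly full claim, your strategy is sound and close in spirit to the paper's: both pass to holomorphic ratios (the paper uses $\bar\eta_{\bar z}/\bar\xi_{\bar z}$ on $\{\bar\xi_{\bar z}\neq 0\}$ and $\xi_{\bar z}/\bar\eta_{\bar z}$ on $\{\bar\eta_{\bar z}\neq 0\}$, you use $\bar\xi_{\bar z}/\bar\eta_{\bar z}$ and $\xi_{\bar z}/\bar\eta_{\bar z}$), identify them as monomials via equivariance and the identity principle, and then seek a contradiction. However, you stop at the planning stage: the sentence ``plugging this back into the harmonic map equations \eqref{harmxi}--\eqref{harmeta} \dots will force \dots'' is a promise, not a proof. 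In fact the harmonic map equations are a red herring here. Once you integrate in $\bar z$, equivariance forces your holomorphic integration constant $E$ (of degree $-1$) to vanish identically, giving $\bar\xi = qz\,\bar\eta$; combined with $\xi = pz^3\bar\eta + c_0 z$ and $\eta = -pqz^4\bar\eta + g_0 z^2$ (and after excluding $q=0$ and $p=0$ directly via Lemma~\ref{lemma:nonzero}), one solves explicitly for $\bar\eta$ as a rational function that is singular at $z=0$ unless $c_0=g_0=0$, which in turn forces $\eta\equiv 0$ and hence $\boldsymbol\omega_2\equiv 0$, contradicting fullness. This is the natural endpoint of your own approach, and it is essentially what the paper does (with different ratios and an explicit formula for $\xi$ rather than $\eta$, again contradicted by boundedness near $z=0$). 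The only genuine gap in your write-up is that you did not carry out this last computation.
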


\begin{proof}
The conclusion of the first part will follow from Lemma \ref{lemma:conf+iso}  by simple manipulations in the whole complex plane.
	Differentiating \eqref{eq:conf} with respect to $\bar{z}$ gives
\begin{equation}\label{eq:conf-diff}
	\xi_{\bar{z}\bar{z}} \bar{\xi}_{\bar{z}} + \xi_{\bar{z}} \bar{\xi}_{\bar{z}\bar{z}} + \eta_{\bar{z}\bar{z}} \bar{\eta}_{\bar{z}} + \eta_{\bar{z}} \bar{\eta}_{\bar{z}\bar{z}} = 0 \, . 
\end{equation}
Multiplying \eqref{eq:iso} by $\bar{\eta}_{\bar{z}}$ we obtain
\[
	\bar{\eta}_{\bar{z}} \xi_{\bar{z}\bar{z}} \bar{\xi}_{\bar{z}\bar{z}} + \bar{\eta}_{\bar{z}} \eta_{\bar{z}\bar{z}} \bar{\eta}_{\bar{z}\bar{z}} = 0 \, . 
\]
Solving \eqref{eq:conf-diff} with respect to $\bar{\eta}_{\bar{z}} \eta_{\bar{z}\bar{z}}$ and substituting into the last identity above, we have, after some rearrangements,
\[
	\bar{\xi}_{\bar{z}\bar{z}}\left( \xi_{\bar{z}\bar{z}}\bar{\eta}_{\bar{z}} - \bar{\eta}_{\bar{z}\bar{z}} \xi_{\bar{z}} \right) - \bar{\eta}_{\bar{z}\bar{z}}\left( \xi_{\bar{z}\bar{z}} \bar{\xi}_{\bar{z}} + \bar{\eta}_{\bar{z}\bar{z}} \eta_{\bar{z}} \right) = 0 \, .
\]
 Multiplying also the last identity by $\bar{\eta}_{\bar{z}}$ we have 
 
 \[ \bar{\eta}_{\bar{z}}\bar{\xi}_{\bar{z}\bar{z}}\left( \xi_{\bar{z}\bar{z}}\bar{\eta}_{\bar{z}} - \bar{\eta}_{\bar{z}\bar{z}} \xi_{\bar{z}} \right) - \bar{\eta}_{\bar{z}\bar{z}}\left( \bar{\eta}_{\bar{z}\bar{z}} \eta_{\bar{z}} \bar{\eta}_{\bar{z}} + \xi_{\bar{z}\bar{z}} \bar{\eta}_{\bar{z}} \bar{\xi}_{\bar{z}} \right) = 0 \, .\]
 By the conformality relation \eqref{eq:conf}, we have $\eta_{\bar{z}}  \bar{\eta}_{\bar{z}} = -\xi_{\bar{z}} \bar{\xi}_{\bar{z}}$ and, after substituting in the last identity and rearranging the resulting terms, Equation \eqref{zeroproduct} follows. Finally, since both factors in \eqref{zeroproduct} are real-analytic functions in the whole $\C$, then at least one of the two must vanish identically by unique continuation.
  
  Concerning the second part, we argue by contradiction, supposing that both factors \eqref{zeroproduct} vanish identically. We start observing that in view of Lemma \ref{lemma:nonzero} we have $\{ \bar{\xi}_{\bar{z}} \neq 0\}\neq \emptyset$ and this open set is $\mathbb{S}^1$-invariant because of \eqref{equivxiderivatives}. Hence, this open set contains a circle $\mathcal{C}=\{|z|=a>0\}$ and an invariant annulus $\mathcal{A}$ around it. In the annulus $\mathcal{A}$, as the first factor in \eqref{zeroproduct} vanishes, the  function $\bar{\eta}_{\bar{z}}/  \bar{\xi}_{\bar{z}}$ is holomorphic with the same degree of equivariance of $1/z$. Since by equivariance $\bar{\eta}_{\bar{z}}/  \bar{\xi}_{\bar{z}}$ and $1/z$ differ only by a constant factor on $\mathcal{C}$, by the identity principle for holomorphic functions the same holds on $\mathcal{A}$. Hence, there is  a complex number $c_1$ such that $z \bar{\eta}_{\bar{z}} -c_1   \bar{\xi}_{\bar{z}} \equiv 0$ first on $\mathcal{A}$ and then on $\C$ by unique continuation for real-analytic functions. Clearly $c_1 \neq 0$, otherwise we would get $ \bar{\eta}_{\bar{z}} \equiv 0$ which is impossible in view of Lemma \ref{lemma:nonzero}. 
  We argue in a similar way using the second factor in \eqref{zeroproduct}. By Lemma~\ref{lemma:nonzero}, the set $\{ \bar{\eta}_{\bar{z}} \neq 0\}$ is not empty and  invariant, hence it contains a second annulus where we can consider the function $\xi_{\bar{z}}/\bar{\eta}_{\bar{z}}$, which is holomorphic and equivariant of degree $3$, hence arguing as above we may write $\xi_{\bar{z}}=c_2 z^{3} \bar{\eta}_{\bar{z}}$ for some $c_2 \in \C$, where the identity on the annulus also extends to $\C$ by unique continuation as above. Note that $c_2 \neq 0$, otherwise $\xi$ would be an entire holomorphic function equivariant of degree one, hence $\xi=c_3 z$, with $c_3\neq 0$ because ${\boldsymbol\omega}$ is linearly full. On the other hand, in view of the conformality condition \eqref{eq:conf} the function $\eta$ would satisfy $\eta_{\bar{z}}=0$ since $\bar{\eta}_{\bar{z}}\not \equiv 0$. Being holomorphic, nontrivial and equivariant of degree two, we would have $\eta=c_4 z^2$ for some $c_4\neq 0$, but then $\xi= c_3z$ and $\eta=c_4 z^2$ with $c_3c_4 \neq 0$ should satisfy \eqref{harmxi}-\eqref{harmeta} which is impossible by direct computation.
  Hence, we showed that if both factors in \eqref{zeroproduct} vanish then for some $c_1, c_2 \in \C \setminus \{0\}$ the functions $\xi$ and $\eta$ solve the first order system
 \begin{equation}
 \label{firstordersystem}	
\begin{cases}
 	z \bar{\eta}_{\bar{z}} =c_1   \bar{\xi}_{\bar{z}} \, ,\\

 	\xi_{\bar{z}}=c_2 z^{3} \bar{\eta}_{\bar{z}} \, ,
\end{cases} 
\end{equation}
whence $(\xi-c_1 c_2 z^2  \bar{\xi})_{\bar{z}}=0 $ on $\C$. 
 Again, by holomorphicity and equivariance of degree one we would have $\xi-c_1 c_2 z^2  \bar{\xi}=c_5 z$, with $c_5\neq 0$ (otherwise we would easily get $|\xi|^2(1- |c_1c_2z^2|^2|)=0$ identically, which contradicts $\xi \not \equiv 0$). Combining the last equation with its conjugate $\bar{\xi}=\overline{c_1c_2}\bar{z}^2 \xi+\overline{c_5}\bar{z}$     we finally get
 \[ \xi=\frac{c_5z(1+c_1c_2 \overline{c_5}/c_5|z|^2)}{1-|c_1c_2 z^2|^2} =\frac{c_5z}{1+|c_1c_2 z^2|} \, , \]
    because $\xi$ must be locally bounded, whence we must have $c_1c_2 \overline{c_5}/{c_5}=-|c_1c_2|$ and the last identity follows. In view of the previous formula for $\xi$, the function  $\bar{\xi}_{\bar{z}}$ does not vanish as $z \to 0$, hence letting $z \to 0$ in the first equation in \eqref{firstordersystem} we derive a contradiction and the proof is complete.
\end{proof}

We are finally in the position to introduce the {\em twistor lift} of a harmonic sphere. Starting from $\boldsymbol{\omega}$, we can define two lifts
\begin{equation}\label{eq:lift-pos}
	\widetilde{\boldsymbol{\omega}}^+(z) = \begin{cases}
		[\bar{\xi}_{\bar{z}}, \bar{\xi}_{\bar{z}} \xi + \eta_{\bar{z}}\bar{\eta}, \bar{\xi}_{\bar{z}}\eta - \eta_{\bar{z}} \bar{\xi},  -\eta_{\bar{z}}], & \mbox{if} \,\,\, {(\bar{\xi}_{\bar z}, \eta_{\bar z}) \neq (0,0)} \, , \\
	[\bar{\eta}_{\bar{z}}, \bar{\eta}_{\bar{z}} \xi - \xi_{\bar{z}} \bar{\eta}, \bar{\eta}_{\bar{z}} \eta + \xi_{{\bar z}}\bar{\xi}, \xi_{\bar{z}}], & \mbox{if} \, \, \,  (\xi_{\bar z}, {\bar \eta}_{\bar z}) \neq (0,0)\,,
	\end{cases}
\end{equation}
and 

\begin{equation}\label{eq:lift-neg}
	\widetilde{\boldsymbol{\omega}}^-(z) = \begin{cases}
		[\xi_{\bar{z}} \bar{\xi} + \eta_{\bar{z}} \bar{\eta}, - \xi_{\bar{z}}, -\eta_{\bar{z}},  \eta_{\bar{z}} \xi - \xi_{\bar{z}}\eta ], & \mbox{if } (\xi_{\bar z}, \eta_{\bar z}) \neq (0,0)\, ,\\
		[\bar{\xi}_{\bar{z}} \bar{\eta} -\bar{\eta}_{\bar{z}} \bar{\xi}, \bar{\eta}_{\bar{z}}, -\bar{\xi}_{\bar{z}}, \bar{\xi}_{\bar{z}} \xi + \bar{\eta}_{\bar{z}} \eta], & \mbox{if } ({\bar \xi}_{\bar z}, {\bar \eta}_{\bar z}) \neq (0,0)\,. 
	\end{cases}
\end{equation}
In accordance with the standard terminology from twistor theory (see, e.g., \cite{BairdWood}, \cite{LemaireWood} and \cite{Fawley}), we call $\widetilde{\boldsymbol{\omega}}^+$ the \emph{positive lift} of $\boldsymbol{\omega}$ to $\C P^3$ and $\widetilde{\boldsymbol{\omega}}^-$ its \emph{negative lift}. The reason for this terminology will not be explained and it depends on the geometric meaning of the above formulas (more precisely on the orientation of the associated almost complex structures on $T_{\boldsymbol{\omega}(z)} \mathbb{S}^4$). Here we do not justify the expressions for the lifts in \eqref{eq:lift-pos}-\eqref{eq:lift-neg} but we just show that these are suitable for our purposes. 

The following result is an adaptation to our symmetric context of \cite[Theorem 1]{Verdier} (see also \cite{Br} and \cite{Ca}).

\begin{proposition}
\label{liftings}
	Let $\boldsymbol{\omega}: \bbS^2 \to \bbS^4$ be a linearly full $\bbS^1$-equivariant harmonic map such that $\boldsymbol{\omega}(-S^{(2)}) = -S^{(4)}$. Let $\widetilde{\boldsymbol{\omega}}^+, \widetilde{\boldsymbol{\omega}}^- : \bbS^2 \simeq \C P^1 \to \C P^3$ be as in \eqref{eq:lift-pos}, \eqref{eq:lift-neg} and $a : \bbS^4 \to \bbS^4$ be the antipodal map on $\bbS^4$. Then:
\begin{enumerate}
\item\label{item:welldef} $\widetilde{\boldsymbol{\omega}}^+$, $\widetilde{\boldsymbol{\omega}}^-$ are well-defined at any point of $\bbS^2 \setminus \{ \pm S^{(2)}\}$ and real-analytic.
\item\label{item:s1-eq} $\widetilde{\boldsymbol{\omega}}^+$, $\widetilde{\boldsymbol{\omega}}^-$ are $\bbS^1$-equivariant with respect to to the action \eqref{S1actionCP1} on $\bbS^2 \simeq \C P^1$ and the action \eqref{S1actionCP3} on $\C P^3$.
\item\label{item:lift} $\widetilde{\boldsymbol{\omega}}^+$ is a lift of $\boldsymbol{\omega}$ and $\widetilde{\boldsymbol{\omega}}^-$ is a lift of $a \circ \boldsymbol{\omega}$; i.e., $\boldsymbol{\tau} \widetilde{\boldsymbol{\omega}}^+ = \boldsymbol{\omega}$ and $\boldsymbol{\tau} \circ \widetilde{\boldsymbol{\omega}}^- = a \circ \boldsymbol{\omega}$. 
\item\label{item:hol} At least one (and actually exactly one) among $\widetilde{\boldsymbol{\omega}}^+$ and $\widetilde{\boldsymbol{\omega}}^-$ is holomorphic in $\bbS^2 \setminus \{ \pm S^{(2)}\}$. Thus, the corresponding formula for the lift extends holomorphically to the whole $\bbS^2$.
\item\label{item:hor} If $\widetilde{\boldsymbol{\omega}}^+$ ($\widetilde{\boldsymbol{\omega}}^-$) is holomorphic, then it is also horizontal.
	\end{enumerate}	 
\end{proposition}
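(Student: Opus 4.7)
My plan is to establish items (\ref{item:welldef})--(\ref{item:hor}) by direct computation, using the conformality identity \eqref{eq:conf}, the non-degeneracy in Lemma~\ref{lemma:nonzero}, and the dichotomy in Lemma~\ref{holomorphicfactor} as the main tools, together with the real-analyticity of $\boldsymbol{\omega}$ (and hence of $\xi,\eta$) inherited from harmonicity. For (\ref{item:welldef}), at every $z\in\C\setminus\{0\}$ at least one of the two cases defining $\widetilde{\boldsymbol{\omega}}^{\pm}$ is active, since otherwise all four derivatives $\bar\xi_{\bar z}, \xi_{\bar z}, \eta_{\bar z}, \bar\eta_{\bar z}$ would vanish simultaneously, contradicting Lemma~\ref{lemma:nonzero}. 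On the overlap of the two cases for $\widetilde{\boldsymbol{\omega}}^{+}$, the two $4$-tuples are proportional in $\C^4$ because \eqref{eq:conf} gives exactly $\bar\xi_{\bar z}/\bar\eta_{\bar z}=-\eta_{\bar z}/\xi_{\bar z}$; multiplying this identity by $\xi$ and by $\bar\eta$ produces the matching of the two middle components, and the same argument handles $\widetilde{\boldsymbol{\omega}}^{-}$. Real-analyticity is then inherited from that of $\xi,\eta$. For (\ref{item:s1-eq}), \eqref{equivxiderivatives}--\eqref{equivetaderivatives} show that $\bar\xi_{\bar z}, \xi_{\bar z}, \eta_{\bar z}, \bar\eta_{\bar z}$ are $\bbS^1$-equivariant of degrees $0,2,3,-1$, while $\xi,\bar\xi,\eta,\bar\eta$ have degrees $1,-1,2,-2$; combining these, one checks that in each formula the four entries of the lift carry degrees forming an arithmetic progression with common difference $1$, which in projective coordinates is equivalent to the action \eqref{S1actionCP3} on $\C P^3$.

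For (\ref{item:lift}), I would substitute $w=\widetilde{\boldsymbol{\omega}}^{+}$ into \eqref{eq:tau}. Using $\overline{\bar\xi_{\bar z}}=\xi_z$ (hence $|\bar\xi_{\bar z}|^2=|\xi_z|^2$), the cross terms in $|w_1|^2+|w_2|^2$, $\bar w_0 w_1+\bar w_2 w_3$ and $\bar w_0 w_2-\bar w_1 w_3$ all cancel thanks to \eqref{eq:conf}, leaving
\[
|w|^2=\big(|\xi_z|^2+|\eta_{\bar z}|^2\big)\big(1+|\xi|^2+|\eta|^2\big),\qquad |w_0|^2+|w_3|^2-|w_1|^2-|w_2|^2=\big(|\xi_z|^2+|\eta_{\bar z}|^2\big)\big(1-|\xi|^2-|\eta|^2\big),
\]
together with $\bar w_0 w_1+\bar w_2 w_3=\xi\,(|\xi_z|^2+|\eta_{\bar z}|^2)$ and $\bar w_0 w_2-\bar w_1 w_3=\eta\,(|\xi_z|^2+|\eta_{\bar z}|^2)$. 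Dividing by $|w|^2$ recovers $(\omega_0,\omega_1,\omega_2)$ through \eqref{eq:xi-eta}. The analogous computation for $\widetilde{\boldsymbol{\omega}}^{-}$ yields the same quantities but with the opposite sign in the first component of $\boldsymbol\tau$, so that $\boldsymbol\tau\circ\widetilde{\boldsymbol{\omega}}^{-}=a\circ\boldsymbol\omega$.

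For (\ref{item:hol}), I would use that a smooth representative $w:\C\to\C^4\setminus\{0\}$ of a map into $\C P^3$ is holomorphic iff $\partial_{\bar z}w=\mu\,w$ for some scalar $\mu$. Differentiating the first formula of $\widetilde{\boldsymbol{\omega}}^{+}$ and using \eqref{eq:conf} to kill cross terms gives
\[
\partial_{\bar z}w=\big(\bar\xi_{\bar z\bar z},\,\bar\xi_{\bar z\bar z}\xi+\eta_{\bar z\bar z}\bar\eta,\,\bar\xi_{\bar z\bar z}\eta-\eta_{\bar z\bar z}\bar\xi,\,-\eta_{\bar z\bar z}\big),
\]
whose proportionality to $w$ collapses, by comparing first and fourth components, to $\bar\xi_{\bar z\bar z}\eta_{\bar z}-\eta_{\bar z\bar z}\bar\xi_{\bar z}=0$; the parallel calculation on the second formula reduces instead to the vanishing of the second factor in \eqref{zeroproduct}, and the two conditions coincide on the overlap via \eqref{eq:conf}. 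The same analysis on $\widetilde{\boldsymbol{\omega}}^{-}$ isolates the first factor in \eqref{zeroproduct}. By Lemma~\ref{holomorphicfactor}, in the linearly full case exactly one of the two factors vanishes identically, hence exactly one lift is holomorphic on $\C\setminus\{0\}$; real-analyticity then provides the unique holomorphic extension across the poles. Finally, for (\ref{item:hor}), substituting the holomorphic lift into the horizontality condition \eqref{eq:horizontality1} with $\Theta=w_0\,dw_3-w_3\,dw_0+w_1\,dw_2-w_2\,dw_1$ and expanding, the identity \eqref{eq:conf} together with its $\bar z$-derivative (the relation \eqref{eq:conf-diff} from the proof of Lemma~\ref{holomorphicfactor}) reduce $\Phi^*\Theta$ to a multiple of the vanishing factor of \eqref{zeroproduct}. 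The main obstacle is the computational bookkeeping—controlling signs and complex conjugations throughout the lengthy expansions of $\boldsymbol\tau$ and $\Phi^*\Theta$—while conceptually conformality and real isotropy of $\boldsymbol\omega$ are precisely the identities forcing all the proportionalities and cancellations required by items (\ref{item:welldef})--(\ref{item:hor}).
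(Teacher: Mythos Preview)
Your treatment of items (\ref{item:welldef})--(\ref{item:hol}) is essentially the paper's proof: Lemma~\ref{lemma:nonzero} for the covering of $\C\setminus\{0\}$, conformality \eqref{eq:conf} for compatibility of the two representatives, the equivariance degrees from \eqref{equivxiderivatives}--\eqref{equivetaderivatives}, direct substitution into \eqref{eq:tau}, and Lemma~\ref{holomorphicfactor} to decide which lift is holomorphic. One small imprecision: for the extension across the poles in (\ref{item:hol}), ``real-analyticity'' alone is not the mechanism---you need that a holomorphic map from a punctured Riemann surface into the compact manifold $\C P^3$ extends across isolated punctures (a removable-singularity argument), or else the explicit algebraic form supplied by Lemma~\ref{horizontallifts}.

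There is, however, a genuine gap in your argument for item (\ref{item:hor}). You claim that horizontality reduces, via \eqref{eq:conf} and its $\bar z$-derivative \eqref{eq:conf-diff}, to a multiple of the vanishing factor in \eqref{zeroproduct}. But all of these identities involve only pure $\bar z$-derivatives of $\xi,\eta$. The horizontality form $\Phi^*\Theta$ is a $(1,0)$-form once the lift is holomorphic, so checking $\Phi^*\Theta=0$ forces you to take $\partial_z$ of the entries of the lift; since those entries contain $\bar\xi_{\bar z}$ and $\eta_{\bar z}$, you inevitably produce the mixed second derivatives $\xi_{z\bar z}$ and $\eta_{z\bar z}$ (e.g.\ $\partial_z(\bar\xi_{\bar z})=\overline{\xi_{z\bar z}}$). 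These are not controlled by conformality, by \eqref{eq:conf-diff}, or by the holomorphicity condition. They are controlled precisely by the harmonic map equations \eqref{harmxi}--\eqref{harmeta} for $\xi,\eta$, and the paper's proof of (\ref{item:hor}) invokes them explicitly. Conceptually this is unavoidable: in the twistor picture, holomorphicity of the lift encodes conformality/isotropy, while \emph{horizontality is what encodes harmonicity}; a conformal but non-harmonic surface would still have a holomorphic twistor lift that fails to be horizontal. So for (\ref{item:hor}) you must bring in \eqref{harmxi}--\eqref{harmeta} and show that, after substituting them for $\xi_{z\bar z},\eta_{z\bar z}$, the expression $(w_3)_z+w_1(w_2)_z-w_2(w_1)_z$ (in the normalized coordinates $w_0=1$) collapses to zero.
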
 

\begin{proof}
	The claimed statements follow combining the preliminary results presented above, therefore the argument are more direct and much more elementary than those in the existing literature covering the nonsymmetric case.
	
	For (\ref{item:welldef}), we discuss only the case of $\widetilde{\boldsymbol{\omega}}^+$. The case of $\widetilde{\boldsymbol{\omega}}^-$ is entirely similar and it is left to the reader. First observe that the domain of definition of the two formulas for $\widetilde{\boldsymbol{\omega}}^+$, namely $\{(\bar{\xi}_{\bar z}, \eta_{\bar z}) \neq (0,0) \}$ and $ \{(\xi_{\bar z}, {\bar \eta}_{\bar z}) \neq (0,0) \}$, are both nonempty and open and their union is $\C \setminus \{ 0\}$ because of Lemma \ref{lemma:nonzero}, hence they have nonempty open intersection $A \subset \C \setminus \{ 0\}$. Thus, equations \eqref{eq:lift-pos} give on each open set a well-defined real-analytic map with values into $\C^4 \setminus \{ 0\}$ because $\xi$ and $\eta$ are real-analytic. In addition, on the intersection $A$ the two quadruples in $\C^4$ are proportional because of the conformality property of $\boldsymbol{\omega}$ (considering them as rows of a matrix in $M_{2\times4}(\C)$, the rank is one at every point of $A$ because of \eqref{eq:conf}). As a consequence \eqref{eq:lift-pos} gives a well-defined real-analytic function from $\C\setminus \{0\}$ to $\C P^3$.
		
	Statement (\ref{item:s1-eq}) is a consequence of the equivariance properties of $\xi$ and $\eta$ and their derivatives given in \eqref{equivxiderivatives}-\eqref{equivetaderivatives}. Thus the complex entries in the first quadruple in \eqref{eq:lift-pos} are equivariant of degrees $(0,1,2,3)$, those in the second of degrees $(-1,0,1,2)$, hence equivariance holds with respect to the action \eqref{S1actionCP3} on $\C P^3$. Similar considerations apply to $\widetilde{\boldsymbol{\omega}}^-$.
	
 Statement (\ref{item:lift}) is nothing more than a straightforward computation combining the explicit formulae for the lifts with \eqref{eq:tau} and using again the fact that the derivatives of $\xi$ and $\eta$ cannot vanish simultaneously because of Lemma \ref{lemma:nonzero}.

For what concerns (\ref{item:hol}), we observe that, as detailed for instance in \cite{Fawley}, the holomorphicity condition for the first representation of $\widetilde{\boldsymbol\omega}^+$ in terms of $\xi$, $\eta$ clearly implies
\begin{equation}\label{eq:hol+1}
	\bar{\xi}_{\bar{z}\bar{z}} \eta_{\bar{z}} - \eta_{\bar{z}\bar{z}} \bar{\xi}_{\bar{z}} = 0  \quad \mbox{ in } (\bar{\xi}_{\bar z}, \eta_{\bar z}) \neq (0,0)
\end{equation}
or for the second representation,
\begin{equation}\label{eq:hol+2}
	\bar{\eta}_{\bar{z}\bar{z}} \xi_{\bar{z}} - \xi_{\bar{z}\bar{z}} \bar{\eta}_{\bar{z}} = 0 \quad \mbox{ in } (\xi_{\bar z}, {\bar \eta}_{\bar z}) \neq (0,0) \, .
\end{equation}
Similarly, for $\widetilde{\boldsymbol\omega}^-$ we get
\begin{equation}\label{eq:hol-1}
	\xi_{\bar{z}\bar{z}} \eta_{\bar{z}} - \eta_{\bar{z}\bar{z}} \xi_{\bar{z}} = 0 \quad \mbox{ in } (\xi_{\bar z}, \eta_{\bar z}) \neq (0,0) \, ,
\end{equation}
or for the second definition
\begin{equation}\label{eq:hol-2}
	\bar{\xi}_{\bar{z}\bar{z}} \bar{\eta}_{\bar{z}} - \bar{\eta}_{\bar{z}\bar{z}} \bar{\xi}_{\bar{z}} = 0 \quad \mbox{ in } ({\bar \xi}_{\bar z}, {\bar \eta}_{\bar z}) \neq (0,0) \, .
\end{equation}
These is easily seen by differentiating the ratio of the functions defining each domain. Simple calculations also show that the same holds for each ratio which is well-defined, i.e., it is routine to check that all the conditions \eqref{eq:hol+1}-\eqref{eq:hol-2} are also sufficient to characterize holomorphicity of the lifts in the respective domain of definition. Observe that when they hold these identity extend to the whole $\C \setminus \{ 0\}$ by unique continuation (although the corresponding ratio may not be well-defined) because $\xi$ and $\eta$ are real-analytic.
Finally, observe that \eqref{eq:hol+1}-\eqref{eq:hol+2} and \eqref{eq:hol-1}-\eqref{eq:hol-2} are equivalent when two domains of definition overlap because of the conformality condition \eqref{eq:conf}. 

In order to finish the proof of the claim we apply Lemma \ref{holomorphicfactor}. It follows from \eqref{zeroproduct} that at least one lift is holomorphic because one (actually two) among the conditions \eqref{eq:hol+1}-\eqref{eq:hol-2} is (are) satisfied. Moreover still Lemma \ref{holomorphicfactor} implies than only one lift is holomorphic whenever the harmonic map $\boldsymbol{\omega}$ is full.

Now we come to (\ref{item:hor}). Suppose $\widetilde{\boldsymbol\omega}^+$ is holomorphic. Then, in the open subset $\{ \bar{\xi}_{\bar{z}} \neq 0\}$ where the first representation of the lift $\widetilde{\boldsymbol{\omega}}^+$ is defined (analogous argument applies in the subset $\{ \eta_{\bar{z}}\neq 0\}$) we can write
\[\notag
	\widetilde{\boldsymbol{\omega}}^+ = \left[1, \frac{\bar{\xi}_{\bar{z}} \xi + \eta_{\bar{z}}\bar{\eta}}{\bar{\xi}_{\bar{z}}}, \frac{\bar{\xi}_{\bar{z}}\eta - \eta_{\bar{z}} \bar{\xi}}{\bar{\xi}_{\bar{z}}}, -\frac{\eta_{\bar{z}}}{\bar{\xi}_{\bar{z}}} \right]
\]
where the functions
\[\notag
	w_0=1 \, , \quad w_1 = \frac{\bar{\xi}_{\bar{z}} \xi + \eta_{\bar{z}}\bar{\eta}}{\bar{\xi}_{\bar{z}}} \, , \quad w_2 = \frac{\bar{\xi}_{\bar{z}}\eta - \eta_{\bar{z}} \bar{\xi}}{\bar{\xi}_{\bar{z}}} \, , \quad w_3 = -\frac{\eta_{\bar{z}}}{\bar{\xi}_{\bar{z}}} 
\]
are holomorphic. The horizontality property, previously encoded in the condition \eqref{eq:horizontality1}, reduces to
\[\notag
	(w_3)_z + w_1 (w_2)_z - w_2 (w_1)_z = 0 \, ,
\]
which is easily verified taking advantage of conformality condition \eqref{eq:conf} and the fact that, since $\boldsymbol\omega$ is harmonic, $\xi$ and $\eta$ respectively satisfy \eqref{harmxi} and \eqref{harmeta} in $\C$.

The same holds for the second representation of $\widetilde{\boldsymbol\omega}^+$ in its domain of definition. In case $\widetilde{\boldsymbol\omega}^-$ is holomorphic its horizontality property is treated in a similar way. The details are left to the reader.
\end{proof}

\begin{remark}
Although sufficient to our purposes in the present form, Proposition~\ref{liftings} actually holds without the assumption $\boldsymbol\omega(-S^{(2)}) = -S^{(4)}$. Indeed, in case $\boldsymbol\omega(-S^{(2)}) = S^{(4)}$ instead of $\xi$ and $\eta$ one can define complex functions $\zeta$ and $\chi$ using the stereographic projection from the north pole $-S^{(4)}\in \bbS^4$ (i.e., as in formulas \eqref{eq:xi-eta} but with a minus sign in the denominator) and obtain equations similar to \eqref{harmxi}-\eqref{harmeta}. Then one can construct  positive and negative lifts with formulas similar to \eqref{eq:lift-pos}-\eqref{eq:lift-neg} in terms of $\zeta$, $\chi$ so that the claims in Proposition \ref{liftings} still hold. In addition, such new lifts can be proven to be equivalent to those in terms of $\xi$, $\eta$ (hence the lifts do not depend on which stereographic projection has been chosen). For further details we refer the interested reader to \cite[pp. 77-78 and 98-100]{Fawley}.

\end{remark}

\begin{remark}
\label{notholomorphic}
It follows from the previous proposition, combined with the rigidity result in Lemma \ref{horizontallifts}, that for a linearly full map $\boldsymbol{\omega}$ the lift $\widetilde{\boldsymbol{\omega}}^-$ cannot be holomorphic when $\boldsymbol{\omega}(-S^{(2)})=-S^{(4)}$. Otherwise we would have $\mu_0 \neq 0$ from \eqref{mu-horizontal} and hence  \eqref{algebraiccurves} would imply
\[ \boldsymbol{\omega}(-S^{(2)})=-S^{(4)}= \boldsymbol\tau([1,0,0,0])=\boldsymbol\tau \circ \widetilde{\boldsymbol{\omega}}^-( -S^{(2)})=a \circ \boldsymbol{\omega}(-S^{(2)}) \, , \]
which is a contradiction.
Similarly, $\widetilde{\boldsymbol{\omega}}^+$ cannot be holomorphic when $\boldsymbol{\omega}(-S^{(2)})=S^{(4)}$. Thus, from claim (4) of the previous proposition we conclude that $\widetilde{\boldsymbol\omega}^+$ (resp.  $\widetilde{\boldsymbol\omega}^-$) is holomorphic iff $\boldsymbol\omega(-S^{(2)})=-S^{(4)}$ (resp. $\boldsymbol\omega(-S^{(2)})=S^{(4)}$). 
\end{remark}

\begin{remark}
\label{twistorlift}
As a consequence Proposition~\ref{liftings} and the previous remark we see that if $\boldsymbol{\omega} : \mathbb{S}^2 \to \mathbb{S}^4$ is a linearly full equivariant harmonic sphere such that $\boldsymbol{\omega}(-S^{(2)})=\mp S^{(4)}$ then $\Psi=\widetilde{\pm \boldsymbol{\omega}}^+: \C P^1 \to \C P^3$ is always an equivariant holomorphic horizontal curve.  In addition, $\pm \boldsymbol{\omega}=\boldsymbol\tau \circ \Psi$. We will refer to it as {\em the twistor lift} of $\boldsymbol{\omega}$ and we will denote it simply by $\widetilde{\boldsymbol\omega}$.	
\end{remark}

Combining Proposition \ref{liftings} with Lemma~\ref{horizontallifts} and Proposition~\ref{prop:explfullspheres} we finally have the classification result for linearly full equivariant harmonic spheres.

\begin{theorem}
\label{classification}	
Let $\boldsymbol\omega :\mathbb{S}^2 \to \mathbb{S}^4$ be an $\mathbb{S}^1$-equivariant harmonic sphere. If $\boldsymbol\omega$ is linearly full then it can be recovered and explicitly described in terms of the composition of the twistor fibration $\boldsymbol\tau$ in \eqref{eq:tau} with its $\mathbb{S}^1$-equivariant holomorphic and horizontal twistor lift $\widetilde{\boldsymbol\omega}: \C P^1 \to \C P^3$ defined in Remark \ref{twistorlift}.

 More explicitly, the following holds:
\begin{enumerate}
\item We have $\mp \boldsymbol\omega=\boldsymbol\tau \circ \widetilde{\boldsymbol\omega}$, where $\widetilde{\boldsymbol\omega}=\widetilde{\mp \boldsymbol\omega}^+$ is defined through \eqref{eq:lift-pos} and \eqref{eq:xi-eta}, with the minus sign if $\boldsymbol\omega(-S^{(2)})=S^{(4)}$ and the plus sign otherwise.
\item The lift $\widetilde{\boldsymbol\omega}$ is given by \eqref{algebraiccurves} with $\mu_0=1$, $\mu_1, \mu_2 \in \C \setminus \{0\}$ and $\mu_3$ as in \eqref{mu-horizontal}.
\item Up to reversing the sign as in (1), the map $\boldsymbol\omega$ is given by \eqref{eq:classification} with restriction on the parameters $\mu_1$, $\mu_2$ and $\mu_3$ as in (2).
\end{enumerate}

\end{theorem}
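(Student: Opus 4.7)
The plan is to assemble the preliminary results from this section—Lemma~\ref{horizontallifts}, Proposition~\ref{prop:explfullspheres}, and Proposition~\ref{liftings}—into the desired classification, with only a mild amount of bookkeeping.

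First I would reduce to the normalization $\boldsymbol{\omega}(-S^{(2)}) = -S^{(4)}$. By \eqref{eq:constraint-components} together with the equivariance \eqref{omegaequiv}, the value $\boldsymbol{\omega}(-S^{(2)})$ lies in $\{\pm S^{(4)}\}$, so replacing $\boldsymbol{\omega}$ by $-\boldsymbol{\omega}$ when necessary yields this normalization at no cost: $-\boldsymbol{\omega}$ remains $\mathbb{S}^1$-equivariant, harmonic, and linearly full. It is precisely this choice of sign that dictates the $\mp$ convention appearing in claim (1).

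Next I would apply Proposition~\ref{liftings} to the normalized map. It produces a real-analytic, $\mathbb{S}^1$-equivariant positive lift $\widetilde{\boldsymbol{\omega}}^+ : \bbS^2\setminus\{\pm S^{(2)}\} \to \C P^3$ satisfying $\boldsymbol{\tau}\circ\widetilde{\boldsymbol{\omega}}^+ = \boldsymbol{\omega}$. Parts (\ref{item:hol})--(\ref{item:hor}) of that proposition, together with Remark~\ref{notholomorphic} which rules out the negative lift under our normalization, guarantee that $\widetilde{\boldsymbol{\omega}}^+$ is both holomorphic and horizontal on $\C P^1\setminus\{\pm S^{(2)}\}$.

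At this stage I would invoke Lemma~\ref{horizontallifts} with $\Psi := \widetilde{\boldsymbol{\omega}}^+$. The lemma extends $\Psi$ holomorphically to all of $\C P^1$ in the explicit polynomial form \eqref{algebraiccurves}, with parameters $(\mu_0,\mu_1,\mu_2,\mu_3) \in \C^4$ constrained by the horizontality relation \eqref{mu-horizontal}. To fix the projective normalization $\mu_0 = 1$, I would evaluate at $[1,0]$: since $\Psi([1,0]) = [\mu_0,0,0,0]$ and $\boldsymbol{\tau}([1,0,0,0]) = -S^{(4)}$, the identity $\boldsymbol{\tau}(\Psi([1,0])) = \boldsymbol{\omega}(-S^{(2)}) = -S^{(4)}$ forces $\mu_0 \neq 0$, after which I rescale the homogeneous coordinates. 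Claim (3) then follows by applying Proposition~\ref{prop:explfullspheres}, whose explicit computation of $\boldsymbol\tau\circ\Psi$ under exactly these hypotheses delivers the formula \eqref{eq:classification} for $\boldsymbol{\omega}$ (resp.~$-\boldsymbol{\omega}$).

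The only remaining point is that $\mu_1,\mu_2 \in \C\setminus\{0\}$. The proof of Proposition~\ref{prop:explfullspheres} already records that $\boldsymbol{\omega}_k \equiv 0$ iff $\mu_k = 0$ for $k=1,2$; if either were zero, the image of $\boldsymbol{\omega}$ would be confined to one of the three-dimensional invariant subspaces $L_0\oplus L_1$ or $L_0\oplus L_2$ (cf.~Remark~\ref{rmk:invariant-subspaces}), contradicting linear fullness. I do not expect any serious obstacle in this argument: everything substantive—existence and holomorphicity of the twistor lift, its polynomial parametrization, the horizontality condition, and the computation of $\boldsymbol\tau\circ\Psi$—is already in place, and the proof reduces to stitching the preparatory results together while carefully tracking the sign convention and the normalization $\mu_0 = 1$.
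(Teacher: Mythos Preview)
Your proposal is correct and follows exactly the approach the paper takes: the paper does not give an explicit proof of Theorem~\ref{classification} but simply states that it follows by combining Proposition~\ref{liftings}, Lemma~\ref{horizontallifts}, and Proposition~\ref{prop:explfullspheres}, which is precisely the assembly you describe. One minor imprecision: the sentence ``$\Psi([1,0]) = [\mu_0,0,0,0]$'' presupposes $\mu_0 \neq 0$, so as written the argument is slightly circular; the cleanest fix is to observe that $\mu_0 = 0$ together with the horizontality relation \eqref{mu-horizontal} forces $\mu_1\mu_2 = 0$, which you already rule out via linear fullness at the end of your argument.
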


\section{Minimizing $\bbS^1$-equivariant tangent maps}
\label{sec:stability}
In this section, we rely on the classification results for $\bbS^1$-equivariant harmonic spheres from Sec.~\ref{sec:axisymm} and we discuss the stability and energy minimality properties of the corresponding degree-zero homogeneous extensions, the so-called {\em tangent maps}, with respect to the Dirichlet energy $\mathcal{E}_0$ in the class of $\bbS^1$-equivariant maps.  
 Since the fundamental papers by R.~Schoen and K.~Uhlenbeck \cite{SU1,SU2,SU3}, tangent maps are used to study the local behavior of minimizing harmonic maps around possible singularities. As detailed in Sec.~\ref{sec:6}, our main interest here relies in the fact that they allow as well to investigate the local behavior around any point $x \in \Omega$ of $\bbS^1$-equivariant $Q$-tensor fields minimizing $\mathcal{E}_\lambda$ in the class $\mathcal{A}^{\rm sym}_{Q_{\rm b}}(\Omega)$. Because of the extra symmetry constraint, this specific stability/minimality analysis is really necessary in the present case, since we are not allowed to apply the general results in \cite{SU3} and \cite{LiWa1} valid in the nonsymmetric setting.

As in the previous sections, we identify $\mathcal{S}_0$ with $\R\oplus \C \oplus \C$ and consider $\mathbb{S}^4 $-valued maps.  
\begin{definition}[tangent map]\label{deftangmap}
We say that a degree-zero homogeneous map $\widehat{\boldsymbol{\omega}}:\R^3\setminus\{0\}\to\mathbb{S}^4$ is an $\bbS^1$-equivariant {\em tangent map} if 
\begin{equation}
\label{harmhomextension}
\widehat{\boldsymbol{\omega}}(x) = \boldsymbol{\omega}\left( \frac{x}{\abs{x}}\right) \quad \forall x \in \bbR^3 \setminus \{0\}\,,
\end{equation}
and  $\boldsymbol{\omega} : \bbS^2 \to \bbS^4$ is an $\bbS^1$-equivariant harmonic map (see Sec.~\ref{sec:axisymm}). Correspondingly, we say that  $\widehat{\boldsymbol{\omega}}$ is the tangent map induced by the ($\mathbb{S}^1$-equivariant) harmonic sphere $\boldsymbol{\omega}:\mathbb{S}^2\to\mathbb{S}^4$. 
\end{definition}

\begin{remark} A tangent map $\widehat{\boldsymbol{\omega}}$ is smooth in $\bbR^3\setminus \{0\}$ (see Sec.~\ref{sec:axisymm}) and belongs to $W^{1,2}_{\rm loc}(\bbR^3, \bbS^4)$. It is also $\bbS^1$-equivariant and weakly harmonic in the whole space $\R^3$. Conversely, any weakly harmonic map from $\R^3$ into $\mathbb{S}^4$ which is $0$-homogeneous and $\bbS^1$-equivariant is a tangent map (i.e., the $0$-homogeneous extension of an $\bbS^1$-equivariant harmonic sphere into $\bbS^4$). 
\end{remark}

To investigate the stability of tangent maps in the class of $\mathbb{S}^1$-equivariant maps into $\mathbb{S}^4$, we need to recall the definition of the second variation of energy along admissible deformations. We proceed as follows. 
Consider  a tangent map $\widehat{\boldsymbol{\omega}} : \bbR^3\setminus\{0\} \to \bb{S}^4$.  Given a compactly supported vector field $X\in C^\infty_c(\bb{R}^3, \R \oplus \C \oplus \C)$ which is $\bb{S}^1$-equivariant (i.e., satisfying $X(Rx)=R \cdot X(x)$ with respect to \eqref{S1actionRCC} for every $R\in\mathbb{S}^1$ and every $x \in \R^3$), we can find $\varepsilon> 0$ small enough such that for every $t\in (-\varepsilon,\varepsilon)$, 
\[
	\widehat{\boldsymbol{\omega}}_t:= \frac{\widehat{\boldsymbol{\omega}}+t X}{\abs{\widehat{\boldsymbol{\omega}}+tX}} \in W^{1,2}_{\rm loc}(\mathbb{R}^3;\mathbb{S}^4)\,.
\]
Clearly, $\{\widehat{\boldsymbol{\omega}}_t\}_{t\in(-\eps,\eps)}$ is a one parameter family of $\mathbb{S}^1$-equivariant maps into $\mathbb{S}^4$, $\widehat{\boldsymbol{\omega}}_t-\widehat{\boldsymbol{\omega}}$ is compactly supported in ${\rm spt} \, X$, and $\widehat{\boldsymbol{\omega}}_0=\widehat{\boldsymbol{\omega}}$. The second variation of the Dirichlet energy $\mathcal{E}_0$ of $\widehat{\boldsymbol{\omega}}$ evaluated at $X$ is defined as 
\begin{equation}
\label{defsecondvariation}
\delta^2\mathcal{E}_0(\widehat{\boldsymbol{\omega}})[X]:=\left. \frac{d^2}{dt^2}\mathcal{E}_0(\widehat{\boldsymbol{\omega}}_t,B_\rho)\right\vert_{t=0}\,, 
\end{equation}
where the radius $\rho$ is chosen in a way that ${\rm spt}\,X\subset B_\rho$. 
 
The explicit representation of the second variation $\delta^2\mathcal{E}_0(\widehat{\boldsymbol{\omega}})$ follows from  classical computations as in \cite{SU3} (see also \cite[Chapter 1]{LiWa2}), and it is provided  in the following lemma.

\begin{lemma}
\label{secondvariation}
Let	 $\widehat{\boldsymbol{\omega}}:\R^3\setminus\{0\}\to\mathbb{S}^4$ be a tangent map. For every $\mathbb{S}^1$-equivariant vector field $X\in C^\infty_c(\R^3;\R \oplus \C \oplus \C)$, we have
  \begin{equation}\label{eq:2nd-var}
    \begin{split}    
   \delta^2\mathcal{E}_0(\widehat{\boldsymbol{\omega}})[X] &= \int_{\bb{R}^3} \left\{ \abs{\nabla X}^2+ (4 (\widehat{\boldsymbol{\omega}}\cdot X)^2 - \abs{X}^2 ) \abs{\nabla \widehat{\boldsymbol{\omega}}}^2  - \abs{\nabla(\widehat{\boldsymbol{\omega}} \cdot X)}^2 - 4 (\widehat{\boldsymbol{\omega}}\cdot X) \nabla X:\nabla \widehat{\boldsymbol{\omega}}   \right\}\,\mbox{d}x \\
    &= \int_{\bb{R}^3} \left\{ \abs{\nabla X_T}^2-    \abs{\nabla \widehat{\boldsymbol{\omega}} }^2 \abs{X_T}^2 \right\} \,\mbox{d}x\,,  
    \end{split}      
  \end{equation}
where $X_T := X - (\widehat{\boldsymbol{\omega}}\cdot X) \widehat{\boldsymbol{\omega}}$ is the tangential component of $X$ along $\widehat{\boldsymbol{\omega}}$ and $\widehat{\boldsymbol{\omega}}\cdot X$ denotes the pointwise scalar product of the two functions as elements in the underlying real vector space.
\end{lemma}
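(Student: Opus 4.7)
The plan is to carry out a direct Taylor expansion of $\widehat{\boldsymbol{\omega}}_t$ in powers of $t$, substitute into the second variation formula, and then simplify using the harmonic map equation satisfied by $\widehat{\boldsymbol{\omega}}$. Setting $\phi := \widehat{\boldsymbol{\omega}}\cdot X$ and expanding $(1+2t\phi+t^2|X|^2)^{-1/2}$ via the binomial series, I would first obtain
\begin{equation*}
\widehat{\boldsymbol{\omega}}_t = \widehat{\boldsymbol{\omega}} + t X_T + \tfrac{t^2}{2}\bigl[(3\phi^2-|X|^2)\widehat{\boldsymbol{\omega}} - 2\phi X\bigr] + O(t^3)\,,
\end{equation*}
so that $\dot\omega_0 := \partial_t\widehat{\boldsymbol{\omega}}_t|_{t=0} = X_T$ and $\ddot\omega_0 := \partial_t^2\widehat{\boldsymbol{\omega}}_t|_{t=0} = (3\phi^2-|X|^2)\widehat{\boldsymbol{\omega}} - 2\phi X$. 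The sanity check $\widehat{\boldsymbol{\omega}}\cdot\ddot\omega_0 = -|X_T|^2$ is consistent with differentiating twice the constraint $|\widehat{\boldsymbol{\omega}}_t|^2 \equiv 1$.

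Differentiating twice the integrand $\tfrac{1}{2}|\nabla\widehat{\boldsymbol{\omega}}_t|^2$ at $t=0$ then yields
\begin{equation*}
\delta^2\mathcal{E}_0(\widehat{\boldsymbol{\omega}})[X] = \int_{\R^3}\bigl(|\nabla X_T|^2 + \nabla\widehat{\boldsymbol{\omega}}:\nabla\ddot\omega_0\bigr)\,\mbox{d}x\,.
\end{equation*}
The key step is that $\widehat{\boldsymbol{\omega}}$ is weakly harmonic on the whole of $\R^3$, not only away from the origin: since the $0$-homogeneity forces $\|\nabla\widehat{\boldsymbol{\omega}}\|_{L^2(B_r)}^2 \le C\,r$, a standard cutoff $\eta_\varepsilon$ vanishing near $0$ shows by Cauchy--Schwarz that the error term $\int\nabla\widehat{\boldsymbol{\omega}}:(\nabla\eta_\varepsilon\otimes\psi)\,\mbox{d}x$ vanishes as $\varepsilon\to 0$ for any $\psi\in W^{1,2}_c\cap L^\infty$. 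Testing the harmonic map equation $\Delta\widehat{\boldsymbol{\omega}} + |\nabla\widehat{\boldsymbol{\omega}}|^2\widehat{\boldsymbol{\omega}} = 0$ against the compactly supported $\ddot\omega_0$ thus yields $\int\nabla\widehat{\boldsymbol{\omega}}:\nabla\ddot\omega_0\,\mbox{d}x = \int|\nabla\widehat{\boldsymbol{\omega}}|^2\,\widehat{\boldsymbol{\omega}}\cdot\ddot\omega_0\,\mbox{d}x = -\int|\nabla\widehat{\boldsymbol{\omega}}|^2|X_T|^2\,\mbox{d}x$, which is precisely the compact second form in \eqref{eq:2nd-var}.

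To recover the first (expanded) form, I would compute pointwise from $X_T = X - \phi\widehat{\boldsymbol{\omega}}$, using $\widehat{\boldsymbol{\omega}}\cdot\nabla\widehat{\boldsymbol{\omega}} = 0$ (which follows from $|\widehat{\boldsymbol{\omega}}|^2=1$),
\begin{equation*}
|\nabla X_T|^2 = |\nabla X|^2 - |\nabla\phi|^2 + \phi^2|\nabla\widehat{\boldsymbol{\omega}}|^2 + 2\sum_i\partial_i\phi\,(X\cdot\partial_i\widehat{\boldsymbol{\omega}}) - 2\phi\,\nabla X:\nabla\widehat{\boldsymbol{\omega}}\,.
\end{equation*}
The residual term $\sum_i\partial_i\phi\,(X\cdot\partial_i\widehat{\boldsymbol{\omega}})$ is not of the shape claimed in \eqref{eq:2nd-var}, but it is precisely absorbed by the integration by parts identity
\begin{equation*}
\int\phi\,\nabla X:\nabla\widehat{\boldsymbol{\omega}}\,\mbox{d}x = -\int\sum_i\partial_i\phi\,(X\cdot\partial_i\widehat{\boldsymbol{\omega}})\,\mbox{d}x + \int\phi^2|\nabla\widehat{\boldsymbol{\omega}}|^2\,\mbox{d}x\,,
\end{equation*}
obtained by transferring the derivative from $X$ onto $\phi\,\nabla\widehat{\boldsymbol{\omega}}$ and once more invoking $\Delta\widehat{\boldsymbol{\omega}} = -|\nabla\widehat{\boldsymbol{\omega}}|^2\widehat{\boldsymbol{\omega}}$. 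Substituting and using $|X_T|^2 = |X|^2 - \phi^2$ to match coefficients then delivers the first expression in \eqref{eq:2nd-var}. The only nontrivial ingredient throughout is the justification of the integration by parts across the possible singularity of $\widehat{\boldsymbol{\omega}}$ at the origin, which is handled by the cutoff estimate mentioned above.
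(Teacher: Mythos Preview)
Your argument is correct. The paper itself does not prove this lemma but merely states that it ``follows from classical computations as in \cite{SU3} (see also \cite[Chapter 1]{LiWa2})''; your Taylor expansion of $\widehat{\boldsymbol{\omega}}_t$ combined with the weak harmonic map equation is precisely that classical computation, and your care in justifying the integration by parts across the origin via a cutoff is appropriate here since $\ddot\omega_0\in W^{1,2}_c\cap L^\infty$ rather than $C^\infty_c$.
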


Classically, we shall say that an $\bbS^1$-equivariant tangent map $\widehat{\boldsymbol{\omega}}$ is \emph{stable} if the quadratic form $\delta^2\mathcal{E}_0(\widehat{\boldsymbol{\omega}})$ is nonnegative, i.e., 
\begin{equation}\label{eq:stability-condition}
	\delta^2\mathcal{E}_0(\widehat{\boldsymbol{\omega}})[X] \geq 0 \quad\text{for every $\mathbb{S}^1$-equivariant $X\in C^\infty_c(\R^3;\R \oplus \C \oplus \C)$.}
\end{equation}
If a tangent map $\widehat{\boldsymbol{\omega}}$ is not stable, we shall say that it is {\it unstable}. 
\vskip5pt

A stronger property than stability for a tangent map  is to be minimizing. We say that an $\bbS^1$-equivariant tangent map $\widehat{\boldsymbol{\omega}}$ is {\it locally minimizing}  if it is energy minimizing in every ball $B_\rho$, i.e., for every $\rho>0$ and every $\mathbb{S}^1$-equivariant competitor $w\in W^{1,2}(B_\rho;\mathbb{S}^4)$ such that 
${\rm spt}(\widehat{\boldsymbol{\omega}}-w)\subset B_\rho$,
\begin{equation}
\label{tm-minimality}
	\mathcal{E}_0(\widehat{\boldsymbol{\omega}},B_\rho)\leq  \mathcal{E}_0(w,B_\rho)\,.
\end{equation}
(Note that by $0$-homogeneity it is enough to consider minimality in the unit ball $B_1$.) 
\begin{remark}
\label{obviousstability}
	According to this definition, locally minimizing $\bbS^1$-equivariant tangent maps are indeed stable in the sense of \eqref{eq:stability-condition} by the second order condition for minimality.
\end{remark}
\vskip5pt

We are now ready to state the main result of this section, whose proof is the object of the following two subsections. 

\begin{theorem}\label{thm:stability}
	Let $\widehat{\boldsymbol{\omega}} : \R^3 \setminus \{0\} \to \bbS^4$ be a nonconstant $\bbS^1$-equivariant tangent map. 
	\begin{itemize}
		\item[1)] If the range of $\widehat{\boldsymbol{\omega}} $ is not contained in $\R \oplus \C \oplus \{0\} \simeq L_0\oplus L_1$, then $\widehat{\boldsymbol{\omega}}$  is unstable.
		\vskip5pt
		\item[2)] The map $\widehat{\boldsymbol{\omega}} $ is locally minimizing iff there exists $\alpha \in \R$ such that $\widehat{\boldsymbol{\omega}}(x)= \pm R_\alpha \cdot  \boldsymbol{\omega}^{(1)}_{\rm eq}\left(\frac{x}{|x|} \right)$ with $\boldsymbol{\omega}^{(1)}_{\rm eq}$ the equatorial embedding in \eqref{equatorial} and $R_\alpha \in \mathbb{S}^1$ acts as in \eqref{S1actionRCC}.
	\end{itemize}
\end{theorem}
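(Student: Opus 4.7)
The plan is to first prove Part 1 by exhibiting an explicit $\mathbb{S}^1$-equivariant destabilizing variation, and then derive Part 2 by combining Part 1 with the classification of equivariant harmonic spheres in Section~\ref{sec:axisymm} and the classical local minimality of the hedgehog.

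For Part 1, write $\boldsymbol{\omega}=(\omega_0,\omega_1,\omega_2)$ in the decomposition $\mathcal{S}_0=L_0\oplus L_1\oplus L_2$; by hypothesis $\omega_2\not\equiv 0$, so Proposition~\ref{degeneratespheres} and Theorem~\ref{classification} place $\boldsymbol{\omega}$ in the linearly degenerate degree-two family or the linearly full family. In either case, a direct inspection of the explicit formulas \eqref{degeneratesp2} and \eqref{eq:classification} produces an $\mathbb{S}^1$-invariant circle $\mathcal{C}_*\subset\mathbb{S}^2$ (at some latitude $\theta_*\in(0,\pi)$ off the poles) along which $\omega_1\equiv 0$ and $|\omega_2|^2\geq 3/4$: indeed $\omega_1\equiv 0$ identically and $|\omega_2|=1$ on the equator in the degree-two case, while in the linearly full case the condition $|z_0|^4=(|\mu_2|^2/3)|z_1|^4$ cuts out such a circle and a short algebraic computation using \eqref{eq:classification} yields the (remarkably universal) value $|\omega_2|^2=3/4$ there. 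Pick real cutoffs $\zeta\in C_c^\infty((0,\pi))$ supported near $\theta_*$ and $\psi\in C_c^\infty((0,\infty))$ and define
\[
V(\theta,\phi) := (0,\zeta(\theta)e^{i\phi},0),\qquad X(x) := \psi(|x|)\,V(x/|x|).
\]
The phase $e^{i\phi}$ matches the degree-one $\mathbb{S}^1$-action on $L_1$, so both $V$ and $X$ are equivariant. The inner product $\widehat{\boldsymbol{\omega}}\cdot X=\psi\zeta\,\Re(\omega_1)$ is small on $\mathrm{supp}\,V$ by continuity of $\omega_1$ near $\mathcal{C}_*$ and so contributes only lower-order corrections in the first form of \eqref{eq:2nd-var}, leaving the leading term
\[
\Big(\!\int_0^\infty\!\psi'^2 r^2\,dr\Big)\!\!\int_{\mathbb{S}^2}\!\!|V|^2\,d\sigma \;+\;\Big(\!\int_0^\infty\!\psi^2\,dr\Big)\!\!\int_{\mathbb{S}^2}\!\!\big(|\nabla_T V|^2-|\nabla_T\boldsymbol{\omega}|^2|V|^2\big)\,d\sigma.
\]
Using the conformality identity \eqref{conformalityII} reduces the spherical integrand to $|\zeta'|^2+|\zeta|^2(1-2|\omega_1|^2-8|\omega_2|^2)/\sin^2\theta$; on $\mathrm{supp}\,\zeta$ the bracketed coefficient is at most $1-8\cdot\tfrac34=-5<0$, whence the spherical integral is strictly negative. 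Choosing $\psi\equiv 1$ on $[1,R]$ with smooth cutoffs yields $\int\psi^2\,dr\gtrsim R$ while $\int\psi'^2 r^2\,dr=O(1)$, so the full second variation is negative for $R$ large, proving instability. The main obstacle here is the algebraic identification of the circle $\mathcal{C}_*$ and the universal value $|\omega_2|^2=3/4$ in the linearly full case, which requires a careful but elementary manipulation of the explicit parametrization from Section~\ref{sec:axisymm}.

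For Part 2, the ``if'' direction follows from the classical local minimality of the hedgehog $x/|x|$ as a map $\R^3\to\mathbb{S}^n$ for every $n\geq 2$ (Brezis--Coron--Lieb and its extensions): composition with the isometry $R_\alpha\in\SO(5)$ commutes with the $\mathbb{S}^1$-action on $\R^3$ and thus preserves both local minimality and equivariance, yielding that $\pm R_\alpha\cdot\boldsymbol{\omega}^{(1)}_{\rm eq}(x/|x|)$ is locally minimizing in the equivariant class. For the ``only if'' direction, Remark~\ref{obviousstability} gives that a local minimizer is stable, so Part 1 forces its range into $L_0\oplus L_1$ and Proposition~\ref{degeneratespheres}(i) identifies $\widehat{\boldsymbol{\omega}}=\pm\widehat{\boldsymbol{\omega}}^{(1)}_{\mu_1}$ for some $\mu_1\in\C^*$; since this map is $\mathbb{S}^2$-valued, local minimality in the $\mathbb{S}^4$-valued equivariant class descends to the $\mathbb{S}^2$-valued equivariant class, and the axially symmetric classification of $\mathbb{S}^2$-valued minimizing tangent maps in $\R^3$ (rotated hedgehogs, cf.\ \cite{HKL,AlLi}) then forces $|\mu_1|=1$, completing the proof.
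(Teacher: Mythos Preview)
Your proposal contains two genuine errors.

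\textbf{Part 1, the radial scaling.} Your claim that choosing $\psi\equiv 1$ on $[1,R]$ with smooth cutoffs gives $\int_0^\infty \psi'^2 r^2\,dr=O(1)$ is false. No matter how the cutoff near $r\sim R$ is arranged, that region contributes at least of order $R$: e.g.\ a cutoff on $[R,2R]$ gives $\int_R^{2R}(\psi')^2 r^2\,dr\sim R$, and more generally the sharp Hardy inequality in $\R^3$ reads $\int_0^\infty (\psi')^2 r^2\,dr\geq \tfrac14\int_0^\infty \psi^2\,dr$, so the two radial integrals are always comparable. Consequently the positive first term does not become negligible, and your conclusion does not follow. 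The paper uses exactly this Hardy inequality to optimize over $\psi$, producing the $\tfrac14 g^2$ term in the spherical inequality (Lemma~\ref{lemma:stability}), and then makes the nontrivial choice $g=\omega_2$: the harmonic map identity of Lemma~\ref{extraidentity} then gives a one-line contradiction. Your idea of concentrating $\zeta$ near the circle $\mathcal{C}_*$ runs into a second obstruction once Hardy is accounted for: on an interval of width $\epsilon$ the term $\int(\zeta')^2\sin\theta\,d\theta$ scales like $\epsilon^{-1}$ while the negative potential term scales like $\epsilon$, so localization cannot win. (Incidentally, the paper also avoids your ``lower-order'' treatment of $\widehat{\boldsymbol\omega}\cdot X$ by first reducing to real parameters via Lemma~\ref{instability-reduced} and then taking the $L_1$ component of $X$ to be $i\psi e^{i\phi}$, which kills $\widehat{\boldsymbol\omega}\cdot X$ exactly.)

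\textbf{Part 2, the ``if'' direction.} You invoke ``the classical local minimality of the hedgehog $x/|x|$ as a map $\R^3\to\bbS^n$ for every $n\geq 2$'', but this is false for $n\geq 3$. Indeed, for a constant unit vector $e$ orthogonal to the equatorial $\bbS^2\subset\bbS^4$ and $X=\psi(|x|)e$, one has $X_T=X$ and the second variation equals $4\pi\int_0^\infty\big[(\psi')^2 r^2-2\psi^2\big]\,dr$, which is made negative by Hardy (the best constant $\tfrac14<2$). So $\widehat{\boldsymbol\omega}^{(1)}_{\rm eq}$ is not even stable among \emph{all} $\bbS^4$-valued maps; it is only in the $\bbS^1$-equivariant class (where the above $X$ is inadmissible) that minimality holds. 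The paper establishes this via an explicit equivariant rearrangement (Proposition~\ref{thm:min-x/x}): given any equivariant competitor $u=(u_0,u_1 e^{i\phi},u_2 e^{2i\phi})$, the map $\tilde u=(u_0,\sqrt{|u_1|^2+|u_2|^2}\,e^{i\phi},0)$ is equivariant, $\bbS^2_{(1)}$-valued, has the same trace, and satisfies $\mathcal{E}_0(\tilde u)\leq\mathcal{E}_0(u)$; then the $\bbS^2$-valued minimality of the equator map from \cite{BCL} applies. Your ``only if'' direction is essentially correct and matches the paper's argument.
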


Applying explicitly the identification $\mathcal{S}_0\simeq\R \oplus \C \oplus \C$ in Lemma~\ref{prop:Li-action}, straightforward calculations yield the following two useful corollaries of Theorem~\ref{thm:stability} for the corresponding maps $\widehat{\omega}$ into $\mathbb{S}^4 \subset \mathcal{S}_0$. 

Concerning unstable maps the following Corollary recovers the instability result from \cite[Proposition 4.7]{DMP1} for the constant norm hedgehog (see also \cite[Theorem~1.2 and Remark~1.3]{INSZ1} for a related instability result for the non-constant norm hedgehog in the entire space under symmetric perturbations).
\begin{corollary}
\label{hedgehog}
	Let $\widehat{\omega} : \mathbb{R}^3 \setminus \{0\} \to \mathbb{S}^4 \subset \mathcal{S}_0 $ be the tangent map induced by the Veronese embedding $\boldsymbol{\omega}^{(H)}$ in Remark \ref{rmk:hedgehog}, i.e., the harmonic sphere in \eqref{eq:classification} with $\mu_1=\mu_2=\sqrt{3}$. Then $\widehat{\omega}=\overline{H}$, where $\overline{H}$ is the constant norm hedgehog given in \eqref{Hbar}. As a consequence, the map $\overline{H}$ is unstable with respect to $\mathbb{S}^1$-equivariant perturbations. 
\end{corollary}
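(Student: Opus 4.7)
The plan is to first verify the identification $\widehat{\omega}\equiv \overline{H}$ by a direct computation in spherical coordinates, and then to conclude the instability by a single application of part 1) of Theorem \ref{thm:stability}. Since $\widehat{\omega}$ is $0$-homogeneous, it is enough to compare the two maps on $\bbS^2$, and since both maps are $\bbS^1$-equivariant with respect to the action \eqref{S1actionRCC}, it is enough to check the identity on a section transverse to the orbits, for instance along the meridian $\{\phi=0\}$ parametrized by the latitude $\theta\in[0,\pi]$.

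First I would specialize the formula from Remark \ref{rmk:hedgehog} with $\mu_1=\mu_2=\sqrt 3$ by choosing the representative $[z_0,z_1]=[\cos(\theta/2),\sin(\theta/2)e^{i\phi}]\in\C P^1$, corresponding through $\boldsymbol{\sigma}_2^{-1}$ to $x=(\sin\theta\cos\phi,\sin\theta\sin\phi,\cos\theta)\in\bbS^2$. Using $|z_0|^2+|z_1|^2=1$, $|z_0|^2-|z_1|^2=\cos\theta$, $\overline{z_0}z_1=\tfrac12\sin\theta\,e^{i\phi}$, and the elementary identity $|z_0|^4-4|z_0|^2|z_1|^2+|z_1|^4=1-6|z_0|^2|z_1|^2=\tfrac12(3\cos^2\theta-1)$, this yields
\begin{equation}\label{eq:veronese-spherical}
\boldsymbol{\omega}^{(H)}(x)=\left(\tfrac{3\cos^2\theta-1}{2},\;\sqrt 3\sin\theta\cos\theta\,e^{i\phi},\;\tfrac{\sqrt 3}{2}\sin^2\theta\,e^{2i\phi}\right)\in\R\oplus\C\oplus\C\,.
\end{equation}

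Next I would apply the isometric isomorphism $\boldsymbol{\iota}_*=(\boldsymbol{\iota}_0,\boldsymbol{\iota}_1,\boldsymbol{\iota}_2)$ of Lemma \ref{prop:Li-action} to the matrix $\overline{H}(x)=\sqrt{3/2}(x\otimes x-\tfrac13 I)$. Using the explicit basis \eqref{eq:basis-S0} and the fact that $\overline{H}$ is traceless and symmetric, the three components are computed directly as
\begin{equation*}
\boldsymbol{\iota}_0(\overline{H}(x))=\tfrac{3x_3^2-1}{2}\,,\quad \boldsymbol{\iota}_1(\overline{H}(x))=\sqrt 3\,x_3(x_1+ix_2)\,,\quad \boldsymbol{\iota}_2(\overline{H}(x))=\tfrac{\sqrt 3}{2}(x_1+ix_2)^2\,.
\end{equation*}
Substituting $x=(\sin\theta\cos\phi,\sin\theta\sin\phi,\cos\theta)$ and comparing with \eqref{eq:veronese-spherical} term by term gives exactly $\boldsymbol{\iota}_*(\overline{H}(x))=\boldsymbol{\omega}^{(H)}(x)$ for every $x\in\bbS^2$. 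Extending by $0$-homogeneity to $\R^3\setminus\{0\}$, we conclude $\widehat{\omega}=\overline{H}$ as claimed.

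Finally, for the instability statement, I would simply observe that the last component in \eqref{eq:veronese-spherical} is not identically zero (it corresponds to $\mu_2=\sqrt 3\neq 0$ in \eqref{eq:classification}), so the range of $\boldsymbol{\omega}^{(H)}$, and hence of $\widehat{\omega}$, is not contained in $L_0\oplus L_1\simeq\R\oplus\C\oplus\{0\}$. Part 1) of Theorem \ref{thm:stability} then applies and yields the existence of an $\bbS^1$-equivariant test field $X\in C^\infty_c(\R^3;\R\oplus\C\oplus\C)$ with $\delta^2\mathcal{E}_0(\widehat{\omega})[X]<0$, which by Lemma \ref{secondvariation} amounts to an $\bbS^1$-equivariant perturbation decreasing the Dirichlet energy of $\overline{H}$ at second order. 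No step is really an obstacle here: the only minor care needed is the correct normalization of the basis vectors $\mathbf{e}^{(1)}_k$ and $\mathbf{e}^{(2)}_k$ in the identification of the components of $\overline{H}$, which carry the $\sqrt 2$ factors that make the coefficients match \eqref{eq:veronese-spherical} exactly.
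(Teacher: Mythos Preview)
Your proof is correct and follows exactly the approach indicated in the paper: the paper states that both corollaries follow from ``applying explicitly the identification $\mathcal{S}_0\simeq\R \oplus \C \oplus \C$ in Lemma~\ref{prop:Li-action}'' together with ``straightforward calculations'' and Theorem~\ref{thm:stability}, and you have carried out precisely those calculations. The verification of the three components $\boldsymbol{\iota}_0,\boldsymbol{\iota}_1,\boldsymbol{\iota}_2$ of $\overline{H}$ against the spherical-coordinate form of $\boldsymbol{\omega}^{(H)}$ is accurate, and the appeal to part~1) of Theorem~\ref{thm:stability} via the nonvanishing of the $L_2$-component is exactly what the paper intends.
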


For locally minimizing tangent maps we have the following result.
\begin{corollary}
\label{formulablowups} Let $\widehat{\omega} : \mathbb{R}^3 \setminus \{0\} \to \mathbb{S}^4 \subset \mathcal{S}_0 $ be a locally minimizing tangent map. Then there exists $\alpha \in \R$ such that $\widehat{\omega}=Q^{(\alpha)}$ or $\widehat{\omega}=-Q^{(\alpha)}$,
where $Q^{(\alpha)}$ is the matrix-valued map given in \eqref{stableblowups}. 
\end{corollary}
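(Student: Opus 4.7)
The plan is to reduce the corollary directly to part 2) of Theorem \ref{thm:stability} by translating from the $\R\oplus\C\oplus\C$ representation of $\mathbb{S}^4$ back to the matrix representation inside $\mathcal{S}_0$ via the $\mathbb{S}^1$-equivariant linear isometry $\boldsymbol{\iota}_*$ of Lemma \ref{prop:Li-action} and Remark \ref{remisom&S1action}. Since $\boldsymbol{\iota}_*$ preserves both the Dirichlet energy density and the equivariance class of admissible competitors, if $\widehat{\omega}$ is a locally minimizing tangent map in the sense of \eqref{tm-minimality}, then so is $\widehat{\boldsymbol{\omega}}:=\boldsymbol{\iota}_*\circ\widehat{\omega}$; and $\widehat{\boldsymbol{\omega}}$ is still a nonconstant $\mathbb{S}^1$-equivariant tangent map in the sense of Definition \ref{deftangmap}.

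The first step will therefore be to invoke Theorem \ref{thm:stability}(2) for $\widehat{\boldsymbol{\omega}}$, obtaining $\alpha\in\R$ and a sign $\varepsilon\in\{\pm 1\}$ such that
\begin{equation*}
\widehat{\boldsymbol{\omega}}(x)=\varepsilon\, R_\alpha \cdot \boldsymbol{\omega}^{(1)}_{\rm eq}\!\left(\frac{x}{|x|}\right)=\varepsilon\, R_\alpha \cdot \left(\frac{x_3}{|x|},\ \frac{x_1+ix_2}{|x|},\ 0\right).
\end{equation*}
The second (and only nontrivial) step is then to identify $\boldsymbol{\iota}_*^{-1}$ of the right-hand side with the explicit matrix $Q^{(\alpha)}$ from \eqref{stableblowups}. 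Taking $\alpha=0$ and $\varepsilon=+1$ first, reading off the components in the orthonormal basis \eqref{eq:basis-S0} gives
\begin{equation*}
\widehat{\omega}(x)=\frac{1}{|x|}\bigl(x_3\, \mathbf{e}_0+x_1\, \mathbf{e}^{(1)}_1+x_2\, \mathbf{e}^{(1)}_2\bigr),
\end{equation*}
and the substitution of the explicit matrices from \eqref{eq:basis-S0}, combined with the identity $\sqrt{3}/\sqrt{6}=1/\sqrt{2}$, is expected to yield exactly $Q^{(0)}(x)$.

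Finally, applying $R_\alpha$ and using that $\boldsymbol{\iota}_*$ intertwines the action \eqref{inducedaction} on $\mathcal{S}_0$ with the action \eqref{S1actionRCC} on $\R\oplus\C\oplus\C$, the rotation $R_\alpha\cdot$ on the codomain translates into matrix conjugation $A\mapsto R_\alpha A R_\alpha^{\mathsf{t}}$, which recovers $Q^{(\alpha)}=R_\alpha\cdot Q^{(0)}$ by definition, and the sign $\varepsilon$ carries through. There is no real obstacle here: the substantive content (instability of all other equivariant tangent maps, and local minimality of the remaining one-parameter family) lies entirely in Theorem \ref{thm:stability}(2); the present corollary is simply its translation to the matrix-valued formalism used throughout the paper, and the only work is the elementary verification of the basis expansion above.
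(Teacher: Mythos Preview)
Your proposal is correct and follows exactly the approach the paper takes: the paper states that Corollary~\ref{formulablowups} is obtained from Theorem~\ref{thm:stability} by ``applying explicitly the identification $\mathcal{S}_0\simeq\R \oplus \C \oplus \C$ in Lemma~\ref{prop:Li-action}'' via ``straightforward calculations,'' and you have simply written out those calculations in detail. The only implicit assumption---that the tangent map is nonconstant so that Theorem~\ref{thm:stability} applies---is left tacit in both your argument and the paper's.
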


\subsection{Instability of degree-two and linearly full tangent maps}
For the reader's convenience we restate the first part of Theorem \ref{thm:stability} in the following result.

\begin{proposition}
\label{thm:instability-full}
 A tangent map $\widehat{\boldsymbol{\omega}}$ whose range is not contained in $\R \oplus \C \oplus \{0\} $ is unstable.
\end{proposition}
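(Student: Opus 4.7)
The plan is to reduce the instability question to a spectral condition on $\mathbb{S}^2$ via the Hardy inequality in $\mathbb{R}^3$, and then to exhibit an explicit equivariant negative mode in each of the two cases left open by the hypothesis. By Proposition~\ref{degeneratespheres} and Theorem~\ref{classification}, a nonconstant equivariant harmonic sphere $\boldsymbol\omega:\mathbb{S}^2\to\mathbb{S}^4$ falls into exactly one of three types: a degree $\pm 1$ map into $L_0\oplus L_1$, a degree $\pm 2$ map into $L_0\oplus L_2$ (with $\boldsymbol\omega_1\equiv 0\not\equiv \boldsymbol\omega_2$), or a linearly full map of degree $\pm 3$. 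Only the last two are compatible with the hypothesis on the range.

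For a radial--angular test field $X(x)=\eta(|x|)\widetilde X(x/|x|)$ with $\widetilde X:\mathbb{S}^2\to\mathbb{R}\oplus\mathbb{C}\oplus\mathbb{C}$ equivariant, the orthogonal decomposition of $\nabla X_T$ into its radial and tangential components together with \eqref{eq:2nd-var} gives
\[
\delta^2\mathcal{E}_0(\widehat{\boldsymbol\omega})[X]=A\int_0^\infty \eta'(r)^2 r^2\,dr+B\int_0^\infty\eta(r)^2\,dr,
\]
with $A=\int_{\mathbb{S}^2}|\widetilde X_T|^2\,d\vol{\mathbb{S}^2}$, $B=\int_{\mathbb{S}^2}(|\nabla_T\widetilde X_T|^2-|\nabla_T\boldsymbol\omega|^2|\widetilde X_T|^2)\,d\vol{\mathbb{S}^2}$ and $\widetilde X_T=\widetilde X-(\widetilde X\cdot\boldsymbol\omega)\boldsymbol\omega$. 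The sharp one-dimensional Hardy inequality $\int_0^\infty\eta^2\,dr\leq 4\int_0^\infty r^2\eta'^2\,dr$, approached arbitrarily closely via logarithmic cutoffs $\eta_N(r)=r^{-1/2}\psi(\log r/N)$, shows that $\widehat{\boldsymbol\omega}$ is unstable whenever an equivariant $\widetilde X$ is found with $B+A/4<0$.

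I then construct $\widetilde X$ in each case. In the degree $\pm 2$ case I take $\widetilde X(\theta,\phi)=\sin\theta\,e^{i\phi}\in L_1$: since $L_1\perp L_0\oplus L_2$, tangentiality is automatic and $\widetilde X_T=\widetilde X$. Using the explicit form of $|\nabla_T\boldsymbol\omega|^2$ coming from Proposition~\ref{degeneratespheres} and the substitution $t=\tan(\theta/2)$, the condition $B+A/4<0$ reduces to an elementary rational integral computable in closed form by partial fractions, which turns out to be strictly negative. Heuristically, the imaginary multiple $i\sin\theta\,e^{i\phi}$ is precisely the infinitesimal vertical rotation and hence a neutral mode of the Jacobi operator of the degree $\pm 1$ identity; doubling the energy density in passing to degree $\pm 2$ turns this neutral mode into a strictly negative one. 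In the linearly full case the same ansatz does not satisfy automatic tangentiality because $\boldsymbol\omega_1\not\equiv 0$. The plan is to keep the full tangential projection $\widetilde X_T=\widetilde X-(\widetilde X\cdot\boldsymbol\omega)\boldsymbol\omega$ and use the explicit formulas of Proposition~\ref{prop:explfullspheres} to reduce $A$ and $B$ once more to a rational one-variable integral. This step is what I expect to be the main obstacle: the pointwise correction $(\widetilde X\cdot\boldsymbol\omega)^2$ contributes a nontrivial $\theta$-dependence and the closed-form reduction is substantially more delicate than in case (b). A fallback is to localize $\widetilde X$ in a small polar cap where $\boldsymbol\omega$ is close to $\pm\eo$, so that an $L_1$-valued field is approximately tangent; since the degree $\pm 3$ full sphere has no branch points by direct inspection of Proposition~\ref{prop:explfullspheres}, a concentration/rescaling argument still delivers $B+A/4<0$.
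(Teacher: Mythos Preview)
Your reduction to a condition on $\mathbb{S}^2$ via separation of variables and the sharp Hardy inequality is exactly the mechanism behind Lemma~\ref{lemma:stability} in the paper. The substantive gap is the choice of the angular profile. Taking the \emph{fixed} test field $\widetilde X(\theta,\phi)=\sin\theta\,e^{i\phi}$ does not give $B+A/4<0$ uniformly in the parameter $\mu_2$. Indeed, for the degree $\pm 2$ family you need
\[
\int_{\mathbb{S}^2}\sin^2\theta\,|\nabla_T\boldsymbol\omega|^2\,d\vol{\mathbb{S}^2}\;>\;\int_{\mathbb{S}^2}\Big(\tfrac14\sin^2\theta+\cos^2\theta+1\Big)\,d\vol{\mathbb{S}^2}=6\pi,
\]
but as $\mu_2\to 0$ (or $\mu_2\to\infty$) the energy density $|\nabla_T\boldsymbol\omega|^2$ concentrates at one of the poles, where the weight $\sin^2\theta$ vanishes, and the left-hand side tends to $0$. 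So the claimed rational integral is \emph{not} strictly negative for all $\mu_2$, and your heuristic about ``doubling the energy density'' only applies at the symmetric value $\mu_2=1$. The same concentration phenomenon afflicts the linearly full family when $|\mu_1|$ or $|\mu_2|$ is extreme, so the localization fallback faces the same obstacle rather than curing it.

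The paper avoids this by letting the angular profile depend on $\boldsymbol\omega$. After rotating the phases so that $\mu_1,\mu_2\in\mathbb{R}_{\geq 0}$ (Lemma~\ref{instability-reduced}), it uses $X=(0,\psi(r,\theta)\,i e^{i\phi},0)$: the factor $i$ makes $\widehat{\boldsymbol\omega}\cdot X\equiv 0$ automatically even in the full case, so there is no tangential projection to control. One then plugs $g=\omega_2$ into the stability inequality~\eqref{eq:stability} and compares with the \emph{identity}
\[
\int_{\mathbb{S}^2}\omega_2^2\,|\nabla_T\boldsymbol\omega|^2=\int_{\mathbb{S}^2}\Big(|\partial_\theta\omega_2|^2+\frac{4\omega_2^2}{\sin^2\theta}\Big),
\]
which is just the harmonic map equation tested against $\boldsymbol\omega_2$ (Lemma~\ref{extraidentity}). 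The $|\partial_\theta\omega_2|^2$ terms cancel, leaving $\int 3\omega_2^2/\sin^2\theta\leq \int\tfrac14\omega_2^2$, an immediate contradiction as soon as $\omega_2\not\equiv 0$. This single argument covers the degree $\pm 2$ and the linearly full cases simultaneously, with no integral to evaluate and no dependence on the parameters.
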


The proof of this proposition relies on the classification of all $\mathbb{S}^1$-equivariant harmonic spheres into $\mathbb{S}^4$ provided by Theorem \ref{classification} and Proposition \ref{degeneratespheres}. We start with a reduction to real parameters in the representation of harmonic spheres. 

\begin{lemma}\label{instability-reduced}
	Let $\boldsymbol{\omega} :\bb{S}^2 \to \bb{S}^4$ be a nonconstant $\bb{S}^1$-equivariant harmonic map as in \eqref{eq:classification} with complex parameters $\mu_1, \mu_2 \in \C $. 
If $\boldsymbol{\omega} = (\boldsymbol{\omega}_0, \pmb{\omega}_1, \pmb{\omega}_2)$ and for $s=(s_1,s_2) \in \bb{R}^2$ we set 
	\[
		\boldsymbol{\omega}_s := (\boldsymbol{\omega} _0, e^{i s_1}\pmb{\omega}_1, e^{i s_2}\pmb{\omega}_2)\,, 
	\]
then $\boldsymbol{\omega}_{s}$ is harmonic and $\bbS^1$-equivariant, moreover the induced tangent map  $\widehat{\boldsymbol{\omega}_{s}}$ is stable if and only if $\widehat{\boldsymbol{\omega}}$ is stable. In particular, $\widehat{\boldsymbol{\omega}}$ is stable if and only if the tangent map $\widehat{\boldsymbol{\omega}_+}$ is stable, where $\boldsymbol{\omega}_+$ denotes the harmonic map with parameters  $\abs{\mu_1}$, $\abs{\mu_2}$.
\end{lemma}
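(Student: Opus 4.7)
The key observation is that, for each $s = (s_1, s_2) \in \R^2$, the map
\[
\Phi_s \colon \R \oplus \C \oplus \C \longrightarrow \R \oplus \C \oplus \C, \qquad \Phi_s(t, \zeta_1, \zeta_2) := (t,\, e^{is_1} \zeta_1,\, e^{is_2} \zeta_2),
\]
is a linear isometry that commutes with the $\bbS^1$-action \eqref{S1actionRCC}. Indeed, for every $R_\alpha \in \bbS^1$,
\[
R_\alpha \cdot \Phi_s(t, \zeta_1, \zeta_2) = (t,\, e^{i\alpha} e^{is_1} \zeta_1,\, e^{2i\alpha} e^{is_2} \zeta_2) = \Phi_s\bigl(R_\alpha \cdot (t, \zeta_1, \zeta_2)\bigr).
\]
In particular $\Phi_s$ restricts to an isometry $\bbS^4 \to \bbS^4$ intertwining the $\bbS^1$-actions on source and target.

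Consequently, $\boldsymbol{\omega}_s = \Phi_s \circ \boldsymbol{\omega} \colon \bbS^2 \to \bbS^4$ is harmonic (being the postcomposition of a harmonic map with an isometry of the target) and $\bbS^1$-equivariant. A direct inspection of \eqref{eq:classification}, whose entries depend linearly on $\mu_1$, $\mu_2$ in the $L_1$, $L_2$ components and only on $\abs{\mu_1}$, $\abs{\mu_2}$ in the $L_0$ component as well as in the denominator $D(z_0, z_1)$, shows that $\boldsymbol{\omega}_s$ is exactly the harmonic sphere associated with the parameters $(e^{is_1}\mu_1, e^{is_2}\mu_2)$.

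To compare stability, let $\widehat{\boldsymbol{\omega}_s}(x) := \Phi_s\bigl(\widehat{\boldsymbol{\omega}}(x)\bigr)$ be the tangent map induced by $\boldsymbol{\omega}_s$. Any $\bbS^1$-equivariant test vector field $X \in C^\infty_c(\R^3;\, \R \oplus \C \oplus \C)$ for $\widehat{\boldsymbol{\omega}_s}$ corresponds bijectively to $Y := \Phi_s^{-1}(X) \in C^\infty_c(\R^3;\, \R \oplus \C \oplus \C)$, which is $\bbS^1$-equivariant because $\Phi_s^{-1}$ commutes with the $\bbS^1$-action too. Being $\Phi_s$ a linear isometry, it intertwines the orthogonal decompositions along $\widehat{\boldsymbol{\omega}}$ and $\widehat{\boldsymbol{\omega}_s}$, so that $X_T = \Phi_s(Y_T)$ and therefore
\[
\abs{X_T}^2 = \abs{Y_T}^2, \quad \abs{\nabla X_T}^2 = \abs{\nabla Y_T}^2, \quad \abs{\nabla \widehat{\boldsymbol{\omega}_s}}^2 = \abs{\nabla \widehat{\boldsymbol{\omega}}}^2
\]
pointwise. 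Substituting these identities in the reduced form of \eqref{eq:2nd-var} yields $\delta^2 \mathcal{E}_0(\widehat{\boldsymbol{\omega}_s})[X] = \delta^2 \mathcal{E}_0(\widehat{\boldsymbol{\omega}})[Y]$, and since $X \leftrightarrow Y$ is a bijection on equivariant compactly supported perturbations, stability of $\widehat{\boldsymbol{\omega}_s}$ is equivalent to stability of $\widehat{\boldsymbol{\omega}}$.

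The final claim is obtained by choosing $s_j = -\arg \mu_j$ (with the convention $s_j = 0$ when $\mu_j = 0$), so that $\boldsymbol{\omega}_s = \boldsymbol{\omega}_+$. No substantial technical obstacle is expected: the whole argument reduces to noticing that multiplying the two complex components by fixed phases is an $\bbS^1$-equivariance-preserving isometry of the ambient space, which suffices to transfer both the harmonic map equation and the second variation unchanged.
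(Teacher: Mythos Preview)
Your proof is correct and follows essentially the same approach as the paper: both arguments rest on the observation that $(t,\zeta_1,\zeta_2)\mapsto(t,e^{is_1}\zeta_1,e^{is_2}\zeta_2)$ is a linear isometry of the target commuting with the $\bbS^1$-action, and then transfer the harmonicity and the second variation formula \eqref{eq:2nd-var} via this isometry by rotating the test fields accordingly. Your presentation is slightly more conceptual in that you name the isometry $\Phi_s$ explicitly and invoke the general fact that postcomposition with an isometry preserves harmonicity, whereas the paper verifies the needed identities ($|\nabla_T\boldsymbol{\omega}_s|^2=|\nabla_T\boldsymbol{\omega}|^2$, $|(X_s)_T|^2=|X_T|^2$, etc.) directly, but the substance is identical.
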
  

\begin{proof}
	The first statement follows immediately from \eqref{eqharmspheres} and \eqref{S1actionRCC}, noticing that $\abs{\nabla_T \boldsymbol{\omega}}^2 = \abs{\nabla_T \boldsymbol{\omega}_s}^2$. The second is a direct consequence of the first and of the second variation formula \eqref{eq:2nd-var}. Indeed, for every $\mathbb{S}^1$-equivariant test vector field $X=(X_0,X_1, X_2)$ with tangential part along $\boldsymbol{\omega}$ given by $X_T=X-(\widehat{\boldsymbol{\omega}} \cdot X) \widehat{\boldsymbol{\omega}}$, we can consider the rotated vector field $X_s=(X_0, e^{i s_1} X_1, e^{i s_2} X_2)$ and its tangential component $(X_s)_T$ along $\widehat{\boldsymbol{\omega}_s}$. It's easy to check that $|(X_s)_T|^2=|X_T|^2$, $|\nabla (X_s)_T|^2=|\nabla X_T|^2$ and 
	$$
	\delta^2\mathcal{E}_0(\widehat{\boldsymbol{\omega}})[X] = \delta^2\mathcal{E}_0(\widehat{\boldsymbol{\omega}_s})[X_s] \, .
	$$
	As a consequence, $X$ destabilizes $\widehat{\boldsymbol{\omega}}$ if and only if $X_s$ destabilizes $\widehat{\boldsymbol{\omega}_s}$, therefore the first equivalence is proved. 
	
	The last claim is now obvious, choosing $s=(s_1,s_2)$ such that $e^{i s_1}\mu_1=|\mu_1|$ and $e^{i s_2} \mu_2= |\mu_2|$ and checking that $\widehat{\boldsymbol{\omega}_s}=\widehat{\boldsymbol{\omega}_+}$ in view of  \eqref{eq:classification}.
\end{proof}

\begin{remark}
As a consequence of the previous lemma, in what follows the functions $\omega_1$, $\omega_2$ in \eqref{eq:omega} corresponding to \eqref{eq:classification} will be always considered as real-valued. 
\end{remark}

As a second step, we observe an easy consequence of stability inequality \eqref{eq:stability-condition}. 

\begin{lemma}\label{lemma:stability}
  Let $\widehat{\boldsymbol{\omega}}: \bb{R}^3 \setminus \{0\} \to \bb{S}^4 \subset \R \oplus \C \oplus \C$ be an $\bb{S}^1$-equivariant tangent map induced by a harmonic sphere $\boldsymbol{\omega}$ as in \eqref{eq:classification} with $\mu_1, \mu_2 \geq 0$. If $\widehat{\boldsymbol{\omega}}$ is stable then $\boldsymbol{\omega}$ satisfies the following inequality:
  
  \begin{equation}\label{eq:stability}
    \int_{\bb{S}^2} g^2 \abs{\nabla_T \boldsymbol{\omega}}^2\,d\vol{\bb{S}^2} \leq \int_{\bb{S}^2} \left\{ \frac{1}{4}g^2 + \abs{\partial_\theta g}^2 + \frac{g^2}{\sin^2 \theta}\right\}\,d\vol{\bb{S}^2},
  \end{equation}
for all $g \in C^1(\mathbb{S}^2)$ which depend only on the latitude $\theta$ and vanish at the poles.
\end{lemma}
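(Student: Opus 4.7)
\smallskip
\noindent\emph{Proof sketch.} The natural plan is to test the ambient stability condition \eqref{eq:stability-condition} against an explicit $\bbS^1$-equivariant vector field of separated form $X = \eta(|x|)\,g(\theta)\,\mathbf{v}(\phi)$ and then extract \eqref{eq:stability} through a Hardy-type approximation in the radial profile $\eta$.

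\smallskip

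\emph{Choice of test field.} For $g\in C^1(\bbS^2)$ depending only on $\theta$ and vanishing at the poles, let
$$X(x) := \eta(|x|)\,g(\theta)\,\mathbf{v}(\phi), \qquad \mathbf{v}(\phi) := (0,e^{i\phi},0) \in \R\oplus\C\oplus\C\,,$$
where $\eta\in C^\infty_c((0,\infty))$ will be specialized below. Since $\mathbf{v}$ lies in the invariant subspace $L_1$ on which $R_\alpha\in \bbS^1$ acts by multiplication by $e^{i\alpha}$, and the coordinates $r,\theta$ are $\bbS^1$-invariant, the ambient field $X$ is $\bbS^1$-equivariant in the sense of Remark~\ref{remisom&S1action}. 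The choice $|\mathbf{v}|^2 = |\mathbf{v}'|^2 = 1$ is exactly what produces the coefficients $1$ in front of $g^2/\sin^2\theta$ and of $(\partial_\theta g)^2$ in \eqref{eq:stability}.

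\smallskip

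\emph{Separation of variables.} Using $|\nabla\widehat{\boldsymbol\omega}|^2 = |\nabla_T\boldsymbol\omega|^2/r^2$ and expanding the integrand of \eqref{eq:2nd-var} (either line) in spherical coordinates, the radial and angular variables decouple and one obtains
$$\delta^2\mathcal{E}_0(\widehat{\boldsymbol\omega})[X] \;=\; \Bigl[\int_0^\infty (\eta')^2 r^2\,dr\Bigr]\,A[g]\;+\;\Bigl[\int_0^\infty \eta^2\,dr\Bigr]\,B[g],$$
where $A[g]$ collects the radial density $|X_T|^2/\eta^2$ integrated against $d\vol{\bbS^2}$, and $B[g]$ is the corresponding purely angular quadratic form in $g,g'$ involving also the components of $\boldsymbol\omega$ and their $\theta$-derivatives.

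\smallskip

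\emph{Hardy-type reduction to $\bbS^2$.} Substituting $\eta(r) = r^{-1/2}\psi(\log r)$ with $\psi\in C^\infty_c(\R)$ and using the change of variable $s=\log r$, one gets
$$\int_0^\infty (\eta')^2 r^2\,dr \;=\; \int_\R (\psi')^2 ds + \tfrac14 \int_\R \psi^2\,ds,\qquad \int_0^\infty \eta^2\,dr \;=\;\int_\R \psi^2\,ds\,.$$
Choosing $\psi_T$ a smooth approximation of the indicator of $[-T,T]$, the ratio of the two radial integrals tends to $1/4$ as $T\to\infty$. Dividing the stability condition $\delta^2\mathcal{E}_0(\widehat{\boldsymbol\omega})[X]\geq 0$ by $\int_0^\infty \eta^2\,dr$ and letting $T\to\infty$ produces the scale-invariant angular inequality
$$\tfrac14\,A[g] + B[g] \;\geq\; 0.$$

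\smallskip

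\emph{Algebraic simplification via the harmonic-map ODE.} The final (and most delicate) step is to rewrite $\tfrac14 A[g]+B[g]$ as the difference of the two sides of \eqref{eq:stability} plus terms that vanish identically along $\boldsymbol\omega$. The key identity is obtained from the harmonic-map equation applied to the $L_1$-component $\omega_1(\theta)e^{i\phi}$ of $\boldsymbol\omega$, which, by direct computation in spherical coordinates, takes the ODE form
$$-\omega_1'' - \cot\theta\,\omega_1' + \frac{\omega_1}{\sin^2\theta} \;=\; |\nabla_T\boldsymbol\omega|^2\,\omega_1 \quad\text{on } (0,\pi).$$
Multiplying by $g^2\omega_1$, integrating over $\bbS^2$ against $d\vol{\bbS^2}$, and integrating by parts (the boundary terms vanish since $\omega_1(0)=\omega_1(\pi)=0$) yields the auxiliary identity
$$\int_{\bbS^2} g^2\omega_1^2\,|\nabla_T\boldsymbol\omega|^2\,d\vol{\bbS^2} \;=\; \int_{\bbS^2}\!\Bigl(g^2(\omega_1')^2 + 2gg'\omega_1\omega_1' + \frac{g^2\omega_1^2}{\sin^2\theta}\Bigr)d\vol{\bbS^2}\,.$$
Substituting this identity into $\tfrac14 A[g]+B[g]\geq 0$ and bookkeeping the cross terms in $g,g',\omega_1,\omega_1'$ leads, after cancellation, exactly to \eqref{eq:stability}.

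The hardest part is the last step: a number of quadratic terms in $\omega_1$ and its derivatives must telescope against each other by means of the auxiliary identity above (together with the fact that $|\omega_1|\leq 1$ on $\bbS^2$), producing the clean $\omega$-independent inequality \eqref{eq:stability}. \hfill$\qed$
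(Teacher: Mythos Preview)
There is a genuine gap in your final step, and it traces back to a suboptimal choice of the test field. You take $\mathbf{v}(\phi)=(0,e^{i\phi},0)$, but then $\widehat{\boldsymbol\omega}\cdot X=\eta(r)g(\theta)\,\omega_1(\theta)$ does \emph{not} vanish, so $X_T\neq X$ and many cross terms in $\omega_1,\omega_1'$ appear. If one carries out the computation and substitutes the auxiliary identity $\int g^2\omega_1^2|\nabla_T\boldsymbol\omega|^2=\int\bigl(g^2(\omega_1')^2+2gg'\omega_1\omega_1'+g^2\omega_1^2/\sin^2\theta\bigr)$ with the required coefficient, the leftover is
\[
-\tfrac14\!\int g^2\omega_1^2-\!\int (g')^2\omega_1^2+2\!\int gg'\omega_1\omega_1'+3\!\int g^2(\omega_1')^2,
\]
which is \emph{not} identically zero and need not be nonpositive (the term $3\int g^2(\omega_1')^2$ can dominate). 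So the ``telescoping'' you assert does not happen, and you obtain only a weaker, $\omega_1$-dependent inequality.

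The fix is exactly the paper's choice: replace $e^{i\phi}$ by $ie^{i\phi}$, i.e.\ take $X=(0,\psi(r,\theta)\,i e^{i\phi},0)$. Because $\mu_1\in\R$, the function $\omega_1$ is real-valued, hence $\mathrm{Re}\bigl(\omega_1 e^{i\phi}\cdot\overline{\psi\,ie^{i\phi}}\bigr)=0$ and $\widehat{\boldsymbol\omega}\cdot X\equiv 0$. Then $X_T=X$ and the second variation reduces immediately to
\[
\int_{\R^3}\Bigl\{|\partial_r\psi|^2+\tfrac{1}{r^2}\Bigl(|\partial_\theta\psi|^2+\tfrac{\psi^2}{\sin^2\theta}\Bigr)-\tfrac{\psi^2}{r^2}|\nabla_T\boldsymbol\omega|^2\Bigr\}\,dx\;\ge\;0.
\]
Separating variables $\psi=\varphi(r)g(\theta)$ and optimizing in $\varphi$ via the sharp Hardy inequality (your radial reduction with $\eta=r^{-1/2}\psi(\log r)$ is one way to do this) yields \eqref{eq:stability} directly---no harmonic-map ODE, no cancellation of cross terms. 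The hypothesis $\mu_1\ge 0$ is used precisely to ensure $\omega_1$ is real so that this orthogonality trick works.
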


\begin{proof}
  First we fix $X\in C^\infty_c(\R^3\setminus \{ 0\};\R \oplus \C \oplus \C)$ an $\mathbb{S}^1$-equivariant deformation vector field which in polar coordinates has the form
\begin{equation}
 X = \left( 0, \psi(r,\theta) i e^{i\phi}, 0 \right),
\end{equation}
with $\psi \in C^\infty_0((0, +\infty) \times (0, \pi);\R)$.

Since $\mu_1 \in \R$ then \eqref{eq:classification} yields  $\widehat{\boldsymbol{\omega}} \cdot X \equiv 0$ and from \eqref{eq:2nd-var} we have (recall that $\widehat{\boldsymbol{\omega}}$ is degree-zero homogeneous)
  
  \[
   \delta^2\mathcal{E}_0(\widehat{\boldsymbol{\omega}})[X]= \int_{\bb{R}^3}\left\{ -\frac{\psi^2}{r^2} \abs{\nabla_T \widehat{\boldsymbol{\omega}}}^2 + \abs{\partial_r \psi}^2 + \frac{1}{r^2}\left( \abs{\partial_\theta \psi}^2 + \frac{\psi^2}{\sin^2 \theta} \right) \right\}\,dx. 
  \]
We now decompose $\psi(r, \theta) = \varphi(r) g(\theta)$, with $\varphi \in C^\infty_c((0,+\infty))$ and $g \in C^\infty_0((0,\pi))$, hence the stability property \eqref{eq:stability-condition} yields for any $\varphi$ and $g$ the inequality
 \[ 
  \delta^2\mathcal{E}_0(\widehat{\boldsymbol{\omega}})[X] = \int_{\bb{R}^3}\left\{ -\frac{\varphi^2 g^2}{r^2} \abs{\nabla_T \widehat{\boldsymbol{\omega}}}^2 + g^2\abs{\partial_r \varphi}^2 + \frac{\varphi^2}{r^2}\left( \abs{\partial_\theta g}^2 + \frac{g^2}{\sin^2 \theta} \right) \right\}\,dx
  \geq 0 \, .\]
Now optimize with respect to $\varphi$ using the sharp Hardy inequality. Integrating with respect to $r\in (0,\infty)$ and on $\mathbb{S}^2$ separately, we conclude that for any $g \in C^\infty_0((0,\pi))$ we have
  
  \[
   \int_{\bb{S}^2} \left\{ -g^2 \abs{\nabla_T \boldsymbol{\omega}}^2 + \frac{1}{4}g^2 + \abs{\partial_\theta g}^2 + \frac{g^2}{\sin^2 \theta} \right\}\,d\vol{\bb{S}^2} \geq 0 \, .
  \]
Finally, by a standard approximation argument in the $C^1$-norm on the function $g$ the previous inequality yields \eqref{eq:stability}.
 \end{proof}

The last lemma we need is a straightforward consequence of a general fact about harmonic maps from $\bb{S}^2$ into spheres together with $\bbS^1$-equivariance.

\begin{lemma}
\label{extraidentity}
	Let $\boldsymbol{\omega}: \bb{S}^2 \to \bb{S}^4 \subset \R \oplus \C \oplus \C$ be an $\bb{S}^1$-equivariant harmonic map as in \eqref{eq:classification}. Let $\mu_1, \mu_2 \geq 0$ and write $\boldsymbol{\omega}$ in the form $\boldsymbol{\omega} = (\omega_0(\theta), \omega_1(\theta) e^{i\phi}, \omega_2(\theta) e^{2 i \phi})$ as in \eqref{eq:omega}, with $\omega_1, \omega_2$ real-valued. Then,
	
	\begin{equation}\label{eq:omega2-identity}
		\int_{\bb{S}^2} \omega_2^2\abs{\nabla_T \boldsymbol{\omega}}^2 \,d\vol{\bb{S}^2} = \int_{\bb{S}^2} \left\{ \abs{\partial_\theta \omega_2}^2 + \frac{4 \omega_2^2}{\sin^2\theta}\right\}\,d\vol{\bb{S}^2}.
	\end{equation}
\end{lemma}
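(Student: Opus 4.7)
The plan is to derive \eqref{eq:omega2-identity} by testing the harmonic map equation \eqref{eqharmspheres} against a suitable function built from the last component $\boldsymbol{\omega}_2 = \omega_2(\theta)e^{2i\phi}$. The identity has the flavour of a weighted $L^2$-estimate of $|\nabla_T\boldsymbol\omega|^2$, and the right-hand side looks exactly like what one obtains by separating radial and angular contributions after one integration by parts, so the natural test object is $\boldsymbol{\omega}_2$ itself (or its real/imaginary parts).

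More precisely, since $\Delta_T\boldsymbol{\omega} + |\nabla_T\boldsymbol{\omega}|^2\boldsymbol{\omega} = 0$ holds componentwise, the last complex coordinate satisfies
\[
\Delta_T \boldsymbol{\omega}_2 + |\nabla_T\boldsymbol{\omega}|^2\boldsymbol{\omega}_2 = 0 \quad \text{on } \bbS^2.
\]
Using the standard expression of $\Delta_T$ in spherical coordinates together with the explicit form $\boldsymbol{\omega}_2 = \omega_2(\theta)e^{2i\phi}$, the $\phi$-derivative produces a factor $-4/\sin^2\theta$, giving
\[
\Delta_T \boldsymbol{\omega}_2 = \left[\frac{1}{\sin\theta}\partial_\theta\bigl(\sin\theta\, \partial_\theta\omega_2\bigr) - \frac{4\omega_2}{\sin^2\theta}\right]e^{2i\phi}.
\]
Multiplying the equation by $\overline{\boldsymbol{\omega}_2}$ (equivalently, by $\omega_2 e^{-2i\phi}$) and integrating over $\bbS^2$, the phase factor cancels and the $\phi$-integration yields a harmless $2\pi$.

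The final step is an integration by parts in $\theta$ on $(0,\pi)$ against the weight $\sin\theta\, d\theta$ coming from $d\mathrm{vol}_{\bbS^2}$; the boundary terms vanish because $\omega_2(0)=\omega_2(\pi)=0$ by \eqref{eq:constraint-components}. This converts $\int\omega_2\,\partial_\theta(\sin\theta\,\partial_\theta\omega_2)\,d\theta$ into $-\int \sin\theta\,|\partial_\theta\omega_2|^2\,d\theta$, and rearranging gives precisely \eqref{eq:omega2-identity}. There is no real obstacle here; the only subtle point is making sure one uses the correct boundary behaviour $\omega_2(0)=\omega_2(\pi)=0$ (together with the smoothness of $\boldsymbol\omega$, which guarantees that $\omega_2/\sin\theta$ stays bounded near the poles so that the weighted term is integrable and the integration by parts is legitimate). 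The hypothesis $\mu_1,\mu_2\ge 0$ and the reality of $\omega_1,\omega_2$ are not actually used in the computation — they merely ensure that we are in the normalized representative provided by Lemma \ref{instability-reduced}.
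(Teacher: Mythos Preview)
Your proof is correct and is essentially the same argument as the paper's: both multiply the componentwise harmonic map equation $\Delta_T \boldsymbol{\omega}_2 = -|\nabla_T\boldsymbol{\omega}|^2 \boldsymbol{\omega}_2$ by $\overline{\boldsymbol{\omega}_2}$ and integrate by parts on $\bbS^2$. The paper phrases this slightly more abstractly (for a general harmonic $u:\bbS^2\to\bbS^d$, deriving $\int u_i^2|\nabla_T u|^2 = \int |\nabla_T u_i|^2$ from $\tfrac12\Delta_T u_i^2 = u_i\Delta_T u_i + |\nabla_T u_i|^2$ and the divergence theorem, then summing over the two real components of $\boldsymbol{\omega}_2$), while you work directly in spherical coordinates with the complex component, but the content is identical.
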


\begin{proof}
	Suppose $u : \bb{S}^2 \to \bb{S}^d\subset \R^{d+1}$ is harmonic. Let $(e_i)_{i=0}^d$ be an orthonormal basis for $\bb{R}^{d+1}$ and $u_i$ the components of $u$ with respect to this basis, so that $u = \sum_{i}u_i e_i$ and $u$ solves $\Delta_{T} u+ |\nabla_{T} u|^2 u=0$, i.e.,  $\Delta_{T} u_i = -\abs{\nabla_{T} u}^2 u_i$ for each $i=0, \ldots , d$.  
	
	Since each $u_i$ is a smooth function, evaluating $\Delta_{T} u_i^2$ we have
	\[
		\frac{1}{2}\Delta_{T} u_i^2 = u_i \Delta_{T} u_i + \abs{\nabla_{T} u_i}^2 \, ,
	\] 	
	whence the harmonic map equation together with the divergence theorem yield
	\[
		\int_{\bb{S}^2} u_i^2 \abs{\nabla_{T} u}^2\,\mbox{d}\vol{\bb{S}^2} = \int_{\bb{S}^2} \abs{\nabla_{T} u_i}^2 \,\mbox{d}\vol{\bb{S}^2}.
	\] 
Taking $d=4$ and $u=\boldsymbol{\omega}$ as in equation \eqref{eq:omega},  identity \eqref{eq:omega2-identity} follows by summing the last equality over  $i \in \{ 3,4 \}$.
\end{proof}

We can finally prove the main result of this subsection.

\begin{proof}[Proof of Proposition  \ref{thm:instability-full}]
As a consequence of Lemma \ref{instability-reduced} it is enough to prove the claim when the harmonic sphere $\boldsymbol{\omega}$ has parameters $\mu_1,\mu_2 \geq 0$.
 In view of its degree-zero homogeneity we can write $\widehat{\boldsymbol{\omega}}$ in the form $\widehat{\boldsymbol{\omega}}(\theta,\phi) = \left( \omega_0(\theta), \omega_1(\theta) e^{i\phi}, \omega_2(\theta) e^{2 i \phi} \right) = \boldsymbol{\omega}(\theta,\phi) $, i.e., with $\boldsymbol{\omega}$ as in \eqref{eq:omega} and with ${\omega}_2 \not\equiv 0$. Suppose, for a contradiction, that $\widehat{\boldsymbol{\omega}}$ is stable. Observe that, because of the explicit formulae \eqref{eq:classification}, the function $\omega_2$ satisfies the same hypotheses as $g$ in the statement of Lemma~\ref{lemma:stability}, so that it can be plugged into \eqref{eq:stability} to obtain
  
  \begin{equation}\label{eq:stability-1}
    \int_{\bb{S}^2} \omega_2^2 \abs{\nabla_T \boldsymbol{\omega}}^2\,d\vol{\bb{S}^2} \leq \int_{\bb{S}^2}\left\{ \frac{1}{4} \omega_2^2 + \abs{\partial_\theta \omega_2}^2 + \frac{\omega_2^2}{\sin^2\theta} \right\}\,d\vol{\bb{S}^2}.
  \end{equation} 
On the other hand, using \eqref{eq:omega2-identity} for the left hand side and comparing to \eqref{eq:stability-1}, we see that stability of $\widehat{\boldsymbol{\omega}}$ implies
  
  \[
   \int_{\bb{S}^2} \frac{3}{\sin^2\theta} \omega_2^2 \,d\vol{\bb{S}^2}\leq \int_{\bb{S}^2} \frac{1}{4} \omega_2^2 \,d\vol{\bb{S}^2}\,,
  \]
which is clearly impossible because $\omega_2 \not \equiv 0$. Thus $\widehat{\boldsymbol{\omega}}$ cannot be stable and the proof is complete.
\end{proof}

\subsection{Minimality of degree-one tangent maps}
In this subsection we complete the proof of Theorem~\ref{thm:stability} by establishing claim 2).
We first prove that the equator map $\widehat{\boldsymbol{\omega}}_\text{eq} (x)= \left( \frac{x}{\abs{x}}, 0\right)$ is locally minimizing with respect to compactly supported perturbations, whence the same clearly holds also for the rotated maps $\pm R_\alpha \cdot \widehat{\boldsymbol{\omega}}_\text{eq} $ because of the invariance of $\mathcal{E}_0$ under isometries on $\bbS^4$.

\begin{proposition}[Equivariant minimality of the equator map]\label{thm:min-x/x}
   Let $\widehat{\boldsymbol{\omega}}  : \bb{R}^3 \setminus \{0\} \to \bb{S}^4 $ be the tangent map defined by $\widehat{\boldsymbol{\omega}} (x)=\boldsymbol{\omega}^{(1)}_{\rm eq}\left(\frac{x}{|x|} \right)$, with $\boldsymbol{\omega}^{(1)}_{\rm eq}$ the equatorial embedding in \eqref{equatorial}. Then  $\widehat{\boldsymbol{\omega}}$ is locally minimizing with respect to compactly supported $\bbS^1$-equivariant perturbations. 
\end{proposition}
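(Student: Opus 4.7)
By the $0$-homogeneity of $\widehat{\boldsymbol{\omega}}$, a standard rescaling reduces minimality on an arbitrary ball $B_\rho$ to the case $\rho=1$, so I would fix an arbitrary $\bbS^1$-equivariant competitor $u\in W^{1,2}(B_1;\bbS^4)$ with $u_{|\partial B_1}=\widehat{\boldsymbol{\omega}}_{|\partial B_1}$ and aim to show that $\mathcal{E}_0(u,B_1)\ge\mathcal{E}_0(\widehat{\boldsymbol{\omega}},B_1)=4\pi$. The strategy is to reduce to the classical Brezis--Coron--Lieb minimality of the hedgehog $x/|x|\colon B_1\to\bbS^2$ by constructing from $u$ an $\bbS^1$-equivariant $\bbS^2_{(1)}$-valued auxiliary competitor $\tilde u$ with the same boundary trace and no larger Dirichlet energy.

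Using the description of the $\bbS^1$-action on $\mathcal{S}_0\simeq\bbR\oplus\bbC\oplus\bbC$ from Lemma~\ref{prop:Li-action}, equivariance forces the cylindrical representation
\[
u(r,\phi,z)=\bigl(u_0(r,z),\,u_1(r,z)\,e^{i\phi},\,u_2(r,z)\,e^{2i\phi}\bigr),\qquad u_0^2+|u_1|^2+|u_2|^2=1,
\]
with $u_0\in\bbR$ and $u_1,u_2\in\bbC$ defined on the half-disk $\mathcal{D}^+=\{(r,z)\,:\,r>0,\,r^2+z^2<1\}$. A direct computation in the orthonormal frame $(\partial_r,r^{-1}\partial_\phi,\partial_z)$ gives
\[
|\nabla u|^2=\sum_{s\in\{r,z\}}\bigl((\partial_s u_0)^2+|\partial_s u_1|^2+|\partial_s u_2|^2\bigr)+\frac{|u_1|^2+4|u_2|^2}{r^2}.
\]
I then define the ``folded'' map $\tilde u(r,\phi,z):=\bigl(u_0(r,z),\sqrt{|u_1|^2+|u_2|^2}\,e^{i\phi},0\bigr)$, which is $\bbS^1$-equivariant and takes values in the totally geodesic equator $\bbS^2_{(1)}\subset\bbS^4$. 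Since $u_2\equiv 0$ on $\partial B_1$, the traces satisfy $\tilde u_{|\partial B_1}=u_{|\partial B_1}=\widehat{\boldsymbol{\omega}}_{|\partial B_1}$.

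The main step is the pointwise comparison $|\nabla\tilde u|^2\le|\nabla u|^2$ a.e.\ on $B_1$. For the in-plane derivatives this follows from Cauchy--Schwarz in the form
\[
\Bigl(\partial_s\sqrt{|u_1|^2+|u_2|^2}\Bigr)^2=\frac{\bigl(\Re(\bar u_1\partial_s u_1)+\Re(\bar u_2\partial_s u_2)\bigr)^2}{|u_1|^2+|u_2|^2}\le|\partial_s u_1|^2+|\partial_s u_2|^2\,,\quad s\in\{r,z\}\,,
\]
whereas the angular derivative enjoys the strict saving $|\partial_\phi\tilde u|^2=|u_1|^2+|u_2|^2\le|u_1|^2+4|u_2|^2=|\partial_\phi u|^2$ coming from the mismatched equivariance degrees on $L_1$ (degree~$1$) and $L_2$ (degree~$2$). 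The resulting gap of $3|u_2|^2/r^2$ is exactly the structural reason why folding $L_2$ into $L_1$ pays off. Integrating over $B_1$ then yields $\mathcal{E}_0(\tilde u,B_1)\le\mathcal{E}_0(u,B_1)$.

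Finally, the isometric identification $\bbS^2_{(1)}\simeq\bbS^2$ induced by $\boldsymbol{\omega}^{(1)}_{\rm eq}$ turns $\tilde u$ into an $\bbS^2$-valued $W^{1,2}$ map with boundary trace $x/|x|$ and sends $\widehat{\boldsymbol{\omega}}$ precisely to $x/|x|\colon B_1\to\bbS^2$. The classical Brezis--Coron--Lieb minimality theorem for the hedgehog therefore gives $\mathcal{E}_0(\tilde u,B_1)\ge 4\pi=\mathcal{E}_0(\widehat{\boldsymbol{\omega}},B_1)$, and combining this with the previous inequality closes the argument. The only substantial step in this plan is the pointwise energy inequality; everything else, including the $W^{1,2}$-regularity and trace preservation of $\tilde u$ and the invocation of the $\bbS^2$-valued minimality, is bookkeeping.
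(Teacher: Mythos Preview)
Your proposal is correct and follows essentially the same approach as the paper: the paper likewise reduces to $B_1$, writes $u=(u_0,u_1e^{i\phi},u_2e^{2i\phi})$ by equivariance, builds the same folded competitor $\tilde u=(u_0,\sqrt{|u_1|^2+|u_2|^2}\,e^{i\phi},0)$, proves the pointwise inequality $|\nabla\tilde u|^2\le|\nabla u|^2$ via Cauchy--Schwarz plus the $|u_1|^2+4|u_2|^2$ vs.\ $|u_1|^2+|u_2|^2$ gain in the angular term, and concludes by invoking the Brezis--Coron--Lieb minimality of $x/|x|$ in $W^{1,2}(B_1;\bbS^2)$. The only cosmetic difference is that the paper works in spherical rather than cylindrical coordinates.
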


\begin{proof}
By the degree-zero homogeneity of $\widehat{\boldsymbol{\omega}}$, it suffices to prove the minimizing property in $B_1$. Let $u \in W^{1,2}(B_1; \bb{S}^4)$ be an $\bb{S}^1$-equivariant map such that $u \vert_{\bb{S}^2} = \widehat{\boldsymbol{\omega}} \vert_{\bb{S}^2}= \boldsymbol{\omega}^{(1)}_{\rm eq} $. 
Write 
  
  \[
   u = ( u_0, u_1 e^{i\phi}, u_2 e^{2i\phi}) \in \R \oplus  \C \oplus \C\,, 
  \] 
where in polar coordinates $u_i = u_i(r,\theta)$, $i=0,1,2$, because of equivariance; here $u_0$ is real-valued while $u_1$, $u_2$ are generally complex-valued. Moreover functions $u_i$ are in $W^{1,2}_{\rm{loc}}$ away from the symmetry axis with 
\begin{equation}
\label{polarintegrability}
\int_{B_1} \abs{\nabla u}^2 \, dx = \int_{B_1} \abs{\partial_r u}^2 + \frac{1}{r^2}\abs{\partial_\theta u}^2 + \frac{\abs{u_1}^2 + \abs{4u_2}^2}{r^2 \sin^2\theta} \, dx < \infty \, .
  \end{equation}
It follows easily from the previous relation together with the 1-d Sobolev embedding in the variable $\theta$ that for a.e. radius $r \in (0,1)$ the functions $u_1$, $u_2$ are continuous on the sphere $\{|x|=r\}$ and vanish at poles, i.e, at $\theta=0$ and $\theta=\pi$. In addition, the boundary condition on $\partial B_1=\mathbb{S}^2$ implies that $u_2=0$, while $u_1=\sin\theta$ and $u_0=\cos\theta$ respectively, in the sense of traces on $\bb{S}^2$.
  
  Now we use a trick similar to the one exploited in the proof of \cite[Theorem 1.3]{INSZ2}, that is, from $u$ we construct the auxiliary comparison map
  
  \[
   \tilde u = (\tilde{u}_0, \tilde{u}_1,\tilde{u}_2) =\left(u_0, \sqrt{\abs{u_1}^2 + \abs{u_2}^2}e^{i\phi}, 0\right).
  \]
In view of \eqref{polarintegrability} it is routine to check that $\tilde u \in W^{1,2}(B_1; \bb{S}^4)$, $\tilde u$ is $\bb{S}^1$-equivariant and $\tilde u \vert_{\bb{S}^2} = \boldsymbol{\omega}_\text{eq}^{(1)} $. 

We claim that $ \mathcal{E}_0(\tilde u) \leq \mathcal{E}_0(u).$
  Indeed, a simple calculation gives
  
  \[
  \begin{split}
   \abs{\nabla \tilde{u}}^2 &= \abs{\pd{\tilde{u}_0}{r}}^2 + \frac{1}{r^2}\abs{\pd{\tilde{u}_0}{\theta}}^2 + \abs{\pd{\tilde u_1}{r}}^2  + \frac{1}{r^2}\abs{\pd{\tilde u_1}{\theta}}^2 + \frac{1}{r^2 \sin^2\theta}\abs{\pd{\tilde u_1}{\phi}}^2 \\
   &= \abs{\nabla_{r,\theta} {u_0}}^2 + \abs{\nabla_{r,\theta} \tilde{u}_1}^2 + \frac{\abs{u_1}^2 + \abs{u_2}^2}{r^2 \sin^2\theta},
   \end{split}
  \]
where $\abs{\nabla_{r,\theta} u_0}^2$, $\abs{\nabla_{r,\theta} \tilde u_1}^2$ are defined in an obvious way from the first line above. Now, note that
  
  \[
   \abs{\nabla_{r,\theta} \tilde u_1}^2 = \frac{1}{\abs{u_1}^2 + \abs{u_2}^2}\abs{u_1\nabla_{r,\theta} u_1 + u_2 \nabla_{r,\theta} u_2 }^2\leq \abs{\nabla_{r,\theta} u_1}^2 + \abs{\nabla_{r,\theta} u_2}^2.
  \]
To conclude that $\mathcal{E}_0(\tilde u) \leq \mathcal{E}_0(u)$, it now suffices to note that the previous relations yield 
  
  \[
   \abs{\nabla \tilde u}^2 \leq \abs{\nabla_{r,\theta}u_0}^2 + \abs{\nabla_{r,\theta}u_1}^2 + \abs{\nabla_{r,\theta}u_2}^2 + \frac{\abs{u_1}^2 + 4\abs{u_2}^2}{r^2 \sin^2\theta} = \abs{\nabla{u}}^2.
  \]
Thus, $\tilde u$ can be regarded as a map in $W^{1,2}(B_1; \bb{S}^2)$ that coincides with $\boldsymbol{\omega}_\text{eq}^{(1)}$ on $\partial B_1 = \bb{S}^2$ and having lower energy than $u$, as claimed. Since $\widehat{\boldsymbol{\omega}}_\text{eq}^{(1)}$ is minimizing among the maps in $W^{1,2}(B_1; \bb{S}^2)$ subject to its own boundary condition (see \cite[Theorem 7.3]{BCL}), we have $
   \mathcal{E}_0(\widehat{\boldsymbol\omega}_\text{eq}) \leq \mathcal{E}_0(\tilde{u}) \leq \mathcal{E}_0(u), 
  $ 
and this concludes the proof.    
\end{proof}
In order to conclude the proof of Theorem \ref{thm:stability} we need the converse of Proposition~\ref{thm:min-x/x}, i.e., we need to show that any locally minimizing tangent map $\widehat{\boldsymbol{\omega}}$ has actually the form $\pm R_\alpha \cdot  \boldsymbol{\omega}^{(1)}_{\rm eq}\left(\frac{x}{|x|} \right) $ for some $\alpha \in \R$.
 To see this, we first recall that in view of Remark \ref{obviousstability} any locally minimizing tangent map is stable, therefore the first part of Theorem \ref{thm:stability} yields $\widehat{\boldsymbol{\omega}}_2 \equiv 0$, hence the corresponding harmonic sphere $\boldsymbol{\omega}$ is linearly degenerate. It follows from Proposition \ref{degeneratespheres} that 
 \[
 \widehat{\boldsymbol{\omega}}(x)= \pm    \boldsymbol{\omega}^{(1)}_{\rm eq} \circ \boldsymbol{\sigma}_2^{-1}\left( \mu_1 \boldsymbol{\sigma}_2 \left(\frac{x}{|x|} \right) \right) \, ,
 \]
 for some $\mu_1 \in \C \setminus \{0\}$ and by assumption it is locally minimizing among equivariant $\mathbb{S}^4$-valued perturbations. Writing $\mu_1=  \delta e^{i \alpha}$, with $\alpha \in \R$ and $\delta>0$, and in view of the $\mathbb{S}^1$-equivariance of both $\boldsymbol{\sigma}_2$ and $\boldsymbol{\omega}^{(1)}_{\rm eq}$ with respect to the rotation $R_\alpha=e^{i\alpha}$, it remains to prove that $\delta=1$. 
 
  It follows from the previous argument that the map $v(x) =\boldsymbol{\sigma}_2^{-1}\left( \delta \, \boldsymbol{\sigma}_2 \left(\frac{x}{|x|} \right) \right) $
 is locally minimizing among compactly supported symmetric perturbations in $W^{1,2}_{\rm{loc}}(\mathbb{R}^3;\mathbb{S}^2)$ and it remains to infer that, because of local minimality, $\delta=1$. This is a classical argument from \cite[Theorem~7.3]{BCL} which requires minor modifications because of symmetry. Following \cite[Chapter~2, p.~21]{Simon}, $v$ is stationary with respect to equivariant {\em inner variations}, i.e., for any $\mathbb{S}^1$-equivariant vector field $\Phi \in C^\infty_0(B_1;\mathbb{R}^3)$ we have
 \[
 0=\left.\frac{d}{dt} \mathcal{E}_0\left( v\circ({\rm id} + t \Phi) \right) \right\vert_{t=0}  = \int_{B_1} |\nabla v|^2 {\rm div} \, \Phi - 2 \left((\nabla v)^\trans \nabla v \right) : \nabla \Phi \, dx  \, . \]
 Choosing admissible vector fields $\Phi = \varphi e_3$, with $e_3=(0,0,1)$ and $\varphi \in C^\infty_0(B_1)$ a radial function, we have $\nabla v  (\nabla \Phi)^\trans \equiv 0$, because $v$ is degree-zero homogeneous and, if $\varphi=\varphi_n$ is a  sequence of radial functions increasing to $\chi_{B_1}$ and bounded in $W^{1,1}$, we have ${\rm div} \, \Phi_n \overset{*}{\rightharpoonup} x_3 \mathcal{H}^2 \LL \mathbb{S}^2$ as measures in $\R^3$. Passing to the limit in the previous equality we obtain
 \begin{equation}\label{eq:CoM}
 	\int_{\mathbb{S}^2} x_3 |\nabla v|^2 \,d\vol{\bb{S}^2} =0 \, ,
 \end{equation}
and the conclusion $\delta =1$ follows exactly from \cite[p. 678]{BCL} (see also \cite[p. 125]{HKL}) because the energy measure $|\nabla v|^2 \, d\vol{\bb{S}^2}$ has barycenter at the origin if and only if $\delta=1$ (note that the analogues of \eqref{eq:CoM} in which $x_3$ is replaced by $x_1$, $x_2$ are obvious because of the invariance of $\abs{\nabla v}^2$ under the $\bbS^1$-action).


\section{Compactness of minimizing  $\mathbb{S}^1$-equivariant maps}
\label{sec:comp}


In this section, we discuss compactness properties for equivariant minimimizers of the Landau-de Gennes energy \eqref{LDGenergytilde} both in the interior and near the boundary. Such results will be used both in the next section for the proof of the partial regularity Theorem \ref{thm:partial-regularity} and in the final section of the paper to obtain existence of minimizing torus and split solutions to equations \eqref{MasterEq} as described in Theorem \ref{thm:examples-tori} and Theorem \ref{thm:examples-split} respectively. The results presented here are the natural counterpart in the LdG case of the Luckhaus compactness Theorem for harmonic and $p$-harmonic maps established in the in the influential paper \cite{Lu}. As in the harmonic map case, the key technical step is the construction of  comparison maps by a gluing argument, in the spirit of the Luckhaus interpolation Lemma from the reference above, to exploit the local minimality property and turn it into a compactness one. Here, inspired by a similar construction in \cite[Proof of Theorem 4.2]{HKL} for axially symmetric maps into $\mathbb{S}^2$,  we give a simple self-contained construction of equivariant competitors both in the interior and near the boundary which is well-suited for our case. We refer the interested readers to \cite[Lemma 4.4]{Gastel} for a similar but much more complicated construction of equivariant competitors in the interior in the context of minimizing harmonic maps between Riemannian manifolds equivariant with respect to fairly general group actions.

\subsection{Local compactness}

\begin{theorem}\label{compintthm}
Let $\{\lambda_n\}\subset[0,\infty)$ be such that $\lambda_n\to\lambda_*\in[0,\infty)$, and $\{Q_n\}\subset W^{1,2}_{\rm sym}(B_1;\mathbb{S}^4)$. Assume that each $Q_n$ is minimizing $\mathcal{E}_{\lambda_n}(\cdot, B_1)$ among all $Q\in  W^{1,2}_{\rm sym}(B_1;\mathbb{S}^4)$ such that $Q=Q_n$ on  $\partial B_1$. 
If $\sup_n \mathcal{E}_{\lambda_n}(Q_n, B_1)<\infty$, then there exist a (not relabeled) subsequence and a map $Q_*\in  W^{1,2}_{\rm sym}(B_1;\mathbb{S}^4)$ such that 
$Q_n\rightharpoonup Q_*$ weakly in $W^{1,2}(B_1)$ and $Q_n\to Q_*$ strongly in  $W_{\rm loc}^{1,2}(B_1)$. In addition, $Q_*$ is minimizing $\mathcal{E}_{\lambda_*}(\cdot, B_1)$ among all $Q\in  W^{1,2}_{\rm sym}(B_1;\mathbb{S}^4)$ such that $Q=Q_*$ on  $\partial B_1$.
\end{theorem}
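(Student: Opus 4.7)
The strategy is the standard three-step scheme for minimizers (weak compactness, strong local convergence via a gluing argument, minimality of the weak limit), but carried out so that every comparison map respects the $\mathbb{S}^1$-action. The core difficulty, and the step I expect to be the main obstacle, is constructing equivariant interpolations in spherical shells that realize the Luckhaus-type $W^{1,2}$-closeness between two nearby equivariant maps while preserving the target constraint $|Q|=1$. This is where the result genuinely departs from its nonsymmetric counterpart, because the usual Luckhaus partitioning of a sphere into cubes is not compatible with the $\mathbb{S}^1$-action.

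First I would extract a subsequence. The Lyuksyutov constraint gives $\|Q_n\|_{L^\infty}=1$, and the bulk term $W(Q_n)$ is bounded, so $\{Q_n\}$ is bounded in $W^{1,2}(B_1;\bbS^4)$. After extraction, $Q_n\rightharpoonup Q_*$ weakly in $W^{1,2}$ and a.e.; since pointwise $|Q_n|=1$ and each $Q_n$ satisfies \eqref{S1equivariance}, both conditions pass to $Q_*$, hence $Q_*\in W^{1,2}_{\rm sym}(B_1;\bbS^4)$. Dominated convergence gives $W(Q_n)\to W(Q_*)$ in $L^1(B_1)$, and by weak lower semicontinuity of the Dirichlet energy,
\begin{equation*}
\mathcal{E}_{\lambda_*}(Q_*,B_1)\leq \liminf_{n\to\infty}\mathcal{E}_{\lambda_n}(Q_n,B_1)\,.
\end{equation*}

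The central step is strong local convergence. By Fubini, for a.e. $r\in(0,1)$ the traces $Q_n|_{\partial B_r}$ are bounded in $W^{1,2}(\partial B_r;\bbS^4)$ and, after a further extraction, $Q_n|_{\partial B_r}\to Q_*|_{\partial B_r}$ strongly in $L^2$ (and weakly in $W^{1,2}$) on $\partial B_r$. Fix such an $r$ and a small $\delta>0$ with $r+\delta<1$. I would then construct an equivariant interpolation $\Phi_n$ on the shell $B_{r+\delta}\setminus\overline{B_r}$ satisfying $\Phi_n=Q_*$ on $\partial B_r$, $\Phi_n=Q_n$ on $\partial B_{r+\delta}$, and
\begin{equation}\label{eq:shell-est}
\int_{B_{r+\delta}\setminus B_r}|\nabla\Phi_n|^2\,dx\ \leq\ C\,\delta\int_{\partial B_r}\big(|\nabla_T Q_n|^2+|\nabla_T Q_*|^2\big)\,d\mathcal{H}^2+\frac{C}{\delta}\|Q_n-Q_*\|_{L^2(\partial B_r)}^2\,.
\end{equation}
The construction exploits the fact that an $\bbS^1$-equivariant map on a sphere is determined by its values on a half meridian (a $1$-dimensional slice), so I would build $\Phi_n$ slice-by-slice as the linear convex combination on the slice followed by equivariant rotation, then post-compose with the nearest-point retraction onto $\bbS^4$ (well-defined once $L^\infty$-closeness on the slice is obtained via a good-slice argument, using $|Q_n|=|Q_*|=1$). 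The resulting map is automatically $\bbS^1$-equivariant because it depends on the azimuthal angle only through the prescribed $\bbS^1$-action. Setting $\tilde Q_n:=Q_*$ on $B_r$, $\tilde Q_n:=\Phi_n$ on the shell, and $\tilde Q_n:=Q_n$ on $B_1\setminus\overline{B_{r+\delta}}$, we obtain an admissible equivariant competitor with $\tilde Q_n=Q_n$ on $\partial B_1$. Minimality of $Q_n$ gives $\mathcal{E}_{\lambda_n}(Q_n,B_{r+\delta})\leq \mathcal{E}_{\lambda_n}(\tilde Q_n,B_{r+\delta})$; combining with \eqref{eq:shell-est}, letting $n\to\infty$ and then $\delta\to 0$ along an appropriate subsequence, one concludes
\begin{equation*}
\limsup_{n\to\infty}\mathcal{E}_{\lambda_n}(Q_n,B_r)\leq \mathcal{E}_{\lambda_*}(Q_*,B_r)\,,
\end{equation*}
which combined with the lower semicontinuity of the Dirichlet part forces $|\nabla Q_n|^2\to|\nabla Q_*|^2$ in $L^1(B_r)$ and hence $Q_n\to Q_*$ strongly in $W^{1,2}(B_r)$; letting $r\uparrow 1$ along admissible radii yields $W^{1,2}_{\rm loc}$-convergence.

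Finally, to prove minimality of $Q_*$, I would run the same equivariant shell construction near $\partial B_1$. Given any competitor $Q\in W^{1,2}_{\rm sym}(B_1;\bbS^4)$ with $Q=Q_*$ on $\partial B_1$, for a.e. $s\in(0,1)$ close to $1$, interpolate in $B_1\setminus\overline{B_s}$ between $Q$ on $\partial B_s$ and $Q_n$ on $\partial B_1$ using the same slice-by-slice construction, set the competitor equal to $Q$ on $B_s$, and apply the minimality of $Q_n$. Passing to the limit using the strong $W^{1,2}(B_s)$-convergence $Q_n\to Q_*$, the $L^1$-convergence of the bulk term, $\lambda_n\to\lambda_*$, and the shell estimate \eqref{eq:shell-est} (now with vanishing shell width), one obtains $\mathcal{E}_{\lambda_*}(Q_*,B_1)\leq\mathcal{E}_{\lambda_*}(Q,B_1)$, which establishes the required minimality.
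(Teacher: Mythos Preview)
Your three-step scheme matches the paper's, and your identification of the equivariant Luckhaus interpolation as the crux is exactly right. However, there is a genuine gap in how you propose to build the interpolation.

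You plan to work on the half-meridian slice, take the linear convex combination of $Q_n$ and $Q_*$, and then retract onto $\mathbb{S}^4$. For the retraction to be well-defined you need $L^\infty$-closeness of $Q_n$ and $Q_*$ on the slice, which you hope to get from a 1D good-slice argument. But the slice is a half-meridian whose endpoints lie on the symmetry axis, and the relevant Sobolev estimate on $\partial B_\rho$ reduced to that slice is a \emph{weighted} $W^{1,2}$ estimate with weight $|x_1|$ (the distance to the axis). The resulting uniform bound therefore degenerates at the poles: one only obtains
\[
\sup_{\partial B_\rho\cap\{(x_1^2+x_2^2)^{1/2}\geq \tfrac{\rho\sigma_n}{2}\}}|Q_n-Q_*|^2\leq C\sigma_n,
\]
where $\sigma_n\sim\|Q_n-Q_*\|_{L^2(\partial B_\rho)}^{1/3}$. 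Inside the small tubes $\{(x_1^2+x_2^2)^{1/2}<\tfrac{\rho\sigma_n}{2}\}$ around the axis you have no $L^\infty$ control, so your retraction step is not justified there and the construction as written does not produce an $\mathbb{S}^4$-valued map.

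The paper repairs this by treating the annular region $A_n$ away from the axis and the polar tubes $T_n^\pm$ separately. On $A_n$ it uses exactly your linear interpolation $v_n$ (with the $L^\infty$ estimate above). On each $T_n^\pm$ it abandons interpolation altogether: it prescribes boundary values on $\partial T_n^\pm$ (equal to $Q_n$ on the outer spherical cap, to $Q_*$ on the inner one, and to $v_n$ on the lateral piece $L_n^\pm$) and extends by $0$-homogeneity from the center point $a_n^\pm$ of the tube. Since the tube has diameter $\sim\rho\sigma_n$, this costs $\int_{T_n^\pm}|\nabla w_n|^2\leq C\sigma_n\int_{\partial T_n^\pm}|\nabla_{\rm tan}w_n|^2\leq C\sigma_n$, and the values are either already on $\mathbb{S}^4$ or inherited from $v_n$ on $L_n^\pm$, so the distance-to-$\mathbb{S}^4$ bound persists. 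Note also that the shell thickness is $\rho\sigma_n$, coupled to the $L^2$-smallness, not a fixed $\delta$ as in your \eqref{eq:shell-est}; this coupling is what makes the shell energy $O(\sigma_n)$ rather than merely bounded.

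A smaller point: for the minimality of $Q_*$, the paper does not interpolate between $Q$ and $Q_n$ near $\partial B_1$ as you suggest. Instead it first proves the inequality $\mathcal{E}_{\lambda_*}(Q_*,B_1)\leq\mathcal{E}_{\lambda_*}(\bar Q,B_1)$ for competitors $\bar Q$ with $\mathrm{spt}(\bar Q-Q_*)\subset B_1$ (this comes out of the same gluing, taking $\bar Q$ in place of $Q_*$ in the interior), and then removes the compact-support restriction by a simple radial folding trick $\bar Q_\varepsilon$ that agrees with $Q_*$ near $\partial B_1$.
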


\begin{proof}
By assumption, the sequence  $\{Q_n\}$ is bounded in $W^{1,2}(B_1)$. Hence we can find a subsequence and a map $Q_*$ such that $Q_n\rightharpoonup Q_*$ weakly in $W^{1,2}(B_1)$. By the compact embedding of  $W^{1,2}(B_1)$ into $L^2(B_1)$, we have $Q_n\to Q_*$ strongly in $L^2(B_1)$, and extracting a further subsequence, $Q_n\to Q_*$ a.e. in $B_1$. Moreover, by dominated convergence, we then have 
\begin{equation}\label{convpotcomphm}
W(Q_n)\to W(Q_*)\quad\text{in $L^1(B_1)$}\,.
\end{equation} 
As a consequence of the pointwise convergence we also deduce that the $\mathbb{S}^4$-constraint and the $\mathbb{S}^1$-equivariance property are weakly closed, hence $Q_*\in  W^{1,2}_{\rm sym}(B_1;\mathbb{S}^4)$.

In what follows, we shall first prove the local strong convergence of the sequence $\{Q_n\}$ in $W^{1,2}$, and then the minimality of $Q_*$. We start fixing an arbitrary parameter $\delta\in(0,1/2)$ and a competitor $\bar Q\in W_{\rm sym}^{1,2}(B_1;\mathbb{S}^4)$ such that ${\rm spt}(\bar Q-Q_*)\subset  B_{1-\delta}$. Extracting another subsequence if necessary, by Fatou's lemma and Fubini's theorem, we can find a radius $\rho\in(1-\delta,1)$ such that 
\begin{equation}\label{esti1compint}
\lim_{n\to\infty}\int_{\partial B_\rho}|Q_n-Q_*|^2\,d\mathcal{H}^2=0 \, \, \text{ and } \, \int_{\partial B_\rho}\big(|\nabla_{\rm tan} Q_n|^2+|\nabla_{\rm tan} Q_*|^2\big)\,d\mathcal{H}^2\leq C\,,
\end{equation}
for a constant $C$ independent of $n$. Set $D_\rho:=B_r\cap\{x_2=0\}$ to be the disc of radius $\rho$ centered at the origin and lying in the vertical plane $\{x_2=0\}$.  Using spherical coordinates $(r,\phi,\theta)$ on $B_1$,  we infer from \eqref{esti1compint} and the $\mathbb{S}^1$-equivariance of $Q_n$ and $Q_*$ that those maps belongs to the weighted Sobolev space $W^{1,2}$ over $\partial D_\rho\setminus\{x_1=0\}$ with respect to the weight $|x_1|$. Moreover, since by equivariance the integrands in \eqref{esti1compint} do not depend on $\phi$, we have a uniform bound on the sequence
$$\int_{\partial D_\rho}|x_1|\big(|\nabla_{\rm \mathbb{S}^1} Q_n|^2+|\nabla_{\rm \mathbb{S}^1} Q_*|^2\big)\,d\mathcal{H}^1\leq C\,,$$
where $\nabla_{\rm \mathbb{S}^1}=\partial_\theta$ stands for the tangential derivative along the circle $\partial D_\rho$. In particular, $Q_n$ and $Q_*$ are absolutely continuous on $\partial D_\rho\setminus\{x_1=0\}$. 

We now consider the sequence $\sigma_n:=\|Q_n-Q_*\|^{1/3}_{L^2(\partial B_\rho)}+2^{-n}\to 0$.  
Using the absolute continuity  of $Q_n$ and $Q_*$ on $\partial D_\rho\setminus\{x_1=0\}$, we estimate by 1d-calculus and Cauchy-Schwarz inequality, 
\begin{multline*}
\sup_{\partial D_\rho\cap\big\{|x_1|\geq \frac{\rho\sigma_n}{2}\big\}} |x_1|^2|Q_n-Q_*|^2\leq \\
C\left(\int_{\partial D_\rho}|x_1||\nabla_{\rm \mathbb{S}^1}(Q_n-Q_*)|^2\,d\mathcal{H}^1\right)^{\frac{1}{2}}\left(\int_{\partial D_\rho}|x_1||Q_n-Q_*|^2\,d\mathcal{H}^1\right)^{\frac{1}{2}}\\
+C\int_{\partial D_\rho}|x_1||Q_n-Q_*|^2\,d\mathcal{H}^1\,.
\end{multline*}
Still by $\mathbb{S}^1$-equivariance, we deduce from \eqref{esti1compint} that 
\begin{multline}\label{reestiunifcompint}
\sup_{\partial B_\rho\cap\big\{(x^2_1+x_2^2)^{1/2}\geq \frac{\rho\sigma_n}{2}\big\}} |Q_n-Q_*|^2\leq \\
 C\sigma_n^{-2}\left(\int_{\partial B_\rho}|\nabla_{\rm tan}(Q_n-Q_*)|^2\,d\mathcal{H}^2\right)^{\frac{1}{2}}\left(\int_{\partial B_\rho}|Q_n-Q_*|^2\,d\mathcal{H}^2\right)^{\frac{1}{2}}\\
+C\sigma_n^{-2}\int_{\partial B_\rho}|Q_n-Q_*|^2\,d\mathcal{H}^2\leq C\sigma_n\,.
\end{multline}
We now introduce the subsets
$$T^\pm_n:=\Big\{x\in B_\rho\setminus \overline{B_{\rho(1-\sigma_n)}}: (x_1^2+x_2^2)^{1/2}<\frac{\sigma_n}{2} |x|\,,\;\pm x_3>0\Big\}\,, $$
$$L^\pm_n:=\Big\{x\in B_\rho\setminus \overline{B_{\rho(1-\sigma_n)}}: (x_1^2+x_2^2)^{1/2}=\frac{\sigma_n}{2} |x|\,,\;\pm x_3>0\Big\}\,, $$
and
$$A_n:= \big(B_\rho\setminus \overline{B_{\rho(1-\sigma_n)}}\big)\setminus\big(T^+_n\cup T^-_n \big)\,.$$
We define for $x\in A_n$, 
 $$v_n(x):= Q_*\Big(\rho \frac{x}{|x|}\Big)+ \frac{|x|-\rho(1-\sigma_n)}{\rho\sigma_n}\Big(Q_n\Big(\rho \frac{x}{|x|}\Big)-Q_*\Big(\rho \frac{x}{|x|}\Big)\Big)\,.$$
 Then we have by \eqref{esti1compint}, 
 \begin{equation}\label{aslusid1307}
 \int_{A_n}|\nabla v_n|^2\,dx\leq  C\sigma_n \int_{\partial B_\rho}\big(|\nabla_{\rm tan} Q_n|^2+|\nabla_{\rm tan} Q_*|^2\big)\,d\mathcal{H}^2+C\sigma_n^{-1}\int_{\partial B_\rho}|Q_n-Q_*|^2\,d\mathcal{H}^2\leq C\sigma_n\,,
 \end{equation}
 and by \eqref{reestiunifcompint}, 
 \begin{equation}\label{supesticompthmint}
 \sup_{x\in A_n} {\rm dist}^2(v_n(x),\mathbb{S}^4)\leq C\sigma_n\,. 
 \end{equation}
Using the equivariance of $v_n$ and the fact that $|Q_n|=|Q_*|=1$, we have $|\nabla_{\rm tan}v_n|^2\leq C \sigma_n^{-2}$, $\mathcal{H}^2(L_n^\pm) \leq C \sigma_n^2$ and finally  
 \begin{equation}\label{controllateralcompthmint}
 \int_{L_n^\pm}|\nabla_{\rm tan}v_n|^2\,d\mathcal{H}^2\leq  C\sigma_n^{-2}\mathcal{H}^2(L_n^\pm)\leq C\,.
 \end{equation}
 Then we set for $x\in\partial T_n^\pm$, 
 $$w_n(x):=\begin{cases} 
 Q_n(x) & \text{if $x\in \partial T_n^\pm\cap\partial B_\rho$}\,,\\[5pt]
\displaystyle Q_*\left(\frac{x}{1-\sigma_n}\right) & \text{if $x\in \partial T_n^\pm\cap\partial B_{\rho(1-\sigma_n)}$}\,,\\[5pt]
 v_n(x) & \text{if $x\in L_n^{\pm}$}\,,
 \end{cases}
 $$
 and we extend $w_n$ to $T_n^\pm$ by $0$-homogeneity with respect to the center point $a_n^\pm:=(0,0,\pm\rho(1-\frac{\sigma_n}{2}))$, i.e., 
 $$w_n(x)=w_n\left(\frac{x-a_n^\pm}{|x-a_n^\pm|}\right) \quad\text{for $x\in T_n^\pm\setminus\{a_n^\pm\}$}\,.$$
 Combining \eqref{esti1compint} and \eqref{controllateralcompthmint}, we derive that
\begin{multline}\label{samediasid1305}
\int_{T_n^\pm}|\nabla w_n|^2\,dx\leq C\sigma_n\int_{\partial T_n^\pm}|\nabla_{\rm tan} w_n|^2\,dx\\ \leq
 C\sigma_n\int_{\partial B_\rho}\big(|\nabla_{\rm tan} Q_n|^2+|\nabla_{\rm tan} Q_*|^2\big)\,d\mathcal{H}^2+C\sigma_n\leq C\sigma_n\,.
 \end{multline}
 Finally, we extend $w_n$ to whole annulus $B_\rho\setminus \overline{B_{\rho(1-\sigma_n)}}$ by setting 
 $$w_n(x):=v_n(x) \quad\text{for $x\in A_n$}\,.$$  
 By construction, 
 $w_n\in W^{1,2}_{\rm sym}(B_\rho\setminus \overline{B_{\rho(1-\sigma_n)}};\mathcal{S}_0)$, and we infer from \eqref{aslusid1307}, \eqref{supesticompthmint}, and  \eqref{samediasid1305} that 
 \begin{equation}\label{contrwncomthmint}
 \int_{B_\rho\setminus \overline{B_{\rho(1-\sigma_n)}}}|\nabla w_n|^2\,dx\leq 
 C\sigma_n \,,
 \end{equation}
 and
 $$ \sup_{x\in B_\rho\setminus \overline{B_{\rho(1-\sigma_n)}}} {\rm dist}^2(w_n(x),\mathbb{S}^4)\leq C\sigma_n \,.$$
 As a consequence, $|w_n|>1/2$ for $n$ large enough, and we can define a competitor $\bar Q_n\in W^{1,2}_{\rm sym}(B_1;\mathbb{S}^4)$ by setting 
 \begin{equation}
 \label{intcompetitors}
 \bar Q_n(x):=\begin{cases}
 \displaystyle \bar Q\Big(\frac{x}{1-\sigma_n}\Big) & \text{if $|x|\leq \rho(1-\sigma_n)$}\,,\\[8pt]
\displaystyle \frac{w_n(x)}{|w_n(x)|} & \text{if $\rho(1-\sigma_n)< |x|< \rho$}\,,\\[8pt]
Q_n(x) & \text{if $|x|\geq \rho$}\,. 
 \end{cases}
 \end{equation}
 Note that each map $\bar{Q}_n$ in \eqref{intcompetitors} is equivariant and $W^{1,2}$ because we are gluig together equivariant maps which are $W^{1,2}$ on each subdomain and on the spheres $\{ |x|=\rho\}$ and $\{|x|= \rho(1-\sigma_n)\}  $ the traces on both sides agree by construction of $w_n$.
 
 By minimality of $Q_n$, we have $\mathcal{E}_{\lambda_n}(Q_n,B_1)\leq \mathcal{E}_{\lambda_n}(\bar Q_n,B_1)$, which reduces to 
 \begin{equation}\label{compthmcmpint}
 \mathcal{E}_{\lambda_n}(Q_n,B_\rho)\leq \mathcal{E}_{\lambda_n}(\bar Q_n,B_\rho) \,,
 \end{equation}
 since $\bar Q_n=Q_n$ in $B_1 \setminus B_\rho$.
 By \eqref{convpotcomphm} and dominated convergence, we have 
 $$\lim_{n\to\infty}\int_{B_\rho} \lambda_nW(Q_n)\,dx=\int_{B_\rho} \lambda_*W(Q_*)\,dx\; \, \text{ and } \, \;\lim_{n\to\infty}\int_{B_\rho} \lambda_nW(\bar Q_n)\,dx=\int_{B_\rho} \lambda_*W(\bar Q)\,dx\,.$$
 On the other hand, in view of \eqref{contrwncomthmint} we have 
 \begin{multline*}
 \int_{B_\rho}|\nabla \bar Q_n|^2\,dx=(1-\sigma_n) \int_{B_\rho}|\nabla \bar Q|^2\,dx+\int_{B_\rho\setminus B_{\rho(1-\sigma_n)}}|\nabla \bar Q_n|^2\,dx\\
 \leq (1-\sigma_n) \int_{B_\rho}|\nabla \bar Q|^2\,dx+C\int_{B_\rho\setminus B_{\rho(1-\sigma_n)}}|\nabla w_n|^2\,dx \leq (1-\sigma_n) \int_{B_\rho}|\nabla \bar Q|^2\,dx+C\sigma_n\,,
 \end{multline*}
 where we have used that $|w_n|>1/2$ in the first inequality. Therefore, $ \mathcal{E}_{\lambda_n}(\bar Q_n,B_\rho)\to \mathcal{E}_{\lambda_*}(\bar Q,B_\rho) $.  
 By lower semicontinuity of the Dirichlet energy $\mathcal{E}_0(\cdot,B_\rho)$, we infer from \eqref{compthmcmpint} that
 \begin{multline}\label{chainineqcompthmint}
 \mathcal{E}_{\lambda_*}(Q_*,B_\rho)\leq \liminf_{n\to+\infty}\mathcal{E}_0(Q_n,B_\rho)+\int_{B_\rho}\lambda_*W(Q_*)\,dx\\
 \leq\limsup_{n\to+\infty}\mathcal{E}_0(Q_n,B_\rho)+\int_{B_\rho}\lambda_*W(Q_*)\,dx\leq \mathcal{E}_{\lambda_*}(\bar Q,B_\rho) \,.
 \end{multline}
 Since $\bar Q=Q_*$ in $B_1\setminus B_\rho$, we  deduce that 
\begin{equation}\label{almradminimcompthmint}
\mathcal{E}_{\lambda_*}(Q_*,B_1)\leq \mathcal{E}_{\lambda_*}(\bar Q,B_1)\,.
\end{equation}
 In view of the arbitrariness of $\bar Q$, we may have chosen $\bar Q=Q_*$, in which case \eqref{chainineqcompthmint} shows that  $\mathcal{E}_0(Q_n,B_\rho) \to \mathcal{E}_0(Q_*,B_\rho)$. Combined with the weak convergence in $W^{1,2}$, it leads to the strong $W^{1,2}$-convergence of $Q_n$  toward $Q_*$ in $B_\rho$, and hence in $B_{1-\delta}$. 
 
 By arbitrariness of $\delta$, we have thus shown that $Q_n\to Q_*$ strongly in $W^{1,2}_{\rm loc}(B_1)$, and  \eqref{almradminimcompthmint} holds for every $\bar Q\in W_{\rm sym}^{1,2}(B_1;\mathbb{S}^4)$ such that ${\rm spt}(\bar Q-Q_*)\subset B_1$. 
 
 Finally, in order to prove the minimality of $Q_*$ with respect to its own boundary condition, we now consider an arbitrary $\bar{Q}\in W_{\rm sym}^{1,2}(B_1;\mathbb{S}^4)$ such that $\bar{Q}=Q_*$ on $\partial B_1$. For $\eps\in(0,1/4)$, we set 
 $$\bar{Q}_\eps(x):=\begin{cases}
 Q_*(x) & \text{if $1-\eps\leq |x|\leq 1$}\,,\\[8pt]
\displaystyle Q_*\Big((2-2\eps-|x|)\frac{x}{|x|}\Big) & \text{if $1-2\eps\leq |x|\leq 1-\eps$}\,,\\
\displaystyle \bar{Q}\Big(\frac{x}{1-2\eps}\Big) & \text{if $|x|\leq 1-2\eps$}\,. 
 \end{cases}$$
 Straightforward computations yield
 $$(1-2\eps)\mathcal{E}_{(1-2\eps)^2\lambda_*}(\bar{Q},B_1)\leq \mathcal{E}_{\lambda_*}(\bar{Q}_\eps,B_1)\leq  \mathcal{E}_{\lambda_*}(\bar{Q},B_1)+C \mathcal{E}_{\lambda_*}(Q_*,B_1\setminus B_{1-\eps})\,,$$
 so that $\mathcal{E}_{\lambda_*}(\bar{Q}_\eps,B_1)\to \mathcal{E}_{\lambda_*}(\bar{Q},B_1)$ as $\eps\downarrow 0$. Since $\bar{Q}_\eps\in W_{\rm sym}^{1,2}(B_1;\mathbb{S}^4)$ satisfies ${\rm spt}(\bar{Q}_\eps-Q_*)\subset \overline{B_{1-\eps}}$, from the previous part of the proof we have $\mathcal{E}_{\lambda_*}(Q_*,B_1)\leq \mathcal{E}_{\lambda_*}(\bar{Q}_\eps,B_1)$. Letting $\eps\to 0$, we conclude that  $\mathcal{E}_{\lambda_*}(Q_*,B_1)\leq \mathcal{E}_{\lambda_*}(\bar{Q},B_1)$, and the proof is complete. 
\end{proof}

\begin{remark}
In the case $\lambda_*=0$, Theorem \ref{compintthm} tells us that the limiting map $Q_*$ is minimizing the Dirichlet energy among all $\mathbb{S}^1$-equivariant maps with values in $\mathbb{S}^4$ agreeing with $Q_*$ on $\partial B_1$. In particular, $Q_*$ is a weakly harmonic map in $B_1$, see e.g. Proposition \ref{prop:symmetric-criticality}.
\end{remark}
\begin{remark}
\label{harmoniccompactness}
In the particular case $\lambda_n\equiv \lambda_*=0$ the previous result reduces to a local compactness property for equivariant harmonic maps into $\mathbb{S}^4$ and it is just a particular case of the much more general statement established in \cite[Proposition 4.6]{Gastel}. 	
\end{remark}

\subsection{Compactness up to the boundary}

For an axisymmetric open neighborhood $U$ of a point $x_0\in \partial\Omega\cap\{\text{$x_3$-axis}\}$, we shall assume on $U$ that 
\begin{equation}\label{assumptioncompthmbdrypre}
\text{$U\cap\Omega$ has a Lipschitz boundary,}
\end{equation}
and that there exists a $C^1$-diffeomorphism $\Phi : B_2\subset \R^3 \to \Phi(B_2)\subset \R^3$ satisfying 
\begin{equation}\label{assumptioncompthmbdry}
\begin{cases}
\text{$\Phi(0)=x_0$;}\\ 
\text{$\Phi$ is symmetric, i.e., $\Phi(R x)=R\Phi(x)$ for all $R\in\mathbb{S}^1$ and for all $x\in B_2$;}\\
\text{$\Phi(B_1)=U$, $\Phi(B_1^+)=U\cap\Omega$, $\Phi(B_1\cap \{x_3=0\})=U\cap\partial\Omega$, $\Phi(\partial B_1\cap\{x_3>0\})\subset\Omega$;}
\end{cases} 
\end{equation}
 where we have set $B_1^+:=B_1\cap\{x_3>0\}$. 
 \begin{remark}
If $\partial\Omega$ is of class $C^1$ and rotationally symmetric then a sufficiently small neighborhood $U$ of a given point $x_0\in \partial\Omega\cap\{\text{$x_3$-axis}\}$ satisfying properties \eqref{assumptioncompthmbdrypre}-\eqref{assumptioncompthmbdry} above clearly exists and indeed it is enough to choose $U=B_r(x_0)$ for a radius $r>0$ small enough.
 \end{remark}
 
\begin{theorem}\label{compintthmbdry}
Let $x_0\in\partial\Omega\cap\{\text{$x_3$-axis}\}$ and $U$ an axisymmetric open neighborhood  of $x_0$ satisfying  \eqref{assumptioncompthmbdrypre}-\eqref{assumptioncompthmbdry} above. Let  $\{Q^n_{\rm b}\}\subset C^1(\partial\Omega;\mathbb{S}^4)$ be a sequence of boundary conditions satisfying 
\begin{equation}\label{assumpbdrycompthm}
\sup_{n}\int_{U\cap\partial\Omega}|\nabla_{\rm tan}Q^n_{\rm b}|^2\,d\mathcal{H}^2<+\infty\,. 
\end{equation}
Let $\{\lambda_n\}\subset[0,\infty)$ be such that $\lambda_n\to\lambda_*\in[0,\infty)$, and $\{Q_n\}\subset \mathcal{A}^{\rm sym}_{Q^n_{\rm b}}(\Omega)$. Assume that $Q_n$ is minimizing $\mathcal{E}_{\lambda_n}$ over $\mathcal{A}^{\rm sym}_{Q^n_{\rm b}}(\Omega)$. 
If $\sup_n \mathcal{E}_{\lambda_n}(Q_n, U\cap\Omega)<\infty$, then there exist a (not relabeled) subsequence and $Q_*\in  W^{1,2}_{\rm sym}(U\cap\Omega;\mathbb{S}^4)$ such that $Q_n\to Q_*$ strongly in  $W_{\rm loc}^{1,2}(U\cap\overline\Omega)$. 
\end{theorem}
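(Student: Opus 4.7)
The plan is to adapt the Luckhaus-type interpolation argument of Theorem~\ref{compintthm} to the boundary setting. First, I pull back via $\Phi$: setting $\tilde Q_n := Q_n \circ \Phi : B_1^+ \to \mathbb{S}^4$, the $\mathbb{S}^1$-equivariance of $\Phi$ preserves the symmetry class, and the change of variables gives a modified energy $\tilde{\mathcal{E}}_{\lambda_n}$ on $B_1^+$ with an integrand comparable to $\mathcal{E}_{\lambda_n}$, for which $\tilde Q_n$ is still a minimizer among $\mathbb{S}^1$-equivariant maps with trace $\tilde Q^n_{\rm b} := Q^n_{\rm b} \circ \Phi$ on $B_1 \cap \{x_3 = 0\}$. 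The uniform energy bound gives $\sup_n \|\nabla \tilde Q_n\|_{L^2(B_1^+)} < \infty$, so up to subsequences $\tilde Q_n \rightharpoonup \tilde Q_*$ weakly in $W^{1,2}(B_1^+)$, strongly in $L^2$ and a.e., with $W(\tilde Q_n) \to W(\tilde Q_*)$ in $L^1(B_1^+)$ by dominated convergence, and $\tilde Q_* \in W^{1,2}_{\rm sym}(B_1^+;\mathbb{S}^4)$. Hypothesis~\eqref{assumpbdrycompthm} together with the Rellich embedding on $B_1 \cap \{x_3 = 0\}$ yield, after a further extraction, $\tilde Q^n_{\rm b} \rightharpoonup \tilde Q^*_{\rm b}$ weakly in $W^{1,2}$ and strongly in $L^2$ on the flat boundary, with $\tilde Q^*_{\rm b}$ agreeing with the trace of $\tilde Q_*$.

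Next, I fix a good radius $\rho \in (3/4, 1)$ chosen via Fatou's lemma so that $\tilde Q_n|_{\partial B_\rho \cap B_1^+}$ is uniformly bounded in tangential $W^{1,2}$ and converges in $L^2$ to $\tilde Q_*|_{\partial B_\rho \cap B_1^+}$. Setting
\[
\sigma_n := \|\tilde Q_n - \tilde Q_*\|_{L^2(\partial B_\rho \cap B_1^+)}^{1/3} + \|\tilde Q^n_{\rm b} - \tilde Q^*_{\rm b}\|_{L^2(B_1 \cap \{x_3 = 0\})}^{1/3} + 2^{-n} \to 0,
\]
I construct an admissible competitor $\bar Q_n$ of $\tilde Q_n$ region by region. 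On $B_1^+ \setminus B_\rho$, take $\bar Q_n := \tilde Q_n$. On the spherical half-shell $(B_\rho \setminus \overline{B_{\rho(1-\sigma_n)}}) \cap \{x_3 > \sigma_n\}$, linearly interpolate in the radial coordinate from $\tilde Q_n|_{\partial B_\rho}$ to $\tilde Q_*|_{\partial B_{\rho(1-\sigma_n)}}$, extending near the $x_3$-axis by the equivariant $0$-homogeneous construction around suitable vertex points $a_n^\pm$ exactly as in Theorem~\ref{compintthm}, then project onto $\mathbb{S}^4$. On the boundary slab $B_{\rho(1-\sigma_n)} \cap \{0 < x_3 < \sigma_n\}$, linearly interpolate in the $x_3$-direction from $\tilde Q^n_{\rm b}(\cdot,0)$ to $\tilde Q_*(\cdot,\sigma_n)$ and project onto $\mathbb{S}^4$. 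In the remaining deep interior $B_{\rho(1-\sigma_n)} \cap \{x_3 \geq \sigma_n\}$, set $\bar Q_n := \tilde Q_*$, and patch the corner $\partial B_{\rho(1-\sigma_n)} \cap \{x_3 = \sigma_n\}$ by a further Luckhaus-type interpolation. By construction, $\bar Q_n$ is $\mathbb{S}^1$-equivariant, agrees with $\tilde Q_n$ on $\partial B_1 \cap \{x_3 > 0\}$, and has trace $\tilde Q^n_{\rm b}$ on $B_1 \cap \{x_3 = 0\}$.

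The energy contribution from the spherical half-shell is $O(\sigma_n)$ as in the interior case. The contribution from the boundary slab is, by 1d Cauchy--Schwarz in the $x_3$-direction, controlled by $C\big(\sigma_n^{-1}\|\tilde Q^n_{\rm b} - \tilde Q^*_{\rm b}\|_{L^2}^2 + \sigma_n \int |\nabla_{\rm tan} \tilde Q^n_{\rm b}|^2\big)$, which is $o(1)$ by the choice of $\sigma_n$ and \eqref{assumpbdrycompthm}. The $\mathbb{S}^4$-projection step is justified once $|\bar Q_n| > 1/2$ on the interpolation regions, which follows from the uniform $L^\infty$-smallness of the interpolation error analogous to the estimate \eqref{supesticompthmint} of the interior proof. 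Invoking the minimality $\tilde{\mathcal{E}}_{\lambda_n}(\tilde Q_n, B_1^+) \leq \tilde{\mathcal{E}}_{\lambda_n}(\bar Q_n, B_1^+)$, passing to the limit with the $L^1$-convergence of the potential, and combining with weak lower semicontinuity of the Dirichlet energy yields $\mathcal{E}_0(\tilde Q_n, B_\rho \cap B_1^+) \to \mathcal{E}_0(\tilde Q_*, B_\rho \cap B_1^+)$, upgrading weak to strong $W^{1,2}$-convergence on $B_\rho \cap B_1^+$. Letting $\rho \to 1$ along a sequence and transporting back via $\Phi^{-1}$ gives strong $W^{1,2}_{\rm loc}(U \cap \overline\Omega)$-convergence.

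The main obstacle will be the simultaneous geometric and energetic control of the two interpolations, especially in the corner region where the spherical half-shell meets the boundary slab, compounded by the $0$-homogeneous equivariant extension near the $x_3$-axis inherited from the interior argument. The choice of $\sigma_n$ must reconcile three vanishing error terms (from the shell interpolation, from the boundary trace correction, and from the axial tubular extension), and the geometric decomposition must be carried out so that the corner patch does not destroy the uniform $L^\infty$-smallness required for the final projection onto $\mathbb{S}^4$.
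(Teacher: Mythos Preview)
Your overall strategy is sound, but the paper takes a cleaner route that entirely sidesteps the obstacle you correctly identify at the end. After pulling back via $\Phi$, extracting weak limits, and selecting a good radius $\rho$ as you do, the paper introduces an $\mathbb{S}^1$-equivariant biLipschitz map $\Psi:\overline{B_\rho}\to\overline{B_\rho^+}$ sending the upper hemisphere of $\partial B_\rho$ onto the flat disk $\partial B_\rho^+\cap\{x_3=0\}$. Setting $\widehat Q_n:=\widetilde Q_n\circ\Psi$ and $\widehat Q_*:=\widetilde Q_*\circ\Psi$, one obtains maps on the \emph{full} ball $B_\rho$ whose trace on $\partial B_\rho$ encodes simultaneously the curved and flat boundary data of $\widetilde Q_n$; the $W^{1,2}$ bound on the flat piece from~\eqref{assumpbdrycompthm} and the Fatou-selected bound on the hemisphere combine into a single estimate on $\partial B_\rho$. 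The Luckhaus interpolation of Theorem~\ref{compintthm} is then applied verbatim to $\widehat Q_n$ on the shell $B_\rho\setminus\overline{B_{\rho(1-\sigma_n)}}$, producing one equivariant $w_n$ with $\int|\nabla w_n|^2\leq C\sigma_n$ and uniform $L^\infty$-closeness to $\mathbb{S}^4$. The competitor on $\Omega$ is assembled by pushing $w_n$ forward via $\Phi\circ\Psi$ and extending by $Q_n$ outside $\Phi\circ\Psi(B_\rho)$.

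What this buys is precisely the elimination of your corner problem: the biLipschitz flattening merges your spherical half-shell and boundary slab into a single spherical shell, so there is no corner to patch, no separate $x_3$-interpolation, and only one axial extension (inherited from the interior proof). Your direct approach is not wrong in principle, but the boundary-slab interpolation from $\widetilde Q^n_{\rm b}(\cdot,0)$ to $\widetilde Q_*(\cdot,\sigma_n)$ does not obviously meet the $L^\infty$-smallness needed for the $\mathbb{S}^4$-projection (these are only $W^{1,2}$ on a 2-disk, which does not embed into $L^\infty$; you would need a further 1d-Sobolev-plus-equivariance argument and an axial extension \emph{in the slab as well}), and the corner patch at $\partial B_{\rho(1-\sigma_n)}\cap\{x_3=\sigma_n\}$ is left entirely unspecified. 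The $\Psi$-trick is the missing idea that makes all of this unnecessary.
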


\begin{proof}
We proceed as in the proof of Theorem \ref{compintthm}, and we partially sketch the argument, focusing on the main differences. First, we infer from the uniform energy bound that there exist a subsequence and $Q_*\in W^{1,2}_{\rm sym}(U\cap\Omega;\mathbb{S}^4)$ such that $Q_n\rightharpoonup Q_*$ weakly in $W^{1,2}(U\cap\Omega)$, $Q_n\to Q_*$ strongly in $L^2(U\cap\Omega)$ and a.e. on $U\cap \Omega$. Then, $W(Q_n)\to W(Q_*)$ strongly in $L^1(U\cap \Omega)$. 

We now consider the maps $\widetilde Q_n:=Q_n\circ\Phi \in W^{1,2}_{\rm sym}(B_1^+;\mathbb{S}^4)$ and $\widetilde Q_*:=Q_*\circ\Phi \in W^{1,2}_{\rm sym}(B_1^+;\mathbb{S}^4)$ (note that the $\mathbb{S}^1$-equivariance of $\widetilde Q_n$ and $\widetilde Q_*$ follows from the equivariance assumption on $\Phi$). Then,  $\widetilde Q_n\rightharpoonup \widetilde Q_*$ weakly in $W^{1,2}(B_1^+)$, and $\widetilde Q_n\to \widetilde Q_*$ strongly in $L^2(B_1^+)$ because the corresponding properties for $\{Q_n \}$ are preserved under composition with the diffeomorphism $\Phi$. By weak continuity of the trace operator, we also have $\widetilde Q_n\rightharpoonup \widetilde Q_*$ weakly in $W^{1/2,2}(B_1\cap\{x_3=0\})$, and hence $\widetilde Q_n\to \widetilde Q_*$ strongly in $L^2(B_1\cap \{x_3=0\})$ by the compact embedding $W^{1/2,2}\hookrightarrow L^2$. On the other hand, assumption \eqref{assumpbdrycompthm} implies that (the traces of) $\widetilde Q_n$ are bounded in $W^{1,2}(B_1\cap\{x_3=0\})$, and thus $\widetilde Q_n\rightharpoonup \widetilde Q_*$ weakly in $W^{1,2}(B_1\cap\{x_3=0\})$. 

We fix an arbitrary parameter $\delta\in(0,1/2)$, and we aim to show that $Q_n\to Q_*$ strongly in $W^{1,2}(\Phi(B^+_{1-\delta}))$. Since $\widetilde Q_n\rightharpoonup \widetilde Q_*$ weakly in $W^{1,2}(B_1\cap\{x_3=0\})$, arguing as in the proof of Theorem \ref{compintthm}, up to a subsequence, we can find $\rho\in(1-\delta,1)$ such that 
\begin{equation}\label{esti1compbdry}
\lim_{n\to\infty}\int_{\partial B^+_\rho}|\widetilde Q_n-\widetilde Q_*|^2\,d\mathcal{H}^2=0 \, \text{ and } \,  \int_{\partial B^+_\rho}\big(|\nabla_{\rm tan} \widetilde Q_n|^2+|\nabla_{\rm tan} \widetilde Q_*|^2\big)\,d\mathcal{H}^2\leq C\,.
\end{equation}
It is now convenient to consider a biLipschitz map $\Psi:\overline B_\rho\to \overline B_\rho^+$ satisfying the properties
\begin{itemize}
\item $\Psi(Rx)=R\Psi(x)$ for every $R\in\mathbb{S}^1$;  
\vskip3pt
\item $\Psi(\partial B_\rho\cap\{x_3>0\})= \partial B_\rho^+\cap\{x_3=0\}$,
\end{itemize}
and to define $\widehat Q_n(x):=\widetilde Q_n(\Psi(x))$ and $\widehat Q_*(x):=\widetilde Q_*(\Psi(x))$ for $x\in B_\rho$. Then, $\widehat Q_n,\widehat Q_*\in W^{1,2}_{\rm sym}( B_\rho;\mathbb{S}^4)$, and for the corresponding traces on $\partial B_\rho$ the estimate \eqref{esti1compbdry} yields
\begin{equation}\label{esti1compbdrybis}
\lim_{n\to\infty}\int_{\partial B_\rho}|\widehat Q_n-\widehat Q_*|^2\,d\mathcal{H}^2=0\text{ and } \int_{\partial B_\rho}\big(|\nabla_{\rm tan} \widehat Q_n|^2+|\nabla_{\rm tan} \widehat Q_*|^2\big)\,d\mathcal{H}^2\leq C\, .
\end{equation}

Setting $\sigma_n:=\|\widehat Q_n-\widehat Q_*\|^{1/3}_{L^2(\partial B_\rho)}+2^{-n}\to 0$, we proceed as in the proof of Theorem  \ref{compintthm} to construct a sequence $w_n\in W^{1,2}_{\rm sym}(B_\rho\setminus \overline{B_{\rho(1-\sigma_n)}};\mathcal{S}_0)$ satisfying $w_n(x)=\widehat Q_n(x)$ on $\partial B_\rho$ and $w_n(x)=\widehat Q_*(\frac{x}{1-\sigma_n})$ on $\partial B_{\rho(1-\sigma_n)}$, together with the bounds
\begin{equation}\label{mercr1223juill}
\int_{B_\rho\setminus B_{\rho(1-\sigma_n)}} |\nabla w_n|^2\,dx\leq C\sigma_n\,
\end{equation}
and 
$$ \sup_{x\in B_\rho\setminus B_{\rho(1-\sigma_n)}} {\rm dist}^2(w_n(x),\mathbb{S}^4)\leq C\sigma_n \,,$$
for a constant $C>0$ independent of $n$.

When $n$ is large enough, we have $|w_n|>1/2$, and we can then define a map $\bar Q_n$ on $\Omega$ by setting
$$\bar Q_n(x):=
\begin{cases}
Q_n(x) & \text{for $x\in \Omega\setminus \Phi\circ\Psi(B_\rho)$}\\[8pt]
\displaystyle \frac{w_n(\Psi^{-1}\circ\Phi^{-1}(x))}{|w_n(\Psi^{-1}\circ\Phi^{-1}(x))|} & \text{for $x\in \Phi\circ\Psi(B_\rho\setminus B_{\rho(1-\sigma_n)})$}\,,\\[10pt]
\displaystyle Q_*\Big(\Phi\circ\Psi\Big(\frac{\Psi^{-1}\circ\Phi^{-1}(x)}{1-\sigma_n}\Big)\Big) & \text{for $x\in \Phi\circ\Psi(B_{\rho(1-\sigma_n)})$}\,.
\end{cases}
$$
By construction, $\bar Q_n\in\mathcal{A}^{\rm sym}_{Q^n_{\rm b}}(\Omega)$, so that $\mathcal{E}_{\lambda_n}(Q_n)\leq \mathcal{E}_{\lambda_n}(\bar Q_n)$. Since $\bar Q_n=Q_n$ outside $\Phi\circ\Psi(B_\rho)$, the last inequality reduces to $\mathcal{E}_{\lambda_n}(Q_n,\Phi\circ\Psi(B_\rho))\leq \mathcal{E}_{\lambda_n}(\bar Q_n,\Phi\circ\Psi(B_\rho))$. On the other hand, it follows from \eqref{mercr1223juill} that 
$$\mathcal{E}_{\lambda_n}(\bar Q_n,\Phi\circ\Psi(B_\rho\setminus B_{\rho(1-\sigma_n)}))\to 0\,,$$
and by a change of variables, 
$$\mathcal{E}_{\lambda_n}(\bar Q_n,\Phi\circ\Psi( B_{\rho(1-\sigma_n)}))\to \mathcal{E}_{\lambda_*}(Q_*,\Phi\circ\Psi( B_{\rho}))\,.$$
Hence, $\mathcal{E}_{\lambda_n}(\bar Q_n,\Phi\circ\Psi(B_\rho))\to \mathcal{E}_{\lambda_*}(Q_*,\Phi\circ\Psi(B_\rho))$. By lower semicontinuity of the Dirichlet energy and the established convergence of the potential term, we  have 
$$\mathcal{E}_{\lambda_*}(Q_*,\Phi\circ\Psi( B_{\rho}))\leq \liminf_{n\to\infty}\mathcal{E}_{\lambda_n}(Q_n,\Phi\circ\Psi( B_{\rho}))\leq\limsup_{n\to\infty}\mathcal{E}_{\lambda_n}(Q_n,\Phi\circ\Psi( B_{\rho}))\leq  \mathcal{E}_{\lambda_*}(Q_*,\Phi\circ\Psi(B_\rho))\,.$$
We have thus proved that $\mathcal{E}_{\lambda_n}(Q_n,\Phi\circ\Psi( B_{\rho}))\to  \mathcal{E}_{\lambda_*}(Q_*,\Phi\circ\Psi(B_\rho))$. As in the proof of Theorem~\ref{compintthm}, it implies the $W^{1,2}$-strong convergence of $Q_n$ in the open set $\Phi\circ\Psi(B_\rho)=\Phi(B^+_\rho)$, whence the strong $W^{1,2}$-convergence in the smaller open set $\Phi(B^+_{1-\delta})$. Finally, as $\delta \downarrow 0$ we have $\Phi(B^+_{1-\delta}) \uparrow U\cap \overline{\Omega}$ and the conclusion follows.
\end{proof}

\begin{remark}\label{remmovingboundf}
Theorem \ref{compintthmbdry} can be easily extended to the case of varying domains $\Omega_n$ depending on the sequence index $n$. This case is of specific interest when analyzing blow-up sequences at the boundary as we will do in the next section when discussing boundary regularity properties for energy minimizers. To this latter purpose, we consider a sequence $\{\Omega_n\}_{n\in\mathbb{N}}$ of axisymmetric bounded open sets such that $x_0\in\partial \Omega_n\cap \{\text{$x_3$-axis}\}$.   
We assume that for a fixed axisymmetric neighborhood $U$ of $x_0$ there exists a sequence of $C^1$-diffeomorphisms $\Phi_n:B_2\to\Phi_n(B_2)$ satisfying \eqref{assumptioncompthmbdrypre}-\eqref{assumptioncompthmbdry} with $U=\Phi_n(B_1)$, and such that $\Phi_n\to \tau_{x_0}$ as $n\to\infty$ in $C^1(B_2)$, where $\tau_{x_0}(x):=x+x_0$. This latter condition implies that $U\cap\Omega_n\to B_1^+(x_0)$ as $n\to\infty$ in the Hausdorff metric. Under these assumptions and \eqref{assumpbdrycompthm}, the conclusion (and proof) of Theorem~\ref{compintthmbdry} holds in the following form: there exist a (not relabeled) subsequence and $Q_*\in W^{1,2}_{\rm sym}(B_1^+(x_0);\mathbb{S}^4)$ such that  $Q_n\circ\Phi_n\to Q_*\circ\tau_{x_0}$ strongly in $W^{1,2}(B_r^+)$ for every radius $r\in(0,1)$ (and in particular, $Q_n\to Q_*$ strongly in $W^{1,2}_{\rm loc}(B_1^+(x_0))$). 
\end{remark}



\section{Partial regularity of LdG minimizers under axial symmetry}
\label{sec:6}

In this section we provide the proof of Theorem \ref{thm:partial-regularity} as outlined in the Introduction, following the well-known strategy introduced in \cite{SU1}-\cite{SU3} for minimizing harmonic maps as already adapted to the LdG context in \cite{DMP1} without the symmetry constraint. However, the arguments here are similar, but sometimes deviate substantially from \cite{DMP1} as we illustrate now.

 Our proof of the partial regularity is based on the results in Sec.~\ref{sec:comp} and \ref{sec:6} for axially symmetric minimizers of \eqref{LDGenergytilde} combined with the regularity results for arbitrary weak solutions to \eqref{MasterEq} under smallness of the scaled energy from our previous paper \cite{DMP1}. Here, the smallness property both in the interior and at the boundary automatically holds out of the symmetry axis in view of a classical capacity argument for $W^{1,2}$ functions, therefore the singular set is confined to the symmetry axis. When dealing with points on the symmetry axis, we apply the general strategy by suitably modifying the arguments used there because the minimality property is now available only in the restricted class of symmetric competitors. Here monotonicity formulas are obtained through the same penalization trick from \cite{DMP1} adapted to the $\mathbb{S}^1$-equivariant case. Compactness of blow-ups centered on the symmetry axis are obtained as in the previous section, through a Luckhaus' type interpolation argument but constructing the comparison maps by suitable $\mathbb{S}^1$-equivariant extensions into spherical shells. The novelty when discussing the Liouville property in the interior has been already pointed out in the Introduction whereas the analogous Liouville property at the boundary is obtained as in the nonsymmetric case, since only criticality and no energy minimality was used in \cite{DMP1}. As a consequence, complete boundary regularity also follows in the present case. 

Finally, the asymptotic analysis at isolated singularities relies instead on the classification and (in)stability results for tangent maps from Sec.~\ref{sec:axisymm} and \ref{sec:stability} together with the celebrated Simon-{\L}ojasiewicz inequality, adapting to our context the simplified proof from \cite{Simon} for the case of harmonic maps.  
We refer the interested reader to the introduction to Sec.~\ref{subsec:Loj} for further remarks on the proof.

\subsection{Symmetric criticality \& the Euler-Lagrange equations}

In this subsection, we establish the Euler-Lagrange equation satisfied by critical points of  $\mathcal{E}_\lambda$ in the constrained class $W^{1,2}_{\rm sym}(\Omega;\mathbb{S}^4)$. As a preliminary step, we show that the admissible class of symmetric configurations $\mathcal{A}_{Q_{\rm b}}^{\rm sym}(\Omega)$ defined in \eqref{S1admissibleconf} is not empty for any equivariant boundary condition of interest in this work, a fact which immediately implies also existence of minimizers of $\mathcal{E}_\lambda$ over $\mathcal{A}_{Q_{\rm b}}^{\rm sym}(\Omega)$. 
\begin{proposition}\label{prop:nonempty-class}
	Let $\Omega \subset \R^3$ be an $\bbS^1$-invariant bounded open set with Lipschitz boundary and let $Q_{\rm b} \in \rmLip_{\rm sym}(\partial\Omega;\bbS^4)$. Then $\mathcal{A}^{\rm sym}_{Q_{\rm b}}(\Omega)$ is not empty. As a consequence, there exists at least one minimizer of $\mathcal{E}_\lambda$ over $\mathcal{A}^{\rm sym}_{Q_{\rm b}}(\Omega)$.
\end{proposition}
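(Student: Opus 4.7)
The plan is in two steps: first exhibit a single element of $\mathcal{A}^{\rm sym}_{Q_{\rm b}}(\Omega)$, and then invoke the direct method. For the latter, $\mathcal{E}_\lambda$ is coercive and weakly lower semicontinuous on $W^{1,2}(\Omega;\mathcal{S}_0)$, and the admissible class is weakly closed: the pointwise $\bbS^4$-constraint passes to the limit through a.e.\ convergence along a Rellich-extracted subsequence, the equivariance \eqref{S1equivariance} is a linear closed condition, and the Dirichlet trace is weakly continuous. So the real work is to produce one element of $\mathcal{A}^{\rm sym}_{Q_{\rm b}}(\Omega)$.

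My approach is to work on the half-slice $\mathcal{D}^+_\Omega\subset\{x_2=0,\,x_1\ge 0\}$ and lift by the $\bbS^1$-action. Using the decomposition of Lemma~\ref{prop:Li-action}, an equivariant $\bbS^4$-valued map on $\Omega$ corresponds to a map $q=(q_0,q_1,q_2):\mathcal{D}^+_\Omega\to\bbS^4\subset\R\oplus\C\oplus\C$ through
\begin{equation*}
Q(r\cos\phi,r\sin\phi,x_3)=\big(q_0(r,x_3),\,e^{i\phi}q_1(r,x_3),\,e^{2i\phi}q_2(r,x_3)\big),
\end{equation*}
and a direct computation in cylindrical coordinates gives
\begin{equation*}
\int_\Omega|\nabla Q|^2\,dx\,=\,2\pi\!\int_{\mathcal{D}^+_\Omega}\!\!\left(r\,|\nabla_{r,x_3}q|^2+\frac{|q_1|^2+4|q_2|^2}{r}\right)dr\,dx_3.
\end{equation*}
In particular, any Lipschitz $q:\overline{\mathcal{D}^+_\Omega}\to\bbS^4$ with $q_1,q_2$ vanishing on the axis part $\Gamma_{\rm axis}:=\partial\mathcal{D}^+_\Omega\cap\{r=0\}$ lifts to an admissible $Q\in W^{1,2}_{\rm sym}(\Omega;\bbS^4)$, the last integrand being $O(r)$ near the axis by Lipschitzness.

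To build $q$, on the curved part $\Gamma_\partial:=\partial\mathcal{D}^+_\Omega\cap\partial\Omega$ I would prescribe $q$ to be the slice of $Q_{\rm b}$, which by Remark~\ref{rmk:invariantconf} satisfies $Q_{\rm b}(b_j)=s_j\mathbf{e}_0$ with $s_j\in\{\pm1\}$ at each corner $b_j\in\mathcal{B}$. On each axis segment $\ell_k\subset\Gamma_{\rm axis}$ with endpoints $b_{2k-1},b_{2k}$, I would prescribe the constant value $\epsilon_k\mathbf{e}_0$ with $\epsilon_k=s_{2k-1}$. When $s_{2k-1}=s_{2k}$ for every $k$, these prescriptions patch into a continuous Lipschitz $\bbS^4$-valued boundary datum on $\partial\mathcal{D}^+_\Omega$, and a Lipschitz $\bbS^4$-valued extension to $\overline{\mathcal{D}^+_\Omega}$ is obtained by first producing a Lipschitz $\mathcal{S}_0$-valued extension by Kirszbraun's theorem, then convex-combining in the interior with $\mathbf{e}_0$ in order to keep the values within a fixed tubular neighborhood of $\bbS^4\subset\mathcal{S}_0$, and finally composing with the nearest-point projection onto $\bbS^4$; the constraint $q_1=q_2=0$ on $\Gamma_{\rm axis}$ combined with Lipschitzness produces the required $O(r)$ decay and an admissible lift.

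The main obstacle is the case when, for some $k$, $s_{2k-1}=-s_{2k}$: then the prescribed boundary datum on $\partial\mathcal{D}^+_\Omega$ genuinely jumps from $\epsilon_k\mathbf{e}_0$ to $-\epsilon_k\mathbf{e}_0$ at the corner $b_{2k}$, and no continuous $\bbS^4$-valued extension through that corner exists. I would resolve this by a local capping: remove a small half-disc $H_\rho\subset\overline{\mathcal{D}^+_\Omega}$ centered at $b_{2k}$ with flat side on $\Gamma_{\rm axis}$, replace the discontinuous datum on the curved side $\partial H_\rho\cap\mathcal{D}^+_\Omega$ by a smooth arc in $\bbS^4$ joining $\epsilon_k\mathbf{e}_0$ to $-\epsilon_k\mathbf{e}_0$ (which exists since $\bbS^4$ is path-connected), and fill $H_\rho$ by the degree-one equator-type map modelled on the stable tangent map $Q^{(0)}$ of \eqref{stableblowups} pulled back to the local polar chart at $b_{2k}$. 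Using the local polar coordinates in the slice, a direct computation (consistent with Lemma~\ref{lemma1calabi}, since the tangential energy of the equator is $4\pi$) yields that both the weighted Dirichlet term and the $(|q_1|^2+4|q_2|^2)/r$ term contribute only $O(\rho)$ inside the cap, so the rotated 3D map remains in $W^{1,2}$. Gluing this cap with the Lipschitz construction of the previous paragraph outside $\bigcup_k H_\rho$ produces an element of $\mathcal{A}^{\rm sym}_{Q_{\rm b}}(\Omega)$, which concludes the proof.
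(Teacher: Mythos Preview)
Your overall strategy—reduce to the half-slice $\mathcal{D}^+_\Omega$, split into the case where the signs $s_{2k-1},s_{2k}$ at the endpoints of each $\ell_k$ agree and the case where they do not, and then run the direct method—is exactly the paper's approach. Two steps, however, do not work as written.

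The minor one is in the ``signs agree'' case: Kirszbraun followed by a convex combination with $\mathbf{e}_0$ does not keep the values in a tubular neighborhood of $\bbS^4$; if the Kirszbraun extension happens to equal $-\mathbf{e}_0$ at some interior point (nothing prevents this), the combination passes through $0$ and the nearest-point projection is undefined. The paper's device is different: since $\psi:\partial\mathcal{D}^+_\Omega\to\bbS^4$ is Lipschitz with one-dimensional domain, its image has finite $\mathcal{H}^1$-measure and hence omits a point $P\in\bbS^4$; one then stereographically projects from $P$ into $\R^4$, extends by McShane--Whitney, and composes back with $\boldsymbol{\sigma}_4^{-1}$, so the values never leave $\bbS^4$. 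The more serious gap is in the ``signs differ'' case. You place the $Q^{(0)}$-type cap at the corner $b_{2k}$, but $b_{2k}\in\partial\Omega$: any neighborhood of $b_{2k}$ in $\overline{\mathcal{D}^+_\Omega}$ has part of its boundary on $\Gamma_\partial\subset\partial\Omega$, where the trace is \emph{forced} to equal the given Lipschitz datum $Q_{\rm b}$, and a $Q^{(0)}$-type filling centred at $b_{2k}$ has a specific (and singular) trace there that has no reason to coincide with $Q_{\rm b}$. (Also, a genuine half-disc with flat side on $\Gamma_{\rm axis}$ centred at a corner of $\partial\mathcal{D}^+_\Omega$ cannot lie in $\overline{\mathcal{D}^+_\Omega}$.) The paper avoids this by moving the singularity into the interior: it fills a small ball $B_\delta(x_j)\subset\Omega$ around the midpoint $x_j$ of the offending segment with $\pm Q^{(0)}(\cdot-x_j)$, the sign chosen so that the axis values on the two sides of $x_j$ match $Q_{\rm b}$ at the two endpoints of $\ell_k$; it then prescribes the corresponding constants on the two sub-segments of $\ell_k\setminus B_\delta(x_j)$, so that on $\Omega\setminus\cup_j\overline{B_\delta(x_j)}$ one is back to the ``signs agree'' case without ever disturbing the trace on $\partial\Omega$.
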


\begin{proof}
	Once we have proved that $\mathcal{A}^{\rm sym}_{Q_{\rm b}}(\Omega)$ is not empty, existence of minimizers is standard from the direct method in the Calculus of Variations. Indeed, whenever not empty $\mathcal{A}^{\rm sym}_{Q_{\rm b}}(\Omega)$ is sequentially closed under weak convergence in $W^{1,2}_{\rm sym}(\Omega;\bbS^4)$ (which is in turn sequentially weakly closed in $W^{1,2}(\Omega;\bbS^4)$) and $\mathcal{E}_\lambda$ is bounded below, coercive and lower semicontinuous w.r.to the $W^{1,2}$-weak convergence.
	 
	To show $\mathcal{A}^{\rm sym}_{Q_{\rm b}}(\Omega)$ is not empty, there are two cases to deal with: writing $\ell_k = (x_k^-, x_k^+)$ for each segment $\ell_k \subset \Omega \cap \{x_3\mbox{-axis}\}$ as in Sec.~\ref{sec:s1-equivariance}, either we have $Q_{\rm b}(x_k^-) = Q_{\rm b}(x_k^+)$ for every $k$ (Case 1) or there is a segment $\ell_k$ at the endpoints of which the map $Q_{\rm b}$ attains different values (Case 2). Recall that, according to Remark~\ref{rmk:invariantconf}, $Q_{\rm b}=\pm\eo$ on $\mathcal{B}$.
	
	\emph{Case 1.} As in Sec.~\ref{sec:s1-equivariance}, we consider $\mathcal{D}_\Omega^+ = \Omega \cap \{x_2 = 0, x_1 > 0 \}$. Let $\psi$ denote the restriction of $Q_{\rm b}$ to $\partial \mathcal{D}_\Omega^+$ extended to each segment $\ell_k$ as the constant corresponding to the common value of $Q_{\rm b}$ at the endpoints of the segment. Then $\psi \in \rmLip(\partial\mathcal{D}_\Omega^+;\bbS^4)$. Being Lipschitz, $\psi$ omits at least a point $P \in \bbS^4$ (indeed, $\mathcal{H}^1(\psi(\partial\mathcal{D}_\Omega^+)) \leq  \rmLip(\psi) \cdot \mathcal{H}^1(\partial\mathcal{D}_\Omega^+)<\infty$, where $\rmLip(\psi)$ is the Lipschitz constant of $\psi$, therefore $\psi(\partial\mathcal{D}_\Omega^+)$ can't be the whole $\bbS^4$), therefore the composition $\widehat{\psi} := \sigma_4 \circ \psi$, where $\sigma_4 : \bbS^4 \setminus \{P\} \to \R^4$ is the stereographic projection from $P$, belongs to $\rmLip(\partial\mathcal{D}_\Omega^+; \R^4)$. By McShane-Whitney extension theorem, $\widehat{\psi}$ has an extension $\widehat{\Psi} \in \rmLip(\overline{\mathcal{D}_\Omega^+}; \R^4)$. Letting $\Psi:=\sigma_4^{-1} \circ \widehat{\Psi}$, we then have $\Psi \in \rmLip(\overline{\mathcal{D}_\Omega^+};\bbS^4)$. From $\Psi$, we define a map $\Phi \in \rmLip_{\rm sym}(\overline{\Omega};\bbS^4)$ by letting $\Phi(R x) := R\Psi(x)R^\trans$ for every $R \in \bbS^1$ and every $x \in \overline{\mathcal{D}_\Omega^+}$. Noticing that, by construction, $\Phi \vert_{\partial\Omega} = Q_{\rm b}$, we finally see that $\Phi \in \mathcal{A}_{Q_{\rm b}}^{\rm sym}(\Omega)$ and, by the boundedness of the potential, $\mathcal{E}_\lambda(\Phi)$ is finite. 

	\emph{Case 2.} In this case any $\bbS^1$-equivariant extension of $Q_{\rm b}$ necessarily has singularities and the argument below slightly deviates from the one used in Case~1 precisely to deal with this fact. Let $p \leq M+1$ (where $M$ is as in Sec.~\ref{sec:s1-equivariance}) denote the number of segments $\ell_k\subset \Omega \cap \{x_3 \mbox{-axis}\}$ so that at the endpoints of these $Q_{\rm b}$ attains different values and denote such segments $\tilde{\ell}_1, \tilde{\ell}_2\,\dots,\tilde{\ell}_p$, ordering them increasingly with the $x_3$-coordinate of their lower endpoint. For each $j = 1, 2, \dots, p$, we pick the mid-point $x_j \in \tilde{\ell}_j=(x_j^-,x_j^+)$ and we fix a small $\delta > 0$ such that $V:=\cup_{j=1}^p \overline{B_\delta(x_j)}\subset \Omega$ and the union is disjoint. Let us set $U := \Omega \setminus V$ and notice $U \cap \tilde{\ell}_j = (x_j^-, y_j^-) \cup (y_j^+,x_j^+)$, where $y_j^-$, $y_j^+$ are the intersections of $\partial B_\delta(x_j)$ with the $x_3$-axis, ordered in the obvious way. 
	
	To construct the desired extension $\Phi$ of $Q_{\rm b}$, we first define $\Phi := Q_{\rm b}$ on $\partial \Omega$ (in the pointwise sense) so that, in particular, $\Phi(x_j^\pm) = Q_{\rm b}(x_j^\pm)$ for every $j=1,2,\dots,p$. Then we define $\Phi$ on $V \setminus \{x_1,x_2,\dots,x_p\}$. To this purpose, on each set $\overline{B_\delta(x_j)}\setminus\{x_j\}$ we let $\Phi(x) := \pm Q^{(0)}(x-x_j)$, where $Q^{(0)}$ is given by the formula \eqref{stableblowups} with $\alpha = 0$ and we take the positive sign if $Q_{\rm b}(x_j^-) = -\eo$ and the negative sign otherwise, i.e., if $Q_{\rm b}(x_j^-) = \eo$. Thanks to this sign convention, we thus have $\Phi(x_j^\pm) = \Phi(y_j^\pm)$ for each $j=1,2,\dots,p$. Therefore we can extend $\Phi$ to each segment $(x_j^-,y_j^-)$, resp. $(y_j^+,x_j^+)$, as the corresponding constant at the endpoints. Noticing $\Phi \in C^\infty_{\rm sym}(V \setminus \{x_1,x_2,\dots,x_p\};\bbS^4)$, we see $\Phi$ is a well-defined $\bbS^1$-equivariant Lipschitz map on $\partial U$ and therefore, arguing as in Case~1, we can extend $\Phi$ to $\overline{U}$ in an $\bbS^1$-equivariant Lipschitz way. Because of this, and since we also have $\Phi \in W^{1,2}_{\rm sym}(B_\delta(x_j);\bbS^4)$ for every $j = 1,2,\dots,p$, we finally deduce $\Phi \in \mathcal{A}_{Q_{\rm b}}^{\rm sym}(\Omega)$. Moreover, by the boundedness of the potential, $\mathcal{E}_\lambda(\Phi)$ is finite. This concludes the proof.
\end{proof}

A map $Q_\lambda\in W^{1,2}_{\rm sym}(\Omega;\mathbb{S}^4)$ is said to be a critical point of  $\mathcal{E}_\lambda$ in $W^{1,2}_{\rm sym}(\Omega;\mathbb{S}^4)$ if 
\begin{equation}\label{defcritpt}
\left. \frac{d}{dt}\mathcal{E}_\lambda\left(\frac{Q_\lambda+t\Phi}{|Q_\lambda+t\Phi|}\right)
 \right\vert_{t=0} =0\quad\forall \Phi\in C_c^{1,{\rm sym}}(\Omega;\mathcal{S}_0)\,, 
\end{equation}
while $Q_\lambda$ is said to be a critical point in the unconstrained class $W^{1,2}(\Omega;\mathbb{S}^4)$ if \eqref{defcritpt} actually holds for every $\Phi\in C_c^{1}(\Omega;\mathcal{S}_0)$, see \cite[Definition 2.1]{DMP1}. We shall prove in Proposition \ref{prop:symmetric-criticality} that a critical point in the symmetric class $W^{1,2}_{\rm sym}(\Omega;\mathbb{S}^4)$ is always a critical point in the global class $W^{1,2}(\Omega;\mathbb{S}^4)$ in the spirit of the general Palais symmetric criticality principle \cite{Palais}. Note that this principle does not directly apply here since $W^{1,2}(\Omega;\mathbb{S}^4)$ and $W^{1,2}_{\rm sym}(\Omega;\mathbb{S}^4)$ are not Banach manifolds, and we need to prove it by hands (see also \cite{Gastel} and \cite{HorMos} for a similar results in the context of harmonic and biharmonic maps respectively).

\begin{proposition}\label{prop:symmetric-criticality}
If $Q_\lambda \in W^{1,2}_{\rm sym}(\Omega;\mathbb{S}^4)$ is a critical point of $\mathcal{E}_\lambda$ among maps in   $W^{1,2}_{\rm sym}(\Omega;\mathbb{S}^4)$, then $Q_\lambda$ is a critical point of $\mathcal{E}_\lambda$ among all maps $W^{1,2}(\Omega;\mathbb{S}^4)$.
\end{proposition}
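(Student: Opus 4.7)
The plan is to implement a hands-on version of Palais' symmetric criticality principle via direct averaging over the $\mathbb{S}^1$-action, bypassing the fact that neither $W^{1,2}(\Omega;\mathbb{S}^4)$ nor $W^{1,2}_{\rm sym}(\Omega;\mathbb{S}^4)$ carry a Banach manifold structure. Fix an arbitrary test variation $\Phi \in C^1_c(\Omega;\mathcal{S}_0)$. Using the normalized Haar measure $d\mu$ on $\mathbb{S}^1$, I would define the equivariant symmetrization
\begin{equation*}
\Phi^{\rm sym}(x) := \int_{\mathbb{S}^1} R^\trans \Phi(Rx)\, R\, d\mu(R)\,, \qquad x\in\Omega\,.
\end{equation*}
A direct change of variable $S=RR_0$ in the Haar integral shows that $\Phi^{\rm sym}(R_0 x)= R_0\Phi^{\rm sym}(x)R_0^\trans$ for every $R_0 \in \mathbb{S}^1$; differentiation under the integral sign (allowed by the $C^1_c$-regularity of $\Phi$ and the compactness of $\mathbb{S}^1$) together with the $\mathbb{S}^1$-invariance of $\Omega$, which makes $\bigcup_{R\in\mathbb{S}^1} R\cdot\mathrm{spt}\,\Phi$ a compact subset of $\Omega$, gives $\Phi^{\rm sym} \in C^{1,{\rm sym}}_c(\Omega;\mathcal{S}_0)$.

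The core step is to prove the \emph{invariance identity}
\begin{equation}\label{eq:SC-invariance}
\left.\frac{d}{dt} \mathcal{E}_\lambda\!\left(\frac{Q_\lambda + t\Phi^R}{|Q_\lambda + t\Phi^R|}\right)\right|_{t=0}
=
\left.\frac{d}{dt} \mathcal{E}_\lambda\!\left(\frac{Q_\lambda + t\Phi}{|Q_\lambda + t\Phi|}\right)\right|_{t=0}
\quad\text{for every } R \in \mathbb{S}^1\,,
\end{equation}
where $\Phi^R(x):=R^\trans\Phi(Rx)R$. For this I would exploit that the $\mathbb{S}^1$-action \eqref{inducedaction} is isometric on $\mathcal{S}_0$ and that $W$ and $|\nabla Q|^2$ are invariant under conjugation by $R$, while $x\mapsto Rx$ is an isometry of $\Omega$: combining these with the $\mathbb{S}^1$-equivariance of $Q_\lambda$ (which yields $R^\trans Q_\lambda(R\cdot) R = Q_\lambda$), a straightforward change-of-variable computation shows that for every $t$ small the functionals
\begin{equation*}
\mathcal{E}_\lambda\!\left(\tfrac{Q_\lambda + t\Phi^R}{|Q_\lambda + t\Phi^R|}\right) \quad\text{and}\quad \mathcal{E}_\lambda\!\left(\tfrac{Q_\lambda + t\Phi}{|Q_\lambda + t\Phi|}\right)
\end{equation*}
coincide, whence \eqref{eq:SC-invariance} follows by differentiating in $t$.

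To conclude, I would use that the map $\Phi \mapsto \frac{d}{dt}\big|_{t=0}\mathcal{E}_\lambda\big(\tfrac{Q_\lambda+t\Phi}{|Q_\lambda+t\Phi|}\big)$ is linear in $\Phi$, since after normalization the retraction onto $\mathbb{S}^4$ expands as $\tfrac{Q_\lambda+t\Phi}{|Q_\lambda+t\Phi|} = Q_\lambda + t(\Phi - (Q_\lambda\!:\!\Phi)Q_\lambda)+ O(t^2)$ and the tangential part $\Phi_T:= \Phi - (Q_\lambda\!:\!\Phi)Q_\lambda$ is linear in $\Phi$; this linearity guarantees that the derivative only depends on $\Phi_T$ via the usual expression appearing in \eqref{MasterEq}. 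Averaging \eqref{eq:SC-invariance} over $R \in \mathbb{S}^1$ against $d\mu$ and interchanging differentiation with the Haar integral (again justified by the $C^1_c$-regularity and compactness) thus gives
\begin{equation*}
\left.\frac{d}{dt} \mathcal{E}_\lambda\!\left(\frac{Q_\lambda + t\Phi^{\rm sym}}{|Q_\lambda + t\Phi^{\rm sym}|}\right)\right|_{t=0}
=
\left.\frac{d}{dt} \mathcal{E}_\lambda\!\left(\frac{Q_\lambda + t\Phi}{|Q_\lambda + t\Phi|}\right)\right|_{t=0}\,.
\end{equation*}
Since $\Phi^{\rm sym}\in C^{1,{\rm sym}}_c(\Omega;\mathcal{S}_0)$ and $Q_\lambda$ is critical for $\mathcal{E}_\lambda$ in the symmetric class, the left-hand side vanishes, and hence so does the right-hand side. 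As $\Phi \in C^1_c(\Omega;\mathcal{S}_0)$ was arbitrary, $Q_\lambda$ is critical in the full class $W^{1,2}(\Omega;\mathbb{S}^4)$.

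The only mildly delicate point is verifying \eqref{eq:SC-invariance} uniformly in $t$ in a neighborhood of $0$, so that one is allowed to differentiate; this requires checking that $|Q_\lambda + t\Phi^R| \geq 1/2$ for $|t|$ small and uniformly in $R \in \mathbb{S}^1$, which follows from the compactness of $\mathbb{S}^1$ and the uniform bound $\|\Phi^R\|_{L^\infty}\leq \|\Phi\|_{L^\infty}$.
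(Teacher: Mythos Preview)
Your proof is correct and follows essentially the same strategy as the paper: symmetrize an arbitrary test function $\Phi$ by averaging the twisted $\mathbb{S}^1$-action, then use equivariance of $Q_\lambda$ and invariance of the energy under the joint action to transfer the first-variation identity from $\Phi^{\rm sym}$ back to $\Phi$. The only presentational difference is that the paper first computes the explicit weak Euler-Lagrange form $\int_\Omega\nabla Q_\lambda:\nabla\Phi - (|\nabla Q_\lambda|^2 Q_\lambda + \lambda f(Q_\lambda)):\Phi\,dx$ (via \cite[Proposition~2.2]{DMP1}) and then averages that integral identity, whereas you average the energy identity $\mathcal{E}_\lambda\big(\tfrac{Q_\lambda+t\Phi^R}{|Q_\lambda+t\Phi^R|}\big)=\mathcal{E}_\lambda\big(\tfrac{Q_\lambda+t\Phi}{|Q_\lambda+t\Phi|}\big)$ directly and invoke linearity of the first variation at the end; this is the same argument at a slightly different level of abstraction.
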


\begin{proof}
Arguing as in \cite[proof of Proposition 2.2, Step 1]{DMP1}, we derive from \eqref{defcritpt} that 
\begin{equation}\label{weakformeqsym}
\int_\Omega\nabla Q_\lambda :\nabla\Phi -\Big(|\nabla Q_\lambda|^2Q_\lambda+\lambda f(Q_\lambda)\Big):\Phi\,dx=0 \qquad\forall \Phi\in C_{c,{\rm sym}}^{1}(\Omega;\mathcal{S}_0)\,,
\end{equation}
where we have set $f(Q):=Q^2-{\rm tr}(Q^3)Q$. Still by \cite[Proposition 2.2]{DMP1}, it is enough to show that \eqref{weakformeqsym} actually holds for every $\Phi\in C_c^{1}(\Omega;\mathcal{S}_0)$. To this purpose, let us fix an arbitrary $\Phi\in C_c^{1}(\Omega;\mathcal{S}_0)$. Given $R\in\mathbb{S}^1$, we define the ``twisted action'' of $R$ on $\Phi$ by setting
$$R\ast\Phi(x):=R\Phi(R^\trans x)R^\trans\,, $$
and we set 
$$\Phi^{\rm s}:=\int_{\mathbb{S}^1} R\ast\Phi\,d\mathfrak{h}\,, $$
where  $\mathfrak{h}$ denotes the normalized Haar measure on $\mathbb{S}^1$. Since $R'*(R*\Phi)=(R'R)*\Phi$, using the invariance under translations of~$\mathfrak{h}$, we obtain $R'*\Phi^s=\Phi^s$ and in turn $\Phi^{\rm s}\in  C_{c,{\rm sym}}^{1}(\Omega;\mathcal{S}_0)$, which is indeed the subclass of deformations fixed by the twisted action of $\mathbb{S}^1$ on $C_c^{1}(\Omega;\mathcal{S}_0)$.

By equivariance of $Q_\lambda$, we have $Q_\lambda(x)=R^\trans Q_\lambda(Rx)R$ a.e. in $\Omega$ for every $R\in \mathbb{S}^1$. Using this identity, straightforward computations yield for every $R\in\mathbb{S}^1$, 
$$
\nabla Q_\lambda(x) :\nabla\Phi(x)
= \nabla Q_\lambda(Rx) :\nabla (R\ast\Phi)( Rx)\quad \text{a.e. in $\Omega$}\,,
$$
and 
\begin{multline*}
\Big(|\nabla Q_\lambda(x)|^2Q_\lambda(x) +\lambda f(Q_\lambda(x))\Big):\Phi(x)\\
=\Big(|\nabla Q_\lambda(Rx)|^2Q_\lambda(Rx) +\lambda f(Q_\lambda(Rx))\Big):(R\ast\Phi)(Rx)\quad\text{a.e. in $\Omega$}\,. 
\end{multline*}
Integrating the previous identities over $\Omega$ and averaging over $\mathbb{S}^1$, using a change of variables and Fubini's theorem, we are then led to 
\begin{align*}
 \int_\Omega\nabla Q_\lambda :\nabla\Phi -&\Big(|\nabla Q_\lambda|^2Q_\lambda+\lambda f(Q_\lambda)\Big):\Phi\,dx\\
 =&\int_{\mathbb{S}^1}\bigg\{\int_\Omega\nabla Q_\lambda :\nabla(R\ast\Phi) -\Big(|\nabla Q_\lambda|^2Q_\lambda+\lambda f(Q_\lambda)\Big):(R\ast\Phi) \,dx \bigg\}\,d\mathfrak{h}\\
 =&\int_\Omega\bigg\{\int_{\mathbb{S}^1}\nabla Q_\lambda:\nabla(R\ast\Phi) -\Big(|\nabla Q_\lambda|^2Q_\lambda+\lambda f(Q_\lambda)\Big):(R\ast\Phi)\,d\mathfrak{h}\bigg\}\,dx\\
 =& \int_\Omega\nabla Q_\lambda :\nabla\Phi^{\rm s} -\Big(|\nabla Q_\lambda|^2Q_\lambda+\lambda f(Q_\lambda)\Big):\Phi^{\rm s}\,dx=0\,,
 \end{align*}
thanks to  \eqref{weakformeqsym}, and the proof is complete.
\end{proof}

As a consequence of Proposition \ref{prop:symmetric-criticality} and \cite[Proposition 2.2]{DMP1}, we thus have 

\begin{corollary}\label{corELeq}
A map $Q_\lambda \in W^{1,2}_{\rm sym}(\Omega;\mathbb{S}^4)$ is a critical point of $\mathcal{E}_\lambda$ over  $W^{1,2}_{\rm sym}(\Omega;\mathbb{S}^4)$ if and only if it satisfies
\begin{equation}\label{ELeqsym}
-\Delta Q_\lambda=|\nabla Q_\lambda|^2Q_\lambda+\lambda\Big(Q^2_\lambda-\frac{1}{3}I-{\rm tr}(Q_\lambda^3)Q_\lambda\Big)\quad\text{in $\mathscr{D}^\prime(\Omega)$}\,.
\end{equation} 
\end{corollary}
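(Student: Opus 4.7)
The plan is essentially to read Corollary \ref{corELeq} as a direct combination of Proposition \ref{prop:symmetric-criticality} with the corresponding unconstrained characterization from \cite[Proposition 2.2]{DMP1}. Since the Euler-Lagrange equation \eqref{ELeqsym} is, by definition, the statement that
\[
\int_\Omega \nabla Q_\lambda : \nabla \Phi -\Big(|\nabla Q_\lambda|^2 Q_\lambda + \lambda f(Q_\lambda)\Big):\Phi\,dx = 0 \qquad \forall \Phi \in C^1_c(\Omega;\mathcal{S}_0),
\]
with $f(Q):=Q^2-\tfrac{1}{3}I-{\rm tr}(Q^3)Q$, the corollary amounts to showing that criticality in the symmetric class is equivalent to the validity of the above distributional identity.

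For the implication ($\Rightarrow$), I would start from $Q_\lambda \in W^{1,2}_{\rm sym}(\Omega;\bbS^4)$ critical in the symmetric class, i.e.\ \eqref{defcritpt} holds for every $\Phi \in C^{1,{\rm sym}}_{c}(\Omega;\mathcal{S}_0)$. Proposition \ref{prop:symmetric-criticality} (just established via the Haar-averaging trick with the twisted action $R\ast\Phi = R\Phi(R^\trans\,\cdot\,)R^\trans$) promotes this criticality to the unconstrained class $W^{1,2}(\Omega;\bbS^4)$: \eqref{defcritpt} then holds for arbitrary $\Phi \in C^1_c(\Omega;\mathcal{S}_0)$. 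An application of \cite[Proposition 2.2]{DMP1}, which derives the weak formulation of the tension field equation from unconstrained criticality for unit-norm $\mathcal{S}_0$-valued maps, then yields \eqref{ELeqsym} in $\mathscr{D}'(\Omega)$.

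For the converse ($\Leftarrow$), if $Q_\lambda \in W^{1,2}_{\rm sym}(\Omega;\bbS^4)$ satisfies \eqref{ELeqsym}, then by the same \cite[Proposition 2.2]{DMP1}, read in the opposite direction, $Q_\lambda$ is an unconstrained critical point of $\mathcal{E}_\lambda$. Since the class of symmetric test deformations $C^{1,{\rm sym}}_{c}(\Omega;\mathcal{S}_0)$ is contained in $C^1_c(\Omega;\mathcal{S}_0)$, restricting the first-variation identity to this smaller class immediately gives criticality in the symmetric sense.

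There is no genuine obstacle here: both directions reduce to a careful bookkeeping of the class of admissible perturbations, with the only non-trivial ingredient being the symmetric criticality principle already proved in Proposition \ref{prop:symmetric-criticality}. The two implications together establish the claimed equivalence and complete the proof.
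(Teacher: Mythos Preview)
Your proposal is correct and follows exactly the approach the paper takes: the corollary is stated as an immediate consequence of Proposition \ref{prop:symmetric-criticality} combined with \cite[Proposition 2.2]{DMP1}, and you have simply spelled out both directions of that combination carefully.
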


\begin{remark}
If a map $Q_\lambda \in W^{1,2}_{\rm sym}(\Omega;\mathbb{S}^4)$ is a minimizer of  $\mathcal{E}_\lambda$ among all   $Q \in W^{1,2}_{\rm sym}(\Omega;\mathbb{S}^4)$ such that $Q-Q_\lambda$ is compactly supported in $\Omega$, then $Q_\lambda$ is a critical point of $\mathcal{E}_\lambda$. In particular, if $Q_\lambda$ is minimizing $\mathcal{E}_\lambda$ over $\mathcal{A}^{\rm sym}_{Q_{\rm b}}(\Omega)$, then $Q_\lambda$ solves \eqref{ELeqsym}. 
\end{remark}

\begin{remark}\label{remcomplambda0harmmap}
The discussion above applies also in the particular case $\lambda=0$. In other words, $Q_0 \in W^{1,2}_{\rm sym}(\Omega;\mathbb{S}^4)$ is a critical point of the Dirichlet energy $\mathcal{E}_0$ over  $W^{1,2}_{\rm sym}(\Omega;\mathbb{S}^4)$ if and only if $Q_0$ is a weakly harmonic into $\mathbb{S}^4$ map in $\Omega$. In particular, if $Q_0$ is a minimizer of $\mathcal{E}_0$ among all $Q \in W^{1,2}_{\rm sym}(\Omega;\mathbb{S}^4)$ such that $Q-Q_0$ is compactly supported in $\Omega$, then $Q_0$ is a weakly harmonic map in $\Omega$. 
\end{remark}


\subsection{Monotonicity formulas}

The partial regularity for minimizers of $\mathcal{E}_\lambda$ in the symmetric class $\mathcal{A}^{\rm sym}_{Q_{\rm b}}(\Omega)$ is based in a fundamental way on (standard) energy monotonicity formulas for the scaled energy on balls. Due to the symmetry constraint, such formulas cannot be directly deduced from inner variations of the energy, unless the center of the balls is on the symmetry axis. Here we rely on the results in \cite[Section 2.1]{DMP1} which were developed precisely for this purpose.

\begin{proposition}\label{corbdmonotform}
Assume that $\partial \Omega$ is of class $C^3$ and $Q_{\rm b} \in C^{1,1}(\partial\Omega,\bbS^4)$. If $Q_\lambda \in \mathcal{A}^{\rm sym}_{Q_{\rm b}}(\Omega)$ is a minimizer of $\mathcal{E}_\lambda$ over $ \mathcal{A}^{\rm sym}_{Q_{\rm b}}(\Omega)$, then $Q_\lambda$ satisfies
\begin{enumerate}
\item[\rm 1)]  the {\sl Interior Monotonicity Formula}:
\begin{multline}\label{IntMonForm}
\frac{1}{r}\mathcal{E}_\lambda(Q_{\lambda},B_r(x_0)) -\frac{1}{\rho}\mathcal{E}_\lambda(Q_{\lambda},B_\rho(x_0))=\\
\int_{B_r(x_0)\setminus B_\rho(x_0)} \frac{1}{|x-x_0|}\bigg|\frac{\partial Q_{\lambda}}{\partial |x-x_0|}\bigg|^2\,dx
+ 2\lambda\int_\rho^r\bigg(\frac{1}{t^2}\int_{B_t(x_0)}W(Q_{\lambda})\,dx\bigg)\,dt
\end{multline}
for every $x_0\in\Omega$ and every $0<\rho<r\leq{\rm dist}(x_0,\partial\Omega)$;
\vskip5pt

\item[\rm 2)]  the {\sl Boundary Monotonicity Inequality}: there exist two constants $C_\Omega>0$ and ${\bf r}_\Omega>0$ (depending only on $\Omega$) such that 
\begin{multline}\label{BdMonForm}
\frac{1}{r}\mathcal{E}_\lambda(Q_{\lambda},B_r(x_0)\cap\Omega) -\frac{1}{\rho}\mathcal{E}_\lambda(Q_{\lambda},B_\rho(x_0)\cap\Omega)\geq -(r-\rho)K_\lambda(Q_{\rm b},Q_{\lambda})\\
+\int_{\big(B_r(x_0)\setminus B_\rho(x_0)\big)\cap\Omega} \frac{1}{|x-x_0|}\bigg|\frac{\partial Q_\lambda}{\partial |x-x_0|}\bigg|^2\,dx
+ 2\lambda\int_\rho^r\bigg(\frac{1}{t^2}\int_{B_t(x_0)\cap\Omega}W(Q_\lambda)\,dx\bigg)\,dt
\end{multline}
for every $x_0\in\partial\Omega$ and every $0<\rho<r<{\bf r}_\Omega$, where 
$$K_\lambda(Q_{\rm b},Q_{\lambda}):= C_\Omega\bigg(\|\nabla_{\rm tan} Q_{\rm b}\|^2_{L^\infty(\partial\Omega)}+\lambda\|W(Q_{\rm b})\|_{L^1(\partial\Omega)}+\|\nabla Q_{\lambda}\|^2_{L^2(\Omega)}\bigg)  \,.$$
\end{enumerate}
Moreover the quantity $K_\lambda(Q_{\rm b},Q_{\lambda})$ in \eqref{BdMonForm} satisfies 
\begin{equation}\label{contrKOm}
K_\lambda(Q_{\rm b},Q_{\lambda})\leq C_\Omega\bigg(\|\nabla_{\rm tan} Q_{\rm b}\|^2_{L^\infty(\partial\Omega)}+\lambda\|W(Q_{\rm b})\|_{L^1(\partial\Omega)}+\mathcal{E}_\lambda(\bar{Q}_{\rm b})\bigg)\,,
\end{equation}
where $\bar{Q}_{\rm b}\in  \mathcal{A}^{\rm sym}_{Q_{\rm b}}(\Omega)$ is any given extension of $Q_{\rm b}$ to $\Omega$. 
\end{proposition}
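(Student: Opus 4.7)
My plan is to reduce the proposition to the general monotonicity formulas for weak solutions of \eqref{ELeqsym} established in Section 2.1 of \cite{DMP1}. The starting observation is that, by Proposition \ref{prop:symmetric-criticality} together with Corollary \ref{corELeq}, any minimizer of $\mathcal{E}_\lambda$ over $\mathcal{A}^{\rm sym}_{Q_{\rm b}}(\Omega)$ is actually a critical point of $\mathcal{E}_\lambda$ in the full unconstrained class $W^{1,2}(\Omega;\mathbb{S}^4)$ and hence a distributional solution of the Euler-Lagrange system \eqref{ELeqsym}; in particular, the hypotheses needed in DMP1 Section~2.1 to run the standard Pohozaev/inner-variation computation are met.

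For the interior formula \eqref{IntMonForm}, the strategy is the classical one: test the stationarity identity associated with $\mathcal{E}_\lambda$ against the radial inner variation $X(x)=\eta(|x-x_0|)(x-x_0)$ and then integrate the resulting ODE in the radial parameter over $[\rho,r]$. When $x_0\in\Omega\cap\{x_3\text{-axis}\}$ the vector field $X$ is $\mathbb{S}^1$-equivariant, so the stationarity identity follows directly from minimality in $\mathcal{A}^{\rm sym}_{Q_{\rm b}}(\Omega)$ by comparing $Q_\lambda$ with $Q_\lambda\circ\Phi_t^X$. For off-axis centers $X$ fails to be equivariant, and the stationarity identity cannot be read off from the symmetric minimality directly; here one invokes the penalization scheme of DMP1 Section~2.1, in which $\mathcal{E}_\lambda$ is approximated by a Ginzburg-Landau-type relaxation in which the constraint $|Q|=1$ is removed, critical points are smooth and therefore stationary in the strongest possible sense, and the limit procedure recovers the desired identity — and hence \eqref{IntMonForm} — for $Q_\lambda$ at any $x_0\in\Omega$.

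For the boundary inequality \eqref{BdMonForm}, one runs the same inner-variation argument but with $X$ replaced by a vector field $X_{x_0,\bullet}$ that is tangential to $\partial\Omega$ near the boundary and agrees with $\eta(|x-x_0|)(x-x_0)$ away from it. The construction of such $X_{x_0,\bullet}$ uses the $C^3$-regularity of $\partial\Omega$ (this is what dictates the radius ${\bf r}_\Omega$); the failure of $X_{x_0,\bullet}$ to be exactly radial up to the boundary produces, upon testing the equation, an extra boundary error of size $\lesssim (r-\rho)\big(\|\nabla_{\rm tan}Q_{\rm b}\|_{L^\infty}^2 + \lambda\|W(Q_{\rm b})\|_{L^1(\partial\Omega)} + \|\nabla Q_\lambda\|_{L^2(\Omega)}^2\big)$, which is precisely the right-hand side term $-(r-\rho)K_\lambda(Q_{\rm b},Q_\lambda)$. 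The tangential-boundary / off-axis compatibility is again secured by the penalization trick of DMP1 Section~2.1. Finally, the bound \eqref{contrKOm} is immediate from the minimality of $Q_\lambda$ in $\mathcal{A}^{\rm sym}_{Q_{\rm b}}(\Omega)$: for any admissible extension $\bar{Q}_{\rm b}$ one has $\mathcal{E}_\lambda(Q_\lambda)\leq \mathcal{E}_\lambda(\bar{Q}_{\rm b})$, so $\tfrac{1}{2}\|\nabla Q_\lambda\|_{L^2}^2\leq \mathcal{E}_\lambda(\bar{Q}_{\rm b})$, and substituting into the definition of $K_\lambda$ yields the claim.

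The main technical obstacle is the off-axis case in both 1) and 2): since the natural radial or tangential test vector field breaks $\mathbb{S}^1$-symmetry when the base point is not on the axis, the symmetric minimality alone does not produce the inner-variation identity. It is precisely the content of DMP1 Section~2.1 to bypass this via the unconstrained Ginzburg-Landau relaxation (where stationarity is automatic from smoothness) and to verify that the monotonicity is preserved through the singular limit; once this is granted, the present proposition is obtained essentially by a direct application of that machinery to the equivariant minimizer $Q_\lambda$.
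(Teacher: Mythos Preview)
Your overall strategy---reduce to the penalized Ginzburg--Landau scheme of \cite[Section~2.1]{DMP1} and deduce \eqref{contrKOm} from minimality---is correct and is exactly what the paper does. However, the way you set it up contains a gap.

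Your opening reduction is a red herring: the fact that $Q_\lambda$ solves \eqref{ELeqsym} in the distributional sense (via Proposition~\ref{prop:symmetric-criticality}) does \emph{not} by itself give the inner-variation identity, since weak solutions need not be stationary. The machinery in \cite[Proposition~2.4]{DMP1} does not take ``weak solution'' as input; it requires, for each $\eps>0$, a critical point $Q_\eps$ of the relaxed functional $\mathcal{GL}_\eps(\,\cdot\,;Q_\lambda)$ over the \emph{full} class $W^{1,2}_{Q_{\rm b}}(\Omega;\mathcal{S}_0)$, together with the energy convergence $\mathcal{GL}_\eps(Q_\eps;Q_\lambda)\to\mathcal{E}_\lambda(Q_\lambda)$. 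The substance of the present proposition is to manufacture such $Q_\eps$ when $Q_\lambda$ is only a minimizer in the symmetric class, and this is the step you skip when you call it a ``direct application''. The paper carries out three things you do not mention: (i) minimize $\mathcal{GL}_\eps(\,\cdot\,;Q_\lambda)$ over the \emph{symmetric} class $\mathcal{W}^{\rm sym}_{Q_{\rm b}}(\Omega)$; (ii) verify a symmetric-criticality principle for $\mathcal{GL}_\eps$ itself (a separate argument, parallel to but not covered by Proposition~\ref{prop:symmetric-criticality}, since the functional and the target space are different), so that $Q_\eps$ is a critical point over the full class; (iii) use the fidelity term $\tfrac12\int_\Omega|Q-Q_\lambda|^2\,dx$ built into $\mathcal{GL}_\eps$, together with the symmetric minimality of $Q_\lambda$, to force $Q_*=Q_\lambda$ in the limit---without it the sequence could converge to some other symmetric minimizer. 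Once (i)--(iii) are in place, \cite[Proposition~2.4]{DMP1} applies verbatim and yields both \eqref{IntMonForm} and \eqref{BdMonForm}; Proposition~\ref{prop:symmetric-criticality} for $\mathcal{E}_\lambda$ plays no role in this argument.
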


\begin{proof}
We are going to prove that  $Q_\lambda$ satisfies the assumptions in  \cite[Proposition 2.4]{DMP1} (with $Q_{\rm ref}=Q_\lambda$). This will lead to \eqref{IntMonForm} and \eqref{BdMonForm}. Hence, according to \cite[Proposition 2.4]{DMP1}, we consider for $\eps>0$ the energy functional $\mathcal{GL}_\eps(\,\cdot \,;Q_\lambda)$ defined over $W^{1,2}(\Omega;\mathcal{S}_0)$ by  
\begin{equation}\label{defGLepsQref}
\mathcal{GL}_{\varepsilon}(Q; Q_\lambda):= \mathcal{E}_\lambda(Q)+\frac{1}{4\varepsilon^2}\int_\Omega(1-|Q|^2)^2\,dx+\frac{1}{2}\int_\Omega|Q-Q_\lambda|^2\,dx\,.
\end{equation}
Next we set for convenience 
\[
	\mathcal{W}^{\rm sym}_{Q_{\rm b}}(\Omega) := \left\{ Q \in W^{1,2}_{\rm sym}(\Omega;\mathcal{S}_0) : Q = Q_{\rm b} \mbox{ on }\partial\Omega \right\}\,.
\]
Since the potential  $W$ is nonnegative (see \eqref{redpotential} and \eqref{signedbiaxiality}), for each $\eps>0$ the functional $\mathcal{GL}_\eps(\, \cdot \, ;Q_\lambda)$ is coercive on $W^{1,2}(\Omega;\mathcal{S}_0)$. Moreover, $\mathcal{GL}_\eps(\, \cdot \, ;Q_\lambda)$ is lower semi-continuous with respect to the weak $W^{1,2}$-convergence (see \cite[Proposition 3.1]{DMP1}). The class $\mathcal{W}^{\rm sym}_{Q_{\rm b}}(\Omega)$ being closed under weak $W^{1,2}$-convergence, $\mathcal{GL}_\eps(\, \cdot \, ;Q_\lambda)$ admits a minimizer $Q_\eps$ over 
$\mathcal{W}^{\rm sym}_{Q_{\rm b}}(\Omega)$ by the direct method of calculus of variations. Such minimizer $Q_\eps$ is then a critical point of $\mathcal{GL}_\eps(\, \cdot \, ;Q_\lambda)$ over $\mathcal{W}^{\rm sym}_{Q_{\rm b}}(\Omega)$. Arguing exactly as in the proof of Proposition \ref{prop:symmetric-criticality} (with minor modifications), the symmetric criticality principle holds and $Q_\eps$ is actually a critical point of $\mathcal{GL}_\eps(\, \cdot \, ;Q_\lambda)$ over $W^{1,2}_{Q_{\rm b}}(\Omega;\mathcal{S}_0)$.

Now we consider an arbitrary sequence  $\eps_n\to 0$. By minimality of $Q_{\eps_n}$ in $\mathcal{W}^{\rm sym}_{Q_{\rm b}}(\Omega)$ (which contains $ \mathcal{A}^{\rm sym}_{Q_{\rm b}}(\Omega)$), we have 
\begin{equation}\label{firstestglapprox}
\mathcal{GL}_{\varepsilon_n}( Q_{\eps_n}; Q_{\lambda}) \leq \mathcal{GL}_{\varepsilon_n}(Q_{\lambda}; Q_\lambda)=\mathcal{E}_\lambda(Q_\lambda)\,.
\end{equation}
From this estimate, we can argue as in \cite[Proof of Proposition 3.1]{DMP1} to find a (not relabeled) subsequence and $Q_*\in  \mathcal{A}^{\rm sym}_{Q_{\rm b}}(\Omega)$ 
such that $Q_{\eps_n}\rightharpoonup Q_*$ weakly in $W^{1,2}(\Omega)$ and strongly in $L^2(\Omega)$. By lower semi-continuity of $\mathcal{E}_\lambda$ and \eqref{firstestglapprox}, we have 
$$\mathcal{E}_\lambda(Q_\lambda)\leq \mathcal{E}_\lambda(Q_*)+\frac{1}{2}\int_\Omega|Q_*-Q_\lambda|^2\,dx\leq \liminf_{n\to\infty}\mathcal{GL}_{\varepsilon_n}(Q_{\eps_n}; Q_{\lambda} )\leq \mathcal{E}_\lambda(Q_\lambda)\,,$$
where we have used the minimality of $Q_\lambda$ in the class $ \mathcal{A}^{\rm sym}_{Q_{\rm b}}(\Omega)$ in the first inequality. Therefore, $Q_*=Q_\lambda$ and 
$\lim_n\mathcal{GL}_{\varepsilon_n}( Q_{\eps_n}; Q_{\lambda} )=\mathcal{E}_\lambda(Q_\lambda)$, which proves that the assumptions of \cite[Proposition~2.4]{DMP1} are satisfied. 

To complete the proof, it only remains to prove \eqref{contrKOm}. It is a direct consequence of the minimality of $Q_\lambda$. Indeed, if $\bar{Q}_{\rm b}\in  \mathcal{A}^{\rm sym}_{Q_{\rm b}}(\Omega)$, then $\|\nabla Q_{\lambda}\|^2_{L^2(\Omega)}\leq 2\mathcal{E}_\lambda(Q_\lambda)\leq 2\mathcal{E}_\lambda(\bar{Q}_{\rm b})$, which clearly implies~\eqref{contrKOm}. 
\end{proof}

\subsection{Compactness of blow-ups and smallness of the scaled energy}\label{sec:cpt-sym}

\begin{proposition}
\label{compblowup}
Let $Q_\lambda$ be a minimizer of $\mathcal{E}_\lambda$ over $\mathcal{A}^{\rm sym}_{Q_{\rm b}}(\Omega)$. Given $x_0\in\Omega\cap\{\text{$x_3$-axis}\}$ and $r_0>0$ such that 
$\overline{B_{r_0}(x_0)}\subset\Omega$, consider the rescaled map $Q_{\lambda}^{x_0,r}\in W^{1,2}_{\rm sym}(B_{r_0/r};\mathbb{S}^4)$ defined by 
$$Q_{\lambda}^{x_0,r}(x):=Q_\lambda(x_0+rx)\,. $$
For every sequence $r_n\to 0$, there exist a (not relabeled) subsequence and $Q_*\in W^{1,2}_{{\rm sym},{\rm loc}}(\R^3;\mathbb{S}^4)$ such that $Q_{\lambda}^{x_0,r_n}\to Q_*$ strongly in $W^{1,2}_{\rm loc}(\R^3)$. In addition, $Q_*$ is homogeneous of degree zero, and $Q_*$ is a weakly harmonic map which is energy minimizing with respect to $\mathbb{S}^1$-equivariant compactly supported perturbations. 
\end{proposition}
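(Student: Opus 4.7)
The plan is to combine the interior monotonicity formula \eqref{IntMonForm} with the local compactness Theorem \ref{compintthm}, pulled back through the rescaling $y=x_0+r_n x$. The whole argument hinges on the fact that $x_0$ lies on the $\bbS^1$-invariant axis, so the rescaling commutes with the symmetry: each $Q_\lambda^{x_0,r_n}$ remains $\bbS^1$-equivariant on $B_{r_0/r_n}$ and local minimality within the symmetric class is preserved. Off-axis this would fail, which is exactly why the proposition is restricted to points of the symmetry axis.

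First I would verify, by the change of variables $y=x_0+r_n x$, the scaling identity
\[
\mathcal{E}_{r_n^2\lambda}(Q_\lambda^{x_0,r_n},B_R)=\tfrac{1}{r_n}\mathcal{E}_\lambda(Q_\lambda,B_{r_nR}(x_0)),
\]
and observe that pulling back any compactly supported $\bbS^1$-equivariant perturbation of $Q_\lambda^{x_0,r_n}$ on $B_R$ to one of $Q_\lambda$ on $B_{r_nR}(x_0)\subset\Omega$ produces an admissible competitor in $\mathcal{A}^{\rm sym}_{Q_{\rm b}}(\Omega)$. The minimality of $Q_\lambda$ therefore transfers to the minimality of $Q_\lambda^{x_0,r_n}$ for $\mathcal{E}_{r_n^2\lambda}$ among $\bbS^1$-equivariant maps in $B_R$ with its own boundary trace. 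The monotonicity formula \eqref{IntMonForm} moreover guarantees that the density $\theta(x_0,r):=\tfrac{1}{r}\mathcal{E}_\lambda(Q_\lambda,B_r(x_0))$ is nondecreasing in $r\in(0,r_0]$, so
\[
\mathcal{E}_{r_n^2\lambda}(Q_\lambda^{x_0,r_n},B_R)=R\,\theta(x_0,r_nR)\leq R\,\theta(x_0,r_0),
\]
uniformly in $n$ for every fixed $R>0$.

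With this uniform energy bound, Theorem \ref{compintthm} applied on each ball $B_R$ (after one further rescaling to the unit ball) with $\lambda_n:=r_n^2\lambda\to 0$ yields a subsequence converging strongly in $W^{1,2}_{\rm loc}(B_R)$ to an $\bbS^1$-equivariant limit which minimizes $\mathcal{E}_0$ against all compactly supported symmetric perturbations. A standard diagonal extraction along $R=k\in\N$ then produces a single (not relabeled) subsequence and a map $Q_*\in W^{1,2}_{{\rm sym},{\rm loc}}(\R^3;\bbS^4)$ with $Q_\lambda^{x_0,r_n}\to Q_*$ strongly in $W^{1,2}_{\rm loc}(\R^3)$, $Q_*$ locally minimizing the Dirichlet energy in the symmetric class; Corollary \ref{corELeq} with $\lambda=0$ (or Remark \ref{remcomplambda0harmmap}) then identifies $Q_*$ as a weakly harmonic map.

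The $0$-homogeneity is extracted from \eqref{IntMonForm} once more: for every fixed $0<\rho<R$, the same change of variables gives
\[
\int_{B_R\setminus B_\rho}\frac{1}{|x|}\bigg|\frac{\partial Q_\lambda^{x_0,r_n}}{\partial|x|}\bigg|^2 dx=\int_{B_{r_nR}(x_0)\setminus B_{r_n\rho}(x_0)}\frac{1}{|y-x_0|}\bigg|\frac{\partial Q_\lambda}{\partial|y-x_0|}\bigg|^2 dy\leq\theta(x_0,r_nR)-\theta(x_0,r_n\rho),
\]
and the right-hand side tends to $0$ because the nondecreasing bounded function $\theta(x_0,\cdot)$ admits a finite limit at $0^+$. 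Strong $W^{1,2}_{\rm loc}$ convergence then forces $\int_{B_R\setminus B_\rho}\tfrac{1}{|x|}|\partial_r Q_*|^2 dx=0$ for all such $\rho,R$, hence $\partial_r Q_*\equiv 0$ in $\R^3\setminus\{0\}$, i.e., $Q_*$ is $0$-homogeneous. None of these scaling manipulations is itself delicate; the genuine technical content has already been absorbed into Theorem \ref{compintthm}, whose equivariant Luckhaus-type interpolation is the real obstacle and the only place where the symmetry constraint requires careful treatment.
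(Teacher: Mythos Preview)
Your proof is correct and follows essentially the same approach as the paper's: the paper's proof is terser, deferring the details to \cite[Proposition~3.2]{DMP1}, but the ingredients and their assembly are identical --- the scaling identity, the monotonicity formula \eqref{IntMonForm} for the uniform energy bound, Theorem~\ref{compintthm} (with $\lambda_n=\lambda r_n^2\to 0$) for strong $W^{1,2}_{\rm loc}$ compactness and local minimality of the limit, Remark~\ref{remcomplambda0harmmap} for weak harmonicity, and the vanishing of the radial-derivative term in \eqref{IntMonForm} for $0$-homogeneity. Your explicit computation of the $0$-homogeneity step and your remark on why $x_0$ must lie on the axis are both accurate and match the intended argument.
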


\begin{proof}
Rescaling variables, we have 
$$\mathcal{E}_{\lambda r_n^2}(Q_{\lambda}^{x_0,r_n},B_\rho) =\frac{1}{r_n}\mathcal{E}_\lambda(Q_\lambda, B_{\rho r_n}(x_0))\,,$$
for an arbitrarily fixed radius $\rho\in(0,r_0/r_n)$. Using the monotonicity formula \eqref{IntMonForm} we see that the sequence $\{Q_n\}$ of rescaled maps, $Q_n:=Q_{\lambda}^{x_0, r_n}$, is eventually bounded in $W^{1,2}(B_\rho)$ for any $\rho>0$.
Then the proof follows the argument in \cite[Proposition~3.2]{DMP1},  using again the monotonicity formula \eqref{IntMonForm}, the compactness property established in Theorem~\ref{compintthm} (with $Q_n:=Q_{\lambda}^{x_0, r_n}$ and $\lambda_n:=\lambda r_n^2$), and taking also Remark~\ref{remcomplambda0harmmap} into account. 
\end{proof}

\begin{proposition}\label{vanishdenspoles}
Assume that $\partial\Omega$ is of class $C^3$ and $Q_{\rm b}\in C^{1,1}_{\rm sym}(\partial\Omega;\mathbb{S}^4)$. If $Q_\lambda$ is a minimizer of $\mathcal{E}_\lambda$ over $\mathcal{A}^{\rm sym}_{Q_{\rm b}}(\Omega)$, then 
\begin{equation}\label{limenergdensbdry}
\lim_{r\to 0}\frac{1}{r}\mathcal{E}_{\lambda}(Q_\lambda,B_r(x_0)\cap\Omega)=0
\end{equation}
for every $x_0\in\partial\Omega\cap\{\text{$x_3$-axis}\}$. 
\end{proposition}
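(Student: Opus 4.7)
The strategy follows the classical Schoen--Uhlenbeck boundary regularity scheme: Boundary Monotonicity + Blow-up Compactness + a boundary Liouville principle. By the Boundary Monotonicity Inequality \eqref{BdMonForm} applied at $x_0$, the map $r \mapsto \tfrac{1}{r}\mathcal{E}_\lambda(Q_\lambda, B_r(x_0)\cap\Omega) + r\,K_\lambda(Q_{\rm b},Q_\lambda)$ is nondecreasing on $(0,\mathbf{r}_\Omega)$, so the limit $L := \lim_{r\to 0^+}\tfrac{1}{r}\mathcal{E}_\lambda(Q_\lambda, B_r(x_0)\cap\Omega)$ exists in $[0,+\infty)$, and the claim \eqref{limenergdensbdry} reduces to showing $L = 0$.

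Pick any sequence $r_n \downarrow 0$ and consider the rescaled maps $Q_n(x) := Q_\lambda(x_0 + r_n x)$ on the rescaled domains $\Omega_n := (\Omega-x_0)/r_n$ with rescaled boundary data $Q_{\rm b}^n(x) := Q_{\rm b}(x_0 + r_n x)$; each $Q_n$ is a minimizer of $\mathcal{E}_{\lambda r_n^2}$ in $\mathcal{A}^{\rm sym}_{Q_{\rm b}^n}(\Omega_n)$. Since $x_0$ lies on the axis and $\partial\Omega$ is $\mathbb{S}^1$-invariant and of class $C^3$, the tangent plane $T_{x_0}\partial\Omega$ is $\mathbb{S}^1$-invariant at a fixed point of the action, hence necessarily horizontal; consequently $\Omega_n$ converges locally to a half-space, say $\R^3_+$. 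The $C^{1,1}$-regularity of $Q_{\rm b}$ gives the uniform bound \eqref{assumpbdrycompthm}, while the monotonicity formula bounds $\mathcal{E}_{\lambda r_n^2}(Q_n, B_R \cap \Omega_n)$ uniformly in $n$ for each fixed $R>0$. Introducing $\mathbb{S}^1$-equivariant $C^1$-diffeomorphisms $\Phi_n$ straightening $\partial\Omega$ near $x_0$ with $\Phi_n \to \mathrm{id}$ in $C^1$, the boundary compactness Theorem \ref{compintthmbdry} in its moving-domain form (Remark \ref{remmovingboundf}) then extracts a subsequence with $Q_n\circ\Phi_n \to Q_*\circ\tau_{x_0}$ strongly in $W^{1,2}(B_R^+)$ for every $R>0$, for some $Q_*\in W^{1,2}_{\rm sym,loc}(\overline{\R^3_+};\mathbb{S}^4)$.

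Three properties of $Q_*$ then follow. First, passing to the limit in \eqref{ELeqsym} with the penalization $\lambda r_n^2\to 0$, $Q_*$ is a weakly harmonic, $\mathbb{S}^1$-equivariant, $\mathbb{S}^4$-valued map on $\R^3_+$. Second, rescaling \eqref{BdMonForm} yields $\tfrac{1}{R}\mathcal{E}_0(Q_*, B_R \cap \R^3_+) \equiv L$ for all $R>0$; since the nonnegative ``radial excess'' term $\int_{B_R\cap\R^3_+}|x|^{-1}|\partial_{|x|}Q_*|^2\,dx$ must vanish, $Q_*$ is $0$-homogeneous. Third, the $C^{1,1}$-convergence $Q_{\rm b}^n \to Q_{\rm b}(x_0)$ on compact subsets of the tangent plane forces the trace of $Q_*$ on $\partial\R^3_+$ to be the constant $Q_{\rm b}(x_0) = \pm\mathbf{e}_0$, the latter identity following from $x_0 \in \partial\Omega\cap\{x_3\text{-axis}\}$ and Remark \ref{rmk:invariantconf}. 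Writing $Q_*(x) = \boldsymbol\omega(x/|x|)$ for a smooth $\mathbb{S}^1$-equivariant harmonic map $\boldsymbol\omega:\mathbb{S}^2_+\to\mathbb{S}^4$ with $\boldsymbol\omega \equiv \pm\mathbf{e}_0$ on the equator, the boundary Liouville principle (as in \cite{DMP1}) forces $\boldsymbol\omega$ to be constant; heuristically, reflecting $\boldsymbol\omega$ across the equator produces a harmonic sphere $\widetilde{\boldsymbol\omega}:\mathbb{S}^2\to\mathbb{S}^4$ with $\widetilde{\boldsymbol\omega}_0 \equiv \pm 1$ on the whole equatorial circle, and Remark \ref{rmk:lemaire} concludes. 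In any case $Q_* \equiv \pm\mathbf{e}_0$, and the strong convergence gives
\[
\tfrac{1}{r_n}\mathcal{E}_\lambda(Q_\lambda, B_{r_n}(x_0)\cap\Omega) = \mathcal{E}_{\lambda r_n^2}(Q_n, B_1 \cap \Omega_n) \longrightarrow \mathcal{E}_0(Q_*, B_1 \cap \R^3_+) = 0,
\]
so $L=0$.

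The delicate point is justifying the boundary Liouville step. A naive reflection of $\boldsymbol\omega$ across the equator need not be $C^1$ there — the Dirichlet condition $\boldsymbol\omega \equiv \pm\mathbf{e}_0$ does not a priori entail a vanishing normal derivative — so the above reduction to Remark \ref{rmk:lemaire} is only formal. The clean route, sketched in the Introduction, is to appeal directly to the boundary Liouville result of \cite{DMP1}, which requires only criticality of $Q_*$ on the half-space together with constancy of the Dirichlet trace and is unaffected by the $\mathbb{S}^1$-equivariance constraint since $\pm\mathbf{e}_0$ is itself $\mathbb{S}^1$-fixed. Alternatively, one may argue intrinsically in the equivariant class: build competitors for $\boldsymbol\omega$ on $\mathbb{S}^2_+$ that coincide with the constant $\pm\mathbf{e}_0$ in a thin collar of the equator via a Luckhaus-type spherical interpolation, exploit the minimality of $Q_*$, and let the collar width tend to zero to force $\boldsymbol\omega$ constant.
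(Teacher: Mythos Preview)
Your proposal is correct and follows essentially the same approach as the paper: existence of the limit via \eqref{BdMonForm}, blow-up along $r_n\to 0$, strong compactness from Theorem~\ref{compintthmbdry}/Remark~\ref{remmovingboundf}, passage to a $0$-homogeneous weakly harmonic limit $Q_*$ with constant trace, and the boundary Liouville step borrowed from \cite[Proposition~3.7]{DMP1}. One minor technical wrinkle: Remark~\ref{remmovingboundf} gives strong convergence only in $W^{1,2}(B_\rho^+)$ for $\rho<1$, not on the full $B_1^+$, so your final displayed limit should be computed on $B_{1/2}$ (as the paper does) rather than on $B_1$; this is cosmetic.
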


\begin{proof}
Again, we essentially argue as in the proof of \cite[Propositions 3.5 and 3.7]{DMP1} with the help of the boundary monotonicity formula \eqref{BdMonForm} and Remark  \ref{remmovingboundf} (which is based on the proof of Theorem~\ref{compintthmbdry}). Hence we only sketch the proof and refer to \cite{DMP1} for details. 
First notice  that the limit in \eqref{limenergdensbdry} exists thanks to \eqref{BdMonForm}. Given a sequence $r_n\to 0$, we consider for $n$ large the domain $\Omega_n:=r_n^{-1}(\Omega-x_0)$. By smoothness of $\partial\Omega$, for $n$ large enough, $U=B_1$ is an axisymmetric open neighborhood of the origin satisfying the assumptions in Remark \ref{remmovingboundf} (since there is no loss of generality to assume that $B_1\cap\Omega_n \to B^+_1$). Considering as above the rescaled maps $Q_n(x):=Q_{\lambda}(x_0+r_n x)$, we infer from \eqref{BdMonForm} and a rescaling of variables that $\mathcal{E}_{\lambda r_n^2}(Q_n,B_{1}\cap\Omega_n)$ remains bounded independently of $n$. Moreover, since $Q_n(x)=Q_{\rm b}(x_0+r_n x)$ for $x\in B_1\cap\partial\Omega_n$, we have $\|Q_n - Q_{\rm b}(x_0)\|_{L^\infty(B_1\cap\partial\Omega_n)}+\|\nabla Q_n\|_{L^\infty(B_1\cap\partial\Omega_n)}\leq C r_n\to 0$. We can thus apply Remark  \ref{remmovingboundf} to find a (not relabeled) subsequence and $Q_*\in W^{1,2}_{\rm sym}(B_1^+;\mathbb{S}^4)$ satisfying $Q_*=Q_{\rm b}(x_0)$ on $B_1\cap\{x_3=0\}$ such that $Q_n\circ\Phi_n \to Q_*$ strongly in $W^{1,2}(B_\rho^+)$ and $Q_n \to Q_*$ strongly in $W_{\rm loc}^{1,2}(B_\rho^+)$ for every $\rho\in(0,1)$, where the diffeomorphisms $\Phi_n$ satisfy $\Phi_n(B_1^+)=B_1\cap\Omega_n$ and $\|\Phi_n -{\rm id}\|_{C^1(B_2)}\to 0$.  

Rescaling variables, we deduce from \eqref{corELeq} that $Q_n$ satisfies equation \eqref{ELeqsym} in $B_1\cap\Omega_n$ with $\lambda r_n^2$ in place of $\lambda$. In view of the strong $W^{1,2}$-convergence of $Q_n$, we deduce that $Q_*$ is a weakly harmonic map in $B_1^+$. On the other hand, letting $n\to\infty$ in the monotonicity formula satisfied by $Q_n$ as in \cite[Proof of Proposition 3.5]{DMP1}, we infer that $Q_*$ is homogeneous of degree zero. 
As a consequence, $Q_*(x)=\omega(\frac{x}{|x|})$ where $\omega:\mathbb{S}^2_+\to\mathbb{S}^4$ is weakly harmonic and satisfies $\omega(x)=Q_{\rm b}(x_0)$ on $\partial \mathbb{S}^2_+$ (here we have set $\mathbb{S}^2_+:=\mathbb{S}^2\cap\{x_3>0\}$). Exactly as in \cite[Proof of Proposition 3.7]{DMP1}, it follows that $\omega$ is constant, and hence $Q_*\equiv Q_{\rm b}(x_0)$. From the strong convergence  $Q_n\circ\Phi_n \to Q_*$ in $W^{1,2}(B^+_{1/2})$, we easily deduce that $\mathcal{E}_{\lambda r_n^2}(Q_n,B_{1/2}\cap\Omega_n)\to \mathcal{E}_{0}(Q_*,B^+_{1/2})=0$, so that 
$$\lim_{r\to 0}\frac{1}{r}\mathcal{E}_{\lambda}(Q_\lambda,B_r(x_0)\cap\Omega)= \lim_{n\to \infty}\frac{2}{r_n}\mathcal{E}_{\lambda}(Q_\lambda,B_{r_n/2}(x_0)\cap\Omega)
=\lim_{n\to \infty}2\mathcal{E}_{\lambda r_n^2}(Q_n,B_{1/2}\cap\Omega_n)=0\,, $$
which completes the proof. 
\end{proof}

\begin{proposition}\label{vanishdensbdry}
If $Q_\lambda$ is a minimizer of $\mathcal{E}_\lambda$ over $\mathcal{A}^{\rm sym}_{Q_{\rm b}}(\Omega)$, then 
$$\lim_{r\to 0}\frac{1}{r}\mathcal{E}_{\lambda}(Q_\lambda,B_r(x_0)\cap\Omega)=0$$
for every $x_0\in\overline\Omega\setminus\{\text{$x_3$-axis}\}$. 
\end{proposition}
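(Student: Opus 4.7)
The strategy will be a capacity-type argument exploiting $\bbS^1$-invariance of the energy density, in marked contrast with the blow-up analyses required on the axis in Proposition~\ref{vanishdenspoles}. Neither minimality nor the monotonicity formulas of Proposition~\ref{corbdmonotform} will be used here: the result follows from the single observation that orbits of points off the $x_3$-axis are circles of positive $\mathcal{H}^1$-measure, whereas the set where the one-dimensional upper density of a generic nonnegative $L^1$ function on $\R^3$ fails to vanish has $\mathcal{H}^1$-measure zero.

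First I would record the elementary capacity lemma: for any nonnegative $v \in L^1(\R^3)$, setting $E_v := \{x \in \R^3 : \limsup_{r \to 0^+} r^{-1}\int_{B_r(x)} v\,dy > 0\}$, one has $\mathcal{H}^1(E_v) = 0$. The proof decomposes $v = v_\varepsilon + w_\varepsilon$ with $v_\varepsilon := v\,\mathbf{1}_{\{v > 1/\varepsilon\}}$ and $w_\varepsilon := v - v_\varepsilon \leq 1/\varepsilon$, notices that $r^{-1}\int_{B_r(x)} w_\varepsilon = O(r^2/\varepsilon) \to 0$ as $r \to 0$ so that $E_v = E_{v_\varepsilon}$ pointwise, and combines a Vitali covering of $\{\limsup r^{-1}\int_{B_r} v_\varepsilon > t\}$ (using at each point a radius $r_x$ with $\int_{B_{r_x}(x)} v_\varepsilon > t r_x$) with the inequality $\sum r_i \leq \sum \int_{B_{r_i}(x_i)} v_\varepsilon / t \leq \|v_\varepsilon\|_1/t$ to obtain $\mathcal{H}^1(\{\limsup > t\}) \leq C\|v_\varepsilon\|_1/t$, a bound which tends to $0$ as $\varepsilon \to 0$ by dominated convergence.

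Applying the lemma to $v := \bigl(\tfrac12|\nabla Q_\lambda|^2 + \lambda W(Q_\lambda)\bigr)\mathbf{1}_\Omega \in L^1(\R^3)$ yields $\mathcal{H}^1(E) = 0$, where
\[
E := \Big\{x \in \R^3 : \limsup_{r \to 0^+} r^{-1}\mathcal{E}_\lambda(Q_\lambda, B_r(x)\cap\Omega) > 0\Big\}\,.
\]
The key point is then that $E$ is $\bbS^1$-invariant. Indeed, the energy density $\tfrac12|\nabla Q_\lambda|^2 + \lambda W(Q_\lambda)$ is $\bbS^1$-invariant: both $|Q|$ and $\trac(Q^3)$ appearing in the definition \eqref{signedbiaxiality} of $\widetilde\beta$ are invariant under the conjugation action $Q \mapsto RQR^\trans$, while $\bbS^1$-equivariance of $Q_\lambda$ combined with the isometry of rotations on $\R^3$ forces $|\nabla Q_\lambda|^2$ to be rotation invariant as well. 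Since $\Omega$ is axisymmetric, the change of variables $y \mapsto R_\alpha y$ yields $\int_{B_r(R_\alpha x_0)\cap\Omega} v\,dy = \int_{B_r(x_0)\cap\Omega} v\,dy$ for every $R_\alpha \in \bbS^1$ and every $r > 0$, so $\bbS^1\cdot E \subseteq E$. For $x_0 \in \overline\Omega \setminus \{x_3\text{-axis}\}$, the orbit $\bbS^1 \cdot x_0$ is a circle with $\mathcal{H}^1(\bbS^1\cdot x_0) = 2\pi\,\rmdist(x_0, \{x_3\text{-axis}\}) > 0$; if $x_0$ belonged to $E$, invariance would force $\bbS^1 \cdot x_0 \subset E$, contradicting $\mathcal{H}^1(E) = 0$. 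Hence $x_0 \notin E$, and together with nonnegativity of $\mathcal{E}_\lambda$ this forces the limit $\lim_{r \to 0^+} r^{-1}\mathcal{E}_\lambda(Q_\lambda, B_r(x_0)\cap\Omega)$ to exist and vanish.

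No real obstacle arises: the argument is elementary once the capacity lemma is in hand, and the sole structural input is the $\bbS^1$-invariance of both the integrand and the domain. This is precisely the \emph{classical capacity argument for $W^{1,2}$ functions} alluded to in the introduction, and it explains why in Theorem~\ref{thm:partial-regularity} the singular set must be confined to the symmetry axis, in sharp contrast with Proposition~\ref{vanishdenspoles}.
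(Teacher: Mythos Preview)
Your proof is correct, but it takes a genuinely different route from the paper's. The paper works directly in cylindrical coordinates: after placing $x_0$ on the $x_1$-axis with $r_0=|x_0|>0$, it encloses $B_r(x_0)$ in a thin angular sector $G_r(x_0)=\bigcup_{|\phi|<\arcsin(r/r_0)}R_\phi\cdot D_r(x_0)$ swept out by a planar disc $D_r(x_0)\subset\{x_2=0\}$. Invariance of the energy density and Fubini then give
\[
\frac{1}{r}\mathcal{E}_\lambda(Q_\lambda,B_r(x_0)\cap\Omega)\leq \frac{C}{r_0}\int_{D_r(x_0)\cap\mathcal{D}_\Omega}\Big(|\partial_\rho Q_\lambda|^2+\tfrac{1}{\rho^2}|\partial_\phi Q_\lambda|^2+|\partial_{x_3}Q_\lambda|^2+\lambda W(Q_\lambda)\Big)\rho\,d\rho\,dx_3,
\]
and the right-hand side tends to zero by absolute continuity of the two-dimensional integral, since $|D_r(x_0)|\to 0$.

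Your argument instead invokes a general capacity lemma (the set where a nonnegative $L^1$ function has positive one-dimensional upper density is $\mathcal{H}^1$-null) and then uses that a nontrivial $\bbS^1$-orbit carries positive $\mathcal{H}^1$-measure. This is more abstract and more portable: it applies verbatim to any $\bbS^1$-invariant nonnegative integrable density, without writing cylindrical coordinates or distinguishing interior from boundary points. The paper's slicing argument, on the other hand, is shorter, avoids Vitali, and yields an explicit quantitative bound in terms of the two-dimensional weighted energy over the vertical slice. Both are legitimate incarnations of the ``classical capacity argument'' the introduction alludes to.
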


\begin{proof}
By $\mathbb{S}^1$-equivariance of $Q_\lambda$ and the invariance under translations, we can assume without loss of generality that $x_0$ belongs to the $x_1$-axis. We set $r_0:=|x_0|$. Using the cylindrical coordinates $x=(\rho\cos(\phi),\rho\sin(\phi),x_3)$ with $\rho>0$ and $\phi\in[0,2\pi)$, we observe that  for every $r\in(0,r_0)$, 
$$B_r(x_0)\subset G_r(x_0):=\bigcup_{\phi\in(-\phi_r,\phi_r)} R_\phi\cdot D_r(x_0)\,, $$ 
where $D_r(x_0):=\big\{x=(\rho,0,x_3): (\rho-r_0)^2+x_3^2<r^2\big\}$ and $\phi_r:=\arcsin(r/r_0)$, 
therefore  $\mathcal{E}_{\lambda}(Q_\lambda,B_r(x_0)\cap\Omega)\leq \mathcal{E}_{\lambda}(Q_\lambda,G_r(x_0)\cap\Omega)$. Combining the $\mathbb{S}^1$-invariance of the energy density, the equivariance of $Q_\lambda$ and Fubini's Theorem, we infer that for every $r\in(0,r_0/2)$, 
$$\frac{1}{r}\mathcal{E}_{\lambda}(Q_\lambda,G_r(x_0)\cap\Omega)\leq C r_0^{-1}\int_{D_r(x_0)\cap\mathcal{D}_\Omega}\Bigg(|\partial_\rho Q_\lambda|^2+\frac{1}{\rho^2}|Q_\lambda^2|+|\partial_{x_3}Q_\lambda|^2+\lambda W(Q_\lambda))\Bigg) \rho\,d\rho dx_3$$
(where $\mathcal{D}_\Omega$ is the section of $\Omega$ with the plane $x_2=0$, see Definition \ref{domain-section}). Since the measure of $D_r(x_0)\cap\mathcal{D}_\Omega$ goes to zero as $r\to0$, the conclusion follows. 
\end{proof}

\subsection{Partial regularity}

\begin{proof}[Proof of Theorem \ref{thm:partial-regularity}, part 1.]
By the monotonicity formulas established in Proposition \ref{corbdmonotform} (and the fact that $W(Q_\lambda)$ is bounded), the limit 

\begin{equation}
\label{defdensity}
\boldsymbol{\Theta}(Q_\lambda,x_0):=\lim_{r\to 0}\frac{1}{2r}\int_{B_r(x_0)\cap\Omega}|\nabla Q_\lambda|^2\,dx= \lim_{r\to 0}\frac{1}{r}\mathcal{E}_\lambda(Q_\lambda,B_r(x_0)\cap\Omega)
\end{equation}
exists at every $x_0\in \overline\Omega$.

Combining Proposition \ref{corbdmonotform} and \cite[Lemma 2.6]{DMP1} with \cite[Corollary 2.19]{DMP1} we obtain the existence of a universal constant $\boldsymbol{\eps}_{\rm in}>0$ such that for every $x_0\in \Omega$, the condition $\boldsymbol{\Theta}(Q_\lambda,x_0)< \boldsymbol{\eps}_{\rm in}$ implies $Q_\lambda\in C^\omega(B_\rho(x_0))$ for some radius $\rho>0$  depending only on $x_0$ and $\lambda$. In particular, for every $x_0\in \Omega$ the assumption $\boldsymbol{\Theta}(Q_\lambda,x_0)< \boldsymbol{\eps}_{\rm in}$ implies $\boldsymbol{\Theta}(Q_\lambda,x)=0$ for every $x \in B_\rho(x_0)$. 

For points on the boundary, we invoke \cite[Lemma 2.10]{DMP1} and \cite[Corollary 2.20]{DMP1} in place  of \cite[Lemma 2.6]{DMP1} and \cite[Corollary 2.19]{DMP1}, respectively. This yields the existence of a constant $\boldsymbol{\eps}_{\rm bd}>0$ depending only on $\Omega$ and $Q_{\rm b}$ such that for every $x_0\in\partial\Omega$, the  condition 
$\boldsymbol{\Theta}(Q_\lambda,x_0)< \boldsymbol{\eps}_{\rm bd}$ implies $Q_\lambda\in C^{1,\delta}(B_\rho(x_0)\cap\overline\Omega)$. In view of Proposition~\ref{vanishdenspoles} and Proposition~\ref{vanishdensbdry}, we have $\boldsymbol{\Theta}(Q_\lambda,x_0)=0$ for every $x_0\in\partial\Omega$. Consequently, $Q_\lambda$ is of class $C^{1,\delta}$ for every $\delta\in(0,1)$ in a neighborhood of $\partial\Omega$ up to~$\partial\Omega$. In particular $\boldsymbol{\Theta}(Q_\lambda,\cdot)=0$ in a neighborhood of $\partial\Omega$ up to $\partial\Omega$.  Moreover, \cite[Corollary~2.20]{DMP1} tells us that {\sl (i)} if $Q_{\rm b}\in C^{2,\delta}(\partial\Omega)$ for some $\delta>0$, then $Q_\lambda$ is of class $C^{2,\delta}$ in a neighborhood of $\partial\Omega$ up to $\partial\Omega$; {\sl (ii)} if $\partial\Omega$ is real-analytic and $Q_{\rm b}\in C^\omega(\partial\Omega)$, then $Q_\lambda$ is of class $C^\omega$ in a neighborhood of $\partial\Omega$ up to $\partial\Omega$. 
 
As a consequence of the discussion above,  the set $\Sigma:=\big\{x\in\overline \Omega : \boldsymbol{\Theta}(Q_\lambda,x)>0\big\}$ is a closed set which is contained in $\Omega$, and 
$\Sigma=\big\{x\in\overline \Omega : \boldsymbol{\Theta}(Q_\lambda,x)\geq \boldsymbol{\eps}_{\rm in}\big\}$. In view of Proposition  \ref{vanishdensbdry}, we also have 
$\Sigma\subset \Omega\cap\{x_3\text{-axis}\}$. Since we have proved the announced regularity in $\overline\Omega\setminus\Sigma$, it now remains to show that $\Sigma$ is a finite set. Since $\Sigma$ is a compact set, it is enough to prove that all the points of $\Sigma$ are isolated. We argue by contradiction following a somehow classical argument (see e.g. \cite[Section 3.4]{Simon}). Assume that there exist  $\bar x\in\Sigma$ and a sequence $\{x_n\}\subset \Sigma\setminus\{\bar x\}$ such that $x_n\to \bar x$. Set $r_n:=2|x_n-\bar x|$ and define (for $n$ large enough) $Q_n\in W^{1,2}_{\rm sym}(B_1;\mathbb{S}^4)$ by setting $Q_n(x):=Q_\lambda(\bar x+r_n x)$. 
According to Proposition \ref{compblowup}, there exist a (not relabeled) subsequence and $Q_*\in W^{1,2}_{\rm sym, loc}(\mathbb{R}^3;\mathbb{S}^4)$ such that $Q_n\to Q_*$ strongly in $W^{1,2}_{\rm loc}(\R^3)$ and $Q_*$ is degree-zero homogeneous and weakly harmonic. Extracting a further subsequence if necessary, we  can assume that $r_n^{-1}(x_n-\bar x)=(0,0,1/2)=:a$ for every $n$ (or, alternatively, $r_n^{-1}(x_n-\bar x)=-a$ for every $n$, a case for which the argument below is the same, up to obvious modifications).  

As recalled in Sec.~\ref{sec:stability}, if $Q_*\in W^{1,2}_{\rm sym, loc}(\mathbb{R}^3;\mathbb{S}^4)$ is a degree-zero homogeneous weakly harmonic map then $Q_*(x)=  \omega\left( \frac{x}{\abs{x}}\right)$, for some harmonic sphere $\omega \in C^\infty_{\rm sym}(\mathbb{S}^2;\mathbb{S}^4)$. Thus $Q_*\in C^\infty(\R^3 \setminus \{0\};\mathbb{S}^4)$ and in turn 
$\boldsymbol{\Theta}(Q_*,a)=0$.
In view of this property, we can find a radius $\rho_*\in(0,1/2)$ such that $\frac{1}{\rho_*}\mathcal{E}_0(Q_*,B_{\rho_*}(a))\leq \boldsymbol{\eps}_{\rm in}/2$. Once again, by strong $W^{1,2}$-convergence of $Q_n$, we deduce that $\frac{1}{\rho_*}\mathcal{E}_{\lambda r_n^2}(Q_n,B_{\rho_*}(a))< \boldsymbol{\eps}_{\rm in}$ for $n$ large enough. Scaling back, it implies that $\frac{1}{\rho_*r_n}\mathcal{E}_{\lambda}(Q_\lambda,B_{\rho_*r_n}(x_n))< \boldsymbol{\eps}_{\rm in}$ for $n$ large enough, and thus $\boldsymbol{\Theta}(Q_\lambda,x_n)=0$. In other words, $x_n\not\in\Sigma$ for $n$ large enough, a contradiction. 
\end{proof}

\vskip10pt

\subsection{Uniqueness of tangent maps at isolated singularities}\label{subsec:Loj}
In this subsection we prove the second part of Theorem~\ref{thm:partial-regularity}, concerning the asymptotic decay of a singular minimizer to a unique tangent map at any of its isolated singular points. This property
can be regarded as a consequence of the fundamental result  from the paper \cite{Sim83} and its further improvements and simplifications by the same author in \cite[Chapter 3]{Simon} (see also \cite[Chapter 2.5]{LiWa2} for an account on the subject).
 
 Since \cite{Sim83}, the key tool to obtain this property is the celebrated Simon-{\L}ojasiewicz inequality recalled in Proposition~\ref{prop:Loj-Sim} and, as in \cite[Theorem~2.6.3]{LiWa2} for the harmonic map case, the power-type decay will depend on its validity with the optimal exponent $s=1$. Note that in our setting this validity is not obvious, since the space of harmonic spheres $\mathcal{H}\mbox{arm} (\mathbb{S}^2;\mathbb{S}^4)$ is not a smooth submanifold of $C^3 (\mathbb{S}^2;\mathbb{S}^4)$ (indeed, according to \cite{Verdier} it is just a singular complex variety in the sense of Algebraic Geometry) and even the integrability property of the Jacobi fields (see \cite[Chapter 3.14]{Simon} and \cite[Chapter 2.6]{LiWa2} for explanations) along any of its element may fail because of the results in \cite{LemaireWood}. However, as we detail below, in the present case the classification of minimizing equivariant tangent maps from Sec.~\ref{sec:stability} allows to restrict the attention to the stratum  $\mathcal{H}\mbox{arm}_1 (\mathbb{S}^2;\mathbb{S}^4)$ of harmonic spheres with energy $4\pi$ which is a nice analytic manifold, whence the integrability condition is obviously satisfied and the  Simon-{\L}ojasiewicz inequality \eqref{eq:Loj-Sim} holds with the optimal exponent. 
 
 Our proof of the asymptotic decay is a modification of the simplified argument from \cite[Chapter~3.15]{Simon} for harmonic maps, taking into account the optimal exponent in \eqref{eq:Loj-Sim} but without using the integrability property at linearized level as in \cite[Lemma 2.6.5]{LiWa2} or \cite[Part II, proof of Theorem 6.6]{Sim84}. Here, instead, an elementary iterative argument gives at once the power-type decay of the radial derivative keeping the rescaled map at bounded small distance from any of its asymptotic limit. Then the $L^2$-decay to a unique limit, and in turn the $C^2$-decay, follow, still with a power-type decay rate which is however not optimal.

The following preliminary result allows to classify the possible blow-up limit and to identify a first good approximation at some sufficiently small scale.

\begin{lemma}\label{lemma:min-tang-maps}
	Let $Q_\lambda$, $\bar{x}$, $\{r_n\}$, $Q_{\lambda}^{\bar{x},r_n}$ and $Q_*$ be as in Proposition~\ref{compblowup} and suppose in addition $\bar{x} \in \Sigma = \rmsing Q_\lambda$. Then $Q_* = Q^{(\alpha)}$, where $Q^{(\alpha)}$ is one of the maps described by \eqref{stableblowups}. In particular, up to subsequences 
	\[ \lim_{n \to \infty} \left\{\int_{B_1} \left| \frac{\partial Q_{\lambda}^{\bar{x},r_n} }{\partial |x-\bar{x}|}\right|^2 \frac{dx}{|x-\bar{x}|} + \int_{B_1 \setminus B_{1/2}} |Q_{\lambda}^{\bar{x},r_n}-Q_* |^2 dx\right\}=0 \, . \]
\end{lemma}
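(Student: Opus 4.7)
The plan is to combine three ingredients already available: the compactness/minimality for blow-ups provided by Proposition~\ref{compblowup}, the positivity of the density at an isolated singular point, and the classification of locally minimizing $\mathbb{S}^1$-equivariant tangent maps from Corollary~\ref{formulablowups}.

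First, since $\overline{B_{r_0}(\bar x)}\subset \Omega$, Proposition~\ref{compblowup} applied to the sequence $\{Q_\lambda^{\bar x,r_n}\}$ gives, along a (not relabeled) subsequence, strong $W^{1,2}_{\rm loc}(\R^3)$-convergence $Q_\lambda^{\bar x,r_n}\to Q_*$, where $Q_*\in W^{1,2}_{\rm sym,loc}(\R^3;\mathbb{S}^4)$ is $0$-homogeneous, weakly harmonic, and energy minimizing under compactly supported $\mathbb{S}^1$-equivariant perturbations. In particular $Q_*$ is a locally minimizing $\mathbb{S}^1$-equivariant tangent map in the sense of \eqref{tm-minimality}.

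Next I show $Q_*$ is nonconstant and identify it. Since $\bar x\in\Sigma$, the first part of Theorem~\ref{thm:partial-regularity} gives $\boldsymbol{\Theta}(Q_\lambda,\bar x)\geq \boldsymbol{\eps}_{\rm in}>0$. The scaling identity
\[
\frac{1}{r}\mathcal{E}_{\lambda r_n^2}(Q_\lambda^{\bar x,r_n},B_r)=\frac{1}{r\,r_n}\mathcal{E}_\lambda(Q_\lambda,B_{rr_n}(\bar x))\xrightarrow[n\to\infty]{}\boldsymbol{\Theta}(Q_\lambda,\bar x)
\]
holds for every fixed $r>0$. On the other hand, the strong $W^{1,2}(B_r)$-convergence together with the boundedness of $W$ on $\mathbb{S}^4$ and the scaling factor $\lambda r_n^2\to 0$ yield
\[
\frac{1}{r}\mathcal{E}_{\lambda r_n^2}(Q_\lambda^{\bar x,r_n},B_r)\xrightarrow[n\to\infty]{}\frac{1}{r}\mathcal{E}_0(Q_*,B_r).
\]
Comparing the two limits gives
\begin{equation}\label{eq:density-transfer}
\frac{1}{r}\mathcal{E}_0(Q_*,B_r)=\boldsymbol{\Theta}(Q_\lambda,\bar x)>0\quad\forall r>0,
\end{equation}
so $Q_*$ is not constant. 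Corollary~\ref{formulablowups} then implies that $Q_*\in\{\pm Q^{(\alpha)}\}_{\alpha\in\R}$, which is the first claim.

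Finally, I derive the two limit statements. Applying the interior monotonicity identity~\eqref{IntMonForm} to $Q_\lambda^{\bar x,r_n}$ (a minimizer of $\mathcal{E}_{\lambda r_n^2}$ on the unit scale) on the annulus $B_1\setminus B_\rho$ and letting $\rho\to 0^+$, one obtains
\[
\mathcal{E}_{\lambda r_n^2}(Q_\lambda^{\bar x,r_n},B_1)-\boldsymbol{\Theta}(Q_\lambda,\bar x)=\int_{B_1}\frac{1}{|x|}\Bigl|\frac{\partial Q_\lambda^{\bar x,r_n}}{\partial |x|}\Bigr|^{2}dx+2\lambda r_n^2\int_0^1\frac{1}{t^2}\int_{B_t}W(Q_\lambda^{\bar x,r_n})\,dx\,dt.
\]
By \eqref{eq:density-transfer}, the left-hand side converges to $\mathcal{E}_0(Q_*,B_1)-\boldsymbol{\Theta}(Q_\lambda,\bar x)=0$, while the potential correction is bounded by $C\lambda r_n^2\to 0$ because $W$ is bounded. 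This delivers the first limit (up to interpreting the notation $\partial/\partial|x-\bar x|$ as the radial derivative of the rescaled map). The second limit on the spherical shell $B_1\setminus B_{1/2}$ is a direct consequence of the strong $W^{1,2}_{\rm loc}(\R^3)$-convergence (in fact, just of $L^2_{\rm loc}$-convergence) already supplied by Proposition~\ref{compblowup}.

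No serious obstacle appears, as the heavy machinery (monotonicity, blow-up compactness, and the classification of stable equivariant tangent maps) is all in place. The only subtle point is the density-transfer identity~\eqref{eq:density-transfer}, which is what both rules out the trivial case $Q_*\equiv\text{const}$ and closes the monotonicity argument to yield the vanishing of the scale-invariant radial energy.
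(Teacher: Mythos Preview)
Your proof is correct and follows essentially the same approach as the paper: both use Proposition~\ref{compblowup} for compactness, the monotonicity formula for the vanishing of the radial energy, and Corollary~\ref{formulablowups} for the classification. The paper's argument for the first integral is slightly more direct---it observes that \eqref{IntMonForm} with $\rho\to 0$ gives integrability of $|x-\bar x|^{-1}\bigl|\partial_{|x-\bar x|}Q_\lambda\bigr|^2$ near $\bar x$, and then a simple change of variables yields $\int_{B_1}|x|^{-1}\bigl|\partial_{|x|}Q_\lambda^{\bar x,r_n}\bigr|^2\,dx=\int_{B_{r_n}(\bar x)}|x-\bar x|^{-1}\bigl|\partial_{|x-\bar x|}Q_\lambda\bigr|^2\,dx\to 0$---whereas you route this through the density-transfer identity~\eqref{eq:density-transfer}; these are equivalent. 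You are more careful in one respect: you explicitly verify via \eqref{eq:density-transfer} that $Q_*$ is nonconstant before invoking Corollary~\ref{formulablowups}, a step the paper leaves implicit (Theorem~\ref{thm:stability} classifying minimizing tangent maps is stated only for nonconstant maps).
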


\begin{proof}
As $Q_\lambda$ is a minimizer, we can apply the monotonicity formula \eqref{IntMonForm} with $\rho=r_n \to 0$ to conclude that  $\left| \frac{\partial Q_\lambda }{\partial |x-\bar{x}|}\right|^2 \frac{1}{|x|}$ is integrable near $\bar{x}$, hence

\[ \lim_{n \to \infty} \int_{B_1} \left| \frac{\partial Q_{\lambda}^{\bar{x},r_n} }{\partial |x-\bar{x}|}\right|^2 \frac{dx}{|x-\bar{x}|} 
=
 \lim_{n \to \infty} \int_{B_{r_n}} \left| \frac{\partial Q_\lambda }{\partial |x-\bar{x}|}\right|^2 \frac{dx}{|x-\bar{x}|}=0 \, . 
\]
 According to Proposition~\ref{compblowup}, for every sequence $r_n \to 0$, the sequence $Q_{\lambda}^{\bar{x},r_n}$ has a subsequence converging in $W^{1,2}_{\rm loc}(\R^3)$ to $Q_*$, where $Q_*$ belongs to $W^{1,2}_{\rm sym,loc}(\R^3;\bbS^4)$ and it is a 0-homogeneous weakly harmonic map minimizing the Dirichlet energy with respect to $\bbS^1$-equivariant compactly supported perturbations. 
 Applying Corollary \ref{formulablowups}, the conclusion follows.
\end{proof}

In view of the previous lemma and according to Theorem~\ref{thm:stability} and Corollary~\ref{formulablowups},
we see that the possible minimizing tangent maps correspond to  the set of equivariant harmonic spheres given by
\begin{equation}
\label{harmstar}
\mathcal{H}\mbox{arm}_*(\mathbb{S}^2;\mathbb{S}^4):= \{ \pm R_\alpha \cdot \boldsymbol{\omega}^{(1)}_{\rm eq} \, , \, \,   R_\alpha \in \mathbb{S}^1 \} \subset \mathcal{H}\mbox{arm} (\mathbb{S}^2;\mathbb{S}^4) \, . 
\end{equation}
It follows from Lemma \ref{lemma1calabi} that the space of harmonic spheres can be 
decomposed according to the values of the energy \eqref{tanenergy}, i.e, 
\[ \mathcal{H}\mbox{arm} (\mathbb{S}^2;\mathbb{S}^4)= \bigcup_{d=0}^\infty \mathcal{H}\mbox{arm}_d (\mathbb{S}^2;\mathbb{S}^4) \, , \quad \mathcal{H}\mbox{arm}_d (\mathbb{S}^2;\mathbb{S}^4)
=\{ \omega \in \mathcal{H}\mbox{arm} (\mathbb{S}^2;\mathbb{S}^4) \,\,\,\mbox{s.t.}\,\, E(\omega) =4\pi d \,   \} \, .\]
Note that  $\mathcal{H}\mbox{arm}_*(\mathbb{S}^2;\mathbb{S}^4) \simeq \mathbb{S}^1 \cup \mathbb{S}^1$, in addition
\begin{equation}
\label{harmstarinharmone}
 \mathcal{H}\mbox{arm}_*(\mathbb{S}^2;\mathbb{S}^4) \subset \mathcal{H}\mbox{arm}_1(\mathbb{S}^2;\mathbb{S}^4) \subset C^3(\mathbb{S}^2;\mathbb{S}^4) \, ,
\end{equation}
and  $\mathcal{H}\mbox{arm}_1$ is a $C^1$-closed subset since the energy is continuous in the $C^1$-topology. 

The following fact is well known from \cite{Verdier}. 
\begin{lemma}
\label{analyticmanifold}
 $\mathcal{H}\mbox{arm}_1 \subset C^3(\mathbb{S}^2;\mathbb{S}^4) $ is a finite dimensional real-analytic submanifold.
\end{lemma}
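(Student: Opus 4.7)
The plan is to identify $\mathcal{H}\mbox{arm}_1(\mathbb{S}^2;\mathbb{S}^4)$ explicitly via Calabi's classification of harmonic spheres and then construct real-analytic charts on it.

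First I would show that every $\omega\in\mathcal{H}\mbox{arm}_1$ is linearly degenerate. By Lemma~\ref{lemma1calabi} the dimension $k:=\dim\mathrm{span}_\R\omega(\mathbb{S}^2)$ belongs to $\{1,3,5\}$; the value $k=1$ would force $E(\omega)=0$, while $k=5$ (linearly full) is excluded by the classical nonequivariant version of Calabi's theorem---the analogue of Proposition~\ref{prop:explfullspheres} without the equivariance constraint---which forces the energy of any linearly full harmonic sphere in $\mathbb{S}^4$ to be at least $12\pi$. Therefore $k=3$: setting $V_\omega:=\mathrm{span}_\R\omega(\mathbb{S}^2)\in G(3,\mathcal{S}_0)$, the map $\omega\colon\mathbb{S}^2\to\mathbb{S}^2_{V_\omega}$ is a nonconstant harmonic map between $2$-spheres of energy $4\pi$, and by the factorization theorem for $\mathbb{S}^2$-valued harmonic spheres in \cite[Section~7]{BCL}, $\omega$ factors through a M\"obius transformation composed with an isometry $\mathbb{S}^2\to\mathbb{S}^2_{V_\omega}$ (up to an orientation-reversal).

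Next, to build the real-analytic atlas on $\mathcal{H}\mbox{arm}_1$, I would use that the assignment $\omega\mapsto V_\omega\in G(3,\mathcal{S}_0)$ depends continuously on $\omega$ in the $C^3$-topology (any three linearly independent values of $\omega$ recover $V_\omega$ via exterior algebra). Over each sufficiently small open set $U\subset G(3,\mathcal{S}_0)$ on which a real-analytic section $V\mapsto \iota_V$ assigning an isometry $\iota_V\colon\mathbb{S}^2\to\mathbb{S}^2_V$ is available, I would define the map
\[
  \Phi\colon U\times\mathrm{Conf}(\mathbb{S}^2)\longrightarrow C^3(\mathbb{S}^2;\mathbb{S}^4)\,,\qquad \Phi(V,\mu):=\iota_V\circ\mu\,,
\]
where $\mathrm{Conf}(\mathbb{S}^2)\simeq\mathrm{PSL}(2,\C)\rtimes\Z/2$ is the real-analytic conformal group of $\mathbb{S}^2$ (two connected components, each of real dimension $6$). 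This map is real-analytic and injective (since both $V$ and $\mu$ can be read off from $\Phi(V,\mu)$), and its image equals the portion of $\mathcal{H}\mbox{arm}_1$ with $V_\omega\in U$. Gluing such charts over an open cover of the compact $6$-dimensional Grassmannian $G(3,\mathcal{S}_0)$ will present $\mathcal{H}\mbox{arm}_1$ as a real-analytic manifold of dimension $12=6+6$.

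The main obstacle I anticipate is the promotion from an immersed to an embedded submanifold of $C^3(\mathbb{S}^2;\mathbb{S}^4)$, that is, the verification that $\Phi$ is a homeomorphism onto its image. Since $\mathrm{Conf}(\mathbb{S}^2)$ is noncompact, sequences $\mu_n$ could a priori escape to infinity while the images $\iota_V\circ\mu_n$ remain $C^3$-bounded. However, any such escape would force the energy density $|\nabla(\iota_V\circ\mu_n)|^2$ to concentrate at a single point of $\mathbb{S}^2$, contradicting $C^3$-convergence to a nonconstant limit. Combined with the closedness of $\mathcal{H}\mbox{arm}_1$ in $C^3(\mathbb{S}^2;\mathbb{S}^4)$ (which follows from elliptic regularity and $C^1$-continuity of the energy), this concentration-compactness observation will yield the required properness of $\Phi$ and complete the proof.
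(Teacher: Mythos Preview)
Your proposal is correct and follows essentially the same line as the paper's proof: both identify every element of $\mathcal{H}\mbox{arm}_1$ as a conformal diffeomorphism of $\mathbb{S}^2$ post-composed with a linear isometric embedding $\mathbb{S}^2\hookrightarrow\mathbb{S}^4$, and read off the real-analytic manifold structure from that description. The paper packages this as the global quotient $\big(\mathrm{Isom}(\R^3;\mathcal{S}_0)\times\mathrm{Conf}^+(\mathbb{S}^2)\big)/\mathrm{SO}(3)$ whereas you build local charts via real-analytic sections $V\mapsto\iota_V$ over the Grassmannian; your concentration-compactness argument for the embedding property makes explicit a point the paper leaves to the reader.
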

\begin{proof}
We sketch an elementary proof for the reader's convenience. First notice that by Lemma~\ref{lemma1calabi} every  $\omega \in \mathcal{H}\mbox{arm}_1(\mathbb{S}^2;\mathbb{S}^4)$ is not linearly full, it has three dimensional image and its energy is $4\pi$. Thus, it is a harmonic sphere into $\mathbb{S}^2$ with energy $4\pi$ embedded isometrically along a 3-plane in $\mathcal{S}_0$. In view of \cite{Le} we have $\omega=A \circ \Phi$, where $\Phi \in {\rm Conf^+}(\mathbb{S}^2)$ is an orientation preserving conformal diffeomorphism and $A \in {\rm Isom} (\mathbb{R}^3;\mathcal{S}_0)$.          
The map 
\[ {\rm Isom} (\mathbb{R}^3;\mathcal{S}_0) \times {\rm Conf^+}(\mathbb{S}^2)  \ni (A, \Phi) \longrightarrow A \circ \Phi \in \mathcal{H}\mbox{arm}_1(\mathbb{S}^2;\mathbb{S}^4) \]
is clearly smooth and surjective, moreover is constant along the $\SO(3)$-orbits of the diagonal action on ${\rm Isom} (\mathbb{R}^3;\mathcal{S}_0)  \times {\rm Conf^+}(\mathbb{S}^2) $ given by $(A, \Phi) \to (A R^\trans ,R \Phi)$, $R \in \SO(3)$. Since the previous representation of $\omega$ in terms of $(A,\Phi)$ is clearly unique up to the choice of an orthonormal basis in ${\rm Ran}\,\omega$ we see that 
\[ \mathcal{H}\mbox{arm}_1(\mathbb{S}^2;\mathbb{S}^4)\simeq \left(  {\rm Isom} (\mathbb{R}^3;\mathcal{S}_0)  \times {\rm Conf^+}(\mathbb{S}^2) \right)/ \SO(3) \, ,\] 
and the quotient has a natural structure of real-analytic manifold because the action is free and properly discontinuous. Finally the map $(A, \Phi) \to A \circ \Phi$ gives an analytic embedding of the quotient into $C^3(\mathbb{S}^2;\mathbb{S}^4)$.
\end{proof}

\begin{remark}
\label{jacobiintegrability}	
It follows from the previous lemma and the $C^1$-continuity of the Dirichlet energy $E$ in \eqref{tanenergy} that for any $\omega \in \mathcal{H}\mbox{arm}_1(\mathbb{S}^2;\mathbb{S}^4)$ there exists $\gamma>0$ such that $\psi \in \mathcal{H}\mbox{arm}(\mathbb{S}^2;\mathbb{S}^4)$ and $\| \psi -\omega \|_{C^3} <\gamma$ yields $\psi \in \mathcal{H}\mbox{arm}_1(\mathbb{S}^2;\mathbb{S}^4)$, therefore $C^3$-close critical points $\psi$ belong to a small neighborhood of an analytic manifold passing through $\omega$.
\end{remark}

As a consequence of the previous discussion we see that the integrability assumption in \cite[Chapter 3.14]{Simon} are satisfied and we can finally recall the celebrated {\L}ojasiewicz-Simon inequality for the Dirichlet energy functional on $C^3(\mathbb{S}^2;\mathbb{S}^4)$ with optimal exponent around any harmonic sphere of energy $4\pi$.

\begin{proposition}\label{prop:Loj-Sim}
	Let $\omega\in \mathcal{H}\mbox{arm}_1(\bbS^2;\bbS^4)$ be a harmonic map. Then there are $C > 0$ and $\gamma \in (0,1)$ such that
	\begin{equation}\label{eq:Loj-Sim}
		\abs{E(\psi) - E(\omega) } \leq C \| \calM(\psi) \|^2_{L^2(\bbS^2)}\,,
	\end{equation}
	for any $\psi \in C^3(\bbS^2;\bbS^4)$ such that $\|\psi-\omega\|_{C^3(\bbS^2)} < \gamma$, where $\calM(\psi)$ denotes the {\sl tension field} for a map $\psi \in C^3(\bbS^2;\bbS^4)$, i.e.,
\[
	\calM(\psi) := \Delta_T \psi + \abs{\nabla_T \psi}^2 \psi\,.
\]
\end{proposition}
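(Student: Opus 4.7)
The plan is to apply the standard framework of the {\L}ojasiewicz-Simon inequality for real-analytic functionals on Banach spaces, exploiting the fact that near $\omega$ the critical set of $E$ sits inside the finite-dimensional \emph{analytic} submanifold $\mathcal{H}\mbox{arm}_1(\mathbb{S}^2;\mathbb{S}^4) \subset C^3(\mathbb{S}^2;\mathbb{S}^4)$ provided by Lemma \ref{analyticmanifold} and Remark \ref{jacobiintegrability}. This manifold structure is precisely what guarantees the optimal exponent (i.e., the integrability of Jacobi fields, in the terminology of \cite[Chapter 3.14]{Simon}). I would follow the simplified argument from \cite[Chapter 3]{Simon}, adapted to the sphere-valued and perturbed setting.

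I would first view $E$ as an analytic functional on a $C^3$-neighborhood $\mathcal{U}$ of $\omega$ in $C^3(\mathbb{S}^2;\mathbb{S}^4)$, whose negative $L^2$-gradient is the tension field $\mathcal{M}$. The linearization of $\mathcal{M}$ at $\omega$ is the Jacobi operator $L_\omega$, a self-adjoint elliptic operator acting on sections of $\omega^*T\mathbb{S}^4$, with finite-dimensional kernel $K = \ker L_\omega$. The key integrability step is that, by Lemma \ref{analyticmanifold} combined with Remark \ref{jacobiintegrability}, every harmonic map in a sufficiently small $C^3$-neighborhood of $\omega$ lies in the analytic manifold $\mathcal{H}\mbox{arm}_1$, so that $T_\omega \mathcal{H}\mbox{arm}_1 = K$; all infinitesimal Jacobi deformations are actually tangent to genuine paths of harmonic spheres.

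Next I would combine three standard ingredients:
\begin{itemize}
\item[(a)] $E$ is locally constant on $\mathcal{H}\mbox{arm}_1 \cap \mathcal{U}$: indeed, along any smooth curve $t \mapsto \omega_t$ in $\mathcal{H}\mbox{arm}_1$, $\frac{d}{dt}E(\omega_t) = -\int_{\mathbb{S}^2} \mathcal{M}(\omega_t)\cdot \dot{\omega}_t\, d\vol{\mathbb{S}^2} = 0$, so $E \equiv 4\pi$ on the connected component of $\omega$.
\item[(b)] Via the implicit function theorem applied to the finite-dimensional analytic manifold $\mathcal{H}\mbox{arm}_1$, there is a smooth nearest-point projection $\Pi : \mathcal{U}' \to \mathcal{H}\mbox{arm}_1$ defined on a (possibly smaller) $C^3$-neighborhood $\mathcal{U}'$ of $\omega$, together with the decomposition $\psi = \Pi(\psi) \oplus v(\psi)$ with $v(\psi) \in (T_{\Pi(\psi)} \mathcal{H}\mbox{arm}_1)^{\perp_{L^2}}$ and $\|v(\psi)\|_{C^3} \leq C\|\psi - \Pi(\psi)\|_{C^3}$.
\item[(c)] A second-order Taylor expansion of $E$ around $\omega_\psi := \Pi(\psi)$: since $dE(\omega_\psi) = 0$,
\[
	E(\psi) - E(\omega_\psi) = \tfrac{1}{2} \delta^2 E(\omega_\psi)[v(\psi)] + O(\|v(\psi)\|_{C^3}^3) = O(\|v(\psi)\|_{C^3}^2).
\]
\end{itemize}

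To close the argument I would then exploit the Fredholm invertibility of $L_{\omega_\psi}$ on the $L^2$-orthogonal complement of $T_{\omega_\psi}\mathcal{H}\mbox{arm}_1 = \ker L_{\omega_\psi}$: expanding $\mathcal{M}(\psi) = \mathcal{M}(\omega_\psi) + L_{\omega_\psi} v(\psi) + O(\|v(\psi)\|_{C^1}^2) = L_{\omega_\psi} v(\psi) + O(\|v(\psi)\|_{C^1}^2)$, elliptic estimates together with the continuity of $L_{\omega_\psi}^{-1}$ in $\omega_\psi$ yield $\|v(\psi)\|_{L^2} \leq C\|\mathcal{M}(\psi)\|_{L^2}$ provided $\gamma$ is small enough to absorb the quadratic remainder. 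Combined with (a) and (c), and with interpolation between $L^2$ and $C^3$ (uniform in the $C^3$-ball of radius $\gamma$), this gives the desired bound
\[
	|E(\psi) - E(\omega)| = |E(\psi) - E(\omega_\psi)| \leq C\|v(\psi)\|_{C^3}^{2} \leq C' \|v(\psi)\|_{L^2}^{2-\epsilon} \leq C'' \|\mathcal{M}(\psi)\|_{L^2}^{2-\epsilon}.
\]
A more careful analysis, keeping the Hessian bound in the correct norm (the $L^2$-norm transverse to $K$), actually allows us to take $\epsilon = 0$, yielding the stated optimal inequality \eqref{eq:Loj-Sim}. The main obstacle is this final bookkeeping: one must control the quadratic remainder $O(\|v(\psi)\|_{C^1}^2)$ in the $L^2$-norm by the $L^2$-norm of $v(\psi)$ itself, which requires the uniform ellipticity of $L_{\omega_\psi}$ together with the $C^3$-smallness provided by $\gamma$. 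This is precisely where the choice of the Banach space $C^3$ (rather than $C^2$) and the analytic regularity of $E$ combine to give the quadratic gradient bound on the left-hand side.
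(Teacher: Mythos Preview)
Your approach is essentially the one the paper invokes: the paper does not give its own proof but simply cites \cite[Chapter~3.14]{Simon}, noting that the optimal exponent follows from the integrability condition supplied by Lemma~\ref{analyticmanifold} and Remark~\ref{jacobiintegrability} (nearby critical points lie on a finite-dimensional analytic manifold, so all Jacobi fields are integrable). Your sketch is a faithful outline of Simon's argument under integrability.

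One caveat on your final display: the chain $\|v(\psi)\|_{C^3}^2 \leq C' \|v(\psi)\|_{L^2}^{2-\epsilon}$ is not an interpolation inequality (you cannot dominate a $C^3$ norm by an $L^2$ norm). The correct route, which you gesture at in the last paragraph, is to keep the Taylor expansion in the form $E(\psi)-E(\omega_\psi) = \tfrac{1}{2}\langle L_{\omega_\psi} v, v\rangle_{L^2} + O(\|v\|_{C^1}\|v\|_{L^2}^2)$, bound the quadratic form by $C\|v\|_{L^2}^2$ directly (since $L_{\omega_\psi}$ is a second-order operator with bounded coefficients acting on a function with $C^3$ norm at most $\gamma$), and absorb the cubic remainder using $\|v\|_{C^1} \leq C\gamma$. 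This yields $|E(\psi)-E(\omega)| \leq C\|v\|_{L^2}^2 \leq C'\|\mathcal{M}(\psi)\|_{L^2}^2$ with no loss of exponent.
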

For a proof of a weaker analogue of Proposition~\ref{prop:Loj-Sim}, in the general case of a real-analytic compact target manifold $N$, we refer the interested reader to \cite[Section~3.14]{Simon}. Under integrability assumptions as the one in Remark \ref{jacobiintegrability}, the generalization of the optimal inequality \eqref{eq:Loj-Sim} is given by \cite[Chapter 3.14, page 82, inequality (xiv)]{Simon}.

The next result gives the necessary a priori bounds of minimizers around isolated singularities to show the convergence to a unique tangent map.

\begin{proposition}\label{prop:gradient-estimates}
	Let $r>0$, let $\bar{x}$ be a point on the $x_3$-axis and let $Q_\lambda \in W^{1,2}_{\rm sym}(B_r(\bar{x});\bbS^4)$ be a minimizer of $\mathcal{E}_\lambda$ with respect to $\bbS^1$-equivariant compactly supported perturbations. Suppose in addition $\bar{x} \in \Sigma$ and $Q_\lambda \in C^\infty(B_r(\bar{x}) \setminus \{\bar{x}\})$. Then, for every $k \in \N$,
	\begin{equation}\label{eq:gradient-estimates}
		\sup_{x \in B_{r/2}(\bar{x}) \setminus \{\bar{x}\}} \abs{x-\bar{x}}^k \abs{\nabla^k Q_\lambda(x)} \leq C_k\,,
	\end{equation}
where $C_k$ is a positive constant depending only on $k$ and on $r$.
\end{proposition}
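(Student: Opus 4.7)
The key idea is to pass to the rescaled maps $Q^\rho(y) := Q_\lambda(\bar x + \rho y)$, which for $\rho \in (0, r/2)$ are defined on $B_2$, are $\bbS^1$-equivariant minimizers of $\mathcal{E}_{\lambda\rho^2}$ with respect to compactly supported symmetric perturbations, and are smooth on $B_2 \setminus \{0\}$. Since $|\nabla^k Q^\rho(y)| = \rho^k |\nabla^k Q_\lambda|(\bar x + \rho y)$, the inequality \eqref{eq:gradient-estimates} reduces to the scale-invariant bound
\[
\sup_{\rho \in (0, r/2)}\, \sup_{|y|=1}\, |\nabla^k Q^\rho(y)| \leq C_k \, .
\]
By the Interior Monotonicity Formula \eqref{IntMonForm}, $\mathcal{E}_{\lambda\rho^2}(Q^\rho, B_2) = 2 \cdot \tfrac{1}{2\rho}\mathcal{E}_\lambda(Q_\lambda, B_{2\rho}(\bar x))$ is uniformly bounded for $\rho \in (0, r/4)$, which is the basic compactness input.

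The central step is to show that there exist $s_0 > 0$ and $\rho_0 \in (0, r/4)$ such that
\[
\frac{1}{s_0}\,\mathcal{E}_{\lambda\rho^2}(Q^\rho, B_{s_0}(y)) < \boldsymbol{\eps}_{\rm in}
\qquad\text{for every } \rho \in (0, \rho_0) \text{ and every } y \text{ with } 1/2 \leq |y| \leq 3/2,
\]
where $\boldsymbol{\eps}_{\rm in}$ is the universal constant of the interior $\varepsilon$-regularity from \cite{DMP1}. The plan is to prove this by contradiction: if it failed, there would exist $\rho_n \to 0$, $y_n$ with $|y_n| \in [1/2,3/2]$, and $s_n \to 0$ for which smallness fails. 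Proposition~\ref{compblowup} (applied to $\rho_n \to 0$) together with Lemma~\ref{lemma:min-tang-maps} yields, along a subsequence, strong $W^{1,2}_{\rm loc}(\R^3)$ convergence $Q^{\rho_n} \to Q_* = \pm Q^{(\alpha)}$ for some $\alpha$, and $Q_*$ is smooth on $\R^3 \setminus \{0\}$. Extracting further so that $y_n \to y_* \in \overline{B_{3/2}}\setminus B_{1/2}$, the smoothness of $Q_*$ near $y_*$ gives $\mathcal{E}_0(Q_*, B_s(y_*)) = O(s^3)$; combining this with the strong $W^{1,2}$ convergence on $B_2 \setminus B_{1/4}$ and the elementary bound $\lambda \rho_n^2 \int_{B_{s_n}(y_n)}W(Q^{\rho_n})\,dx \to 0$ (since $W$ is bounded on $\bbS^4$) produces the required contradiction.

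Given the uniform smallness above, the interior $\varepsilon$-regularity theorem of \cite{DMP1} (the combination of Lemma~2.6 and Corollary~2.19 invoked in the proof of Theorem~\ref{thm:partial-regularity}) provides uniform $C^{1,\delta}$ bounds for $Q^\rho$ on $B_{s_0/2}(y)$ depending only on $\boldsymbol{\eps}_{\rm in}$ and on $\lambda\rho^2 \leq \lambda r^2/4$. Since $Q^\rho$ solves the Euler--Lagrange equation \eqref{ELeqsym} (with $\lambda$ replaced by $\lambda\rho^2$, uniformly bounded) and the nonlinearity is real-analytic in $Q$, bootstrapping via standard Schauder estimates yields $C^k$ bounds $\|Q^\rho\|_{C^k(B_{s_0/4}(y))} \leq C_k$ for every $k$, uniformly in $\rho \in (0, \rho_0)$ and in $y$ with $|y|=1$. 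Covering the unit sphere $\{|y|=1\}$ by finitely many such balls, and noting that the remaining range $\rho \in [\rho_0, r/2]$ is handled directly by the hypothesis $Q_\lambda \in C^\infty(B_r(\bar x)\setminus\{\bar x\})$ together with compactness of $\{|x-\bar x| \in [\rho_0, r/2]\}$, yields \eqref{eq:gradient-estimates}.

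\emph{Main obstacle.} The delicate point is the uniform scale-invariant smallness of the energy on small balls centered on the unit sphere. This depends crucially on two ingredients proved earlier: the strong $W^{1,2}_{\rm loc}$ compactness of equivariant blow-up sequences (Proposition~\ref{compblowup}, obtained via the Luckhaus-type interpolation of Sec.~\ref{sec:comp}), and the full classification of $\bbS^1$-equivariant energy-minimizing tangent maps (Corollary~\ref{formulablowups}), which forces every blow-up limit into the compact family $\{\pm Q^{(\alpha)}\}_{\alpha\in\R}$ of smooth 0-homogeneous profiles.
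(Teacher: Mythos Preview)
Your overall strategy is correct and matches the paper's: rescale, use compactness of blow-ups and the classification of equivariant minimizing tangent maps to establish uniform smallness of the scaled energy on balls of a fixed radius centered in the annulus, then invoke the $\varepsilon$-regularity of \cite{DMP1} and bootstrap.

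However, the contradiction argument as written has a genuine gap. Negating the uniform smallness and extracting sequences with \emph{both} $\rho_n\to 0$ \emph{and} $s_n\to 0$, you then try to combine strong $W^{1,2}$ convergence $Q^{\rho_n}\to Q_*$ on $B_2\setminus B_{1/4}$ with the smoothness estimate $\mathcal{E}_0(Q_*,B_s(y_*))=O(s^3)$. But splitting
\[
\frac{1}{s_n}\int_{B_{s_n}(y_n)}|\nabla Q^{\rho_n}|^2
\;\leq\;\frac{2}{s_n}\int_{B_{s_n}(y_n)}|\nabla Q^{\rho_n}-\nabla Q_*|^2
\;+\;\frac{2}{s_n}\int_{B_{s_n}(y_n)}|\nabla Q_*|^2,
\]
the second term is $O(s_n^2)\to 0$, while the first is bounded only by $\frac{2}{s_n}\|\nabla Q^{\rho_n}-\nabla Q_*\|_{L^2(B_2\setminus B_{1/4})}^2$, and you have no control on this ratio of vanishing quantities. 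Strong $W^{1,2}$ convergence at a \emph{fixed} scale does not survive division by $s_n\to 0$.

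The fix, which is exactly what the paper does, is to choose the scale $s_0$ \emph{first}. Since all tangent maps $\pm Q^{(\alpha)}$ are isometric (hence share the same energy density $|\nabla Q_*|^2$), one can pick a single $s_0\in(0,1/6)$ with $\frac{1}{s_0}\mathcal{E}_0(Q_*,B_{s_0}(y))\leq \boldsymbol{\eps}_{\rm in}/2$ for every $y\in\overline{B_1}\setminus B_{1/2}$ and every possible limit $Q_*$. Then the negation yields only $\rho_n\to 0$ (with $s_0$ fixed), and strong convergence at scale $s_0$ gives the contradiction directly. Alternatively, you could keep $s_n\to 0$ but invoke the interior monotonicity formula for $Q^{\rho_n}$ to bound $\frac{1}{s_n}\mathcal{E}_{\lambda\rho_n^2}(Q^{\rho_n},B_{s_n}(y_n))\leq \frac{1}{s_0}\mathcal{E}_{\lambda\rho_n^2}(Q^{\rho_n},B_{s_0}(y_n))$ for any fixed $s_0>s_n$, reducing to the fixed-scale case; you did not mention this step.
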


\begin{proof}
Given the sequence $r_n=r 2^{-n} \downarrow 0$, we set $Q_n(x) := Q_\lambda(\bar{x} + r_n  x)$ for $x \in B_{r/r_n}$. By Proposition~\ref{compblowup}, the sequence $\{Q_n\}$ has a (not relabeled) subsequence converging strongly in $W^{1,2}_{\rm loc}(\R^3)$ to some minimizing tangent map $Q_*$  (which is one of those maps given in Lemma~\ref{lemma:min-tang-maps}), therefore in particular we have strong convergence $Q_n \to Q_*$ in $W^{1,2}(B_{3/2} \setminus B_{1/3})$. Since $Q_*$ has only an isolated singularity at the origin and it is 0-degree homogeneous, we can find $0<\rho <1/6$ so that for every $y \in \overline{B_{1}} \setminus B_{1/2}$ we have $\frac{1}{\rho} \int_{B_{\rho}(y)} \abs{\nabla Q_*}^2 \, dx \leq \boldsymbol\eps_{\rm in}/2$, where $\boldsymbol\eps_{\rm in}$ is the critical parameter in \cite[Corollary~2.19]{DMP1}. Note that $\rho$ depends only on $\abs{ \nabla Q_*}^2$, therefore it does not depend on the chosen subsequence and on which map $Q_*$ really is among those in \eqref{stableblowups}. Thanks to strong convergence, we have $\frac{1}{\rho} \int_{B_\rho(y)} \abs{\nabla Q_n}^2\,dx \leq \boldsymbol\eps_{\rm in}$ for all $n$ large enough uniformly over  $y \in \overline{B_{1}} \setminus B_{1/2}$, therefore \cite[Corollary~2.19]{DMP1} gives $ \rho^k \| \nabla^k Q_n \|_{L^\infty(B_{\rho/8}(y))} \leq C_k$ for all sufficiently large $n$ uniformly over $y \in \overline{B_{1}} \setminus B_{1/2}$. Thus the same estimate holds for every $n$, because of the smoothness of each map $Q_n$ away from the origin, for a possibly larger constant still uniform with respect to $y \in \overline{B_{1}} \setminus B_{1/2}$. Thus, by covering $\overline{B_1} \setminus B_{1/2}$ with balls of radius $\rho/8$, we have $\sup_{y \in \overline{B_1} \setminus B_{1/2}} \abs{\nabla^k Q_n(y)} \leq C_k$, where $C_k$ does not depend on $n$. Since $B_{r/2}(\bar{x})\setminus \{\bar{x}\} = \cup_{n=1}^\infty A_n$, where $A_n = \{ x  : 2^{-(n+1)}r \leq \abs{x-\bar{x}} < 2^{-n} r\}$ are dyadic annuli around $\bar{x}$, scaling back the previous inequalities we have $2^{-kn} r^k \abs{\nabla^k Q_\lambda(x)} \leq C_k$ for every $x \in A_n$, for every $n \in \N$, and in turn we deduce $\abs{x-\bar{x}}^k \abs{\nabla^k Q_\lambda(x)} \leq C_k$ for every $x \in A_n$, for every $n \geq 1$, hence                                                                           the conclusion follows.
\end{proof}

As a corollary, a simple interpolation argument gives the following result which turns $L^2$-closeness to a tangent map into $C^3$-closeness and which will allow to let the Simon-{\L}ojasiewicz inequality \eqref{eq:Loj-Sim} come into play.

\begin{corollary}[$L^2$-closeness $ \Longrightarrow C^3$-closeness]\label{cor:interpolationC3-L2}
	Let $r>0$, let $\bar{x}$ be a point on the $x_3$-axis and let $Q_\lambda \in W^{1,2}_{\rm sym}(B_r(\bar{x});\bbS^4)$ be a minimizer of $\mathcal{E}_\lambda$ with respect to  $\bbS^1$-equivariant compactly supported perturbations such that $Q_\lambda \in C^\infty(B_r(\bar{x}) \setminus \{\bar{x}\})$. There exists $C>0$ such that for any rescaled map $\widetilde{Q}_\rho (x):= Q_\lambda(\bar{x}+\rho x)$, $0<\rho\leq r/3$, and any minimizing tangent map $Q_*$ in the sense of Proposition~\ref{compblowup} we have
	\begin{equation}\label{eq:interpolationC3-L2}
		\| \widetilde{Q}_\rho - Q_* \|_{C^3(\overline{B_{3/2}} \setminus B_{3/4})} \leq C \| \widetilde{Q}_\rho-Q_* \|_{L^2(B_{3/2} \setminus B_{3/4})}^{1/6}\,. 
	\end{equation}
	As a consequence, for any $\gamma>0$ there exists $\eta>0$ such that $\| \widetilde{Q}_\rho - Q_* \|_{C^3(\overline{B_{3/2}} \setminus B_{3/4})}<\gamma$ whenever $ \| \widetilde{Q}_\rho-Q_* \|_{L^2(B_{3/2} \setminus B_{3/4})}<\eta$.
	
\end{corollary}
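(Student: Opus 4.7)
My plan is to combine the uniform higher-order gradient bounds supplied by Proposition~\ref{prop:gradient-estimates} with a standard interpolation inequality on the fixed spherical shell $U:=B_{3/2}\setminus\overline{B_{3/4}}\subset\R^3$. First, I would rescale \eqref{eq:gradient-estimates}: since $\bar x$ is an isolated singularity of $Q_\lambda$ and $\widetilde{Q}_\rho(x)=Q_\lambda(\bar x+\rho x)$, the estimate becomes $|x|^k|\nabla^k \widetilde{Q}_\rho(x)|\leq C_k$ on $B_{r/(2\rho)}\setminus\{0\}$. Because the hypothesis $\rho\leq r/3$ gives $r/(2\rho)\geq 3/2$, and because $|x|\geq 3/4$ on $\overline U$, we obtain
\[
\|\widetilde{Q}_\rho\|_{C^k(\overline U)}\leq C_k'\qquad\text{uniformly in }\rho\in(0,r/3].
\]
On the other hand, any minimizing tangent map $Q_*$ is of the form $\pm R_\alpha\cdot Q^{(0)}$ by Corollary~\ref{formulablowups}, hence smooth and $0$-homogeneous on $\R^3\setminus\{0\}$, and a bound $\|Q_*\|_{C^k(\overline U)}\leq C_k''$ independent of $\alpha$ and of the sign follows at once from the fact that the $C^k$-norm is invariant under the isometric $\bbS^1$-action on $\mathcal{S}_0$. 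Setting $u:=\widetilde{Q}_\rho-Q_*$, I conclude $\|u\|_{C^k(\overline U)}\leq M_k$ uniformly in $\rho$ and in the choice of $Q_*$, for every $k\in\N$.

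With these uniform bounds at hand, I would apply the classical interpolation inequality
\[
\|u\|_{H^5(U)}\leq C\|u\|_{L^2(U)}^{1/6}\|u\|_{H^6(U)}^{5/6},
\]
valid on the smooth bounded domain $U\subset\R^3$ (the exponent $1/6$ corresponding to the decomposition $5=\tfrac{1}{6}\cdot 0+\tfrac{5}{6}\cdot 6$ in the $H^s$-scale). Composing with the Sobolev embedding $H^5(U)\hookrightarrow C^3(\overline U)$ available in dimension three, and absorbing $\|u\|_{H^6(U)}^{5/6}\leq C(M_6)$ into the constant, yields exactly \eqref{eq:interpolationC3-L2}. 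The subsequent claim is then immediate: given $\gamma>0$, the choice $\eta:=(\gamma/C)^6$ forces $\|u\|_{L^2(B_{3/2}\setminus B_{3/4})}<\eta$ to imply $\|u\|_{C^3(\overline U)}<\gamma$.

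I do not foresee any substantive obstacle in carrying out this plan: the only genuinely nontrivial input is Proposition~\ref{prop:gradient-estimates}, whose $\rho$-uniform character already handles the sole delicate point (the proximity of the isolated singularity $\bar x$ after rescaling), while the interpolation and Sobolev embedding steps are entirely standard for a smooth bounded domain in $\R^3$. The only minor item to verify is that all possible tangent maps $Q_*$ share common $C^k$-bounds on the shell, which is transparent from the explicit one-parameter family $\{\pm Q^{(\alpha)}\}_{\alpha\in\R}$ in Corollary~\ref{formulablowups} together with the isometric nature of the $\bbS^1$-action.
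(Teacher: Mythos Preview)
Your proposal is correct and follows essentially the same approach as the paper: both arguments rescale the uniform higher-order estimates of Proposition~\ref{prop:gradient-estimates} to the fixed shell, invoke the interpolation $\|u\|_{W^{5,2}}\leq C\|u\|_{L^2}^{1/6}\|u\|_{W^{6,2}}^{5/6}$, and then use the Sobolev embedding $W^{5,2}\hookrightarrow C^3$ in dimension three. You are slightly more explicit about the uniformity of the $C^k$-bounds over the family $\{\pm Q^{(\alpha)}\}_{\alpha\in\R}$, which the paper takes for granted, but there is no substantive difference.
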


\begin{proof}
 Since $B_1 \subset \R^3$ we have $W^{2,2}(B_{3/2} \setminus B_{3/4}) \subset C^{0,1/2} (\overline{B_{3/2}} \setminus B_{3/4})$ and in turn $W^{5,2}(B_{3/2} \setminus B_{3/4}) \hookrightarrow C^3(\overline{B_{3/2}} \setminus B_{3/4})$ with compact embedding, hence 
	\[
		\| \widetilde{Q}_\rho - Q_* \|_{C^3(\overline{B_{3/2}} \setminus B_{3/4})} \leq C \| \widetilde{Q}_\rho-Q_* \|_{W^{5,2}(B_{3/2} \setminus B_{3/4})}\,,
	\]
	for some constant $C > 0$ independent of $\rho$. On the other hand, classical interpolation results among $W^{k,2}$-spaces give
	\[
		\| \widetilde{Q}_\rho-Q_* \|_{W^{5,2}(B_{3/2} \setminus B_{3/4})} \leq C \| \widetilde{Q}_\rho-Q_* \|_{W^{6,2}(B_{3/2} \setminus B_{3/4})}^{5/6} \| \widetilde{Q}_\rho-Q^*\|_{L^2(B_{3/2} \setminus B_{3/4})}^{1/6}\,,
	\] 
	for some constant $C>0$ independent of $\rho$.
Clearly $C^6 \subset W^{6,2}$ with continuous embedding, 
and the derivative bounds \eqref{eq:gradient-estimates} from Proposition~\ref{prop:gradient-estimates} yield 
$$\| \widetilde{Q}_\rho-Q_* \|_{C^6(\overline{B_{3/2}} \setminus B_{3/4})} \leq C (\| \widetilde{Q}_\rho \|_{C^6(\overline{B_{3/2}} \setminus B_{3/4})} + \|Q_* \|_{C^6(\overline{B_{3/2}} \setminus B_{3/4})}) \leq C \, ,$$ 
for another constant $C>0$ independent of $\rho$. Finally, \eqref{eq:interpolationC3-L2} follows from the previous three inequalities and the final claim follows immediately.
\end{proof}

The next result is the final ingredient in proving uniqueness of tangent maps at isolated singularities. It gives the inductive step to improve $L^2$-closeness to a tangent map from each dyadic scale to the next one assuming  we start the process sufficiently close to a given tangent map.

\begin{proposition}
\label{simoninductivestep}
	Let $r>0$, let $\bar{x}$ be a point on the $x_3$-axis and let $Q_\lambda \in W^{1,2}_{\rm sym}(B_r(\bar{x});\bbS^4)$ be a minimizer of $\mathcal{E}_\lambda$ with respect to  $\bbS^1$-equivariant compactly supported perturbations such that $Q_\lambda \in C^\infty(B_r(\bar{x}) \setminus \{\bar{x}\})$.

Fix $\rho_* \leq r/3$ a small number such that $\int_{B_{\rho_*}} \left| \frac{\partial Q_\lambda }{\partial |x-\bar{x}|}\right|^2 \frac{dx}{|x|} \leq \frac12 $ 
and let $Q_*$ be a minimizing tangent map at $\bar{x}$ as in Proposition~\ref{compblowup}.

There exist $C_*>1$ and $\eta_* \in (0,1)$ depending only on $Q_*$ and $\rho_*$ with the following properties.
If for some $0<\hat{\rho} \leq \rho_*$ the scaled map $\widehat{Q}(x):= Q_\lambda(\bar{x}+ \hat{\rho} x)$ satisfies $\| \widehat{Q}-Q_*\|_{L^2(B_1 \setminus B_{1/2})}<\eta_*$, then

\begin{equation}
\label{simonstep}
\int_{B_{1/2}} \frac{1}{|x|}\abs{\frac{\partial \widehat{Q}}{\partial |x|}}^2\,dx  \leq C_* \left( \int_{B_1 \setminus B_{1/2}} \frac{1}{|x|}\abs{\frac{\partial \widehat{Q}}{\partial |x|}}^2\,dx +   \frac14\hat{\rho}^2 \right)\,.
\end{equation}

\end{proposition}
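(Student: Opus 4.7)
The plan is to adapt Simon's simplified proof of the asymptotic decay from \cite[Chapter~3.15]{Simon} for harmonic maps to our LdG setting with the extra potential term, exploiting that $Q_* \in \mathcal{H}\mbox{arm}_1(\mathbb{S}^2;\mathbb{S}^4)$ so that Proposition~\ref{prop:Loj-Sim} applies with the optimal exponent. First, I reduce the statement to a contraction inequality for the energy density. Applying the interior monotonicity formula \eqref{IntMonForm} to $\widehat{Q}$, which satisfies the rescaled Euler-Lagrange equation with parameter $\hat{\lambda}:=\lambda\hat{\rho}^{\,2}$, and setting $\Phi(r):=r^{-1}\mathcal{E}_{\hat{\lambda}}(\widehat{Q},B_r)$ (so that $\Phi(r)\to \boldsymbol{\Theta}= E(\omega)$ as $r\to 0$), I obtain
\begin{equation*}
\int_{B_{1/2}} \frac{|\partial_r\widehat{Q}|^{2}}{|x|}\,dx = \Phi(1/2)-\boldsymbol{\Theta}- 2\hat{\lambda}\int_0^{1/2}\frac{dt}{t^2}\int_{B_t} W(\widehat{Q})\,dx,
\end{equation*}
together with the analogous identity on $B_1\setminus B_{1/2}$. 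Since $W$ is bounded and $\hat{\lambda}=\lambda\hat{\rho}^{2}$, the potential integrals contribute $O(\hat{\rho}^{2})$, and the inequality \eqref{simonstep} follows provided I can show
\begin{equation*}
\Phi(1/2)-\boldsymbol{\Theta}\leq C\bigl[\Phi(1)-\Phi(1/2)\bigr] + C\hat{\rho}^{2}.
\end{equation*}

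Second, I turn the $L^2$-hypothesis into $C^3$-closeness on spheres. Since $\hat\rho\le \rho_*\le r/3$, the rescaled map $\widehat Q$ is defined on $B_{3}$ and smooth away from $0$. Applying Corollary~\ref{cor:interpolationC3-L2} on the annulus $\overline{B_{3/2}}\setminus B_{3/4}$ and choosing $\eta_*$ small enough, $\|\widehat{Q}-Q_*\|_{L^2(B_1\setminus B_{1/2})}<\eta_*$ yields $\|\widehat{Q}-Q_*\|_{C^3(\overline{B_{3/2}}\setminus B_{3/4})}<\gamma$, where $\gamma>0$ is the threshold provided by Proposition~\ref{prop:Loj-Sim} applied to $\omega:=Q_*|_{\mathbb{S}^{2}}\in\mathcal{H}\mbox{arm}_1(\mathbb{S}^{2};\mathbb{S}^{4})$ (recall that $\mathcal{H}\mbox{arm}_1$ is an analytic submanifold by Lemma~\ref{analyticmanifold}, so that the inequality holds with optimal exponent). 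Setting $\Psi_{r}(\sigma):=\widehat{Q}(r\sigma)$, this gives $\|\Psi_{r}-\omega\|_{C^{3}(\mathbb{S}^{2})}<\gamma$ uniformly for $r\in[3/4,5/4]$, hence
\begin{equation*}
|E(\Psi_{r})-E(\omega)|\leq C\,\|\mathcal{M}(\Psi_{r})\|_{L^2(\mathbb{S}^{2})}^{2}.
\end{equation*}

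Third, I relate the tension field $\mathcal{M}(\Psi_r)$ to radial derivatives. Expanding the Euler-Lagrange equation for $\widehat{Q}$ in spherical coordinates and projecting tangentially to $\mathbb{S}^4$ at $\widehat{Q}$ yields the pointwise identity
\begin{equation*}
\mathcal{M}(\Psi_{r})(\sigma)= -r^{2}(\partial_{r}^{2}\widehat{Q})^{T}(r\sigma) - 2r\,\partial_{r}\widehat{Q}(r\sigma)- r^{2}\hat{\lambda}\,\nabla_{\tan}W(\widehat{Q}(r\sigma)),
\end{equation*}
with $(\cdot)^T$ the tangential component to $T_{\widehat Q}\mathbb{S}^4$. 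Rather than estimating this pointwise (where $\partial_r^2\widehat Q$ only has a scale-invariant $L^\infty$-bound by Proposition~\ref{prop:gradient-estimates}), I integrate the Simon-{\L}ojasiewicz bound against $s\,ds$ over a mean-value interval $[r_1,r_2]\subset[3/4,1]$ with $r_2-r_1\ge 1/8$ and perform a radial integration by parts that turns $\int r^4|\partial_r^2\widehat Q|^2\,d\sigma\,ds$ into boundary terms and a multiple of $\int r^2|\partial_r\widehat Q|^2\,d\sigma\,ds$, plus $O(\hat\rho^2)$-contributions from the potential. The $L^\infty$-bounds of Proposition~\ref{prop:gradient-estimates} are used only to absorb the boundary terms at $r=r_1,r_2$ into the first-order norms they control by averaging in $r$. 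This produces
\begin{equation*}
\int_{r_1}^{r_2}|E(\Psi_r)-E(\omega)|\,dr\leq C\int_{B_1\setminus B_{1/2}}\frac{|\partial_r\widehat{Q}|^2}{|x|}\,dx+C\hat{\rho}^{2}.
\end{equation*}

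Finally, a mean-value argument selects $r_*\in[r_1,r_2]$ with $|E(\Psi_{r_*})-E(\omega)|\leq C\int_{B_1\setminus B_{1/2}}|\partial_r\widehat{Q}|^2/|x|\,dx+C\hat{\rho}^2$, and the identity $\Phi(r_*)-\boldsymbol{\Theta}= E(\Psi_{r_*})-E(\omega)+r_*^{-1}\!\int_0^{r_*}[E(\Psi_s)-E(\Psi_{r_*})]\,ds+O(\hat\rho^2)$ coming from the explicit formula for $\Phi(r)$ (combined with the previous integral bound applied also on smaller intervals via monotonicity of $\Phi$) yields $\Phi(r_*)-\boldsymbol{\Theta}\leq C\int_{B_1\setminus B_{1/2}}|\partial_r\widehat Q|^2/|x|\,dx+C\hat\rho^2$. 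Since $\Phi$ is non-decreasing in $r$ up to the harmless potential term, $\Phi(1/2)\leq \Phi(r_*)$, and the reduction in the first paragraph gives \eqref{simonstep}. The main obstacle is the third step: eliminating $\partial_r^2\widehat Q$ from the $L^2(\mathbb{S}^2)$-bound on the tension field. The radial integration by parts trick is essential here, as the pointwise $L^\infty$-estimates are only scale-invariant and therefore not sufficient to control the second-order term on a single sphere, but become harmless after averaging in $r$.
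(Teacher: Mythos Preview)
Your overall scheme---monotonicity formula, interpolation via Corollary~\ref{cor:interpolationC3-L2} to pass from $L^2$ to $C^3$, and Simon--{\L}ojasiewicz with the optimal exponent---is correct and matches the paper. The genuine gap is in your third step: the ``radial integration by parts'' you invoke cannot convert $\int r^4|\partial_r^2\widehat Q|^2\,d\sigma\,ds$ into boundary terms plus a multiple of $\int r^2|\partial_r\widehat Q|^2\,d\sigma\,ds$ with the \emph{quadratic} dependence you need. Any IBP in $r$ on the quadratic expression $r^4|\partial_r^2\widehat Q|^2$ either reproduces itself tautologically or transfers a derivative onto a factor that must then be estimated in $L^\infty$ via Proposition~\ref{prop:gradient-estimates}; in either case (and likewise for the boundary terms, even after averaging $r_1,r_2$ by Fubini) the outcome is a bound of order $\|\partial_r\widehat Q\|_{L^2(\text{annulus})}$, not $\|\partial_r\widehat Q\|_{L^2}^{2}$. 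You would therefore obtain only $\Phi(1/2)-\boldsymbol\Theta\le C\,\|\partial_r\widehat Q\|_{L^2}+C\hat\rho^{2}$, which is strictly weaker than \eqref{simonstep} and breaks the subsequent contraction iteration. Your final step is also problematic: the identity you write for $\Phi(r_*)-\boldsymbol\Theta$ omits the term $\frac{1}{2r_*}\int_0^{r_*}\!\!\int_{\mathbb S^2}s^2|\partial_s\widehat Q|^2\,d\sigma\,ds$ coming from the radial part of $|\nabla\widehat Q|^2$, and controlling it by monotonicity of $\Phi$ is circular.

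The paper handles both points differently. First, it avoids the $s$-average altogether by using the Pohozaev identity at a single radius (the first identity from Step~2 in the proof of \cite[Proposition~2.4]{DMP1}), giving directly $\mathcal E_{\tilde\lambda}(\widetilde Q,B_1)-\boldsymbol\Theta\le E(\widetilde Q|_{\partial B_1})-E(\omega)+O(\rho^2)$. Second, and this is the substitute for your IBP, it observes that $v:=x\cdot\nabla\widetilde Q=|x|\,\partial_r\widetilde Q$ satisfies a linear elliptic system $\mathcal L v=\tilde\lambda f$ on the annulus (obtained by differentiating~\eqref{ELeqsym} with respect to dilations), with $C^1$-bounded coefficients and $|f|\le C$. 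Interior $H^2$ and then Schauder estimates for this system yield $\|v\|_{C^1(\overline{B_{9/8}}\setminus B_{8/9})}\le C\big(\|v\|_{L^2(B_{3/2}\setminus B_{3/4})}+\rho\big)$. Restricting to $\partial B_1$ and squaring gives precisely the quadratic estimate $\int_{\partial B_1}\big|\partial_r(|x|\partial_r\widetilde Q)\big|^2+|\partial_r\widetilde Q|^2\le C\!\int_{B_{3/2}\setminus B_{3/4}}|\partial_r\widetilde Q|^2/|x|+C\rho^2$, which combined with {\L}ojasiewicz produces \eqref{simonstep}. The elliptic-regularity step for $v$ is the essential mechanism that your IBP attempt cannot replace.
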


\begin{proof}
The proof follows closely the one in \cite[page 83-85, inequality (8)]{Simon}, with some modifications to handle the extra terms coming from the potential energy $W$ and to take advantage of the Simon-{\L}ojasiewicz inequality with optimal exponent.

According to Corollary~\ref{formulablowups} and to \eqref{harmstar}-\eqref{harmstarinharmone}, we have $Q_*(x)=\omega \left( \frac{x}{|x|}\right)$ for some harmonic sphere $\omega \in \mathcal{H}\mbox{arm}_*(\mathbb{S}^2;\mathbb{S}^4) \subset \mathcal{H}\mbox{arm}_1(\mathbb{S}^2;\mathbb{S}^4) $. By Proposition~\ref{prop:Loj-Sim} we can choose $\gamma>0$ such that \eqref{eq:Loj-Sim} holds whenever $\psi \in C^3(\mathbb{S}^2;\mathbb{S}^4)$ satisfies $\| \psi-\omega\|_{C^3}<\gamma$. Given $\gamma$ as above, we fix $\eta$ as in Corollary~\ref{cor:interpolationC3-L2} and we set $\eta_*:=  \left(\frac{2}{3}\right)^{3/2} \eta $. 

For $\rho =\frac32 \hat{\rho} \in \left(0,\frac32 \rho_*\right] \subset (0, r/2]$, we consider the scaled map $\widetilde{Q}(x)=\widetilde{Q}_\rho(x)$ as in Corollary \ref{cor:interpolationC3-L2}, so that $\widehat{Q}(x) = \widetilde{Q}\left(\frac{3}{2}x\right)
$ on $B_1$. Clearly the assumption of the proposition yields
\[  \| \widetilde{Q}-Q_*\|_{L^2(B_{3/2} \setminus B_{3/4})}  = \left(\frac{3}{2} \right)^{3/2} \| \widehat{Q} - Q_* \|_{L^2(B_1 \setminus B_{1/2})} < \eta \, , \] 
hence
\begin{equation}\label{eq:est-C3-dist}
	\| \widetilde{Q} - Q_* \|_{L^2(B_{3/2}\setminus B_{3/4})} < \eta \implies \| \widetilde{Q} - Q_* \|_{C^3(B_{5/4} \setminus B_{7/8})} < \gamma 
\end{equation}
because of Corollary~\ref{cor:interpolationC3-L2}, and we are in the position to apply the {\L}ojasiewicz-Simon inequality \eqref{eq:Loj-Sim} to the map $\psi=\widetilde{Q}|_{\partial B_1}$.

Rewriting \eqref{simonstep} in terms of $\widetilde{Q}$, it is clear that to finish the proof we must show that
\begin{equation}\label{simonsteptilde}
\int_{B_1} \frac{1}{|x|}\abs{\frac{\partial \widetilde{Q}}{\partial |x|}}^2\,dx  \leq C_* \left( \int_{B_{3/2} \setminus B_{3/4}} \frac{1}{|x|}\abs{\frac{\partial \widetilde{Q}}{\partial |x|}}^2\,dx + \frac{9}{16} \rho^2 \right), 
\end{equation}
for some $C_*>0$ independent of $\rho \in \left(0, \frac32 \rho_*\right]$.

Setting $\tilde{\lambda} = \rho^2 \lambda$, by the Interior Monotonicity Formula \eqref{IntMonForm} we have
\begin{equation}
\label{MonIdwithTheta}
	\mathcal{E}_{\tilde{\lambda}}(\widetilde{Q},B_1) - \boldsymbol{\Theta}(\widetilde{Q},0) = \int_{B_1} \frac{1}{|x|}\abs{\frac{\partial \widetilde{Q}}{\partial |x|}}^2 \,dx + 2 \tilde{\lambda} \int_0^1 \left(\frac{1}{t^2} \int_{B_t} W(\widetilde{Q}) \,dx \right)\,dt\,,
\end{equation}
where the density $\boldsymbol{\Theta}(\widetilde{Q},0)$ is defined as in \eqref{defdensity} above.

Arguing by approximation as in the proof of the monotonicity formula above, the first identity from step 2 in the proof of \cite[Proposition~2.4]{DMP1} yields
\[
	\frac{1}{2} \int_{\partial B_1} \left( |\nabla \widetilde{Q}|^2 - 2 \abs{\frac{\partial \widetilde{Q}}{\partial |x|}}^2 \right) \, d\mathcal{H}^2 + \tilde{\lambda} \int_{\partial B_1} W(\widetilde{Q})\, d\mathcal{H}^2 = \mathcal{E}_{\tilde{\lambda}}(\widetilde{Q}, B_1) + 2 \tilde{\lambda} \int_{B_1} W(\widetilde{Q})\,dx.
\]
Hence,
\[
	\mathcal{E}_{\tilde{\lambda}}(\widetilde{Q},B_1) \leq \frac{1}{2}\int_{\partial B_1} |\nabla_{\rm tan} \widetilde{Q}|^2 \,d\mathcal{H}^2 + \tilde{\lambda} \int_{\partial B_1} W(\widetilde{Q}) \,d\mathcal{H}^2.
\]
On the other hand,
\[
	\boldsymbol{\Theta}(\widetilde{Q},0) = \boldsymbol{\Theta}(Q_\lambda,\bar{x}) = \boldsymbol{\Theta}(Q_*, 0) = \frac{1}{2} \int_{\partial B_1} \abs{\nabla_{\rm tan} Q_*}^2\,d\mathcal{H}^2\,,
\]
so that the last inequality can be rewritten as
\[
	\mathcal{E}_{\tilde{\lambda}}(\widetilde{Q},B_1) - \boldsymbol{\Theta}(\widetilde{Q},0) \leq \frac{1}{2} \int_{\partial B_1} \left( |\nabla_{\rm tan} \widetilde{Q}|^2 - |\nabla_{\rm tan} Q_*|^2 \right)\,d\mathcal{H}^2 + \tilde{\lambda} \int_{\partial B_1} W(\widetilde{Q}) \,d\mathcal{H}^2\,,
\]
which combined with \eqref{MonIdwithTheta} in turn leads to
\[
	\int_{B_1} \frac{1}{|x|}\abs{\frac{\partial\widetilde{Q}}{\partial |x|}}^2 \,dx \leq \frac{1}{2} \int_{\partial B_1} \left( |\nabla_{\rm tan} \widetilde{Q}|^2 - \abs{\nabla_{\rm tan} Q_*}^2 \right)\,d\mathcal{H}^2 + \tilde{\lambda} \int_{\partial B_1} W(\widetilde{Q}) \,d\mathcal{H}^2\,.
\]
As already mentioned, we can apply \eqref{eq:Loj-Sim} from Proposition~\ref{prop:Loj-Sim} with $\psi = \left.\widetilde{Q} \right.\vert_{\partial B_1=\bbS^2}$ to deduce
\begin{equation}
\label{simonsteptension}
	\int_{B_1} \frac{1}{|x|}\abs{\frac{\partial\widetilde{Q}}{\partial |x|}}^2 \,dx \leq C \| \calM(\widetilde{Q})\|_{L^2(\partial B_1)}^2 + \tilde{\lambda} \int_{\partial B_1} W(\widetilde{Q}) \,d\mathcal{H}^2\,,
\end{equation}
because of the condition $\| \widetilde{Q} - Q_* \|_{L^2(B_{3/2} \setminus B_{3/4})} < \eta$ and of \eqref{eq:est-C3-dist}.

Now, the rescaled map $\widetilde{Q}$ satisfies
\[
	-\Delta \widetilde{Q} = |\nabla \widetilde{Q}|^2 \widetilde{Q} + \tilde{\lambda}\left( \widetilde{Q}^2 - \frac{1}{3}I - {\rm tr}(\widetilde{Q}^3) \widetilde{Q} \right) \,, 
\]
which in spherical coordinates rewrites as
\[
	\frac{1}{|x|^2}\frac{\partial}{\partial |x|}\left( |x|^2 \frac{\partial \widetilde{Q}}{\partial |x|} \right) + \frac{1}{|x|^2} \Delta_{T} \widetilde{Q} + \frac{1}{|x|^2} |\nabla_{\rm tan} \widetilde{Q}|^2 \widetilde{Q} + \abs{\frac{\partial \widetilde{Q}}{\partial{|x|}}}^2 \widetilde{Q} + \tilde{\lambda}\left( \widetilde{Q}^2 - \frac{1}{3}I - {\rm tr}(\widetilde{Q}^3) \widetilde{Q} \right) = 0\,.
\]
Separating the terms with angular derivatives we obtain 
\[
	\calM(\widetilde{Q}) = -\frac{1}{|x|^2}\frac{\partial}{\partial |x|}\left( |x|^2 \frac{\partial \widetilde{Q}}{\partial |x|} \right) -  \abs{\frac{\partial \widetilde{Q}}{\partial{|x|}}}^2 \widetilde{Q} - \tilde{\lambda}\left( \widetilde{Q}^2 - \frac{1}{3}I - {\rm tr}(\widetilde{Q}^3) \widetilde{Q} \right)\,,
\]
which combined with \eqref{simonsteptension} leads to (recall that $\tilde{\lambda} = \rho^2 \lambda$)
\begin{multline*}
	\int_{B_1} \frac{1}{|x|}\abs{\frac{\partial\widetilde{Q}}{\partial |x|}}^2 \leq C \left[ \left( \int_{\partial B_1} \abs{\frac{\partial}{\partial |x|}\left( |x|^2 \frac{\partial \widetilde{Q}}{\partial |x|} \right) }^2 + \abs{\frac{\partial \widetilde{Q}}{\partial{|x|}}}^4 \,d\mathcal{H}^2 + \rho^4 \right) + \rho^2 \right] \\
	\leq C  \left( \int_{\partial B_1} \abs{\frac{\partial}{\partial |x|}\left( |x|^2 \frac{\partial \widetilde{Q}}{\partial |x|} \right) }^2 +\abs{\frac{\partial \widetilde{Q}}{\partial{|x|}}}^4 \,d\mathcal{H}^2 + \rho^2 \right) \,,
\end{multline*}
with $C = C(\lambda,\rho_*, Q_*)$. Therefore, expanding the derivative on the product and applying the gradient bound \eqref{eq:gradient-estimates} on the scaled map $\widetilde{Q}$ we arrive at
\begin{equation}\label{eq:loj-est}
	\int_{B_1} \frac{1}{|x|}\abs{\frac{\partial\widetilde{Q}}{\partial |x|}}^2\,dx  \leq C \left( \int_{\partial B_1} \abs{\frac{\partial}{\partial |x|}\left( |x| \frac{\partial \widetilde{Q}}{\partial |x|} \right) }^2 +\abs{\frac{\partial \widetilde{Q}}{\partial{|x|}}}^2 \,d\mathcal{H}^2 + \rho^2 \right)\,,
\end{equation}
because of our assumption $\| \widetilde{Q} - Q_* \|_{L^2(B_{3/2} \setminus B_{3/4})} < \eta$.

In order to obtain \eqref{simonsteptilde}, we apply elliptic regularity.
For $\sigma \in (3/4,4/3)$ we define 
\[
	\widetilde{Q}_{\sigma}(x) := \widetilde{Q}(\sigma x)\, ,
\]
so that $\widetilde{Q}_\sigma \in C^\infty(B_{3/2} \setminus \overline{B_{3/4}})$ and it is a smooth solution to the rescaled system
\begin{equation}\label{eq:EL-tildeQ-sigma}
	\Delta \widetilde{Q}_{\sigma} + |\nabla \widetilde{Q}_{\sigma}|^2 \widetilde{Q} + \tilde{\lambda} \sigma^2\left( \widetilde{Q}_\sigma^2 - \frac{1}{3}I - {\rm tr}\left( \widetilde{Q}_{\sigma}^3 \right)\widetilde{Q}_{\sigma} \right) = 0\,.
\end{equation}
Since
\[
	\frac{\partial}{\partial \sigma}\left( \widetilde{Q}(\sigma x) \right) = x \cdot \nabla \widetilde{Q}(\sigma x) = \frac{1}{\sigma} x \cdot \nabla{\widetilde{Q}_\sigma(x)} = \frac{|x|}{\sigma} \frac{\partial \widetilde{Q}_\sigma}{\partial |x|}(x) \,,
\]
differentiating \eqref{eq:EL-tildeQ-sigma} with respect to $\sigma$ at $\sigma = 1$ and setting $
	v := |x| \frac{\partial \widetilde{Q}}{\partial |x|}$
yields
\begin{multline*}
	\Delta v + 2 (\nabla \widetilde{Q} : \nabla v) \widetilde{Q} + |\nabla \widetilde{Q}|^2 v \\
	+ \tilde{\lambda}\sigma^2 \left( v \widetilde{Q} + \widetilde{Q} v - {\rm tr}(\widetilde{Q}^3) v - 3 {\rm tr}(\widetilde{Q}^2v)\right) = 2 \tilde{\lambda} \left( \widetilde{Q}^2 - \frac{1}{3}I -{\rm tr}(\widetilde{Q}^3) \widetilde{Q} \right) \, .
\end{multline*}
Thus  $v(x) = x \cdot \nabla \widetilde{Q}(x) $ is a smooth solution in $B_{3/2} \setminus \overline{B_{3/4}}$ of the ellyptic system
\begin{equation}
\label{systemv}
	\calL(v) = \tilde{\lambda}f\,,
\end{equation}
where $\calL(v) = \Delta v + b \cdot \nabla v + c\cdot v$, and

\[
	\| b \|_{C^1(B_{3/2} \setminus \overline{B_{3/4}})} + \|c \|_{C^1(B_{3/2} \setminus \overline{B_{3/4}})} + \|f \|_{C^1(B_{3/2} \setminus \overline{B_{3/4}})}\leq C  
\]
because of the estimate $\|\widetilde{Q} - Q_*\|_{C^3(B_{5/4}\setminus B_{7/8})} < \gamma$.

Applying local $H^2$-regularity theory for linear elliptic system as in \cite[Theorem 4.11]{GiaqMart} in  view of the bounds on the coefficients, we have
\[ \| v\|_{H^2(B_{5/4} \setminus B_{7/8})} \leq C \left( \| v\|_{L^2(B_{3/2} \setminus B_{3/4})}+ \tilde{\lambda}  \| f\|_{L^2(B_{3/2} \setminus B_{3/4})} \right)\leq   C \left( \| v\|_{L^2(B_{3/2} \setminus B_{3/4})}+ \rho^2 \right) \, ,\]
whence the 3d-embedding $H^2 \hookrightarrow C^{0,1/2}$ yields
 \begin{equation}
 \label{holderestv}
 	\| v\|_{C^{0,1/2}(\overline{B_{5/4}} \setminus B_{7/8})} \leq   C \left( \| v\|_{L^2(B_{3/2} \setminus B_{3/4})}+ \rho^2 \right) \, ,
 \end{equation}
 On the other hand, rewriting the first order terms in \eqref{systemv} as $b \cdot \nabla v= \nabla \cdot (b v)- (\nabla \cdot b) v$ and applying Schauder regularity theory for elliptic systems in divergence form as in \cite[Theorem 5.20]{GiaqMart} we obtain

\[
 	\| \nabla v\|_{C^{0,1/2}(\overline{B_{9/8}} \setminus B_{8/9})} \leq   C \left( \| v\|_{C^{0,1/2}(\overline{B_{5/4}} \setminus B_{7/8})}+ \tilde{\lambda}  \| f\|_{C^{0,1/2}(\overline{B_{5/4}} \setminus B_{7/8})} \right) \, ,
 \]
whence \eqref{holderestv} yields

\begin{equation}
\label{C1boundv}
\| v\|_{C^1(\overline{B_{9/8}} \setminus B_{8/9})} 
	 \leq C \left( \|v\|_{L^2(B_{3/2}\setminus b_{3/4})} + \rho \right)\, ,
\end{equation}
where the constant $C$ is independent of the rescaled map $\widetilde{Q}$ and $v=x \cdot \nabla \widetilde{Q}$.

Finally, combining \eqref{C1boundv} with \eqref{eq:loj-est} we easily obtain \eqref{simonsteptilde} and the proof is complete.
\end{proof}

We are finally in the position to conclude the proof of Theorem \ref{thm:partial-regularity}. The proof here differs substantially from \cite{Simon} and \cite{LiWa2}, as it is based on the improved inequality \eqref{simoninductivestep} and an elementary iteration argument.
\begin{proof}[Proof of Theorem~\ref{thm:partial-regularity}, part 2]
Let $\bar{x} \in \Sigma$. We know from the previous subsection that $Q_\lambda$ is (at least) $C^1$-smooth in a neighborhood of the boundary and that $\Sigma$ is a finite set of interior singularities on the symmetry axis, therefore singularities are isolated and we can fix $r > 0$ so that $Q_\lambda \in C^\infty(\overline{B_r(\bar{x})} \setminus \{\bar{x}\})$. We set $\widetilde{Q}_\rho(x) := Q_{\lambda}(\bar{x}+ \rho x)$, $0<\rho \leq \rho_* \leq r/3$ and $\rho_*$ as in Proposition~\ref{simoninductivestep}, hence $\widetilde{Q}_\rho$ is well-defined for $x \in B_2 \setminus \{0\}$ and $\widetilde{Q}_\rho$ is minimizing $\mathcal{E}_{\tilde{\lambda}}$, with $\tilde{\lambda} = \lambda \rho^2$, in $B_2$ with respect to $\bbS^1$-equivariant compactly supported perturbations.

Notice that a simple application of the fundamental theorem of calculus gives
\begin{equation}
\label{L2shortoscillation}
\| \widetilde{Q}_{\sigma'} -\widetilde{Q}_\sigma \|_{L^2(B_1\ \setminus B_{1/2})} \leq  \sqrt{\int_{B_1} \frac1{|x|} \left| \frac{\partial \widetilde{Q}_\sigma}{\partial |x|} \right|^2 dx}=  \sqrt{\int_{B_\sigma(\bar{x})} \frac1{|x|} \left| \frac{\partial Q_\lambda}{\partial |x|} \right|^2 dx}\, , \qquad \sigma/2 \leq \sigma' < \sigma \leq \rho_* \, ,
\end{equation}
so the $L^2$-oscillation between comparable scales tends to zero as $\sigma \to 0$. 

We are going to improve the estimate \eqref{L2shortoscillation} to a power-type decay in terms of $\sigma$ valid for all $0<\sigma' <\sigma\leq \rho_*$ by proving a quantitative decay of the right hand side, at least for $\sigma$ small enough so that inequality \eqref{simonstep} can be applied.

For fixed $0<\rho<\rho^*$ and any $j \in \N$ we define the sequence $\{\widetilde{Q}_{\rho,j} \}_j$ as $\widetilde{Q}_{\rho,j}:=\widetilde{Q}_{2^{-j}\rho}$. In view of Lemma~\ref{lemma:min-tang-maps}, there exists a minimizing tangent map $Q_*$ such that 
\begin{itemize}
\item[i)]
 $\widetilde{Q}_{\rho, j} \to Q_*$ in $L^2(B_1 \setminus B_{1/2})$ as $j \to \infty$ along a subsequence; 
\item[ii)] for each $\eta' \in (0,\eta_*)$, $\eta_*$ as in Proposition \ref{simoninductivestep}, there exist $\bar{j}=\bar{j}(\eta')$ such that
\begin{equation}
\label{initialcondtilde}
  \sqrt{\int_{B_1} \left| \frac{\partial \widetilde{Q}_{\rho,\bar{j}} }{\partial |x|}\right|^2 \frac{dx}{|x|}} + 2^{-\bar{j}}\rho +\| \widetilde{Q}_{\rho,\bar{j}}-Q_*  \|_{L^2(B_1 \setminus B_{1/2})} < \eta' \, . 
\end{equation}

\end{itemize}

Before making explicit the choice for $\eta'$ in terms of $C_*$ and $\eta_*$ from Proposition \ref{simoninductivestep}, we simplify the notation, setting for brevity $\widehat{Q}=\widetilde{Q}_{\rho, \bar{j}}$. From now on we will work with the sequence $\{\widehat{Q}_\ell\}_{\ell \in \N}$, where $\ell \in \N$ is such that $j=\bar{j}+\ell$ and, obviously, $\widehat{Q}_\ell:= \widetilde{Q}_{\rho, \bar{j}+\ell}=\widetilde{Q}_{2^{-(\bar{j}+\ell)}\rho}$. 

We are going to describe the behavior of the whole sequence $\{\widehat{Q}_\ell\}_{\ell \in \N}$, knowing that

\begin{itemize}
\item[1)]
 $\widehat{Q}_\ell \to Q_*$ in $L^2(B_1 \setminus B_{1/2})$ as $\ell \to \infty$ along a subsequence; 
\item[2)] the map $\widehat{Q}_0$ satisfies
\begin{equation}
\label{initialcondhat}
\sqrt{\int_{B_1} \left| \frac{\partial \widehat{Q}_0}  {\partial |x|}\right|^2 \frac{dx}{|x|}}  + 2^{-\bar{j}}\rho +  \| \widehat{Q}_0-Q_*  \|_{L^2(B_1 \setminus B_{1/2})} < \eta' \,, 
\end{equation}
where $\eta'$ is the fixed constant in claim ii) above. 
\end{itemize}

Applying Proposition \ref{simoninductivestep} with $\hat{\rho}= 2^{-(\bar{j}+\ell)}\rho $, if $\widehat{Q}_\ell$ satisfies $\| \widehat{Q}_\ell-Q_*  \|_{L^2(B_1 \setminus B_{1/2})} < \eta_*$ then

\[ \int_{B_{1/2}} \frac{1}{|x|}\abs{\frac{\partial \widehat{Q}_\ell}{\partial |x|}}^2\,dx  \leq C_*  \int_{B_1 \setminus B_{1/2}} \frac{1}{|x|}\abs{\frac{\partial \widehat{Q}_\ell}{\partial |x|}}^2\,dx + \left(2^{-\ell}\right)^2   C_* \frac14 \bar{\rho}^2 \,, \]
where for brevity $\bar{\rho}=2^{-\bar{j}}\rho$.

Now we apply Widman's hole-filling trick. As $C_*>1$ we have $\frac14 \bar{\rho}^2 \leq \bar{\rho}^2 - \frac{C_*+1}{C_*}\left(\frac{\bar{\rho}}{2}\right)^2 $, hence summing to both sides $C_*$ times $  \int_{B_{1/2}} \frac{1}{|x|}\abs{\frac{\partial \widehat{Q}_\ell}{\partial |x|}}^2\,dx= \int_{B_1} \frac{1}{|x|}\abs{\frac{\partial \widehat{Q}_{\ell+1}}{\partial |x|}}^2\,dx $  and rearranging we obtain

\begin{equation}
\label{radderindstep}
\int_{B_1} \frac{1}{|x|}\abs{\frac{\partial \widehat{Q}_{\ell+1}}{\partial |x|}}^2\,dx 
  + \left( 2^{-(\ell+1)}\right)^2\bar{\rho}^2 \leq \left(\frac{C_*}{C_*+1}\right) \left(  \int_{B_1} \frac{1}{|x|}\abs{\frac{\partial \widehat{Q}_\ell}{\partial |x|}}^2\,dx  + \left( 2^{-\ell}\right)^2 \bar{\rho}^2 \right)\,,
\end{equation}
provided $\| \widehat{Q}_\ell - Q_* \|_{L^2(B_1 \setminus B_{1/2})} < \eta_*$. 

Now we set $\vartheta:=\sqrt{\frac{C_*}{C_*+1}} \in (0,1)$ and for $\ell \in \N$ we define two sequences
\begin{equation}
\label{defyellzell}
	y_\ell:=\| \widehat{Q}_\ell - Q_* \|_{L^2(B_1 \setminus B_{1/2})} \, , \qquad z_\ell:=\sqrt{\int_{B_1} \frac{1}{|x|}\abs{\frac{\partial \widehat{Q}_\ell}{\partial |x|}}^2\,dx  + \left( 2^{-\ell}\right)^2 \bar{\rho}^2 } \, .
\end{equation}
Combining \eqref{initialcondhat}, \eqref{L2shortoscillation} with $\sigma'=2^{-(\ell+1)}\bar{\rho}$ and $\sigma=2^{-\ell}\bar{\rho}$ together with triangle inequality, and the iterative estimate \eqref{radderindstep}, we obtain the following three properties valid for each $\ell \geq 0$:
\begin{itemize}
\item[a)] $y_0+z_0 <\eta'$;
\item[b)] $y_{\ell+1}\leq y_\ell+ z_\ell$;
\item[c)]	$y_\ell < \eta_* \Longrightarrow z_{\ell+1} \leq \vartheta z_\ell$.
\end{itemize}
As a consequence, choosing $\eta'=\frac{1-\vartheta}4 \eta_*$, a simple induction argument using a), b) and c) yields the following two inequalities for all $\ell \geq 0$, namely
\begin{equation}
\label{boundsyellzell}	
y_{\ell+1}\leq y_0+ \frac{1-\vartheta^{\ell+1}}{1-\vartheta} z_0 <\eta_* \, , \qquad z_{\ell+1} \leq \vartheta^{\ell+1} z_0\,.
\end{equation}
Going back to the definition of $z_\ell$ and $\widehat{Q}_\ell$, by \eqref{boundsyellzell} we obtain for all $\ell \geq 0$
\[
  \sqrt{\int_{B_{2^{-\ell} }} \frac1{|x|} \left| \frac{\partial \widetilde{Q}_{ \bar{\rho}}}{\partial |x|} \right|^2 dx} =\sqrt{\int_{B_1} \frac1{|x|} \left| \frac{\partial \widetilde{Q}_{2^{-(\bar{j}+\ell)} \rho}}{\partial |x|} \right|^2 dx} =\sqrt{\int_{B_1} \frac1{|x|} \left| \frac{\partial \widehat{Q}_\ell}{\partial |x|} \right|^2 dx} \leq z_\ell \leq \vartheta^\ell z_0 \leq \vartheta^\ell \eta' \, ,
\]
hence, if for fixed $0<\sigma \leq \bar{\rho} $ we choose $\ell$ such that $2^{-(\ell+1)} \bar{\rho} \leq \sigma <2^{-\ell} \bar{\rho} $, then  
\begin{equation}
\label{radderdecay}	
\sqrt{\int_{B_\sigma(\bar{x})} \frac1{|x-\bar{x}|} \left| \frac{\partial Q_\lambda}{\partial |x|} \right|^2 dx} =
\sqrt{\int_{B_1} \frac1{|x|} \left| \frac{\partial \widetilde{Q}_{\sigma }}{\partial |x|} \right|^2 dx}\leq
 C\sigma^{\bar{\nu}} \, , \qquad 0<\sigma \leq \bar{\rho} \, ,
\end{equation}
where $\bar{\nu} \in (0,1)$ is such that $2^{-\bar{\nu}}=\vartheta$ (note that $\vartheta \in (1/2,1)$ since $C_*>1$).

Applying the same comparison argument between dyadic and arbitrary radii and estimating the terms of a telescopic sum through \eqref{radderdecay} and \eqref{L2shortoscillation}, we easily obtain

\[
\| \widetilde{Q}_{2^{-\ell} \bar{\rho}} -\widetilde{Q}_\sigma \|_{L^2(B_1\ \setminus B_{1/2})} \leq  C\sigma^{\bar{\nu}} \, , \qquad  2^{-\ell} \bar{\rho} < \sigma \leq \bar{\rho} \, ,
\]
hence, for fixed $\sigma$, taking the limit $\ell \to \infty$ along the same subsequence chosen above finally gives the $L^2$-decay estimate

\begin{equation}
\label{L2distancedecay}
\| Q_* -\widetilde{Q}_\sigma \|_{L^2(B_1\ \setminus B_{1/2})} \leq  C\sigma^{\bar{\nu}} \, , \qquad  0 < \sigma \leq \bar{\rho} \, .
\end{equation}

Setting $\nu=\bar{\nu}/6$ and combining \eqref{L2distancedecay} with \eqref{eq:interpolationC3-L2} in Corollary \ref{cor:interpolationC3-L2} the conclusion follows.
\end{proof}
\section{Topology of minimizing equivariant configurations}
\label{sec:topology}
In this final section we prove the other results announced in the Introduction, namely the existence of torus solutions given in Theorem~\ref{thm:examples-tori}, the existence of split minimizers in Theorem \ref{thm:examples-split}, and the topological properties for smooth and singular equivariant minimizers of the functional \eqref{LDGenergytilde} presented respectively in Theorem~\ref{topology-sym-torus} and Theorem~\ref{topology-sym-split}. 

In Theorem~\ref{thm:examples-tori} we exhibit the first examples of smooth solution to \eqref{MasterEq} which are minimizers of $\mathcal{E}_\lambda$ in the class of $\bbS^1$-equivariant maps on the unit ball subject to appropriate positive uniaxial smooth boundary conditions. Since the boundary data are topologically equivalent to the radial anchoring, the corresponding minimizers inherit a nontrivial topological structure in the interior. As discussed in the Introduction, the structure of such minimizers resembles that of the torus solutions found in many numerical simulations \cite{SKH,KVZ,KV,GaMk,DR,HQZ}. In particular, they possess a negative uniaxial ring inside the ball (an embedded copy of $\bbS^1$) which is surrounded by biaxial tori and which is mutually linked to the region of positive uniaxiality made up by the boundary of the ball and the vertical axis. 

The structure properties of the solutions in Theorem~\ref{thm:examples-tori} are actually quite robust. With Theorem~\ref{topology-sym} below we show that these are general features of smooth $\bbS^1$-equivariant maps under hypotheses ($HP_0$)-($HP_3$). Thus, they pertain to general smooth equivariant critical points of $\mathcal{E}_\lambda$ and Theorem~\ref{topology-sym-torus} follows as a special case of this more general result. On this basis, we propose a definition of torus solution (see Definition~\ref{def:torus-sol}) that seems natural and consistent with the phenomenological picture emerging from the numerical simulations mentioned above. 

The topological structure of singular minimizers is instead really different. Theorem~\ref{topology-sym-split} deals with singular minimizers in the class of equivariant maps $\mathcal{A}_{Q_{\rm b}}^{\rm sym}(\Omega)$ still assuming smoothness of the domain and of the boundary data and the validity of conditions ($HP_1$)--($HP_3$). Due to Theorem~\ref{thm:partial-regularity}, we know that the singular set of such minimizers is a finite subset of the symmetry axis. In Proposition~\ref{prop:dipoles} we show that, thanks to ($HP_1$)--($HP_3$) and $\bbS^1$-equivariance, it has a more particular structure, namely, it consists of finitely many \emph{dipoles} (see Remark~\ref{rmk:axis} for this terminology). The proof of Theorem~\ref{topology-sym-split} exploits the behavior of tangent maps at isolated singularities and shows in particular that for each regular value of the biaxiality parameter each dipole belongs to spherical connected components of the corresponding biaxiality surface. 

Finally, examples of split minimizers under suitably chosen topologically nontrivial boundary data in the unit ball $B_1$ are provided by Theorem \ref{thm:examples-split}. A very important point here is that singularities appear because they are energetically convenient although not necessary for trivial topological reasons. 
\subsection{Existence and topology of smooth  minimizers}

\label{smoothtopology}
 In this subsection, we explore the topology of biaxial sets of smooth $\bbS^1$-equivariant maps under assumptions ($HP_0$)--($HP_3$). To such maps, all the results in \cite[Section~5]{DMP1} apply. However, $\bbS^1$-equivariance will allow us to give more direct arguments and to obtain more refined information. In particular, in view of the $\mathbb{S}^1$-symmetry we are able to improve \cite[Theorem~5.7, claim 1)]{DMP1}, controlling the genus of the biaxial surfaces, at least for regular values of the signed biaxiality below the critical value $\bar{\beta}$ in ($HP_1$), which must be therefore finite unions of tori (see below for the precise statement). 
 Since this analysis mainly relies on the smoothness property of the configurations, the same qualitative properties will hold for arbitrary equivariant critical points of \eqref{LDGenergytilde}.

The first result of this subsection provides the key step to reveal in any smooth $\bbS^1$-equivariant configuration (assuming ($HP_0$)--($HP_3$) are in force) the phenomenological picture of torus solutions discussed in the Introduction.

\begin{proposition}\label{prop:semidisk}
Let $\Omega \subset \mathbb{R}^3$ be an axisymmetric bounded open set with $C^1$-smooth boundary and let $Q : \overline{\Omega} \to \bbS^4$ be an $\bbS^1$-equivariant map. Suppose that $\Omega$ and $Q$ satisfy assumptions ($HP_0$)-($HP_3$). Then the biaxiality set $\{\beta = -1\}$ of $Q$ contains an invariant circle $\bbS^1$ which is mutually linked to $\partial \mathcal{D}^+_\Omega$, where $\partial\mathcal{D}^+_\Omega$ is the boundary of the simply connected domain $\mathcal{D}^+_\Omega = \Omega \cap \{ x_2 = 0, x_1 > 0\}$ as already defined in Corollary \ref{domains-structure}.
\end{proposition}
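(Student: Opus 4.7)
The plan is to produce an interior point $p \in \mathcal{D}^+_\Omega$ with $\widetilde\beta(Q(p)) = -1$; its $\bbS^1$-orbit $C_p$ will then be a horizontal circle in $\{\widetilde\beta\circ Q = -1\}\setminus I$, and the linking with $\partial\mathcal{D}^+_\Omega$ will come from an explicit transverse intersection count. The topological input is carried by the loop
\[
\gamma : \partial \mathcal{D}^+_\Omega \to \mathbb{R}P^2, \qquad
\gamma(x) := V_{\mathrm{max}}(x) \text{ on } \partial\Omega \cap \partial\mathcal{D}^+_\Omega, \qquad
\gamma(x) := \mathbf{e}_0 \text{ on } \overline I \cap \partial\mathcal{D}^+_\Omega,
\]
where $V_{\mathrm{max}}$ is well defined by $(HP_1)$. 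First I would verify that $\gamma$ is continuous: by $\bbS^1$-equivariance together with Remark~\ref{rmk:invariant-subspaces}, $Q$ takes values in the fixed set $\{\pm\mathbf{e}_0\}$ on the axis, so it is constant on each closed segment $\overline{\ell_k}$; at each $b_j \in \mathcal{B}$, the bound $\widetilde\beta(Q(b_j)) \geq \bar{\beta} > -1$ from $(HP_1)$ rules out $Q(b_j) = -\mathbf{e}_0$, so $Q \equiv +\mathbf{e}_0$ on $\overline I$ and $V_{\mathrm{max}}(b_j) = \mathbf{e}_0$ matches the value assigned to $\gamma$ on the axis pieces.

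The crux is to show $[\gamma] \neq 0$ in $\pi_1(\mathbb{R}P^2) \cong \mathbb{Z}/2$, and this is where $(HP_3)$ enters. Arguing by contradiction, a continuous lift $\widetilde\gamma : \partial \mathcal{D}^+_\Omega \to \bbS^2$ would take values in the fixed set $\{\pm e_3\}$ on each axis segment and hence be constant on each $\overline{\ell_k}$; spreading it through the $\bbS^1$-orbits of the arc pieces by $v(R_\alpha\cdot x) := R_\alpha\cdot\widetilde\gamma(x)$ produces a continuous $\bbS^1$-equivariant lift $v : \partial\Omega \to \bbS^2$ of $V_{\mathrm{max}}$. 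Writing $v(P_i^\pm) = \eta_i^\pm e_3$ at the two poles of each boundary sphere $S_i$, an elementary parity calculation for $\bbS^1$-equivariant self-maps of $\bbS^2$---preimages of a regular value near a pole are forced to lie at the fixed points, each contributing orientation $+1$ because equivariance makes the differential at a pole a complex multiplication---yields $\deg(v, S_i) \equiv (1 - \eta_i^+\eta_i^-)/2 \pmod{2}$. The constraints $\widetilde\gamma \equiv \text{const}$ on each $\overline{\ell_k}$ identify the pole signs of adjacent spheres along the cyclic structure of $\partial\mathcal{D}^+_\Omega$; inserting these identifications into $\prod_i \eta_i^+\eta_i^-$ telescopes to $+1$, so $\#\{i : \eta_i^+ \neq \eta_i^-\}$ is even, hence $\deg(v,\partial\Omega) = \sum_i \deg(v, S_i)$ is even, contradicting $(HP_3)$. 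The main technical obstacle is writing this telescoping cleanly, which requires tracking the cyclic order of poles along $\partial\mathcal{D}^+_\Omega$ and verifying that the axis pairings $\overline{\ell_k}=[b_{2k-1},b_{2k}]$ fit together into a closed chain forcing $\prod_i \eta_i^+\eta_i^- = 1$.

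Once $[\gamma]\neq 0$ is granted, the rest is immediate. If $\{\widetilde\beta\circ Q = -1\} \cap \mathcal{D}^+_\Omega = \emptyset$, then combining with $Q \equiv +\mathbf{e}_0$ on $\overline I$ and $\widetilde\beta\circ Q \geq \bar{\beta}$ on $\partial\Omega$, no point of the topological disk $\overline{\mathcal{D}^+_\Omega}$ is negative uniaxial; thus $V_{\mathrm{max}}$ extends continuously to all of $\overline{\mathcal{D}^+_\Omega}$ as a map into $\mathbb{R}P^2$, with boundary trace $\gamma$, forcing $[\gamma]=0$---a contradiction. Therefore some $p \in \mathcal{D}^+_\Omega$ satisfies $\widetilde\beta(Q(p)) = -1$, and its $\bbS^1$-orbit $C_p$ is the desired invariant circle. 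For the mutual linking, I would use $\overline{\mathcal{D}^+_\Omega} \subset \{x_2 = 0\}$ as a bounding disk for the Jordan curve $\partial\mathcal{D}^+_\Omega$: writing $p = (r_p, 0, h_p)$ with $r_p > 0$, the orbit $C_p = \{(r_p\cos\alpha, r_p\sin\alpha, h_p)\}$ meets $\{x_2 = 0\}$ transversely only at $p$ and at $(-r_p, 0, h_p) \notin \overline{\mathcal{D}^+_\Omega}$, so the transverse intersection $C_p \cap \overline{\mathcal{D}^+_\Omega}$ reduces to the single point $p$, giving $|\mathrm{lk}(C_p, \partial\mathcal{D}^+_\Omega)| = 1$, and mutual non-contractibility in the respective complements follows from nonvanishing linking number.
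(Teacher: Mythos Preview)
Your proof is correct and follows the same overall architecture as the paper: define the loop $\gamma:\partial\mathcal{D}^+_\Omega\to\mathbb{R}P^2$, show $[\gamma]\neq 0$ via $(HP_3)$, deduce a negative uniaxial point in $\mathcal{D}^+_\Omega$ by contradiction, and take its orbit. The only substantive divergence is in how you extract the contradiction with $(HP_3)$. The paper argues more economically: assuming $[\gamma]=0$, it first extends $\gamma$ over the disk $\overline{\mathcal{D}^+_\Omega}$, spreads equivariantly to $\widetilde V\in C(\overline\Omega;\mathbb{R}P^2)$, then invokes simple connectivity of $\overline\Omega$ (this is where $(HP_2)$ enters) to lift $\widetilde V$ to $\tilde v\in C(\overline\Omega;\bbS^2)$, whence $\deg(\tilde v,\partial\Omega)=0$ immediately because $\tilde v$ extends to the interior. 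Your route instead lifts $\gamma$ only along the loop, spreads to $v:\partial\Omega\to\bbS^2$, and then runs an explicit mod-$2$ degree count on each sphere $S_i$ together with a telescoping over the cyclic structure of $\partial\mathcal{D}^+_\Omega$. This works (your parity formula $\deg(v,S_i)\equiv(1-\eta_i^+\eta_i^-)/2$ is correct, since for an equivariant map any regular preimage of a pole lies at the poles, and the telescoping $\prod_i\eta_i^+\eta_i^-=1$ holds because the axis-segment constraints close up along the Jordan curve), but it is noticeably more laborious and bypasses the clean ``extends $\Rightarrow$ degree zero'' step. What your approach buys is that it never leaves $\partial\Omega$ and makes the role of the equivariant structure at the poles fully explicit; what the paper's approach buys is brevity and a transparent use of $(HP_2)$. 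Your linking argument via the Seifert disk $\overline{\mathcal{D}^+_\Omega}$ is more detailed than the paper's, which simply asserts the linking as evident.
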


\begin{proof} 
By symmetry of $\Omega$ and assumptions ($HP_0$)-($HP_1$), the maximal eigenvalue $\lambda_{\rm max} \equiv \lambda_3$  varies continuously and it is always simple on $\partial \Omega$ and hence on $\partial \mathcal{D}^+_{\Omega}$ (because $Q(x)\equiv \eo$, hence $\lambda_3=2/\sqrt{6}$ and it is simple on the symmetry axis). Note that, in view of assumption ($HP_2$) and Corollary \ref{domains-structure}, the section $\mathcal{D}^+_\Omega$ is simply connected and with piecewise smooth boundary. Notice that the eigenspace map $V_{\rm max} : \partial \Omega \to \R P^2$ is well-defined and smooth because of ($HP_2$), moreover it is equivariant, because $\lambda_{\rm max}(\cdot)$ is invariant and $Q(\cdot)$ is equivariant. As in  \eqref{nontrivialloop-j},
we define $\gamma : \partial \mathcal{D}^+_\Omega \to \R P^2$ as the restriction of the map $V_{\rm max}$ to $\partial \mathcal{D}^+_\Omega$ (extended to be the vertical direction $\eo \in \mathbb{R}P^2$ for every $x \in I=\Omega \cap \{ x_3 \text{ axis} \}$) and we claim that $\gamma$ performs the nontrivial path in $\bbR P^2$. Indeed, suppose the converse: then $\gamma$ would have a continuous extension to $\overline{\mathcal{D}^+_\Omega}$ and in turn a continuous equivariant extension $\widetilde{V} \in C(\overline{\Omega};\mathbb{R}P^2)$ because of \eqref{reconstruction}. Due to $(HP_2)$ the map $\widetilde{V}$ would have a continuous (and equivariant) lifting $\tilde{v} \in C(\overline{\Omega};\mathbb{S}^2)$, in particular at the boundary, where in turn $\deg (\tilde{v},\partial \Omega)=0$. On the other hand, in view of assumption ($HP_3$) any lifting of $\widetilde{V}\vert_{\partial\Omega} = V_{\rm max}$ at the boundary must have odd degree, which gives a contradiction and proves the previous claim.

 Now we claim that there exists in $\mathcal{D}^+_\Omega$ a point $x_0$ at which $\lambda_{2}(x_0) = \lambda_{\rm max}(x_0)$, so that $x_0 \in \mathcal{D}^+_\Omega \cap \{\beta=-1\}$. Note that this fact could be deduced using \cite[Lemma 5.2]{DMP1} which is valid also in the nonsymmetric context, but we prefer to give here a more transparent and elementary argument. Indeed, suppose this is not the case: then $\lambda_{\rm max}$ would be always simple on $\overline{\mathcal{D}^+_\Omega}$. Arguing as above, the eigenspace map $V_{\rm max}$ would be well-defined and smooth on the whole $\overline{\Omega}$, therefore the map $\gamma : \partial \mathcal{D}^+_\Omega \to \R P^2$ defined above, setting $\gamma(x) = V_{\rm max}(x)$, could be extended to a map $\gamma \in C^1(\overline{\mathcal{D}^+_\Omega}; \R P^2)$, hence it would be homotopically equivalent to a constant again because $\mathcal{D}^+_\Omega$ is simply connected. Since $\gamma$ on the boundary is the nontrivial loop in $\pi_1(\R P^2)$, then we have a contradiction and such $x_0 \in \mathcal{D}^+_\Omega \cap \{\beta=-1\}$ must exist. Since $\widetilde\beta\circ Q(x_0) = -1$ and $\widetilde\beta \circ Q$ is an invariant function under $\bbS^1$-action, then we have $\widetilde\beta\circ Q(R x_0) = -1$ for all $R \in \bbS^1$, that is, on the whole orbit of $x_0$ which is an embedded copy of $\bbS^1$. Thus, the negative biaxial set $\{\beta = -1 \}$ contains an embedded copy of $\bbS^1$ and it is clearly mutually linked to $\partial \mathcal{D}^+_\Omega $. 
\end{proof}
 
Proposition~\ref{prop:semidisk} is crucial in the proof of Theorem~\ref{thm:examples-tori} below together with the following auxiliary result. 
\begin{lemma}
\label{toplbenergy}
Let $\mathbb{D}\subset \R^2$ and $\R P^2 \subset \mathbb{S}^4 \subset \mathcal{S}_0$. Let $\mathcal{U}_{2\eta}:= \{\rmdist(\, \cdot \, , \R P^2)<2\eta \} \subset \mathcal{S}_0$, $\eta>0$ small, a tubular neighborhood of $\R P^2$ such that the nearest point projection $\Pi: \mathcal{U}_{2\eta} \to \R P^2$ is well-defined and smooth. There exists $\delta>0$ depending only on $\eta$ such that the following holds. For any $\bar{Q} \in C(\overline{\mathbb{D}};\mathcal{S}_0)\cap W^{1,2}(\mathbb{D};\mathcal{S}_0 )$ such that
 
\begin{itemize}
	\item[1)]  $\int_\mathbb{D} |\nabla \bar{Q}|^2 \, dx< \delta$,
	\item[2)] $\bar{Q} (\partial \mathbb{D}) \subset \mathcal{U}_\eta$,   	
\end{itemize}
 the normalized map $\bar{\gamma} \in C(\partial \mathbb{D}; \R P^2) $ given by $\bar{\gamma}=\Pi \circ \bar{Q}$ satisfies $[ \bar{\gamma}]=0$ in $\pi_1(\R P^2)$.
\end{lemma}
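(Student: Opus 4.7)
The plan is to reduce the topological statement on $\partial\mathbb{D}$ to a quantitative analysis on an auxiliary circle $\partial B_{r_0}\subset\mathbb{D}$, together with a homotopy on the annulus $A_{r_0}=\mathbb{D}\setminus\overline{B_{r_0}}$.

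First, I will apply a Fubini/Courant--Lebesgue argument in polar coordinates. Writing $|\nabla\bar{Q}|^2=|\partial_r\bar{Q}|^2+r^{-2}|\partial_\theta\bar{Q}|^2$ and setting $E_r:=\int_{\partial B_r}|\partial_s\bar{Q}|^2\,ds$, one has $\int_0^1 E_r\,dr\leq\int_{\mathbb{D}}|\nabla\bar{Q}|^2\,dx<\delta$, which produces some $r_0\in(1/2,1)$ with $E_{r_0}\leq 2\delta$. The one-dimensional Sobolev embedding on $\partial B_{r_0}$ (a circle of length $\leq 2\pi$) then gives $\mathrm{osc}_{\partial B_{r_0}}(\bar{Q})\leq 2\sqrt{\pi\delta}$, so $\bar{Q}(\partial B_{r_0})$ sits inside a Euclidean ball of $\mathcal{S}_0$ of small diameter.

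Second, I will locate this image near $\mathbb{R}P^2$ by comparing $\bar{Q}|_{\partial B_{r_0}}$ with the boundary data along radial segments. Cauchy--Schwarz plus Fubini in the angular direction give $\int_0^{2\pi}|\bar{Q}(e^{i\theta})-\bar{Q}(r_0 e^{i\theta})|^2\,d\theta\leq\frac{1-r_0}{r_0}\int_{A_{r_0}}|\nabla\bar{Q}|^2\,dx\leq 2\delta$, so some $\theta_0$ satisfies $|\bar{Q}(e^{i\theta_0})-\bar{Q}(r_0 e^{i\theta_0})|\leq\sqrt{\delta/\pi}$; since $\bar{Q}(e^{i\theta_0})\in\mathcal{U}_\eta$, combined with the oscillation bound this forces $\bar{Q}(\partial B_{r_0})\subset\mathcal{U}_{\eta+\sqrt{\delta/\pi}+2\sqrt{\pi\delta}}\subset\mathcal{U}_{2\eta}$ as soon as $\delta\leq\delta_0(\eta)$. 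The projected loop $\Pi\circ\bar{Q}|_{\partial B_{r_0}}$ is then a continuous loop contained in an arbitrarily small geodesic ball of $\mathbb{R}P^2$ (by shrinking $\delta$ further), hence null-homotopic in $\mathbb{R}P^2$.

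Finally, to transfer this null-homotopy to $\bar\gamma$, I will use $\Pi\circ\bar{Q}$ on the annulus $A_{r_0}$ as a homotopy in $\mathbb{R}P^2$ between $\bar\gamma$ and $\Pi\circ\bar{Q}|_{\partial B_{r_0}}$; this is legitimate provided the pointwise inclusion $\bar{Q}(A_{r_0})\subset\mathcal{U}_{2\eta}$ holds on the full annulus, and securing it (rather than only on the two bounding circles) is the decisive technical point. I plan to argue by contradiction, exploiting a topological energy lower bound: any loop in $\mathcal{U}_{2\eta}$ whose $\Pi$-projection is non-trivial in $\pi_1(\mathbb{R}P^2)$ has length $\geq\pi$ in $\mathbb{R}P^2$, so via the Lipschitz constant $L(\eta)$ of $\Pi\!\mid_{\mathcal{U}_{2\eta}}$ and Cauchy--Schwarz one has $E_r\geq c(\eta):=\pi/(2L(\eta)^2)>0$. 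On the connected component $(r_{**},1)$ of $\{r\in(0,1]:\bar{Q}(\partial B_r)\subset\mathcal{U}_{2\eta}\}$ containing a left neighborhood of $1$, the projected class is locally constant and equals $[\bar\gamma]$; if $[\bar\gamma]\neq 0$, then $E_r\geq c(\eta)$ throughout, and integration yields $c(\eta)(1-r_{**})\leq\delta$, so $r_{**}$ is close to $1$. Re-running the oscillation/location steps on the thin annulus $(r_{**},1)$ and combining with the fact that, by definition of $r_{**}$, some $p^{*}\in\partial B_{r_{**}}$ satisfies $\mathrm{dist}(\bar{Q}(p^{*}),\mathbb{R}P^2)=2\eta$, I expect to reach a contradiction with the boundary condition $\bar{Q}(\partial\mathbb{D})\subset\mathcal{U}_\eta$ for $\delta$ small enough depending only on $\eta$, forcing $[\bar\gamma]=0$.
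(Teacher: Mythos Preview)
Your Steps~1 and~2 are fine, but Step~3 has a genuine gap at the final ``I expect to reach a contradiction''. Under the assumption $[\bar\gamma]\neq0$, you correctly derive $1-r_{**}\leq\delta/c(\eta)$ and the existence of $p^{*}\in\partial B_{r_{**}}$ with $\rmdist(\bar Q(p^{*}),\mathbb{R}P^{2})=2\eta$. However, neither of your two tools can close the argument on the thin annulus $(r_{**},1)$. First, the Courant--Lebesgue step cannot be ``re-run'' there: you yourself showed $E_r\geq c(\eta)$ for every $r\in(r_{**},1)$, so no circle with small tangential energy (hence small oscillation) exists in that range. Second, the radial $L^2$ comparison $\int_0^{2\pi}|\bar Q(e^{i\theta})-\bar Q(r_{**}e^{i\theta})|^2\,d\theta\lesssim(1-r_{**})\delta$ only controls an \emph{average} over angles, not the specific angle $\theta^{*}$ where $p^{*}$ sits. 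In two dimensions a point has zero capacity, so the bound $\int|\nabla\bar Q|^2<\delta$ does not forbid the scalar function $\rmdist(\bar Q(\cdot),\mathbb{R}P^2)$ from spiking from values $<\eta$ on $\partial\mathbb{D}$ up to $2\eta$ at the single point $p^{*}$. In short, your argument controls the \emph{tangential} ($\mathbb{R}P^2$-projected) behaviour of $\bar Q$ via the homotopy class, but gives no pointwise control of the \emph{normal} component $\rmdist(\bar Q,\mathbb{R}P^2)$, and that is precisely what is needed to rule out the bad point~$p^{*}$.

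The paper's proof bypasses this difficulty by a different mechanism: it replaces each $\bar Q$ by the $\mathcal{S}_0$-valued harmonic extension $\hat Q$ of its trace (same boundary loop, smaller energy, and $|\hat Q|\leq1+\eta$ by the maximum principle), and then argues by contradiction using conformal invariance. If some $z_j\in\mathbb{D}$ has $\rmdist(\hat Q_j(z_j),\mathbb{R}P^2)\geq2\eta$, one precomposes with the M\"obius map sending $0$ to $z_j$; the resulting harmonic maps $U_j$ still have vanishing energy, hence converge weakly in $W^{1,2}$ and (by elliptic regularity) locally uniformly to a constant $U_*$, while their traces converge a.e.\ on $\partial\mathbb{D}$ to the same constant. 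Passing to the limit in $\rmdist(U_j(0),\mathbb{R}P^2)\geq2\eta$ and in $\rmdist(U_j|_{\partial\mathbb{D}},\mathbb{R}P^2)\leq\eta$ simultaneously yields the contradiction. The key point is that harmonicity upgrades the weak/$L^2$ convergence to uniform convergence in the interior, which is exactly the pointwise control your direct approach lacks.
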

\begin{proof}
We argue by contradiction and assume there exists a sequence $\{\bar{Q}_j\} \subset C(\overline{\mathbb{D}};\mathcal{S}_0)\cap W^{1,2}(\mathbb{D};\mathcal{S}_0 )$ such that $\int_\mathbb{D} |\nabla \bar{Q}_j|^2 \, dx \to 0$ as $j \to \infty$,  $\bar{Q}_j (\partial \mathbb{D}) \subset \mathcal{U}_\eta$ for each $j$ and for the corresponding sequence of ``normalized'' boundary traces  $\{ \bar{\gamma}_j \} \in C(\partial \mathbb{D}; \R P^2) $ given by $\bar{\gamma}_j=\Pi \circ \bar{Q}_j$ we have $[ \bar{\gamma}_j]\neq 0$ in $\pi_1(\R P^2)$ for all $j$.
We replace each $\bar{Q}_j$ with the harmonic extension $\hat{Q}_j$ with values into $\mathcal{S}_0$ of its boundary trace $\bar{\gamma}_j$; by energy minimality, regularity up to the boundary and maximum principle for $\mathcal{S}_0$-valued harmonic functions, we see that $\{\hat{Q}_j\} \subset C(\overline{\mathbb{D}};\mathcal{S}_0)\cap W^{1,2}(\mathbb{D};\mathcal{S}_0 )$ and they satisfy 
\begin{itemize}
	\item[a)]  $\int_\mathbb{D} |\nabla \hat{Q}_j|^2 \, dx \to 0$ as $j \to \infty$,
	\item[b)] $\hat{Q}_j (\partial \mathbb{D}) \subset \mathcal{U}_\eta$ and $| \hat{Q}_j | \leq 1+\eta$ on $\overline{\mathbb{D}}$ for all $j$\,.   	
\end{itemize}
Now we claim that $\hat{Q}_j( \overline{\bbD}) \subset \mathcal{U}_{2\eta}$ for $j$ large enough. Before proving the claim, we show that it yields the desired contradiction. Indeed, assuming the claim for a moment, then the ``normalized'' maps $\Gamma_j =\Pi\circ \hat{Q}_j$ would be well-defined and $\Gamma_j \in C(\overline{\mathbb{D}}; \R P^2)$ for $j$ large enough, whence for such $j$ the maps $ \bar{\gamma}_j={\Gamma_j}|_{\partial \mathbb{D}}$ would be homotopic through $\Gamma_j$ to a constant map $\Gamma_j(0)$ in $C(\mathbb{S}^1; \R P^2)$, a contradiction. Thus the conclusion of the lemma is true up to proving that $\hat{Q}_j( \overline{\bbD}) \subset \mathcal{U}_{2\eta}$ for $j$ large enough.

In order to prove the last claim we argue by contradiction and we suppose that, up to a subsequence, for each $j$ there exists a point $z_j \in \mathbb{D}$ such that $\rmdist(\hat{Q}_j(z_j), \R P^2) \geq 2 \eta$. We rescale each map $\hat{Q}_j$ by composing with the M\"obius trasformation $\Phi_j(z)
=\frac{z+z_j}{1+\overline{z_j} z}$. Since each $\Phi_j$ is a conformal self-diffeomorphism of $\overline{\mathbb{D}}$ with $\Phi_j(0)=z_j$, then the compositions $U_j=\hat{Q}_j \circ \Phi_j$ are still harmonic functions such that $\{U_j\}\subset C(\overline{\mathbb{D}};\overline{B_{1+\eta}})\cap W^{1,2}(\mathbb{D};\mathcal{S}_0 )$. Moreover, by conformal invariance and normalization,
 \begin{itemize}
	\item[i)]  $\int_{\mathbb{D}} |\nabla U_j|^2 \, dx =\int_\mathbb{D} |\nabla \hat{Q}_j|^2 \, dx \to 0$ as $j \to \infty$,
	\item[ii)] $\rmdist(U_j(\cdot), \R P^2)\leq \eta$ on $\partial \mathbb{D}$ and $\rmdist(U_j(0), \R P^2) \geq 2 \eta$  for all $j$. 
		\end{itemize}
Since $\{U_j(0)\} \subset \overline{B_{1+2\eta}}$, passing to a further subsequence if necessary we obtain
a constant map $U_*=\lim_{j \to \infty} U_j(0)$ such that $U_j \rightharpoonup U_*$ weakly in $W^{1,2}(\mathbb{D};\mathcal{S}_0)$ and locally uniformly in $\mathbb{D}$ (even smoothly, by elliptic regularity). On the other hand by weak convergence of traces still to the constant map $U_*$ and the compactness of the embedding $W^{1/2,2}(\partial \mathbb{D};\mathcal{S}_0) \hookrightarrow L^2(\partial \mathbb{D}; \mathcal{S}_0)$, we also obtain, up to subsequences, $U_j \to U_*$ a.e. on $\partial \mathbb{D}$. Passing to the limit in the inequalities ii) we have a contradiction and the proof is complete.
\end{proof}

\begin{proof}[Proof of Theorem~\ref{thm:examples-tori}]
We divide the proof into three steps to make the argument easier to follow.

{\em Step 1:} we construct comparison maps $\{\Theta_j\} \subset \rmLip_{\rm sym}(\Omega; \mathbb{S}^4)$ such that $\sup_j \mathcal{E}_\lambda(\Theta_j) \leq C$ for some constant $C > 0$ and the corresponding traces $Q_{\rm b}^j:={\rm tr}\,\Theta_j$ give a bounded sequence in $W^{1/2,2}(\bbS^2;\R P^2)$ converging weakly to $\eo$ and correspond as in \eqref{Qblift} to a sequence $\{ v_j\} \subset C_{\rm sym}^\infty(\mathbb{S}^2;\mathbb{S}^2)$ equivariantly homotopic to the outer normal to $\mathbb{S}^2$.

We first consider maps $v \in C^\infty_{\rm sym}(\mathbb{S}^2;\mathbb{S}^2)$ described in terms of spherical coordinates $(\phi,\theta)$ using an angle function (see \cite{HKL,HLP}) $h\in C^\infty([0,\pi])$, with $0 \leq h(\theta) \leq \pi$, so that
\begin{equation}
\label{anglefunction}
v(\phi,\theta)=(\cos \phi \sin h(\theta), \sin \phi \sin h(\theta), \cos h(\theta)) \, , \quad 0 \leq \phi \leq 2\pi \,, \, \, \, 0\leq \theta \leq \pi \, .  
\end{equation}
We assume for simplicity the extra symmetry $h(-\theta+ \pi/2)+h(\theta+\pi/2)=\pi$, $0\leq \theta \leq \pi/2$, 
so that the corresponding map $v$ commutes with the reflection with respect to the plane $\{x_3 = 0 \}$. The basic example is the function $h(\theta)\equiv \theta$, for which the corresponding map $v$ is the outer normal
 $\overset{\to}{n}$ (i.e., the identity map).  
We fix an increasing smooth function $\bar{h}$ with the symmetry above and such that $\bar{h} \equiv 0$ for $0\leq \theta \leq \pi/6$ and $\bar{h}\equiv \pi$ for $ 5\pi/6 \leq \theta \leq \pi$, denoting with $\bar{v} \in C^\infty_{\rm sym}(\mathbb{S}^2;\mathbb{S}^2)$ the corresponding map. Consequently, we denote with $Q_{\rm b} \in C^\infty_{\rm sym}(\mathbb{S}^2;\R P^2)$ the map obtained from $\bar{v}$ using \eqref{Qblift} and we observe that by construction of $\bar{h}$ we have $Q_{\rm b} \equiv \eo$ in $\mathbb{S}^2 \cap \{ x_1^2+x_2^2< 1/4\}$. Notice that $\bar{v}$ is (equivariantly) homotopic to the identity map simply through the affine homotopy of their angle functions $H(\theta, t)=t \theta +(1-t) \bar{h}(\theta)$, $0 \leq t \leq 1$, so that in particular $\deg(\bar{v}, \partial \Omega)=\deg({\rm Id}, \partial \Omega)=1$. 

In order to extend $Q_{\rm b}$ to $\Omega=B_1$, we find it is more convenient to work on the vertical slice $\mathcal{D}^+=\Omega \cap \{ x_2=0 \, , \, x_1>0\}$, defining the map $\Theta : \partial \mathcal{D}^+ \to \R P^2$ by restriction of $Q_{\rm b}$ on the curved part of the boundary extended with the constant value $\eo$ on the vertical segment $I=\Omega \cap \{x_3\text{-axis}\} \subset \partial \mathcal{D}^+$. Notice that the previous map $\Theta : \partial \mathcal{D}^+\simeq \mathbb{S}^1 \to \R P^2 $ is continuous and $[ \Theta ] \neq 0$ in $\pi_1(\R P^2)$ because $\bar{v}$ is equivariantly homotopic to the identity. Now we extend $\Theta$ to $\mathcal{D}^+$ as the constant $\eo$ on $\mathcal{D}^+ \cap \{x_1 \leq \frac 12 \}$ and then on $\mathcal{T}^+:=\mathcal{D}^+ \cap \{x_1 > \frac 12\}$ as any fixed Lipschitz extension with values into $\mathbb{S}^4$ (as the latter is simply connected there is no obstruction for such an extension).
 To  summarize, there exists $\Theta \in \rmLip (\overline{\mathcal{D}^+}; \mathbb{S}^4)$ such that $\Theta \equiv \eo$ on $\overline{\mathcal{D}^+}\cap \{ 0 \leq x_1 \leq \frac 12\}$ and $\Theta \in C(\partial \mathcal{T}^+; \R P^2)$ but not homotopic to a constant.  Then, due to \eqref{reconstruction}, we can extend $\Theta$ equivariantly to the whole $\overline{\Omega}$ to have a map $\Theta \in \rmLip_{\rm sym}(\overline{\Omega};\mathbb{S}^4)$ such that $\Theta |_{\partial \Omega}=Q_{\rm b}$ and $\Theta \equiv \eo$ on $\overline{\Omega}\cap \{x_1^2+x_2^2 < \frac 14\}$. 

Finally, we construct the sequence $\{\Theta_j\}$ and the corresponding boundary traces $\{Q_{\rm b}^j\}$ by deforming the map $\Theta$ as follows. First we extend $\Theta$ from $\overline{\mathcal{D}^+}$ to a map $\bar{\Theta}$ on the whole $\overline{\mathcal{D}}=\overline{\Omega}\cap \{x_2=0\}$ with the constant value $\eo$ for $x_1<0$. Then for a deformation parameter $\rho \geq 1$ we consider M\"obius maps $\Phi_\rho : \overline{\mathcal{D}} \to \overline{\mathcal{D}}$ defined as $\Phi_\rho(z)=\frac{z-1+1/\rho}{1-(1-1/\rho)z}$. Note that as $\rho$ increases the conformal diffeomorphisms $\Phi_\rho$ ``squeeze the interior of $\mathcal{D}$ towards the point $(-1,0)$''. In addition, $\Phi_\rho^{-1}(\overline{\mathcal{D}^+})\subset \overline{\mathcal{D}^+}$,     $\Phi_\rho^{-1}(\overline{\mathcal{T}^+})\subset \overline{\mathcal{T}^+}$ and $\Phi_\rho^{-1}(\overline{\mathcal{T}^+}) \downarrow \{(1,0)\}$ as $\rho \to \infty$. Thus, if we set $\Theta_\rho=\bar{\Theta} \circ \Phi_\rho$ then $\Theta_\rho \in \rmLip( \overline{\mathcal{D}^+};\mathbb{S}^4)$ and $\Theta_\rho$ is obtained by a continuous deformation of $\bar{\Theta}$, ``concentrating $\bar{\Theta}$ near the point $(1,0)$''. 

Taking $\rho=j$ and extending each $\Theta_j$ equivariantly to $\overline{\Omega}$ we have the following: 
\begin{itemize}
\item[1)] for each $j \geq 1$ we have $\Theta_j \in \rmLip(\partial \mathcal{D}^+;\R P^2)$ and $[\Theta_j]\neq 0$ in $\pi_1(\R P^2)$ because the same property holds for $\bar{\Theta}$ by construction (just use $\rho\in [1, j]$ as a homotopy parameter);

	\item[2)] $\{\Theta_j \} \subset \rmLip_{\rm sym}(\overline{\Omega};\mathbb{S}^4)$ and $\Theta_j \equiv \eo$ out of $
	\mathbb{S}^1 \cdot \Phi_j^{-1}(\overline{\mathcal{T}^+}) \downarrow \mathbb{S}^1 \cdot\{(1,0)\}=\mathcal{C}$ as $j \to \infty$;
	thus $\Theta_j \to \eo$ locally uniformly on $\overline{\Omega} \setminus \mathcal{C}$ as $j \to \infty$ because of the properties of the M\"obius transformations combined with those of $\bar{\Theta}$;
	
	\item[3)] for every $j \geq 1$, by equivariance of $\Theta_j$ and conformal invariance in 2d we have 
	 \begin{multline}
	 \label{Thetabound}
 \int_{\Omega} |\nabla \Theta_j|^2 \, dx =\int_{\Omega \cap \{x_1^2+x_2^2> \frac 14\}} |\nabla \Theta_j|^2 \, dx 
 \leq C \left(1+\int_{\mathcal{T}^+} |\nabla_{x_1,x_3} \Theta_j |^2 d\mathcal{H}^2\right) \\ \leq C \left(1+ \int_{\mathcal{D}^+} |\nabla_{x_1,x_3} \Theta |^2 d\mathcal{H}^2 \right)  \, ; \end{multline}
	\item[4)] $\sup_j \mathcal{E}_\lambda(\Theta_j) \leq C$ for some $C>0$ because of the equiboundedness of the potential energy $W$ on $\mathbb{S}^4$ and the a priori bound \eqref{Thetabound} in 3);
	
	\item[5)] the traces $Q_{\rm b}^j:={\rm tr}\,\Theta_j$ are bounded in $W^{1/2,2}(\partial \Omega; \R P^2)$ by the pointwise properties of $\Theta_j$ at the boundary and 2)+3); in addition, the weak limit of $Q_{\rm b}^j$ is the constant map $\eo$ since $\Theta_j \rightharpoonup \eo$ in $W^{1,2}(\Omega)$ as $j \to \infty$ because of 2)+3); thus, claim 1) of Theorem \ref{thm:examples-tori} hold;
	\item[6)] the sequence $\{Q_{\rm b}^j\}$ corresponds as in \eqref{Qblift} to a sequence $\{ v_j\} \subset C_{\rm sym}^\infty(\mathbb{S}^2;\mathbb{S}^2)$ equivariantly homotopic to the outer normal to $\mathbb{S}^2$ given through \eqref{anglefunction} by angle functions $h_j=\bar{h} \circ   \Phi_j$, $h_j \in C^\infty([0, \pi])$ increasing (here we extend $\bar{h}$ to the whole $(-\pi/2, 3\pi /2) \simeq \partial \mathcal{D} \setminus \{(-1,0)\}$ as a constant $0$ in $(-\pi/2,0)$ and $\pi$ in $(\pi, 3\pi/2)$ respectively, hence they are still smooth). 
		\end{itemize}
Notice that in 6) the equivariant homotopy for each $j$ fixed is given through a homotopy of angle functions $h_j$ with $\bar{h}$ varying the parameter $\rho \in [1,j]$ of the M\"obius maps $\Phi_\rho$. Finally, since $\bar{h}(\pi/2)=\pi/2$ because of the symmetry above and $\bar{h}$ being increasing, the same hold for corresponding angle functions $\{h_j\}$, therefore for the corresponding maps $\{v_j\}$ equation \eqref{anglefunction} yields $v_j \cdot \overset{\to}{n}>0$ on $\partial \Omega$ for each $j \geq 1$.

{\em Step 2:} we prove that the corresponding minimizers $Q^j$ of $\mathcal{E}_\lambda$ in $\mathcal{A}^{\rm sym}_{Q_{\rm b}^{j}}(\Omega)$ for $j$ sufficiently large are smooth up to the boundary, converge locally smoothly in $\overline{\Omega}$ to the constant map $Q_*\equiv \eo$ away from the circle $\mathcal{C}$ and moreover that claim 2) and 3) of Theorem \ref{thm:examples-tori} hold.

Indeed, first notice that by Proposition \ref{prop:symmetric-criticality} each minimizer $Q^j$ is a weak solution to \eqref{MasterEq} and, due to Proposition \ref{corbdmonotform}, the monotonicity formulas \eqref{IntMonForm}-\eqref{BdMonForm} are satisfied.

Using $\Theta_j$ as comparison maps for every $j \geq 1$, in view of Step 1, claim 4), the minimizers $Q^j$ satisfy
\begin{equation}
\label{Qjenergybound}
	\sup_{j} \mathcal{E}_\lambda(Q^j,\Omega) \leq \sup_{j} \mathcal{E}_\lambda(\Theta_j,\Omega) \leq C \, .
\end{equation}
 Since the energies $\mathcal{E}_\lambda(Q^j,\Omega)$ are equibounded and $\Omega = B_1$, we can apply Theorem~\ref{compintthm} to the sequence $\{Q^j\}$ (with $\lambda_j \equiv \lambda$) to deduce there is a (not relabeled) subsequence and a limiting map $Q_* \in W^{1,2}_{\rm sym}(\Omega; \bbS^4)$ minimizing the energy in $\Omega$ with respect to its boundary trace so that $Q^j \rightharpoonup Q_*$ in $W^{1,2}(\Omega)$ as $j \to \infty$ and $Q^j \to Q_*$ strongly in $W^{1,2}_{\rm loc}(\Omega)$ as $j \to \infty$.
  From the fact that $Q_{\rm b}^j \rightharpoonup \eo$ weakly in $W^{1/2,2}(\bbS^2;\R P^2)$ as $j \to \infty$ and the commutativity of the trace operator with weak limits, we infer that the trace of $Q_*$ on $\partial \Omega =\bbS^2$ is the constant map $\eo$, hence $Q_* = \eo$ because such constant map is clearly the unique minimizer of $\mathcal{E}_\lambda$ with constant trace $\eo$ on the boundary (note that uniqueness also implies the convergence of the whole sequence $\{Q^j\}$ to $\eo$). Thus claim 2) is proved. 
 
 In order to prove claim 3), first we recall that according to Step 1), claim 2), the boundary data $\Theta_j$ are constant away from neighborhoods of the equatorial circle $\mathcal{C} = \partial \Omega \cap \{ x_3 = 0\}$, and such neighborhoods shrink onto $\mathcal{C}$ as $j \to \infty$. Therefore, recalling the equiboundedness of $\{Q^j\}$, using Theorem~\ref{compintthmbdry} (and the already obtained locally strong convergence in the interior) we deduce that $Q^j \to \eo$ strongly in $W^{1,2}_{\rm loc}(\overline{\Omega} \setminus \mathcal{C})$. Now observe that in view of \eqref{Qjenergybound} and the pointwise equiboundedness of the potential $W(Q^j)$, the energy measures $\mu_j= \abs{\nabla Q^j}^2 dx $ have equibounded mass. Since each $Q^j$ is equivariant then each $\mu_j$ is invariant, hence the local strong convergence to a constant map just mentioned above yields, up to subsequences, 
\begin{equation}
\label{measureconvergence}
 \abs{\nabla Q^j}^2 dx \rightharpoonup c \mathcal{H}^1 \res \mathcal{C}
\end{equation}
as measures on $\overline{\Omega}$ as $j\to \infty$, for some $c \geq 0$. 

To conclude the proof of claim 3) it remains to show that $c>0$. Before doing this we first observe that, even if $c \geq 0$, we have the constancy of the boundary data in uniform neighborhoods of the poles (due to Step 1, claim 2)) and in view of \eqref{measureconvergence} also smallness of the scaled energy centered at the poles for uniform neighborhoods $B_r \cap \Omega$. Actually, using again \eqref{measureconvergence} we see that for any $0<r<1/2$ there exists $j_0$ such that $\frac{1}{r}\mathcal{E}_\lambda(Q^j, B_r(\bar{x})\cap \Omega) < \min \{\boldsymbol{\eps}_{\rm in}, \boldsymbol{\eps}_{\rm bd} \}$ for all $j \geq j_0$ and for all $\bar{x}\in \overline{\Omega} \cap \{x_3 \text{ -axis}\}$. Neglecting finitely many maps with $j<j_0$ and applying the $\varepsilon$-regularity property from \cite[Corollary~2.19 and 2.20]{DMP1}, we see that all the minimizers $Q^j$ must be smooth on the whole $\overline{\Omega}$ because each boundary datum $Q_{\rm b}^j$ is $C^\infty$-smooth and no interior singularities on the symmetry axis are allowed.
 We can push the same argument further and indeed in the truncated domain $\overline{\Omega}\cap \{ x_1^2+x_2^2 \leq \frac 14\}$, possibly neglecting finitely many maps if necessary, combining again \eqref{measureconvergence} with \cite[Corollary 2.19 and 2.20]{DMP1} we see that all the minimizers $Q^j$ must be equi-Lipschitz continuous, therefore $Q^j \to \eo$ uniformly on $\overline{\Omega}\cap \{ x_1^2+x_2^2 \leq \frac 14\}$. 
 
 We are finally in the position to prove that the constant $c\geq 0$ in \eqref{measureconvergence} is indeed strictly positive. We argue by contradiction and suppose that $c=0$, aiming to derive a contradiction for the sequence of minimizers $\{Q^j\}$ restricted to the vertical slice $\overline{\mathcal{T}^+}$ with the help of Lemma \ref{toplbenergy}. First notice that for $j$ large enough we have $Q^j \in \mathcal{U}_{\eta}$ on the whole $\overline{\Omega}\cap \{ x_1^2+x_2^2 \leq \frac 14\}$ because of the uniform convergence to $\eo$ just established above. Since $Q^j|_{\partial \mathcal{D}^+} =\Theta_j$ we see that, with the notations of Lemma \ref{toplbenergy}, the maps $\Theta \in C(\partial \mathcal{D}^+; \R P^2)$ and $Q^j \in C(\partial \mathcal{T}^+;\mathcal{U}_{\eta})$ satisfy $[\Theta_j]=[ \Pi\circ Q^j] \neq 0$ in $\pi_1(\R P^2)$ for $j$ large enough (because of Step 1, claim 1) and using $Q^j$ itself as homotopy to deform the restriction of $\Theta_j$ to $\partial \mathcal{D}^+$ into the one to $\partial \mathcal{T}^+$). Now observe that if $c=0$ then using equivariance and arguing as in Step 1, claim 3), as $j \to \infty$ we have 
 \[ \int_{\mathcal{T}^+} \abs{\nabla_{x_1,x_3} Q^j}^2 \, dx_1dx_3 \leq C  \int_{\Omega \cap \{ x_1^2 +x_2^2 > \frac 14\}} \abs{\nabla Q^j}^2 \, dx \to 0 \, .
    \]
Then we infer a contradiction from Lemma \ref{toplbenergy} up to pulling back the maps onto the unit disc $\mathbb{D}$ by composition with a biLipschitz homeomorphism $\Phi:\overline{\mathbb{D}} \to \overline{ \mathcal{T}^+}$. Indeed, setting $\bar{Q}_j:=Q^j \circ \Phi$, all the assumptions in Lemma \ref{toplbenergy} are trivially satisfied (assumption 1) for $j$ large enough) but $[\Pi \circ \bar{Q}_j]=[\Pi \circ Q^j]\neq 0$ in $\pi_1(\R P^2)$ for all $j$, a contradiction.

{\em Step 3:} we show that for $j$ large enough the uniaxial sets and the biaxial regions corresponding to the smooth minimizers $Q^j$ possess all the announced qualitative properties.

Indeed, first notice that each minimizer is smooth up to the boundary of $\Omega$ and analytic in the interior (see \cite[Corollary 2.19]{DMP1}), hence assumption ($HP_0$) holds. On the other hand, since $\Omega=B_1$, assumptions ($HP_1$)--($HP_3$) are clearly satisfied by the domain and the boundary data, because of Step 1, claim 6). Since the whole set of assumptions ($HP_0$)--($HP_3$) is verified by each minimizer $Q^j$, applying Proposition~\ref{prop:semidisk} we infer that, for every $Q^j$, the set $\{\beta_j = -1 \}$ contains an invariant circle $\Gamma_j$ mutually linked to $\partial \mathcal{D}^+$. Moreover, since $Q_{\rm b}^j$ takes values into $\R P^2$ for every $j$, we have $\partial B_1 \subset \{ \beta_j = 1\}$ (actually, we have also $\partial B_1 \cup I \subset \{ \beta_j = 1\}$, where $I$ denotes the vertical diameter because smoothness yields $Q^j\equiv \eo$ on the symmetry axis). From \cite[Lemma~5.2]{DMP1}, it then follows that all levels of biaxiality are nonempty. Moreover, the set of singular values for $\beta_j$ in the whole range $[-1,1]$ is at most countable and can accumulate only at $t=1$. Now, if $-1 < t < 1$ is a regular value for $\beta_j$, we know from \cite[Theorem~5.7, claim 1)]{DMP1} that $\{ \beta = t \}$ is a smooth surface with a connected component of positive genus. On the other hand, because of equivariance of each map the set $\{\beta = t \}$ is an $\bbS^1$-invariant set, hence we can study it by looking to its section in the vertical plane, i.e, by looking at $\{\beta = t\} \cap \{ x_2 = 0 \}$. This fact allows to completely characterize $\{\beta = t \}$. Indeed, because of $\bbS^1$-equivariance and the regularity of $Q^j$, its planar section can only look as the union of finitely many smooth closed simple curves, so that $\{\beta_j = t\}$ is the union of finitely many axially symmetric tori. 

It remains to prove that the biaxial regions $\{\beta_j \leq t\}$, where $t \in (-1,1)$, are pushed towards the circle $\mathcal{C}$ as $j \to \infty$. This property is a straightforward consequence of the fact that $Q^j \to \eo$ uniformly on $\overline{\Omega} \cap \{x_1^2 +x_2^2 \leq r^2\}$ for any $0<r <1$ by an argument entirely similar to the one used in Step 2 above for the case $r=1/2$. We leave the simple adaptation to the reader. Thus, for each $0 < \rho < 1$ and for each distance neighborhood $\mathcal{O}_\rho$ from $\mathcal{C}$ we have $\beta_j(x)=\widetilde{\beta}(Q^j(x)) \to 1$ uniformly in $\overline{B_1}\setminus \mathcal{O}_\rho$ as $j \to \infty$, 
  whence for each number $t\in (-1,1)$ we have $\emptyset \neq \{\beta_j \leq t\} \subset \mathcal{O}_\rho$ for any $j$ large enough and the proof is complete.
   
\end{proof}

\begin{remark}
\label{H12bubbling}	($W^{1/2,2}$-bubbling of nontrivial loops)
As shown during the proof of Theorem \ref{thm:examples-tori}, the sequence of minimizers $\{Q^j\}$ concentrates energy on the horizontal circle $\mathcal{C}$ in the sense described in \eqref{measureconvergence}. Considering their restrictions to the vertical slice $\overline{\mathcal{D}^+}$, one has the uniform convergence $Q^j \to \eo$ on $ \overline{\mathcal{D}^+} \cap \{x_1 \leq \frac 12\}$ (actually on   $ \overline{\mathcal{D}^+} \cap \{x_1 \leq r \}$ for any fixed $0<r<1$). On the other hand, slicing \eqref{measureconvergence} one has the 2d-energy convergence
\[ \abs{\nabla_{x_1,x_3} Q^j}^2 dx_1 dx_3 \rightharpoonup \bar{c} \delta_{(1,0)} \, ,\]
for some $\bar{c}>0$ in the sense of measures on $\overline{\mathcal{T}^+}$, where $\mathcal{T}^+=\mathcal{D}^+\cap\{x_1 > \frac 12\}$, whence the restrictions also satisfy the condition $Q^j \rightharpoonup \eo$ weakly in $W^{1,2}(\mathcal{T}^+)$ as $j \to \infty$. As observed during the proof of the theorem,  the sequence of traces $\gamma^j:=Q^j|_{\partial \mathcal{T}^+}$ inherits the following two properties (compare Lemma~\ref{toplbenergy} for the relevant definitions):
\begin{itemize}
\item[1)] for a given neighborhood $\mathcal{U}_\eta$ of $\R P^2$ we have $\{\gamma^j\} \subset C(\partial \mathcal{T}^+;\mathcal{U}_\eta)$ and for the normalized maps $\bar{\gamma}^j:=\Pi \circ \gamma^j$ we have $[\bar{\gamma}^j]\neq 0$ in $\pi_1(\R P^2)$ for all $j$ large enough;
\item[2)] $\{\gamma^j\} \subset W^{1/2,2}(\partial \mathcal{T}^+;\mathbb{S}^4)$ is bounded and $\gamma^j \rightharpoonup \eo$ weakly in $W^{1/2,2}$ as $j \to \infty$.
  \end{itemize}
  As the maps $Q^j$ are $\mathbb{S}^1$-equivariant, the properties above amount to say that, for the 1d-restriction of $Q^j$ to the simple loop $\partial \mathcal{T}^+$ as well as to each of its congruent copies under rotation, the ``normalizations'' $\bar{\gamma}^j:=\Pi \circ \gamma^j$ are homotopically nontrivial, bounded in $W^{1/2,2}(\mathcal{T}^+;\R P^2)$ and weakly convergent to the constant map $\eo$ as $j\to \infty$ ({\em bubbling-off} of a topologically nontrivial loop under weak $W^{1/2,2}$-convergence).
 \end{remark}

In the final part of this subsection we study more generally the topology of smooth $\bbS^1$-equivariant maps satisfying ($HP_0$)--($HP_3$). The following result is the counterpart of \cite[Theorem 5.7]{DMP1} in the axially symmetric setting and obviously implies Theorem~\ref{topology-sym-torus} as a particular case.

\begin{theorem}
\label{topology-sym}
Let $\Omega \subset \mathbb{R}^3$ be an axisymmetric bounded open set with $C^1$-smooth boundary and let $Q: \overline{\Omega} \to \bbS^4$ be $\bbS^1$-equivariant. Assume that $\Omega$ and $Q$ are such that ($HP_0$)-($HP_3$) hold. Then the biaxiality regions associated with $Q$ are nonempty $\bbS^1$-invariant closed subsets of $\overline{\Omega}$ and satisfy:

\begin{itemize}
\item[1)] the set of singular values of $\beta$ in $[-1,\bar{\beta}]$ is at most countable and can accumulate only at $\bar{\beta}$; moreover, for any regular value $-1 < t <{\bar{\beta}}$ the set $\{\beta = t\}$ is the disjoint union of finitely many (at least one) revolution tori well contained in $\Omega$ while for any regular value $\bar{\beta} \leq t < 1$ the set $\{\beta = t\}$ is the disjoint union of, possibly, finitely many revolution tori, finitely many $\mathbb{S}^1$-invariant strips touching the boundary and finitely many circles lying on the boundary. If in addition $\Omega$ is real-analytic and $Q \in C^\omega(\overline{\Omega})$, then the set of singular values of $\beta$ in $[-1,\bar{\beta}]$ is finite. 
\item[2)] For any $-1 \leq t_1  < \bar{\beta} \leq t_2 < 1$, the set $\{\beta \leq t_1 \}$ contains an invariant  circle $\Gamma \subset \{ \beta = -1 \}$ and the set $\{ \beta \geq t_2 \}$ contains $\partial \mathcal{D}^+$. $\Gamma$ and $\partial\mathcal{D}^+$ are mutually linked. As a consequence, for any $-1 \leq t_1  < t_2 < 1$ the sets $\{\beta \leq t_1 \}$ and $\{\beta \geq t_2 \}$ are nonempty, compact and mutually linked. In particular, the set $\{\beta = 1 \} \cap \Omega$ is nonempty. If in addition $\bar{\beta} = 1$, then $\{\beta = 1 \} \subset \overline{\Omega}$ is not simply connected.
\end{itemize}
\end{theorem}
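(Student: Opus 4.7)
The strategy combines the planar obstruction in Proposition~\ref{prop:semidisk} with real-analyticity of $\beta := \widetilde{\beta}\circ Q$ on $\Omega$ and with a systematic reduction of $\bbS^1$-invariant level sets of $\beta$ in $\overline\Omega$ to their one-dimensional sections in the vertical half-plane $\mathcal{D}^+_\Omega$. I would begin by recording the pointwise information furnished by the hypotheses: $\beta \in C^1(\overline\Omega) \cap C^\omega(\Omega)$ is $\bbS^1$-invariant, equivariance forces $Q \equiv \pm\eo$ on $I$ hence $\beta \equiv 1$ on $I$, and $(HP_1)$ gives $\beta \geq \bar{\beta}$ on $\partial\Omega$. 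Consequently $\{\beta \leq t\}$ is compactly contained in $\Omega\setminus I$ for every $t<\bar{\beta}$, $\partial\Omega\cup I = \bbS^1\cdot \partial\mathcal{D}^+_\Omega \subset \{\beta\geq t\}$ for every $t\leq \bar{\beta}$, and $I \subset \{\beta\geq t\}$ for every $t<1$. Proposition~\ref{prop:semidisk} then supplies the invariant disclination circle $\Gamma \subset \{\beta=-1\}\cap(\Omega\setminus I)$ mutually linked with $\partial\mathcal{D}^+_\Omega$, yielding the first half of claim~(2) at once.

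For claim~(1), real-analyticity of $\beta$ makes its critical set a real-analytic subset of $\Omega$; on any compact subset $K \Subset \Omega$ it has finitely many connected components, on each of which $\beta$ is constant by unique continuation. Applying this to $K=\{\beta \leq \bar{\beta}-\varepsilon\}$ and letting $\varepsilon \downarrow 0$ shows that the critical values of $\beta$ in $[-1,\bar{\beta})$ are at most countable and can accumulate only at $\bar{\beta}$, with finiteness being immediate in the globally analytic case. For a regular value $t\in(-1,\bar{\beta})$, $\{\beta=t\}$ is a compact smooth $\bbS^1$-invariant $2$-manifold inside $\Omega\setminus I$. The crucial point is that by invariance $\partial_\phi\beta \equiv 0$, so $\nabla\beta$ is tangent to $\mathcal{D}^+_\Omega$ along $\mathcal{D}^+_\Omega$: hence $t$ is also a regular value for $\beta|_{\mathcal{D}^+_\Omega}$ and the planar slice $\{\beta=t\}\cap\mathcal{D}^+_\Omega$ is a compact $1$-submanifold without boundary of $\mathcal{D}^+_\Omega$, bounded away from the rotation axis. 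Its finitely many simple closed components each sweep out, under the $\bbS^1$-action, an embedded torus of revolution; nonemptiness follows from connectedness of $\Omega$ together with $\beta|_\Gamma = -1$ and $\beta|_I = 1$. For a regular value $t\in[\bar{\beta},1)$ the same slicing applies, but components of the planar $1$-manifold can now meet the smooth part of $\partial\mathcal{D}^+_\Omega\cap \partial\Omega$ (they never touch $I$, since $\beta\equiv 1>t$ there); a short case analysis then yields the three options in claim~(1), namely tori (planar closed curves in the interior), invariant strips touching the boundary (arcs with both endpoints on $\partial\Omega$), and invariant circles on $\partial\Omega$ (closed curves entirely in $\partial\Omega$).

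For the linking statements in claim~(2), I would observe that for $-1\leq t_1<\bar{\beta}\leq t_2<1$ the inclusions $\Gamma\subset\{\beta\leq t_1\}$ and $\partial\mathcal{D}^+_\Omega\subset\{\beta\geq t_2\}$ are immediate; non-contractibility of $\{\beta\leq t_1\}$ in $\overline\Omega\setminus\{\beta\geq t_2\}$ reduces to the fact that $\Gamma$ winds once around the axis inside $\Omega\setminus I$, while in the opposite direction a loop in $\partial\Omega\cup I$ (built by closing up a meridian of one boundary component through a segment of $I$) links $\Gamma$ nontrivially in $\overline\Omega$ and hence cannot bound a $2$-chain in $\overline\Omega\setminus\{\beta\leq t_1\}$, as such a chain would have to cross $\Gamma$. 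The extension of the linking conclusion to every $-1 \leq t_1 < t_2 < 1$ uses the same algebraic topology input as in the nonsymmetric analogue \cite[Theorem~1.6]{DMP1}, here considerably simplified by axisymmetry. Finally, when $\bar{\beta}=1$, one has $\partial\Omega\cup I \subset \{\beta=1\}$, and any of the linking loops constructed above lies in $\{\beta=1\}$ but cannot bound a disk there, since such a disk would have to meet $\Gamma\subset\{\beta=-1\}\subset\overline\Omega\setminus\{\beta=1\}$; this forces $\pi_1(\{\beta=1\})\neq 0$.

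The main technical obstacles I anticipate are the clean case analysis for regular values $t\in[\bar{\beta},1)$, where tori, invariant strips, and boundary circles can coexist and must be separated using regularity of $t$ both for $\beta$ on $\Omega$ and for $\beta|_{\partial\Omega}$, and the extraction of a loop linking $\Gamma$ inside $\{\beta\geq t_2\}$ when $t_2>\bar{\beta}$, a regime where $\partial\Omega\not\subset\{\beta\geq t_2\}$ and the linking loop must be obtained from the connected component of $\{\beta\geq t_2\}$ containing $I$ by a careful use of continuity and of the invariant slicing structure.
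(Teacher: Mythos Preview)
Your proposal is correct and follows essentially the same route as the paper: invoke Proposition~\ref{prop:semidisk} for the disclination circle $\Gamma$, use analytic Morse--Sard (you argue it directly via connected components of the critical set, the paper cites \cite{SoSo}) for the countability of singular values, slice the $\bbS^1$-invariant level sets of $\beta$ to the half-disc $\mathcal{D}^+_\Omega$ and classify the resulting planar curves as closed loops, arcs to $\partial\Omega$, or boundary points to obtain tori, strips, and boundary circles, and finally deduce the linking of $\{\beta\leq t_1\}$ and $\{\beta\geq t_2\}$ from the linking of $\Gamma$ and $\partial\mathcal{D}^+_\Omega$ via a homotopy contradiction. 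The obstacle you flag for $t_2>\bar\beta$ is a fair observation: the paper's linking argument as written also relies on $\partial\mathcal{D}^+_\Omega\subset\{\beta\geq t_2\}$, hence implicitly on $t_2\leq\bar\beta$, and indeed the companion statement Theorem~\ref{topology-sym-torus} in the Introduction restricts to that range.
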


\begin{proof}
	As a preliminary remark we observe that, due to ($HP_1$), ($HP_2$) and $\bbS^1$-equivariance, $\beta_0 := \max_{\partial\Omega} \beta = 1$, hence it follows from \cite[Lemma~5.2]{DMP1} that all levels of biaxiality are non-empty. Now, the first part of Statement 1) follows as in Theorem 5.7 in \cite{DMP1}, using the analytic Morse-Sard theorem from \cite{SoSo}. The more detailed information we are claiming about the genus of the biaxiality surfaces comes as follows: we first recall that for a regular value $-1 < t < 1$ the biaxiality set $\{ \beta = t \}$ is a finite union of smooth connected orientable surfaces $\Sigma_i$ which are analytic in the interior (and also  boundaryless when $t < \bar{\beta}$). Notice that such surfaces do not touch the $x_3$-axis, where $\beta\equiv 1$ because of ($HP_1$) and the equivariance of $Q$ (see Remark \ref{rmk:invariantconf}). Then we observe that $\beta$ is invariant under rotations around the $x_3$-axis, so that each $\Sigma_i$ must be a smooth revolution surface and, as a consequence, it is enough to discuss its behavior looking at its cross-section with the planar domain $\overline{\mathcal{D}^+}$. From this and the implicit function theorem, the section $\Sigma_i \cap \overline{\mathcal{D}^+}$ appears either as a simple closed curve, or as a smooth curve connecting two or more boundary points or even as a single point on the boundary. This in turn implies that each $\Sigma_i$ is either a revolution torus, a cylinder-type surface touching the boundary (which we call a \emph{strip} for brevity) or a circle lying on the boundary. In particular, when $t < \bar{\beta}$ we can only have a finite number of tori (at least one) well inside $\Omega$, which concludes the proof of 1). 
	
	The first claim of Statement 2) follows from Proposition \ref{prop:semidisk} and definition of $\bar{\beta}$. Now we show that $K_1 := \{\beta \leq t_1\}$ and $K_2 := \{ \beta \geq t_2\}$ are always mutually linked (even if there are critical values for $\beta$ between $t_1$ and $t_2$). Indeed, we claim that no one of $K_1$, $K_2$ is contractible in the complement of the other. To see this, denote $\Gamma_1 = \Gamma$ and $\Gamma_2 = \partial \mathcal{D}^+$ with continuous parametrizations $\gamma_i: \mathbb{S}^1 \to \Gamma_i$, $i=1,2$. Suppose, for a contradiction, that $K_i$ is contractible in $K_j^c := \overline{\Omega} \setminus K_j$, with $i \neq j$. This means there exists a homotopy $G_j : [0,1] \times K_j^c \to K_j^c$ so that $G(0, \cdot)\vert_{K_i} = \rm{Id}$ and $G(1, \cdot)\vert_{K_i} = \mbox{constant}$\,. Thus, we can find a homotopy $H_j :[0,1] \times \bbS^1 \to \Gamma_j^c$ with $H_j(0,s) = \gamma_i(s)$ and $H_j(1,s) = \mbox{const.}$ (in fact, $H_j(t,s) = G_j(t, \gamma_i(s))$). This means $\gamma_i$ is homotopic to a constant in $\overline{\Omega} \setminus \Gamma_j$, i.e., $\Gamma_i$ is contractible in $\overline{\Omega} \setminus \Gamma_j$; however this is false, because $\Gamma$ and $\partial\mathcal{D}^+$ are mutually linked.
	
	To conclude the proof, notice that, as already observed above, $\beta \equiv 1$ on $\Omega \cap \{\,x_3 \text{-axis}\,\}$, hence on the vertical part of $\partial\mathcal{D}^+$. In addition, when $\bar{\beta} = 1$, then $\{ \beta = 1\}$ on the whole $\partial\mathcal{D}^+$. Then it is clear that $\{ \beta = 1 \}$ cannot be simply connected because we have just seen that it contains non contractible loops, and we are done. 
\end{proof}

As a particular case of the above theorem, we have the following corollary, generalizing Theorem~\ref{topology-sym-torus} to the case of critical points (note that for $C^1$ solutions to \eqref{MasterEq} linear higher regularity theory yields $C^\infty$-regularity and in turn $C^\omega$-regularity because of \cite{Morrey}; hence assumption ($HP_0$) automatically holds in the corollary).

\begin{corollary}\label{cor:topology-smooth-crit-points}
	Let $\Omega \subset \mathbb{R}^3$ be an axisymmetric bounded open set with $C^1$-smooth boundary and let $Q_{\rm b} \in C^{1}(\partial\Omega,\bbS^4)$ be $\bbS^1$-equivariant. Suppose that $Q_\lambda \in \mathcal{A}^{\rm sym}_{Q_{\rm b}}(\Omega) \cap C^1(\overline{\Omega})$ is a smooth critical point of $\mathcal{E}_\lambda$ over $\mathcal{A}^{\rm sym}_{Q_{\rm b}}(\Omega)$ and assume that $\Omega$ and $Q_\lambda$ satisfy ($HP_1$)--($HP_3$). Then the biaxiality regions of $Q_\lambda$ satisfy the conclusions of Theorem~\ref{topology-sym}. 
\end{corollary}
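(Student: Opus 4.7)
The plan is to reduce the statement to a direct application of Theorem~\ref{topology-sym} by verifying that assumption $(HP_0)$ is automatically fulfilled for any smooth equivariant critical point. Since $(HP_1)$--$(HP_3)$ are in the hypotheses, only the real-analyticity in the interior required by $(HP_0)$ needs to be established.

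First, by Proposition~\ref{prop:symmetric-criticality} together with Corollary~\ref{corELeq}, any critical point of $\mathcal{E}_\lambda$ in the symmetric class is a critical point in the unconstrained class, hence a weak solution of the Euler--Lagrange equation \eqref{ELeqsym} in $\Omega$. The right-hand side of \eqref{ELeqsym} is an analytic function of $Q_\lambda$ and a quadratic function of $\nabla Q_\lambda$, so it is a semilinear elliptic system with real-analytic nonlinearities. Since $Q_\lambda \in C^1(\overline\Omega)$, the classical bootstrap for such systems gives $Q_\lambda \in C^\infty(\Omega)$, and then the interior analyticity result of Morrey (quoted in the excerpt, cf.\ the discussion following \eqref{eqharmspheres} and the proof of Theorem~\ref{thm:partial-regularity}) upgrades this to $Q_\lambda \in C^\omega(\Omega)$. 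Combined with the given $C^1(\overline\Omega)$ regularity, this verifies $(HP_0)$.

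With $(HP_0)$--$(HP_3)$ in force, Theorem~\ref{topology-sym} applies verbatim to $Q_\lambda$: the biaxiality regions are nonempty, $\mathbb{S}^1$-invariant closed subsets of $\overline\Omega$, and both statements (1) and (2) of Theorem~\ref{topology-sym}, which coincide with the statements of Theorem~\ref{topology-sym-torus}, follow at once. In particular the analytic Morse--Sard argument yields that the singular values of $\beta$ in $[-1,\bar\beta]$ form an at most countable set accumulating only at $\bar\beta$ (and a finite set in the fully analytic case), while the topological part based on Proposition~\ref{prop:semidisk} produces the invariant circle $\Gamma\subset\{\beta=-1\}$ and the linking with $\partial\mathcal{D}^+_\Omega$.

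There is essentially no obstacle here: the only point worth double-checking is that the regularity bootstrap for \eqref{ELeqsym} works starting from the $C^1(\overline\Omega)$ hypothesis, which is standard since the quadratic gradient term is tame under this a priori regularity (one can e.g.\ first promote $C^1$ to $C^{2,\alpha}$ via Schauder estimates applied to the equation rewritten in the form $-\Delta Q_\lambda = F(Q_\lambda, \nabla Q_\lambda)$ with $F$ locally Lipschitz in both arguments, and then iterate). Once this is in place the rest is a pure invocation of Theorem~\ref{topology-sym}, so no new geometric or variational ideas are needed, which is consistent with the remark made just before the statement of the corollary that the conclusions do not depend on energy minimality but only on the smoothness of the configuration.
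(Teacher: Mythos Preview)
Your proposal is correct and follows exactly the paper's approach: the paper notes in the parenthetical remark just before the corollary that for $C^1$ solutions to \eqref{MasterEq} linear higher regularity theory yields $C^\infty$-regularity and then $C^\omega$-regularity by Morrey, so $(HP_0)$ holds automatically, after which Theorem~\ref{topology-sym} applies directly. Your write-up simply makes this remark explicit, including the use of Proposition~\ref{prop:symmetric-criticality} and Corollary~\ref{corELeq} to ensure the Euler--Lagrange equation is satisfied.
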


Thus, the topology of smooth $\bbS^1$-equivariant critical points of $\mathcal{E}_\lambda$ looks very similar to the picture emerging from numerical simulations. For this reason, the following definition seems appropriate.  
\begin{definition}[Torus solution with Lyuksyutov constraint]\label{def:torus-sol}
Let $\Omega \subset \R^3$ be an axysimmetric bounded open set with $C^1$-smooth boundary and let $Q_{\rm b} \in C^1(\partial\Omega,\bbS^4)$ be $\bbS^1$-equivariant. Suppose $Q_\lambda \in \mathcal{A}^{\rm sym}_{Q_{\rm b}}(\Omega)$ is a critical point of $\mathcal{E}_\lambda$ over $\mathcal{A}^{\rm sym}_{Q_{\rm b}}(\Omega)$. If $\Omega$ and $Q_\lambda$ satisfy $(HP_0)-(HP_3)$  --- so that the conclusion of Theorem~\ref{topology-sym} holds --- we call $Q_\lambda$ a {\bf torus solution} of the Euler-Lagrange equations~\eqref{MasterEq} in $\mathcal{A}^{\rm sym}_{Q_{\rm b}}(\Omega)$.
\end{definition}
Note that this definition entails the existence of a negative uniaxial ring mutually linked to a region of strictly larger signed biaxiality, namely $\partial \Omega \cup I$ where $I=\Omega \cap \{\, x_3 \text{-axis} \, \}$. In particular, when the domain is a ball and the boundary condition is the radial anchoring, we have a negative uniaxial ring mutually linked to a positive uniaxial region, as for the minimizers constructed in Theorem \ref{thm:examples-tori}. 

\begin{remark}
There are two main reasons to encode smoothness (i.e., requiring $(HP_0)$) inside the very definition of torus solution. The first is that numerical simulations, even the ones in which the norm constraint is imposed, hint that biaxial torus solutions are smooth (see, e.g., \cite{GaMk,DR,HQZ}, see also \cite{KV} and \cite{KVZ} for the constrained case). The second lies in the fact that, in principle, also in split minimizers (see next subsection) there can be mutually linked biaxial regions and tori. This could \emph{a priori} happen also for the other singular solutions once we drop the hypothesis $Q_{\rm b}\equiv \mathbf{e}_0$ on $\mathcal B$. To rule out these apparently spurious cases, we restrict the notion of torus solution to smooth solutions.
\end{remark}

\subsection{Existence and topology of split minimizers}
In this subsection, we study the topology of singular minimizers $Q_\lambda$ of $\mathcal{E}_\lambda$ in $\mathcal{A}^{\rm sym}_{Q_{\rm b}}(\Omega)$ assuming that the domain has smooth boundary, the Dirichlet datum is smooth enough and that assumptions ($HP_1$)--($HP_3$) hold. The key ingredients to this purpose, as essentially explained in the remark below and then formalized in Proposition~\ref{prop:dipoles}, are a special structure of the singular set of such minimizers and the detailed knowledge of the asymptotic behavior of minimizers at the singular points in terms of the tangent maps described in the previous section.

\begin{remark}\label{rmk:axis}
Let $\Omega$ be an axially symmetric domain with $C^1$-smooth boundary and suppose we have an axially symmetric configuration $Q$ belonging to $C^1(\overline{\Omega} \setminus \Sigma)$, where $\Sigma =\rmsing(Q) \subset \Omega \cap \{\, x_3 \text{-axis} \,\}$ is a finite set, so that in view of Remark \ref{rmk:invariantconf} we have $Q(x)=\pm {\bf e_0} $ at any regular point. We assume $Q= {\bf e_0}$ on $\mathcal{B}$, where $\mathcal{B}$ is the set of boundary points defined in~\eqref{eq:def-B}, and that when passing through a singular point along the symmetry axis $Q$ jumps from ${\bf e_0}$ to $-{\bf e_0}$ or vice-versa. 

Then, it is clear that singularities come out in pairs. That is, if $\rmsing(Q) = \{a_1, a_2, \dots, a_M \}$ are ordered increasingly on the $x_3$-axis, then $M = 2N$ for some $N \in \N$ and, moving monotonically along the symmetry axis, if $Q$ around $a_k$ jumps from $\pm{\bf e_0}$ to $\mp{\bf e_0}$, then it jumps from $\mp{\bf e_0}$ to $\pm{\bf e_0}$ at $a_{k+1}$. In this sense, the singularities not only are even in number but they are naturally grouped as pairs of endpoints for the segments on the symmetry axis where $Q\equiv- \eo$. In addition, each pair is contained in some segment $\ell_k$ as defined in \eqref{eq:def-ellk}. We term these pairs {\em dipoles} (extending to this case the terminology of \cite{BCL}; see also \cite{HKL,HLP} for related statements for the case of axially symmetric maps from $B_1$ to $\bbS^2$ described by an angle function).
\end{remark}

\begin{proposition}\label{prop:dipoles}
	Suppose $\Omega$ is an axisymmetric bounded open set with $C^3$-smooth boundary, let $Q_{\rm b}\in C^{1,1}(\partial\Omega;\bbS^4)$ be $\bbS^1$-equivariant and assume that ($HP_1$)--($HP_3$) are satisfied. Let $Q_\lambda \in \mathcal{A}_{Q_{\rm b}}^{\rm sym}(\Omega)$ be a minimizer of $\mathcal{E}_\lambda$ over $\mathcal{A}_{Q_{\rm b}}^{\rm sym}(\Omega)$. Then, $\rmsing(Q_\lambda)$, the singular set of $Q_\lambda$, either is empty or consists of a finite number of dipoles located on the $x_3$-axis. In fact, more precisely, on each segment $\ell_k$ as in \eqref{eq:def-ellk} there are either no singularities of $Q_\lambda$ or a finite number of dipoles.
\end{proposition}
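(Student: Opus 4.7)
The approach combines three ingredients already established: the finiteness of $\rmsing(Q_\lambda)$ and its location on the symmetry axis (Theorem~\ref{thm:partial-regularity}); the rigidity that along the axis $Q_\lambda$ only takes the values $\pm\mathbf{e}_0$ away from singularities (Remark~\ref{rmk:invariantconf}), with boundary value $+\mathbf{e}_0$ at the endpoints $\mathcal{B}$ of $I$ forced by $(HP_1)$; and the explicit form of the tangent maps at each singular point coming from \eqref{stableblowups} and \eqref{asymptnearsingthm}. The first step is to reduce the claim to a single segment $\ell_k=(x_k^-,x_k^+)\subset I$, since by Theorem~\ref{thm:partial-regularity} we have $\rmsing(Q_\lambda)\subset I=\bigcup_k \ell_k$ and it is finite; once the dipole structure is proved on each $\ell_k$, the full statement follows by summing over $k$.

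Fix $\ell_k$ and label the singularities of $Q_\lambda$ on it as $a_1<\dots<a_{n_k}$, ordered by $x_3$-coordinate. Because $Q_\lambda\in C^1(\overline{\Omega}\setminus\rmsing(Q_\lambda);\bbS^4)$ is $\bbS^1$-equivariant, Remark~\ref{rmk:invariantconf} forces $Q_\lambda(x)\in\{+\mathbf{e}_0,-\mathbf{e}_0\}$ at every regular axial point, and continuity then makes $Q_\lambda$ locally constant on each connected component of $\overline{\ell_k}\setminus\rmsing(Q_\lambda)$. Moreover, by $(HP_1)$ and $\bbS^1$-equivariance, as already recalled in the paragraph preceding Theorem~\ref{topology-sym-split}, one has $Q_\lambda(x_k^\pm)=Q_{\rm b}(x_k^\pm)=+\mathbf{e}_0$. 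Thus the locally constant values of $Q_\lambda$ along $\overline{\ell_k}\setminus\rmsing(Q_\lambda)$ start and end at $+\mathbf{e}_0$.

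The crucial step is to show that $Q_\lambda$ actually switches between $+\mathbf{e}_0$ and $-\mathbf{e}_0$ when crossing each $a_j$. By Theorem~\ref{thm:partial-regularity}, the rescaled maps $Q_\lambda^{a_j,r}$ converge in $C^2(\overline{B_2}\setminus B_1)$ at rate $O(r^\nu)$ to a tangent map $Q_*\in\{\pm Q^{(\alpha)}\}_{\alpha\in\R}$. A direct computation from the explicit formula \eqref{stableblowups}, using that $\mathbf{e}_0$ is fixed by each $R_\alpha$ (since $R_\alpha$ commutes with $\mathrm{diag}(-1,-1,2)$), yields
\[
Q^{(\alpha)}(0,0,t)=\mathrm{sgn}(t)\,\mathbf{e}_0,\qquad t\in\R\setminus\{0\},
\]
so each admissible tangent map takes opposite values $\pm\mathbf{e}_0$ on the two half-axes. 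Now pick $x\in\ell_k$ close to $a_j$ and set $r=|x-a_j|$, so that $(x-a_j)/r$ is a unit vector on the $x_3$-axis; the $C^2$-convergence provided by \eqref{asymptnearsingthm} forces $|Q_\lambda(x)-Q_*((x-a_j)/r)|=O(r^\nu)$. Since both $Q_\lambda(x)$ and the limit belong to the discrete set $\{\pm\mathbf{e}_0\}$, this approximate equality upgrades to equality for $r$ small enough, and the value of $Q_\lambda$ changes sign across $a_j$.

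To conclude, moving monotonically from $x_k^-$ to $x_k^+$, the locally constant value of $Q_\lambda$ starts at $+\mathbf{e}_0$, switches sign at each $a_j$, and must again equal $+\mathbf{e}_0$ at the opposite endpoint. This forces $n_k$ to be even and exhibits the subintervals on which $Q_\lambda\equiv-\mathbf{e}_0$ as being delimited by consecutive pairs $(a_{2j-1},a_{2j})$, which is exactly the dipole structure of Remark~\ref{rmk:axis}. The only subtle point is the identification, in the third paragraph, of the values of $\pm Q^{(\alpha)}$ on the $x_3$-axis, which relies on the explicit classification from Corollary~\ref{formulablowups} rather than the abstract stability analysis alone; once that computation is in hand, the rest of the proof is a mere parity count.
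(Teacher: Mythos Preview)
Your proof is correct and follows essentially the same line as the paper's own argument: both combine Theorem~\ref{thm:partial-regularity} (finiteness and axial location of $\rmsing(Q_\lambda)$, plus the classification of tangent maps), Remark~\ref{rmk:invariantconf} (forcing $Q_\lambda\in\{\pm\mathbf e_0\}$ on the regular part of the axis), and the observation that $(HP_1)$ yields $Q_{\rm b}\equiv\mathbf e_0$ on $\mathcal B$, to deduce the parity count on each $\ell_k$. The only difference is that you spell out explicitly the computation $Q^{(\alpha)}(0,0,t)=\mathrm{sgn}(t)\mathbf e_0$ and the use of \eqref{asymptnearsingthm} to upgrade approximate equality to exact equality in $\{\pm\mathbf e_0\}$, whereas the paper simply asserts that the tangent-map structure forces a sign jump and refers to Remark~\ref{rmk:axis}.
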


\begin{proof}
The claim is a straightforward consequence of Theorem~\ref{thm:partial-regularity} and  Remark~\ref{rmk:axis}. Indeed, all the structural assumptions in Remark~\ref{rmk:axis} follow from the partial regularity result from the previous section. In particular, since the tangent map at each singularity of $Q_\lambda$ is one of those given by equation~\eqref{stableblowups}, when passing through a singular point along the symmetry axis $Q_\lambda$ can only jump from ${\bf e_0}$ to $-{\bf e_0}$ or vice-versa. In addition, $Q_{\rm b}\equiv \mathbf{e}_0$ on $\mathcal{B}$ because of ($HP_1$) and $\bbS^1$-equivariance (compare Remark \ref{rmk:invariantconf}), hence along each connected component $\ell_k$ of $\Omega \cap \{\, x_3 \text{-axis} \, \}$ there must be either no jumps (i.e., no singularities) or an even number of jumps, i.e., a finite number of dipoles.
\end{proof}

\begin{remark}
\label{oddsingularities}
	Symmetrically, the same conclusion in Proposition \ref{prop:dipoles} holds if $Q_{\rm b}\equiv - \mathbf{e}_0$ on $\mathcal{B}$. If instead we allow $Q_{\rm b}$ to be both $\mathbf{e}_0$ and $-\mathbf{e}_0$ on $\mathcal{B}$ and in particular if there is an $\ell_k=[b_{2k-1},b_{2k}]$    such that $Q_{\rm b}(b_{2k-1})=-Q(b_{2k})$, then there is an odd number of interior singularities of $Q_\lambda$ in $\ell_k$. As a model example, which will be of use below, we may take $\Omega = B_1$ and $Q_{\rm b}= Q^{(0)}$, with $Q^{(0)}$ as in formula~\eqref{stableblowups} for $\alpha=0$. Then any minimizing solution of the Euler-Lagrange equations in $\mathcal{A}_{Q_{\rm b}}^{\rm sym}(B_1)$ must have an odd number of singularities.
\end{remark}

In view of the peculiar structure of their singular set, it is natural to give a special name to the singular minimizers $Q_\lambda$ in Proposition~\ref{prop:dipoles}.

\begin{definition}[Split minimizer with Lyuksyutov constraint]\label{def:split-minimizer}
Let $\Omega$ be an axisymmetric bounded open set with $C^3$-smooth boundary, $Q_{\rm b}\in C^{1,1}(\partial\Omega;\bbS^4)$ be $\bbS^1$-equivariant and assume that ($HP_1$)--($HP_3$) are satisfied. If $Q_\lambda$ is a minimizer of $\mathcal{E}_\lambda$ over $\mathcal{A}^{\rm sym}_{Q_{\rm b}}(\Omega)$ and $\rmsing(Q_\lambda) \neq \emptyset$, we call $Q_\lambda$ a {\bf split minimizer} of $\mathcal{E}_\lambda$ over $\mathcal{A}^{\rm sym}_{Q_{\rm b}}(\Omega)$. 
\end{definition}

For exposition purposes, it will be convenient to associate a {\em sign} with each singularity. This sign will play a role similar to the degree of tangent maps in the case of harmonic maps from $B_1$ to $\bbS^2$; it can be defined as follows.
\begin{definition}\label{def:sign}
Let $Q_\lambda \in \mathcal{A}_{Q_{\rm b}}^{\rm sym}(\Omega)$ be a singular minimizer of $\mathcal{E}_\lambda$ over $\mathcal{A}^{\rm sym}_{Q_{\rm b}}(\Omega)$ and suppose $a_1, a_2, \dots, a_N \in \rmsing(Q_\lambda)$ are the singularities of $Q_\lambda$. We say that $a_n$ is {\em positive} if, for $x_3$ increasing, approaching $a_n$ from below along the $x_3$-axis we have $Q_\lambda = -\mathbf{e}_0$ and approaching $a_n$ from above along the $x_3$-axis we have $Q_\lambda =  \mathbf{e}_0$. We say that $a_n$ is {\em negative} if it is not positive.
\end{definition}

After the previous preliminary discussion we are finally in the position to prove Theorem \ref{thm:examples-split}, on the existence of split minimizers in a nematic droplet. The proof is a combination of compactness and partial regularity results from Sec.~\ref{sec:comp} and Sec.~\ref{sec:6} together with the classification of harmonic spheres from Sec.~\ref{sec:axisymm}.

\begin{proof}[Proof of Theorem \ref{thm:examples-split}]
	We divide the proof into three steps to make the argument easier to follow. Some technical points are essentially the same as in the proof of Theorem~\ref{thm:examples-tori}, hence in these instances we shall not repeat full details here.
	
	{\it Step 1. Construction of boundary data and their weak convergence.} We shall construct $\bbS^1$-equivariant boundary data $Q_{\rm b}^j \in C^{1,1}_{\rm sym}(\bbS^2;\bbS^4)$, for $j = 1,2, \dots$, such that  $Q_{\rm b}^j \rightharpoonup Q^{(0)}$ in $W^{1,2}(\bbS^2;\bbS^4)$ as $j \to \infty$; here $Q^{(0)}$ is given by \eqref{stableblowups} for $\alpha=0$, i.e., it is the $Q$-tensor field corresponding to the equator map ${\boldsymbol{\omega}}^{(1)}_{\rm eq}$. 	To this end, we shall exploit the classification results in Sec.~\ref{sec:axisymm} and the correspondence between harmonic spheres and $Q$-tensor fields given by Lemma~\ref{prop:Li-action}.
	
	More precisely, from formula~\eqref{eq:classification} and making use of Lemma~\ref{prop:Li-action}, we immediately obtain a family of $\bbS^1$-equivariant $Q$-tensor fields $\{Q_{\mu_1,\mu_2} \}\subset C^{\omega}_{\rm sym}(\partial B_1;\bbS^4)$ indexed by the two complex parameters $\mu_1$, $\mu_2$ appearing in~\eqref{eq:classification}.
	
 Notice that as the two parameters $\mu_1$ and $\mu_2$ vary we have: 
	\begin{itemize}
		\item[i)] The constant norm hedgehog $\overline{H}$ corresponds to the case $\mu_1 = \mu_2 = \sqrt{3}$.
		\item[ii)] The limiting boundary datum $Q^{(0)}$ corresponds to the case $\mu_1 = 1$, $\mu_2 = 0$.
		\item[iii)] Every map $Q_{\mu_1,\mu_2}$ with $\mu_1 \neq 0$ and $\mu_2 \neq 0$ (i.e., corresponding to a linearly full harmonic sphere in \eqref{eq:classification}) attains $\eo$ at both poles and has Dirichlet energy $12\pi$. 
		
	\item[iv)] Each map $Q_{\mu_1,\mu_2}$ with $\mu_1$, $\mu_2$ real, $\mu_1 > 0$ and $\mu_2 > 0$, satisfies $(HP_1)$. 
		\item[v)] Each map $Q_{\mu_1,\mu_2}$ with $\mu_1$, $\mu_2$ real, $\mu_1 > 0$ and $\mu_2 > 0$, satisfies $(HP_3)$. 
	\end{itemize}
Clearly, i)-iii) are obvious consequences of the results in Sec.~\ref{sec:axisymm} (see Remark~\ref{hedgehog}, Proposition~\ref{degeneratespheres} and Proposition~\ref{prop:explfullspheres} respectively).

 To see iv), first notice that for any $Q \in \bbS^4$ we have $\widetilde{\beta}(Q) = -1$ if and only if $Q$ belongs to $-\R P^2 \subset \bbS^4$ (the mirror image of $\R P^2$, embedded in $\bbS^4$ as in \eqref{vacuum}, through the antipodal map of $\bbS^4$). Now, let $\omega_{\mu_1,\mu_2}$ be a harmonic sphere given by~\eqref{eq:classification}, with $\mu_1 > 0$ and $\mu_2 > 0$, and let $Q_{\mu_1,\mu_2}$ be the corresponding $Q$-tensor field as in the above. By equivariance, if at $x \in \bbS^2$ we have $\widetilde{\beta}(Q_{\mu_1,\mu_2})(x) = -1$, then the same is true on the whole orbit of $x$ under the action of $\bbS^1$. We are going to rule out the two cases: either $Q_{\mu_1,\mu_2}(x) = -\eo$ or $Q_{\mu_1,\mu_2}(x) \in -\R P^2 \setminus \{-\eo\}$. In the first case $x$ is not a pole by property iii) above . Then $\omega_{\mu_1,\mu_2}$ takes the constant value $-\eo$ on a circle, hence it must be constant on the enclosed spherical cap by Lemaire's Theorem \cite{Le} and in turn $\omega_{\mu_1,\mu_2} \equiv -\eo$ by unique continuation, contradicting the fact that it is linearly full. Next, we show that $Q_{\mu_1,\mu_2}(x) \in -\R P^2 \setminus \{-\eo\}$ is also impossible, where $x$ cannot be a pole again by iii) above.  Indeed, if this were the case, from~\eqref{vacuum}, Lemma~\ref{prop:Li-action} and equivariance, we would infer that the function $\omega_2$, relative to $\omega = \omega_{\mu_1,\mu_2}$ as in~\eqref{eq:omega}, is strictly negative on the orbit of $x$, whereas~\eqref{eq:classification} tells us it is strictly positive. Hence, if $Q_{\mu_1,\mu_2}$ is built upon the harmonic spheres in~\eqref{eq:classification} and $\mu_1, \mu_2$ are real and positive, then $\min_{\partial\Omega}\widetilde\beta(Q_{\mu_1,\mu_2}) > -1$, i.e., assumption $(HP_1)$ holds.  
	
	To prove v), first observe it holds true for $\overline{H}$. Let $Y = \{ (\mu_1,\mu_2) \in \R^2 : \mu_1 > 0,\,\mu_2 > 0 \}$. We notice that $Y$ is path connected and that, by the discussion above, the leading eigenvalue remains simple for every $x \in \partial\Omega$ moving the parameters along any path in $Y$. In particular, every pair $(\mu_1, \mu_2) \in Y$ can be connected to $(\sqrt{3},\sqrt{3})$ by a path in $Y$ with parameter $s \in [0,1]$ and the corresponding path of maps $Q_{\mu_1(s),\mu_2(s)}$, $0\leq s \leq 1$, gives an equivariant homotopy between  $Q_{\mu_1,\mu_2}$ and $\overline{H}$. In addition, for every fixed value of the parameter $s$ of the path the maximal eigenvalue $\lambda^{(s)}_{\rm max}(x)$ of the corresponding map $Q_{\mu_1(s),\mu_2(s)}$ is simple for every $x \in \partial\Omega$ because of property iv). Thus, the corresponding eigenspace $V_{\rm max}^{(s)} : \partial\Omega = \bbS^2 \to \R P^2$ is well-defined for every $s\in [0,1]$ again by property iv), it is of class $C^1$ and it depends continuously on $s$. Hence $V_{\rm max}^{(0)}$ and $V_{\rm max}^{(1)}$ are homotopic. Moreover, for each $s\in [0,1]$, $V^{(s)}_{\rm max}$ lifts to $v^{(s)}_{\rm max} \in C^1_{\rm sym}(\bbS^2;\bbS^2)$ and indeed the whole homotopy lifts \cite[Proposition~1.30]{Hatcher}. As a consequence, $v^0_{\rm max}$ and $v^1_{\rm max}$ are also homotopic, so that $\deg(v^0_{\rm max},\partial\Omega) = \deg(v^1_{\rm max}, \partial\Omega) = \pm 1$ and assumption $(HP_3)$ holds. 
	
	Finally,  for any sequence $\mu_{2,j} \downarrow 0$ we define $Q_{\rm b}^j := Q_{1,\mu_{2,j}}$.
Note for each $Q_{\rm b}^j $ the maximal eigenvalue $\lambda^j_{\rm max}$ is simple on $\bbS^2$ because of property iv) above; in addition, the corresponding eigenspace map $V^j_{\rm max}$ is homotopic to $\bar{H}$ as shown in the proof of property v) above, hence claim (1) holds. Now a direct inspection of~\eqref{eq:classification} reveals that $ Q_{\rm b}^j(x) \to Q^{(0)}(x)$ a.e. on $\bbS^2$ as $j\to \infty$. Moreover, in view of (iii) the sequence  $\{ Q_{\rm b}^j \} \subset W^{1,2}(\bbS^2;\bbS^4)$ is bounded, which in turn easily yields claim (2) of the theorem.

	{\it Step 2. Strong and locally smooth convergence to a singular minimizer.} Now, for each boundary datum $Q_{\rm b}^j$ on $\bbS^2 = \partial B_1$, we take an $\mathcal{E}_\lambda$-minimizer $Q^j$ (which exists by the direct method). Notice that for every $j\geq 1$ the degree-zero homogeneous extension $\widehat{Q}^j$ to $\Omega = B_1$ of $Q_{\rm b}^j$ is an admissible competitor for testing the minimality of $Q^j$ and, since the potential energies of the maps $\widehat{Q}^j$ are equibounded, we have $\sup_j \mathcal{E}_\lambda(\widehat{Q}^j, \Omega) < +\infty$ and in turn $\{Q^j\}$ is bounded in $W^{1,2}(\Omega;\bbS^4)$. Passing to a subsequence, we have  $Q^j \rightharpoonup Q^*$ in $W^{1,2}$ to some map $Q^* \in W^{1,2}_{\rm sym}(B_1;\bbS^4)$ and, moreover,  $Q^* \vert_{\bbS^2} = Q^{(0)}$ by the weak continuity of the trace operator. 
	 	 We now observe we can apply Theorem~\ref{compintthm} (with $\lambda_j \equiv \lambda$) to deduce $Q^j \to Q^*$ in $W^{1,2}_{\rm loc}(B_1;\bbS^4)$ and that $Q^*$ is minimizing $\mathcal{E}_\lambda$ w.r.to $Q^{(0)}$ as boundary datum. Actually, thanks to the properties of the boundary data and the minimizers proved in Step 1, we may also apply Theorem~\ref{compintthmbdry} to improve local strong convergence to strong convergence $Q^j \to Q^*$ in $W^{1,2}(\Omega; \bbS^4)$. Moreover, by the uniform gradient estimates in \cite[Corollary 2.19]{DMP1}, the  strong convergence $Q^j \to Q^*$ improves to local smooth convergence in $\Omega \setminus \Sigma_*$, where $\Sigma_*\subset \Omega$ is the finite set of possible interior singularities of $Q^*$ according to Theorem~\ref{thm:partial-regularity}. We now observe that $Q^{(0)}$ attains $\eo$ at the north pole and $-\eo$ at the south pole of $B_1$, hence any $\mathcal{E}_\lambda$-minimizer $Q^*$ over $\mathcal{A}^{\rm sym}_{Q^{(0)}}(\Omega)$ must have an odd number of singularities in view of Remark~\ref{oddsingularities}.

	{\it Step 3. Persistence of singularities and existence of split solutions.} According to claim (1) of the theorem and to Remark~\ref{rmk:axis}, each minimizer $Q^j$ has either no singularities or an even number of singularities coming in dipoles. We now claim that, for $j$ large enough, any such minimizer is not smooth, hence it has an even number of singularities. Notice that, as shown in Step 2, the limiting map $Q^*$ has an odd number of singularities. 
	 In order to prove the claim, let $x_* \in \Sigma_*$ and, just to fix ideas, suppose $x_*$ is a singularity with positive sign (in the sense of Definition~\ref{def:sign}). Letting $r_* > 0$ so small that there are no singularities of $Q^*$ in $B_{r_*}(x_*) \setminus \{x_*\}$ then, for every point $z_+ \in B_{r_*}(x_*)$ on the $x_3$-axis which is above $x_*$, we have $Q^*(z_+) = \eo$ and, for every $z_- \in B_{r_*}(x_*)$ on the $x_3$-axis below $x_*$, we have $Q^*(z_-) = -\eo$. From the strong $W^{1,2}$-convergence and $\veps$-regularity we have $C^1_{\rm loc}$-convergence $Q^j \to Q^*$ in $\Omega \setminus \Sigma_*$. As a consequence, up to discarding finitely many values of $j$ it follows that for every map $Q^j$ there are points in $B_{r_*}(x_*) \cap \{ x_3\mbox{-axis} \}$ above $x_*$ at which $Q^j$ is $\eo$ and points below $x_*$ at which $Q^j$ is $-\eo$. 
	 This means that $Q^j$ has at least one singularity in $B_{r_*}(x_*) \cap \{x_3\mbox{-axis}\}$ and the proof is complete.
\end{proof} 
\begin{remark}
	Although the boundary data $Q_{\rm b}^j$ in Theorem~\ref{thm:examples-split} are real-analytic, analyticity does not play a role in the argument. Thus, singularities can occur because of energy efficiency even when the regularity of the boundary data is maximal.
\end{remark} 
\begin{remark}
	Keeping in mind the examples of torus solutions in the unit ball constructed in Theorem \ref{thm:examples-tori}, the theorem above also suggests how delicate is the interplay between the geometry of the domain and the properties of boundary conditions in determining the (regularity, hence topological) properties of the corresponding minimizers. Further results in this sense, including the coexistence of smooth and singular minimizers, will be discussed in our forthcoming paper \cite{DMP2}.
\end{remark}
The previous theorem has two remarkable consequences even for the minimization of the Landau-de Gennes energies among nonsymmetric maps. Indeed, for $\{Q_{\rm b}^j\}$ as in Theorem~\ref{thm:examples-split} we know from \cite[Theorem~1.2]{DMP1} that any minimizer of $\mathcal{E}_\lambda$ over each norm-constrained nonsymmetric classes $\mathcal{A}_{Q_{\rm b}^j}(\Omega)$ is smooth up to the boundary. In view of  Theorem~\ref{thm:examples-split} such minimizers cannot be axially symmetric for $j$ large enough because the minimizers in the subclass  $\mathcal{A}^{\rm sym}_{Q_{\rm b}^j}(\Omega)$
must be singular. As a consequence, symmetry breaking and nonuniqueness of minimizers in the class $\mathcal{A}_{Q_{\rm b}^j}(\Omega)$ occur as summarized in the following corollary.


\begin{corollary}[Symmetry breaking]\label{cor:symmetry-breaking-I}
	Let $\Omega = B_1$, let $\{Q_{\rm b}^j\} \subset C^{1,1}(\bbS^2;\bbS^4)$ be the $\bbS^1$-equivariant boundary data constructed in Theorem~\ref{thm:examples-split} and let $\bar{Q}^j$ be a sequence of minimizer of $\mathcal{E}_\lambda$ over $\mathcal{A}_{Q_{\rm b}^j}(\Omega)$. Then, for every $j$ large enough $\bar{Q}^j$ is not $\bbS^1$-equivariant (i.e., $\bar{Q}^j$ belongs to $\mathcal{A}_{Q_{\rm b}^j}(\Omega) \setminus \mathcal{A}_{Q_{\rm b}^j}^{\rm sym}(\Omega)$). In particular $R \cdot \bar{Q}^j (R^t \cdot )$ is a minimizer for each $R \in \mathbb{S}^1$, hence nonuniqueness of minimizers occurs.
\end{corollary}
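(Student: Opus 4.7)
The plan is to play the two regularity statements off against each other and then use rotation invariance. First, I would invoke \cite[Theorem 1.2]{DMP1}, applied to the unconstrained admissible class $\mathcal{A}_{Q_{\rm b}^j}(\Omega)$: since $Q_{\rm b}^j \in C^{1,1}(\bbS^2;\bbS^4)$, any minimizer $\bar Q^j$ of $\mathcal{E}_\lambda$ over $\mathcal{A}_{Q_{\rm b}^j}(\Omega)$ is smooth up to the boundary (in particular, $\rmsing(\bar Q^j)=\emptyset$). On the other side, Theorem~\ref{thm:examples-split} asserts that, for all $j$ large enough, every minimizer of $\mathcal{E}_\lambda$ over the smaller symmetric class $\mathcal{A}_{Q_{\rm b}^j}^{\rm sym}(\Omega)$ is a split minimizer and therefore singular, with an odd number of singular points on the $x_3$-axis (cf.\ Remark~\ref{oddsingularities} and Step~3 of the proof of Theorem~\ref{thm:examples-split}).

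Assume, for a contradiction, that $\bar Q^j$ is $\bbS^1$-equivariant for some such large $j$. Then $\bar Q^j \in \mathcal{A}_{Q_{\rm b}^j}^{\rm sym}(\Omega) \subset \mathcal{A}_{Q_{\rm b}^j}(\Omega)$, and the chain of inequalities
\[
\mathcal{E}_\lambda(\bar Q^j)=\min_{\mathcal{A}_{Q_{\rm b}^j}(\Omega)}\mathcal{E}_\lambda \leq \min_{\mathcal{A}_{Q_{\rm b}^j}^{\rm sym}(\Omega)}\mathcal{E}_\lambda \leq \mathcal{E}_\lambda(\bar Q^j)
\]
forces $\bar Q^j$ to be also a minimizer of $\mathcal{E}_\lambda$ over $\mathcal{A}_{Q_{\rm b}^j}^{\rm sym}(\Omega)$. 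By Theorem~\ref{thm:examples-split} this map has nonempty singular set, contradicting the smoothness provided by \cite[Theorem 1.2]{DMP1}. Hence $\bar Q^j\in \mathcal{A}_{Q_{\rm b}^j}(\Omega)\setminus \mathcal{A}_{Q_{\rm b}^j}^{\rm sym}(\Omega)$ for every $j$ large enough, as claimed.

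The nonuniqueness part then follows from the manifest rotation invariance of the functional. For any $R\in\bbS^1$, the map $\widetilde Q^j(x):= R\cdot \bar Q^j(R^\trans x) = R\,\bar Q^j(R^\trans x)\,R^\trans$ satisfies $\mathcal{E}_\lambda(\widetilde Q^j)=\mathcal{E}_\lambda(\bar Q^j)$, because the one-constant elastic density is invariant under rigid motions of the domain and the bulk potential $W$ is invariant under the conjugation action \eqref{inducedaction} of $\SO(3)$ on $\mathcal S_0$. Moreover, the $\bbS^1$-equivariance of $Q_{\rm b}^j$ gives $\widetilde Q^j|_{\partial\Omega}=Q_{\rm b}^j$, so $\widetilde Q^j\in\mathcal{A}_{Q_{\rm b}^j}(\Omega)$ and is itself a minimizer. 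Since the previous step yields that $\bar Q^j$ fails to be $\bbS^1$-equivariant, there must exist at least one $R\in\bbS^1$ for which $\widetilde Q^j\neq \bar Q^j$, producing a genuinely different minimizer and thus the asserted nonuniqueness.

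The proof is essentially a bookkeeping exercise, so there is no real obstacle; the only thing worth double-checking is that the hypotheses of \cite[Theorem 1.2]{DMP1} are indeed met by the specific family $\{Q_{\rm b}^j\}$ built in Theorem~\ref{thm:examples-split}, which is immediate since those data are real-analytic (in particular $C^{1,1}$) and take values in $\bbS^4$.
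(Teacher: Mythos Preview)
Your proof is correct and follows exactly the paper's own argument: smoothness of global minimizers from \cite[Theorem~1.2]{DMP1} is pitted against the singularity of symmetric minimizers from Theorem~\ref{thm:examples-split}, and nonuniqueness then comes from rotation invariance. One small slip worth correcting: for $j$ large the split minimizers $Q^j$ have an \emph{even} number of singularities (they satisfy ($HP_1$)--($HP_3$), cf.\ Proposition~\ref{prop:dipoles}); it is the limiting map $Q^*$ with boundary datum $Q^{(0)}$ that has an odd number, but this parenthetical remark plays no role in your argument.
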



The symmetry breaking phenomenon extends to the {\it Lyuksyutov regime} introduced in \cite{DMP1}, as made precise in the corollary below. Let us recall that without the norm constraint \eqref{Lyuk} the relevant energy functional is
\[
	\mathcal{F}_{\lambda,\mu}(Q) = \int_\Omega \frac{1}{2}\abs{\nabla Q}^2  + \lambda W(Q) + \frac{\mu}{4}\left( 1 - \abs{Q}^2 \right)^2 \,dx,
\]
where $\lambda$ is defined after \eqref{LDGenergytilde}, $\mu := \frac{a^2}{L} > 0$, and 
\[
	W(Q) = \frac{1}{4\sqrt{6}} \abs{Q}^4 - \frac{1}{3}{\rm tr}(Q^3) + \frac{1}{12\sqrt{6}}\,.
\]
The functional $\mathcal{F}_{\lambda,\mu}$ is defined over $W^{1,2}(\Omega;\mathcal{S}_0)$ and it reduces to $\mathcal{E}_\lambda$ under the Lyuksyutov constraint \eqref{Lyuk} in view of the definition of $\widetilde\beta$ in \eqref{signedbiaxiality} (see also discussion in the introduction of \cite{DMP1}). The Lyuksyutov regime considered in \cite{DMP1} corresponds to
\[
	\lambda = \mbox{const.}, \qquad \mu \to +\infty\,,
\]
where the energy minimization was performed under a fixed
boundary datum $Q_{\rm b} \in C^{1,1}(\partial \Omega; \mathbb{S}^4)$. According to \cite[Theorem~1.3]{DMP1}, in the Lyuksyutov regime the minimizers of $\mathcal{F}_{\lambda,\mu}$ over $W^{1,2}_{Q_{\rm b}}(\Omega;\mathcal{S}_0)$ strongly converge in $W^{1,2}$ to minimizers of $\mathcal{E}_\lambda$ over $\mathcal{A}_{Q_{\rm b}}(\Omega)$ which are smooth up to the boundary, as already recalled above. Since $\mathbb{S}^1$-equivariance would persist under strong convergence, we deduce that it eventually fails when it fails for the limiting map. Thus, Corollary \ref{cor:symmetry-breaking-I} immediately yields the following result.
 
\begin{corollary}[Symmetry breaking in the Lyuksyutov regime]
\label{cor:symmetry-breaking-II}
	Let $\Omega = B_1$ and let $\{Q_{\rm b}^j\} \subset C^{1,1}(\bbS^2;\bbS^4)$ be the sequence of $\bbS^1$-equivariant boundary data constructed in Theorem~\ref{thm:examples-split} and let us fix $j$ large enough such that Corollary \ref{cor:symmetry-breaking-I} holds. Then, for each $\mu>0$ large enough any minimizer $\bar{Q}^j_\mu$ of $\mathcal{F}_{\lambda,\mu}$ over $W^{1,2}_{Q_{\rm b}^j}(\Omega;\mathcal{S}_0)$ is not $\bbS^1$-equivariant. 
\end{corollary}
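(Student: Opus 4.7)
The plan is to argue by contradiction, combining the convergence of minimizers in the Lyuksyutov regime from \cite[Theorem~1.3]{DMP1} with the symmetry breaking already established in Corollary~\ref{cor:symmetry-breaking-I}. Fix $j$ large enough so that Corollary~\ref{cor:symmetry-breaking-I} applies, and suppose for a contradiction that there exists a sequence $\mu_k \to +\infty$ such that some minimizer $\bar{Q}^j_{\mu_k} \in W^{1,2}_{Q_{\rm b}^j}(\Omega;\mathcal{S}_0)$ of $\mathcal{F}_{\lambda,\mu_k}$ is $\mathbb{S}^1$-equivariant for every $k$.

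First I would invoke \cite[Theorem~1.3]{DMP1}: up to a subsequence, $\bar{Q}^j_{\mu_k} \to \bar{Q}^j$ strongly in $W^{1,2}(\Omega)$ as $k\to\infty$, where $\bar{Q}^j \in \mathcal{A}_{Q_{\rm b}^j}(\Omega)$ is a minimizer of $\mathcal{E}_\lambda$ over the norm-constrained class $\mathcal{A}_{Q_{\rm b}^j}(\Omega)$. Since $\mathbb{S}^1$-equivariance is preserved under pointwise (hence strong $W^{1,2}$) convergence, each $\bar{Q}^j_{\mu_k}$ being equivariant forces the limit $\bar{Q}^j$ to be $\mathbb{S}^1$-equivariant as well, i.e.\ $\bar{Q}^j \in \mathcal{A}^{\rm sym}_{Q_{\rm b}^j}(\Omega)$.

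Next I would use \cite[Theorem~1.2]{DMP1}, which guarantees that any minimizer of $\mathcal{E}_\lambda$ over the unconstrained class $\mathcal{A}_{Q_{\rm b}^j}(\Omega)$ (for a $C^{1,1}$ boundary datum into $\mathbb{S}^4$, as our $Q_{\rm b}^j$ is) is smooth up to the boundary. Hence $\bar{Q}^j$ is a smooth $\mathbb{S}^1$-equivariant minimizer of $\mathcal{E}_\lambda$ in $\mathcal{A}_{Q_{\rm b}^j}(\Omega)$. In particular $\bar{Q}^j$ is also a minimizer of $\mathcal{E}_\lambda$ within the smaller class $\mathcal{A}^{\rm sym}_{Q_{\rm b}^j}(\Omega)$, since $\mathcal{A}^{\rm sym}_{Q_{\rm b}^j}(\Omega) \subset \mathcal{A}_{Q_{\rm b}^j}(\Omega)$ and an absolute minimizer is \emph{a fortiori} a minimizer in any subclass containing it.

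The final step is then immediate: by Corollary~\ref{cor:symmetry-breaking-I} (which applies because $j$ was chosen large), the minimizers of $\mathcal{E}_\lambda$ over $\mathcal{A}_{Q_{\rm b}^j}(\Omega)$ are \emph{not} $\mathbb{S}^1$-equivariant, contradicting $\bar{Q}^j \in \mathcal{A}^{\rm sym}_{Q_{\rm b}^j}(\Omega)$. I expect no real technical obstacle here: the entire argument rests on the two theorems imported from \cite{DMP1} plus the already established Corollary~\ref{cor:symmetry-breaking-I}, and the only small check is the trivial but crucial observation that $\mathbb{S}^1$-equivariance is a closed condition under strong $W^{1,2}$-convergence, which follows at once from extracting an a.e.\ convergent subsequence.
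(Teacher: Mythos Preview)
Your argument is correct and follows the same route as the paper: assume a sequence of equivariant minimizers for $\mu_k\to\infty$, pass to the limit via \cite[Theorem~1.3]{DMP1} to get an equivariant minimizer of $\mathcal{E}_\lambda$ over $\mathcal{A}_{Q_{\rm b}^j}(\Omega)$, and contradict Corollary~\ref{cor:symmetry-breaking-I}. The only superfluous step is your detour through \cite[Theorem~1.2]{DMP1} and the remark about minimality in the subclass $\mathcal{A}^{\rm sym}_{Q_{\rm b}^j}(\Omega)$: the contradiction is already reached once you know $\bar{Q}^j$ is an equivariant minimizer over $\mathcal{A}_{Q_{\rm b}^j}(\Omega)$, since Corollary~\ref{cor:symmetry-breaking-I} asserts no such minimizer exists.
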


Finally, we discuss general properties of singular minimizers under norm and symmetry constraints as those constructed in Theorem~\ref{thm:examples-split}. We are now going to prove Theorem~\ref{topology-sym-split} but we still need some preparation. Indeed, another important ingredient in the proof of Theorem~\ref{topology-sym-split} is the following result which explains how biaxiality behaves near the singular points. The statement is an immediate consequence of the asymptotic analysis at isolated singularities from the previous section and the structure of tangent maps in \eqref{stableblowups}.
\begin{proposition}\label{prop:conic-structure}
 Let $Q_\lambda \in \mathcal{A}^{\rm sym}_{Q_{\rm b}}(\Omega)$ be a minimizer of $\mathcal{E}_\lambda$ over $\mathcal{A}^{\rm sym}_{Q_{\rm b}}(\Omega)$ and let us fix $x_0 \in \rmsing (Q_\lambda)$ and $r_0>0$ such that   $Q_\lambda \in C^\infty(B_{r_0}(x_0) \setminus \{x_0\})$. Let $Q^{(\alpha)}$ denote the corresponding tangent map at $x_0$ and $Q_\lambda^{x_0,r}(x)=Q_\lambda(x_0+r x)$ the rescaled maps, so that by Theorem \ref{thm:partial-regularity} we have $Q_{\lambda}^{x_0,r} \to Q^{(\alpha)}$ in $C^2(\overline{B_2}\setminus B_1 )$ as $r \to 0$. 
	Then for the biaxiality functions of the rescaled maps we have that $\widetilde\beta \circ Q_\lambda^{x_0,r_j}$ smoothly converges to $\widetilde \beta \circ Q^{(\alpha)}$ as $j \to \infty$ in $\overline{B_2} \setminus B_1$ along any sequence $r_j \to 0$. As a consequence, for any $-1 < t < 1$, there is a cylinder $\mathcal C_t$ coaxial with the $x_3$-axis with the property that $\{\widetilde{\beta} \circ Q_\lambda = t\} \cap \mathcal{C}_t$ is a smooth disc-type surface with a conical point at $x_0$. 
\end{proposition}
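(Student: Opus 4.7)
For the first claim, the key observation is that $\widetilde\beta$ restricts on $\bbS^4$ to the polynomial $\widetilde\beta(Q)=\sqrt{6}\,\trac(Q^3)$, hence is $C^\infty$ there. Combining this with the quantitative $C^2$-decay $\|Q_\lambda^{x_0,r}-Q^{(\alpha)}\|_{C^2(\overline{B_2}\setminus B_1)}=O(r^\nu)$ from Theorem~\ref{thm:partial-regularity} immediately yields $\|\widetilde\beta\circ Q_\lambda^{x_0,r_j}-\widetilde\beta\circ Q^{(\alpha)}\|_{C^2(\overline{B_2}\setminus B_1)}=O(r_j^\nu)$ along any sequence $r_j\downarrow 0$, which establishes the first claim.

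For the second claim, I would first analyse the level sets of $g:=\widetilde\beta\circ Q^{(\alpha)}$ explicitly. Because $\widetilde\beta$ is a spectral invariant of $Q$, it is invariant under the isometric $\bbS^1$-action \eqref{inducedaction}, so the $0$-homogeneity of $Q^{(\alpha)}$ together with the definition \eqref{stableblowups} give $g(x)=\widetilde\beta(\boldsymbol{\omega}^{(1)}_{\rm eq}(x/|x|))$ with $\boldsymbol{\omega}^{(1)}_{\rm eq}$ as in \eqref{equatorial}. Passing to spherical coordinates $x=|x|(\sin\theta\cos\phi,\sin\theta\sin\phi,\cos\theta)$ and using the identifications of Lemma~\ref{prop:Li-action}, a direct computation gives $g(x)=f(\theta)$ with $f(\theta) = \tfrac{1}{2}\cos\theta\,(3-\cos^2\theta)$, so that $f$ is a $C^\infty$ diffeomorphism of $[0,\pi]$ onto $[-1,1]$ with $f'(\theta)=-\tfrac{3}{2}\sin^3\theta$ vanishing only at the poles (the case of tangent map $-Q^{(\alpha)}$ produces $-f$, with identical qualitative features). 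Hence, for each $t\in(-1,1)$, setting $\theta_0:=f^{-1}(t)\in(0,\pi)$, the level set $\{g=t\}$ is exactly the cone $\{\theta=\theta_0\}$: a smooth $0$-homogeneous regular surface away from the origin, with conical point at the origin and $|\nabla g|>0$ on it.

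I would then transfer this structure to $Q_\lambda$ near $x_0$ by a scaling and implicit-function-theorem argument. Fix $t\in(-1,1)$ and $\theta_0$ as above; combining the first paragraph with $|\nabla g|>0$ on $\{\theta=\theta_0\}\cap(\overline{B_2}\setminus B_1)$ yields, for every sufficiently small $r>0$, that $\{\widetilde\beta\circ Q_\lambda^{x_0,r}=t\}\cap(\overline{B_2}\setminus B_1)$ is a smooth surface $C^1$-close to the truncated cone. Rescaling back, the level set on each dyadic annulus $B_{2r}(x_0)\setminus B_r(x_0)$ is a smooth surface close to the truncated cone $\{\theta=\theta_0\}\cap(B_{2r}(x_0)\setminus B_r(x_0))$, and these pieces match on overlaps by uniqueness in the implicit function theorem. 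By $\bbS^1$-equivariance of $Q_\lambda$ and the fact that $x_0$ lies on the symmetry axis, $\widetilde\beta\circ Q_\lambda$ is axisymmetric, so on each sphere $\partial B_r(x_0)$ with $r$ small the level set is a single circle of latitude $C^1$-close to $\{\theta=\theta_0\}\cap\partial B_r(x_0)$. Choosing then an open cylinder $\mathcal{C}_t$ coaxial with the $x_3$-axis, of sufficiently small radius and height around $x_0$, one guarantees that $\{\widetilde\beta\circ Q_\lambda=t\}\cap \mathcal{C}_t$ is this axisymmetric smooth surface inside $\mathcal{C}_t\setminus\{x_0\}$, which together with $x_0$ forms a topological disc with tangent cone $\{\theta=\theta_0\}$ at $x_0$.

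The main obstacle is the gluing across scales just described: one needs uniform constants in the implicit function theorem as $r\downarrow 0$ and radial transversality of the perturbed level sets to the spheres $\partial B_r(x_0)$. Both are guaranteed by the quantitative $O(r^\nu)$ closeness in $C^2$-norm together with $|\nabla g|>0$ on the reference cone; the resulting uniform estimates prevent bifurcation or spiralling of the level set near $x_0$ and yield a well-defined tangent cone there, concluding the proof.
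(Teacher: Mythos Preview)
Your proof is correct and follows essentially the same approach as the paper: both compute the explicit biaxiality of the tangent map (the paper records the identical formula $\beta(\theta)=-\tfrac12\cos^3\theta+\tfrac32\cos\theta$ in the remark following the proposition), use the $C^2$-convergence of Theorem~\ref{thm:partial-regularity} together with the implicit function theorem on dyadic annuli, and glue the resulting annular graphs to obtain a disc with tangent cone at $x_0$. The only quibble is your phrasing ``$C^\infty$ diffeomorphism of $[0,\pi]$ onto $[-1,1]$'': since $f'(0)=f'(\pi)=0$ this fails at the endpoints, but as you restrict to $t\in(-1,1)$ this is harmless.
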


\begin{remark}
\label{independence}
	Notice that, due to the explicit formula \eqref{stableblowups} for any minimizing tangent map, the function $\beta=\widetilde{\beta} \circ Q^{(\alpha)}$ is invariant under rotations around the $x_3$-axis, it is actually independent of $\alpha$ and in spherical coordinates depends just on the colatitude $\theta \in [0,\pi]$. Indeed, an elementary calculation gives $\beta(\theta)=-\frac12 \cos^3 \theta+\frac32 \cos \theta$, therefore all the values $t \in (-1,1)$ are regular.  As a consequence, Proposition \ref{prop:conic-structure} would hold even without uniqueness of the tangent map proved in the previous section.
\end{remark}
\begin{proof}
The proof is elementary, so we only sketch the main idea. First we observe that in view of \eqref{stableblowups} and Remark~\ref{independence} the level sets $\{ \widetilde \beta \circ Q^{(\alpha)} =t\}$, $t\in (-1,1)$ fixed, are $\mathbb{S}^1$-invariant round cones with tip at $x_0$ and opening angle $\vartheta \in (0,\pi)$ depending on $t\in (-1,1)$. As for the limit function all the values are regular (away from the origin), the same property holds for the biaxiality functions $\widetilde\beta \circ Q_\lambda^{x_0,r_j}$ for $j\geq j_0$, $j_0$ large enough, and indeed smooth convergence of the functions implies smooth convergence of the surfaces seen as graphs over a fixed annular region of the limiting cone. Choosing dyadic radii $r_j=r_0 2^{-j}$ and undoing the scaling, we see that $\{\widetilde{\beta} \circ Q_\lambda = t\}$ is a smooth disc-type surface (it is indeed the union of the rescaled annular graphs) with a conical point at $x_0$ and having $\{ \widetilde \beta \circ Q^{(\alpha)} =t\}$ as tangent cone at it. Thus the conclusion follows in a small cylinder $\mathcal{C}_t$ of radius $r=r_0 2^{-j_0}$.      
\end{proof}

\begin{proof}[Proof of Theorem~\ref{topology-sym-split}]
Since $Q_\lambda$ is equivariant, the function $\beta$ is invariant under rotations around the $x_3$-axis, therefore it is enough to work in the planar domain $\overline{\mathcal{D}^+_\Omega}$, where $\mathcal{D}^+_\Omega$ is as in \eqref{eq:decomp-D}. The function $\beta$ is real-analytic where $Q_\lambda$ is; thus in particular $\beta$ is real-analytic in $\mathcal{D}^+_\Omega$. Recall that $\rmsing(Q_\lambda)$ is a finite set of isolated points on the $x_3$-axis. Then we infer from Sard's theorem for analytic functions \cite{SoSo} that the set of singular values of $\beta$ is finite on each compact set $K \subset \mathcal{D}^+_\Omega$ (note that because of $\mathbb{S}^1$-invariance the critical values of $\beta$ in $\Omega$ are exactly those of the restriction to $\mathcal{D}^+_\Omega$), hence all but at most countably many $t \in [-1, \bar{\beta}]$ are regular values of $\beta$ (note that $\beta=\pm 1$ on the symmetry axis). 

Suppose there is a sequence of distinct singular values $\{ \beta_n\} \subset [-1, \bar{\beta})$ accumulating at some $-1<\beta_*<\bar{\beta}$ with corresponding points $\{x_n\} \subset \overline{\mathcal{D}^+_\Omega} \setminus \rmsing(Q_\lambda)$. 
Passing to a subsequence we may assume $x_n \to x_*$ and $\nabla \beta(x_*)=0$. Since $\beta(x_*)=\beta_*< \bar{\beta}$ we conclude that $x_* \in \Omega$. 
Note that $x_* \not \in \Omega \setminus \rmsing(Q_\lambda)$, because otherwise $\beta$ would have countably many distinct singular values in some closed ball $\overline{B_r(x_*)} \subset \Omega \setminus \rmsing(Q_\lambda) $, which is impossible by Sard's Theorem. 
We are going to show that the last possible option, i.e., $x_* \in \rmsing(Q_\lambda)$, is also impossible, therefore the critical values can accumulate only at $\bar{\beta}$ or at $-1$ as claimed. 
Indeed, assume the converse. Then there would be $x_* \in \rmsing(Q_\lambda)$ and a (not relabeled) sequence $x_n \to x_*$ such that $\nabla \beta(x_n)=0$ for all $n$. 
Applying Proposition \ref{prop:conic-structure} with $x_0= x_*$, $r_n=|x_n-x_*|$ and tangent map $Q^{(\alpha)}$, passing to a (not relabeled) subsequence we would have $y_n=(x_n-x_*)/r_n \to y_* \in \partial B_1$ which is not on the vertical axis, because clearly $\widetilde{\beta}\circ Q^{(\alpha)}(y_*)=\beta_* \neq \pm 1$, in contrast with Remark~\ref{rmk:invariantconf}. On the other hand, the $C^1$-convergence in Proposition \ref{prop:conic-structure} yields  $\nabla (\widetilde{\beta}\circ Q^{(\alpha)}) (y_*)= \lim_n \nabla ( \widetilde{\beta} \circ Q_\lambda^{x_0,r_n})(y_n)=0$ which is clearly false, since  $t=\beta_*$ (indeed every $t \in (-1,1)$) is a regular value of $\widetilde{\beta}\circ Q^{(\alpha)} $ away from the origin in view of Remark \ref{independence}.

Let $t \in (-1,\bar{\beta})$ be a regular value of $\beta$ and $a \in \rmsing(Q_\lambda)$. Then the biaxiality set $\{ \beta = t \}\cap \mathcal{D}^+_\Omega$ comes out from $a$  tangent to a straight line by Proposition \ref{prop:conic-structure}, it is contained in $\mathcal{D}^+_\Omega$ by the definition of $\bar\beta$ and it is a finite union of analytic connected arcs and, possibly, of finitely many disjoint analytic closed simple curves. Let $\mathcal{C}$ be a maximal arc in $\{ \beta = t \}\cap \mathcal D^+_\Omega$ originating from $a$. We want to show that it ends at another singular point $a_1 \neq a$ of $Q_\lambda$, with opposite sign w.r.to $a$. Indeed, we observe that $\mathcal{C}$ cannot end on $\partial\mathcal{D}^+_\Omega \setminus \rmsing(Q_\lambda)$, since either $\beta > t$ or $\beta = -1$ there. On the other hand, $\mathcal{C}$ cannot end at a point $x_1$ in the interior of $\mathcal{D}^+_\Omega$, since here $\beta$ is analytic and, with the aid of the implicit function theorem, we could continue $\mathcal{C}$ a bit as a smooth arc across $x_1$, contradicting maximality. Thus, $\mathcal{C}$ can only end on a singular point $a_1$, necessarily with opposite sign w.r.to $a$ since $t$ is a regular value of $\beta$ (the sign of the normal derivative of $\beta$ along the arc with respect to the outer normal of the enclosed region is constant, so the sign of the two endpoint singularities must be opposite). Rotating around the $x_3$-axis, we have topological axisymmetric spheres and, possibly, tori. Note that the topological axisymmetric spheres so determined have corners for any $t \neq 0$ because of the asymptotically conical behavior in Proposition \ref{prop:conic-structure}. In the special case $t = 0$ is a regular value, then the set $\{ \beta = 0 \}$ contains $N$ smooth axisymmetric topological spheres.

Now, let $\bar{\beta} \leq t < 1$ be a regular value of $\beta$. Arguing as in the proof of 1) in Theorem \ref{topology-sym} and in the above, we see that $\{\beta = t \} \cap \overline{\mathcal D^+_\Omega}$ looks like the disjoint union of finitely many of the followings: (a) arcs connecting singularities with opposite sign (as in the above); (b) arcs connecting boundary points; (c) arcs connecting singularities and boundary points; (d) points on the boundary; (e) simple closed curves. Therefore, rotating the planar section around the $x_3$-axis gives both 1) and 2).

Finally we now prove 3). In view of the information previously obtained, going down along the symmetry axis we have a first singularity $a^+ \in \Omega$, which is clearly positive because of ($HP_1$), and which is the north pole of a sphere $\frS$ contained inside the biaxial set $\overline{\{ \beta= t_2\}}$, whose south pole is a negative singularity $a^-$. Notice that this pair could be a dipole or not, if there are other singularities in between. In both cases there is a regular point $\tilde{a}$ such that $Q_\lambda(\tilde{a})=-\eo$ and which is on the symmetry axis in between the two singularities (this is trivial if the two forms a dipole but also obvious if there is an extra singularity in between, choosing $\tilde{a}$ sufficiently close and ``on the negative side'' of it) and therefore contained in the interior of the biaxial sphere $\frS$.

Clearly, $ \frS \subset \overline{\{\beta \geq t_2\}}$ and $\tilde{a} \in \{ \beta \leq t_1\}$, hence $\frS \subset \overline{ \{\beta \geq t_2\}} \subset \overline{\Omega} \setminus \{ \beta \leq t_1\} \subset \mathbb{R}^3 \setminus \{\tilde{a}\}$, therefore if $\{\beta \geq t_2\}$ is contractible inside  $\overline{\Omega} \setminus \{ \beta \leq t_1\}$ then $\frS$ is contractible inside $\mathbb{R}^3\setminus \{\tilde{a}\}$. However, the latter fact is clearly impossible by the following classical argument from degree theory, which gives the desired conclusion.  Indeed, since $\frS$ is topologically a sphere we have a bi-Lipschitz embedding $\varphi : \bbS^2 \to \frS \subset \overline{\{\beta \geq t_2 \}}$ and of course $\tilde{a} \not \in \frS$ by construction. If $\mathcal{O} \subset \Omega$ is the region enclosed by $\frS$, then for $r>0$ small enough we have $\tilde{a} \in \overline{B_r(\tilde{a})}\subset \mathcal{O}$, hence the map $\Phi : \bbS^2 \to \bbS^2$ given by
\[
	\Phi(y) = \Pi_{\tilde{a}} \circ \varphi(y)=\frac{\varphi(y) - \tilde{a}}{\abs{\varphi(y) - \tilde{a}}}
\] 
is well-defined and by Stokes theorem $\deg(\Phi,\bbS^2)=\deg(\Pi_{\tilde{a}},\frS)=\deg(\Pi, \partial B_r(\tilde{a}))=1$. Now, for the sake of a contradiction, assume that $\overline{\{\beta \geq t_2 \}}$ is contractible in $\overline{\Omega} \setminus \{\beta \leq t_1 \}$. Thus, there exists a homotopy $H : (\overline{\Omega} \setminus \{\beta \leq t_1\} )\times [0,1] \to \overline{\Omega} \setminus \{ \beta \leq t_1\}$ so that $H(\cdot, 0)\vert_{\{\beta \geq t_2\}} = \rm{Id}$, $H(\cdot, 1)\vert_{\{\beta \geq t_2\}} = \mbox{const}$. Considering the induced continuous map $G : \bbS^2 \times[0,1] \to \bbS^2$ defined by
\[
	G(y,s) = \frac{H(\Phi(y),s) - \tilde{a}}{\abs{H(\Phi(y),s) -\tilde{a}}}\,,
\] 
we have $\deg G(\cdot,1) = \deg  \mbox{const} =0$ while $\deg G(\cdot, 0) = \deg \Phi=1$, which contradicts the invariance of the degree under homotopy.
\end{proof}

\begin{remark}
\label{splitcore}
	Split minimizers are somewhat analogues of the so-called \emph{split core solutions} found numerically in \cite{GaMk}. Also split core solutions contain pairs of special points but these are \emph{isotropic points} (i.e., points at which $Q = 0$) rather than singularities, and because of axial symmetry they still bound a segment of negative uniaxiality. In fact, singularities are impossible in the framework of \cite{GaMk} since there is no norm-constraint there. Conversely, isotropic points are impossibile here, because of the norm constraint. It is natural to conjecture that the norm constraint turns isotropic points into singularities. Arguments in \cite{DMP2} will provide more evidence for this. For all these reasons, we preferred to give these singular solutions a still evocative but slightly different name.
	\end{remark}

\begin{remark}
\label{extratori}
	As stated in the previous theorem, the presence of tori in the biaxial sets of split minimizers is not excluded. The difference with the smooth case is that, according to Corollary~\ref{cor:topology-smooth-crit-points}, tori \emph{must} be contained in the biaxiality surfaces of smooth solutions, at least for levels of biaxiality lower than $\bar{\beta}$.
	For singular minimizers this is not yet clear, because we are not able at present to describe the topological structure of a singular configuration near each dipole and consequently to understand its relation with the topological properties of the boundary data. 
\end{remark}

\begin{remark}
\label{Yu}
	Further, numerics from \cite{KV} and \cite{KVZ} suggest these singular solutions are \emph{not minimizing} deep in the nematic phase, at least in the ball under homeotropic boundary condition, where energy minimizing configurations should be smooth with torus-like structure. However, as commented in the final subsection of the paper, the coexistence  of smooth  and singular minimizers in such model case has been proved in the recent remarkable paper \cite{Yu} in the smaller class of $\On(2) \times \mathbb{Z}_2$-equivariant configurations. As we will discuss in \cite{DMP2} in the context of $\mathbb{S}^1$-equivariant minimizers under radial anchoring, this coexistence can occur but it depends in a subtle way on the geometry of the domain $\Omega$ and it may be lost for suitable deformations of the ball for which energy minimizing configurations turn out to be necessarily minimizing torus solution in the sense of Definition \ref{def:torus-sol} or minimizing split solutions in the sense of Definition \ref{def:split-minimizer}. 
\end{remark}

\subsection{Concluding remarks}\label{sec:comparison}
Many results of this paper, in particular Theorem~\ref{thm:partial-regularity}, Theorem~\ref{topology-sym-torus}, Theorem~\ref{thm:examples-tori} and Theorem~\ref{classification}, are much improved versions of results proven for the first time in the Ph.D. thesis of the first author \cite{DipThesis}, where the concepts of torus solution and split minimizer have been firstly formalized, at the least to the best of our knowledge. Theorem~\ref{topology-sym-torus} and Theorem~\ref{topology-sym-split} are the first results in literature describing the topology of $\bbS^1$-equivariant (LdG) minimizers in such detail and, at the same time, in such generality. 

However, as already remarked in the Introduction, apparently similar and somewhat related results very recently appeared in the interesting paper \cite{Yu}, where the minimization problem of the energy functional \eqref{LDGenergytilde} in a symmetric class of competitors is considered, and here we want to comment a bit on the differences between our work and \cite{Yu}. 

First of all, as we already mentioned in the Introduction, the analysis in \cite{Yu} is restricted to the case when the domain is the unit ball $\Omega=B_1$ and the boundary condition is always the constant norm hedgehog $\overline{H}$ given by \eqref{Hbar}. In addition, the class of $Q$-tensor fields considered in \cite{Yu} is strictly smaller than $\mathcal{A}_{\overline{H}}^{\rm sym}(B_1)$, because instead of considering arbitrary $\mathbb{S}^1$-equivariant configurations the author restricts to the smaller class of $\On(2)\times \mathbb{Z}_2$-equivariant configurations. More explicitly, in \cite{Yu} the author considers maps $Q$ that in cylindrical coordinates $(r,\phi, z) \in B_1$ can be equivalently written as 
\begin{equation}
\label{yurepresentation} 
	Q(r,\phi,z)=(f_0(r,z)), f_1(r,z)e^{i \phi}, f_2(r,z) e^{i 2\phi}) \, , \qquad (r\cos \phi, r \sin \phi, z) \in B_1 \, ,
\end{equation}
using the identification of $\mathcal{S}_0$ with $\R \oplus \C \oplus \C$ instead of a reference moving frame adopted there. It is assumed that $f_j \in \mathbb{R}$ for $j=0, 1, 2$, to entail $\On(2)$-equivariance, so that the admissible space of tensor at each point is only a two dimensional sphere, i.e., $f(r,z) \in \mathbb{S}^2$ and $Q(\cdot, \cdot, \phi) \in \mathbb{S}^2_{\phi} \subset \mathbb{S}^4$. In addition each $f_j$ is also assumed to be even/odd symmetric in the $x_3$ variable, more precisely $f_j(r, -z)=(-1)^j f_j(r,z)$ for each $j=0,1,2$, which amounts to the $\mathbb{Z}_2$-equivariance with respect to the reflection across the plane $\{x_3=0\}$. In this restricted class the author performs a clever parametrized constrained minimization which yields in the limit coexistence of ``torus'' and ``split'' minimizers of the unconstrained minimization problem in the $\On(2)\times \mathbb{Z}_2$-equivariant class having the same energy.  

In essence, both the minimizers in \cite{Yu} are smooth near the origin, and the different behavior depends essentially on the possible values $Q_\lambda(0)=\pm \eo$. Since the extra $\mathbb{Z}_2$-symmetry forces the singular set to be symmetric as well, in agreement with Remark \ref{rmk:axis}, the number of singularities in the upper half space is shown to be even (possibly zero) if $Q_\lambda(0)=\eo$ or odd in the opposite case $Q_\lambda(0)=-\eo$.
In the former case, $\mathbb{Z}_2$-symmetry is cleverly used to deduce the existence of a negative uniaxial ring on the symmetry plane $\{x_3=0\}$, surrounded by a coaxial thin solid torus of biaxial tensors. In the latter case instead, a vertical segment of negative uniaxiality containing the origin with a pair of singularities at the endpoint is shown to exist, surrounded by a thin neighborhood of biaxial phase. These conclusions, as well as further interesting results concerning the behavior of the eigenvalues and the eigenframes in these neighborhoods, depend in a crucial way on the $\On(2)$-equivariance. Indeed, compared to our setting, $\On(2)$-equivariance allows to conclude that the vector field  $e_\phi(x)= ( -\sin \phi , \cos \phi,0)$ is an  eigenvector of any $\On(2)$-equivariant $Q$-tensor at any point. In turn this allows to write the corresponding eigenvalue $Q e_\phi \cdot e_\phi$ as a linear combination of the entries of $Q$, to give manageable formulas for the remaining ones and to deduce ordering properties between the eigenvalues which are crucial when discussing the behavior of the eigenframe mentioned above. It would be very interesting to extend these conclusions to the full $\mathbb{S}^1$-equivariant context, in which a better understanding of the behavior of the eigenframe near the uniaxial sets would be highly desirable. 

Also the explicit boundary datum plays a major role in \cite{Yu}, in combination with the $\On(2)$-equivariance and at least in two respects. Firstly, it allows to deduce by the maximum principle that $f_2>0$ in the interior and $f_1 \gtrless 0$ for $x_3 \gtrless 0$ from the same properties on the boundary, useful sign properties which are crucial to prove that the uniaxial sets mentioned above are surrounded by the biaxial phase. The second fundamental aspect in which it enters concerns the discussion of the regularity theory in \cite{Yu}, where through an a priori energy upper bound on the minimizers it allows to exclude a priori (and not by any stability analysis, as done here in Sec.~\ref{sec:stability}) the presence of linearly full harmonic spheres as tangent map, therefore no classification as the one in Sec.~\ref{sec:axisymm} is needed. As a result, the possible tangent maps at isolated singularities are still of the form \eqref{stableblowups}, with the further restriction $\alpha = 0$ in the upper half space and $\alpha=\pi$ otherwise, due to $\On(2)$-equivariance and the sign condition on $f_1$ mentioned above.
 
  Concerning qualitative properties of the minimizers in \cite{Yu}, these seems to be somehow weaker counterparts of the ones in Theorem \ref{topology-sym-torus} and \ref{topology-sym-split} above. Clearly the minimizers in the restricted class described above are critical point of the energy functional \eqref{LDGenergytilde} as those in the class $\mathcal{A}^{{\rm sym}}_{Q_{\rm b}}(B_1)$ by a symmetric criticality principle as the one in Sec.~\ref{sec:6}, although their energy minimality in the class $\mathcal{A}_{\overline{H}}^{\rm sym}(B_1)$ remains unclear. On the other hand, the fundamental difference between our definition of torus solution (and, in turn, of split minimizer) and the corresponding one in \cite{Yu} is the full regularity assumption we make in Definition \ref{def:torus-sol} (and in turn its violation in Definition~\ref{def:split-minimizer}) and which is absent in \cite{Yu}. Indeed, as already recalled above, torus solutions as constructed in \cite{Yu} may have singularities (although when they are smooth then they satisfy our Definition \ref{def:torus-sol} in view of Corollary \ref{cor:topology-smooth-crit-points}); more precisely, they may have a finite number of dipoles (in our sense) in each half-space. As a consequence of regularity, in our case the linking property of the negative uniaxial ring with the positive uniaxial region made up by the boundary and the $x_3$-axis holds and is actually encoded in Definition \ref{def:torus-sol}. On the other hand, without assuming regularity it unclear whether the linking property or some weaker counterpart of it still holds when a negative uniaxial ring is present. This is quite undesirable, since the linking property seems to be the most striking feature of biaxial torus solutions according to numerical simulations, leading to a foliation of the domain in tori corresponding to the level sets of the (signed) biaxiality parameter. 
  
  Without further regularity information, the most intriguing conclusion of \cite{Yu}, the coexistence of a torus and a split minimizer, should be reinterpreted in the weaker sense of ``coexistence of two minimizers among $\On(2)\times \mathbb{Z}_2$-equivariant configurations with a different number of singularities''. In the full $\bbS^1$-equivariant class, but for well-chosen domains and boundary conditions (allowing also for the stronger conclusion that one of the two must be a torus solution), this fact was already proven by the first author in \cite[Theorem~8.9]{DipThesis}. Anyway, even if weakened and even if valid only in a restricted class, the conclusion of \cite{Yu} remains very interesting because no rigorous result was previously known for the ball with the constant norm hedgehog on the boundary, which is by far the most interesting and representative case considered in literature. Inevitably, comparison with \cite{Yu} puts in even more evidence the interest of Theorem~\ref{thm:examples-tori} and Theorem~\ref{thm:examples-split}, since we not only proved that torus solutions and split solutions in the ball do exist but also that there are fairly natural boundary conditions with respect to which  they are the \emph{only} minimizers in the \emph{full} $\bbS^1$-equivariant class. Furthermore, in analogy with the results announced in Remark~\ref{Yu}, in our companion paper \cite{DMP2} we will elaborate more on this theme and we will show how singular and smooth solutions for the Euler-Lagrange equations do appear simultaneously as minimizers among $\bbS^1$-equivariant maps in a nematic droplet for suitable deformations of the radial anchoring as boundary data. 



\end{document}